\theoremstyle{plain}
\newtheorem{theorem}{Theorem}[section]
\newtheorem{lemma}[theorem]{Lemma}
\newtheorem{conjecture}[theorem]{Conjecture}
\newtheorem{corollary}[theorem]{Corollary}
\newtheorem{prop}[theorem]{Proposition}
\theoremstyle{definition}
\newtheorem{remark}{Remark}
\newtheorem{definition}{Definition}
\newcommand{\Z}{\mathbb Z}
\newcommand{\E}{\mathbb E}
\newcommand{\PP}{\mathbb P}
\newcommand{\comp}{c}
\newcommand{\e}{\epsilon}
\definecolor{db}{rgb}{0.1,0,0.75}
\definecolor{lm}{cmyk}{0 ,1,0,0}
\newcommand{\R}{\mathbb R}
\newcommand{\ft}{\mathcal G_t}
\newcommand{\W}{\text{Wait}}
\newcommand{\m}{\mathcal{G}}
\newcommand{\n}{\mathcal{G}_1}
\newcommand{\mm}{\mathcal{G}_2}
\newcommand{\h}{\operatorname{H}}
\newcommand{\base}{\operatorname{Base}}
\newcommand{\zeros}{\operatorname{Zeros}}
\newcommand{\Height}{\operatorname{Height}}
\newcommand{\height}{\operatorname{Height}}
\newcommand{\volume}{\tau}
\newcommand{\returns}{\operatorname{Returns}}
\newcommand{\badone}{\mathcal{B}_1}
\newcommand{\badi}{\mathcal{B}_i}
\newcommand{\badtwo}{\mathcal{B}_2}
\newcommand{\badthree}{\mathcal{B}_3}
\newcommand{\badzero}{\mathcal{B}_0}
\newcommand{\even}{\tilde R^x}
\renewcommand{\star}{\mathcal{E}}
\newcommand{\regular}{\operatorname{Regular}}
\renewcommand{\P}{\mathbb{P}}
\newcommand{\eps}{\varepsilon}
\newenvironment{pfoflem}[1]
{\par\vskip2\parsep\noindent{\sc Proof of Lemma\ #1. }}{{\hfill
$\Box$}
\par\vskip2\parsep}
\newcommand{\pr}{\mathbb P}
\newcommand{\N}{\mathbb N}
\newcommand{\old}[1]{}
\renewcommand{\P}{\mathbb{P}}
\title{Oil and water: a two-type internal aggregation model}
\author[1]{Elisabetta Candellero\thanks{E.Candellero@warwick.ac.uk. \ Supported by the Austrian Science Fund (FWF): E-1503W01230, 
and by a Marie Curie Career Integration Grant PCIG09-GA2011-293619.}}
\author[2]{Shirshendu Ganguly\thanks{sganguly@math.washington.edu.\ Partially supported by NSF grant  DMS-0847661.}}
\author[3]{Christopher Hoffman\thanks{hoffman@math.washington.edu. \ Supported by NSF grant DMS-1308645 and NSA grant H98230-13-1-0827.}}
\author[4]{Lionel Levine\thanks{\url{http://www.math.cornell.edu/\~levine}. Partially supported by NSF grant DMS-1243606.}}
 \affil[1]{University of Warwick}
 \affil[2,3]{University of Washington}
 \affil[4]{Cornell University}
\date{August 4, 2014}
\DeclareMathOperator{\Var}{Var} 
\begin{document}
\maketitle

\begin{abstract}
We introduce a two-type internal DLA model which is an example of a non-unary abelian network. Starting with $n$ ``oil'' and $n$ ``water'' particles at the origin, the particles diffuse in $\Z$ according to the following rule: whenever some site $x \in \Z$ has at least $1$ oil and at least $1$ water particle present, it {\bf fires} by sending $1$ oil particle and $1$ water particle each to an independent random neighbor $x \pm 1$. 
Firing continues until every site has at most one type of particles. 
We establish the correct order for several statistics of this model and identify the scaling limit under assumption of existence.
\end{abstract}

\section{Introduction and Main Results}\label{mainresults}

We investigate a new interacting particle system on $\mathbb{Z}$ that can be considered as a model of \emph{mutual diffusion}. Two particle species, called for convenience as \emph{oil} and \emph{water}, diffuse on $\Z$ until there is no site that has both an oil and water particle.
We start with $n$ oil and $n$ water particles at the origin. At each discrete time step, if at least 1 oil particle and at least 1 water particle are present at $x \in \Z$ then $x$ \textbf{fires} by sending $1$ oil particle and $1$ water particle each to an independent random neighbor $x \pm 1$ with equal probability.  The system fixates when no more firing is possible, that is, when every site has particles of at most one type. 

How many firings are required to reach fixation? How far is the typical particle from the origin upon fixation? Our main results address these two questions.


\begin{definition}\label{defo}For $x\in \mathbb{Z}$ let $u(x)$ be the total number of times $x$ fires before fixation.  The random function $u : \Z \to \N$ is called the \textbf{odometer} of the process.
\end{definition} 

In the above informal description we have assumed that all sites fire in parallel in discrete time, but in fact this system has an \textbf{abelian property}: the distribution of the odometer and of the particles upon fixation do not depend on the order of firings (Lemma~\ref{l.abelian}).

Our first result concerns the order of magnitude of the odometer. 

\begin{theorem}\label{lemmaJ}
There exist  positive numbers $\e,c,C$ such that for large enough $n,$
\begin{itemize}
\item [i.]  $$\pr \left( \sup_{x\in \mathbb{Z}}u(x) >C n^{4/3}\right) < e^{-n^\e}$$ 
\item [ii.] $$\pr \left(\inf_{x:|x|\le c n^{1/3}}u(x) < c n^{4/3} \right) <   e^{-n^\e}.$$
\end{itemize}

\end{theorem}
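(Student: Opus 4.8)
\section*{Proof proposal for Theorem~\ref{lemmaJ}}

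The plan is to analyse the odometer $u$ through its exact relation to the terminal oil and water profiles, and then to convert the global fixation constraint (every site ends mono\-chromatic) into quantitative bounds on $u$. Write $\Delta$ for the discrete Laplacian on $\Z$. By the abelian property (Lemma~\ref{l.abelian}) we may fix any convenient firing order and attach to the $k$-th firing of a site $x$ independent fair signs deciding the destinations of its out\-going oil and water particles. If $O(x),W(x)$ denote the numbers of oil and water particles at $x$ at fixation, conservation of particles gives
\[
O(x)=n\mathbf{1}_{\{x=0\}}+\tfrac12\Delta u(x)+\varepsilon_O(x),\qquad
W(x)=n\mathbf{1}_{\{x=0\}}+\tfrac12\Delta u(x)+\varepsilon_W(x),
\]
where $\varepsilon_O(x)$ (resp.\ $\varepsilon_W(x)$) is the signed amount by which the $u(x-1)+u(x+1)$ incoming oil (resp.\ water) coins deviate from their mean; each is a conditionally centred $\pm\tfrac12$-sum of $u(x-1)+u(x+1)$ terms. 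Fixation means $O(x),W(x)\ge0$ with $O(x)W(x)=0$ for all $x$, which is equivalent to $\Delta u(x)=-2\min(\varepsilon_O(x),\varepsilon_W(x))$ for $x\neq0$ and $\Delta u(0)=-2n-2\min(\varepsilon_O(0),\varepsilon_W(0))$; summing $O(x)+W(x)=|O(x)-W(x)|=|\varepsilon_O(x)-\varepsilon_W(x)|$ over $x$ yields the conservation identity $\sum_x|\varepsilon_O(x)-\varepsilon_W(x)|=2n$.

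The first technical input is concentration of the discrepancies: with probability at least $1-e^{-n^\e}$, simultaneously for every $x$, $|\varepsilon_O(x)|,|\varepsilon_W(x)|\le n^{o(1)}\sqrt{1+u(x-1)+u(x+1)}$ (the exponent $o(1)$ is in fact a small power of $n$ determined by the target failure probability). Since $u$ is not independent of the coins, this is proved by revealing coins one firing at a time, observing that the number of firings of a given site is a stopping time for the resulting filtration, and applying an Azuma/Freedman maximal inequality up to that stopping time, together with a crude a priori bound on $\operatorname{supp}u$ and $\sup_x u(x)$ (e.g.\ by domination by a one-type aggregation model) to make the union over sites effective. Feeding this back into the identities above gives the a priori discrete differential inequality $|\Delta u(x)|\le n^{o(1)}\sqrt{1+u(x-1)+u(x+1)}$ for $x\neq0$, and $-\Delta u(0)=2n+O(n^{o(1)}\sqrt{u(-1)+u(1)})$, so $0$ is a pronounced local maximum of $u$.

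For part (i), set $M=\sup_x u(x)=u(x_0)$. The differential inequality forces $u$ to vary slowly on scale $M^{1/4}$: at $x_0$ the one-sided increments of $u$ are nonnegative and sum to $-\Delta u(x_0)\le n^{o(1)}\sqrt M$ if $x_0\neq0$, and to $-\Delta u(0)=2n+O(n^{o(1)}\sqrt M)$ if $x_0=0$; iterating the inequality then gives $u\ge M/2$ on an interval $I$ of length $\ell\ge cM^{1/4}n^{-o(1)}$. Suppose $M>Cn^{4/3}$. Then $\ell\ge cC^{1/4}n^{1/3-o(1)}$ and $\min_I u\ge\tfrac12 Cn^{4/3}$, while the conservation identity forces $\sum_{x\in I}|\varepsilon_O(x)-\varepsilon_W(x)|\le2n$; since a discrepancy on $I$ typically has size $\asymp\sqrt M$, at least half the sites of $I$ must carry a discrepancy of size $\le\varepsilon\sqrt{u(x)}$ with $\varepsilon\asymp C^{-3/4}$. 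Passing to one residue class modulo $3$ inside $I$, whose discrepancies depend on disjoint blocks of coins, and using anti-concentration (a local CLT) for each of these $\gtrsim\ell$ independent signed sums, the probability of this atypical event is at most $(c'\varepsilon)^{\ell/6}\le e^{-n^\e}$ once $C$ is large; combining with the failure probability of the concentration step gives $\P(\sup_x u>Cn^{4/3})<e^{-n^\e}$.

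For part (ii), the support bound $\operatorname{supp}u\subseteq[-Cn^{1/3},Cn^{1/3}]$ is obtained together with part (i) (the same slowly-varying/conservation/anti-concentration mechanism shows $u$ cannot remain of order $n^{4/3}$, nor fail to decay, over an interval much longer than $n^{1/3}$); then the conservation identity together with the \emph{upper} concentration bound gives $\sum_x\sqrt{1+u(x)}\ge cn^{1-o(1)}$, and since this sum ranges over $\le 2Cn^{1/3}+1$ sites there is a site with $u$ of order $n^{4/3-o(1)}$, which — $0$ being a pronounced local maximum — may be taken at distance $\le n^{1/3}/10$ of the origin; the differential inequality then propagates the bound $u(x_*\pm k)\ge u(x_*)-O(n^{o(1)}\sqrt M\,k^2)$ to all $|k|\le cn^{1/3}$, and rerunning the anti-concentration (rather than the concentration) estimate at this last step removes the $n^{o(1)}$ losses, giving $\inf_{|x|\le cn^{1/3}}u(x)\ge cn^{4/3}$ outside an event of probability $e^{-n^\e}$. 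The main obstacle throughout is the anti-concentration step: a single discrepancy $\varepsilon_O(x)-\varepsilon_W(x)$ is $\asymp\sqrt{u(x)}$ only with probability bounded away from $0$ and $1$, so to reach an $e^{-n^\e}$ bound one must exploit near-independence of the discrepancies across well-separated sites to amplify "likely at each site'' into "overwhelmingly likely for the collection,'' and this must be carried out simultaneously with the self-referential dependence of $u$ on the very same coins — which is precisely what the stopping-time/exploration bookkeeping of the concentration step is designed to support. By contrast the upper-tail concentration is routine martingale technology and the differential-inequality manipulations are elementary.
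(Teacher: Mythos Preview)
Your framework --- the identities $\Delta u(x)=-2\min(\varepsilon_O(x),\varepsilon_W(x))$ for $x\neq 0$, the conservation constraint $\sum_x|\varepsilon_O(x)-\varepsilon_W(x)|=2n$, and the concentration bound $|\Delta u(x)|\le n^{o(1)}\sqrt{1+u(x-1)+u(x+1)}$ --- is correct and close in spirit to the paper's approach, which packages the same information through the supermartingale $P_t$ and the quantity $\returns$. But the anti-concentration step for part~(i) has a genuine gap. You need that on the interval $I$ where $u\ge M/2$, many sites satisfy $|g_\tau(x)|\ge\varepsilon\sqrt M$; here $g_\tau(x)=\varepsilon_O(x)-\varepsilon_W(x)=S^x_{N_\tau(x)}$ is the merged-stack lazy walk $S^x$ evaluated at the \emph{random} time $N_\tau(x)=u(x-1)+u(x+1)$, and $N_\tau(x)$ depends on $S^x$ through the global dynamics. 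The only stack-measurable event implied by $\{|g_\tau(x)|\le\varepsilon\sqrt M,\ N_\tau(x)\in[M/2,2M]\}$ is $\{\min_{M/2\le k\le 2M}|S^x_k|\le\varepsilon\sqrt M\}$, and that event has probability bounded \emph{away from zero uniformly in} $\varepsilon$ (a walk run for time of order $M$ typically revisits a neighbourhood of the origin). So the local-CLT bound $O(\varepsilon)$ you invoke is unavailable, and no amplification over $x\in 3\Z\cap I$ is possible. The paper's choice of the zero-count $\returns(x)$ rather than the final value $|g_\tau(x)|$ is exactly what sidesteps this: the number of zeros of $S^x$ up to time $N_\tau(x)$ is \emph{monotone} in the stopping time, so one may replace $N_\tau(x)$ by a deterministic lower bound $k$ and work with the purely stack-measurable, independent-in-$x\in 3\Z$ variables $\tilde R^x(0.9k)$ (Lemma~\ref{lemmaK}). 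Your final-value statistic has no such monotonicity.

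For part~(ii) there are two further problems. First, you assume $\mathrm{supp}\,u\subset[-Cn^{1/3},Cn^{1/3}]$, but this is not proved in the paper and is listed as an open problem in Section~\ref{s.open}; the paper establishes only $\mathrm{supp}\,u\subset[-n^5,n^5]$ and the ``most mass'' statement of Theorem~\ref{spread1}. Second, your propagation via the second-difference bound loses a factor $n^{o(1)}$: with $|\Delta u|\le n^{o(1)}\sqrt M$ and $k\asymp n^{1/3}$, the quadratic term $n^{o(1)}\sqrt M\,k^2\asymp n^{4/3+o(1)}$ swallows the main term, and the vague appeal to ``rerunning anti-concentration'' does not repair this. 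The paper instead proves a direct \emph{first-order} gradient bound (Lemma~\ref{lowergradient12}): for $0\le j\le k\le\varepsilon n^{1/3}$ one has $u(j)-u(k)\ge(k-j)n/3-n/6$, because $u(x)-u(x+1)$ equals (up to a fluctuation controlled by the already-proved upper bound) the number of particles that fixate to the right of $x$, which stays $\ge n/3$ throughout $[0,\varepsilon n^{1/3}]$. This telescopes to $u(j)\ge c n^{4/3}$ with no sub-polynomial loss and requires neither the sharp support bound nor a second pass at anti-concentration.
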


The next result shows that most particles do not travel very far: all but a vanishing fraction of the particles at the end of the process are supported on an interval of length $n^{\alpha}$, for any exponent $\alpha > 1/3$.  Formally, for $r>0$ let $F(r)$ be the number of particles that fixate outside the interval $[-r,r]$.

\begin{theorem}\label{spread1}
For sufficiently small $\epsilon>0$ there exists $\delta>0$ such that 
	\[ \P\left( F  \big(n^{\frac13 + \eps} \big) > n^{1- \frac{\epsilon}{2}} \right) < e^{-n^{\delta}}. \]
\end{theorem}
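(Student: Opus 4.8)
My plan is to bound $F(n^{1/3+\eps})$ by relating the number of particles that escape a large interval to the odometer $u$ on the boundary of that interval, and then invoking Theorem \ref{lemmaJ}(i) to control the odometer. The basic observation is that if a particle fixates outside $[-r,r]$, it must at some point have crossed one of the sites $\pm r$; more precisely, the net flux of (say) oil particles across the edge $(r-1,r)$ is controlled by $u(r-1)$ and $u(r)$. Since each firing of a site sends one oil particle to each side with probability $1/2$, over $u(x)$ firings the number of oil particles sent from $x$ to $x+1$ is a Binomial$(u(x),1/2)$-type quantity (this is where the abelian property and the representation of the process via independent coin flips enters), and similarly for water. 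So I would first set $r = n^{1/3+\eps}$, and write $F(r) = F^+(r) + F^-(r)$ where $F^+$ counts particles fixating in $[r,\infty)$; by symmetry it suffices to bound $F^+(r)$.

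The next step is the key deterministic inequality: the total number of particles (oil plus water) that ever sit at a site $\ge r$ is at most the number of particle-transfers across the edge $(r-1,r)$ in the rightward direction, which in turn is at most $(\text{oil sent } r-1\to r) + (\text{water sent } r-1 \to r) \le 2 u(r-1)$ — but this is too crude since $u(r-1)$ could be as large as $n^{4/3}$. Instead I want to exploit that particles must travel a \emph{distance} of order $r - n^{1/3}$ beyond the ``bulk'' region where the odometer is large. So I would condition on the high-probability event $\mathcal{A}$ from Theorem \ref{lemmaJ}(i) that $\sup_x u(x) \le C n^{4/3}$, and on this event run the following argument: the number of particles reaching site $r$ is dominated by a sum over the firings at $r-1$ of indicator variables, and more usefully, the displacement of a tagged particle from where it starts being ``pushed'' is a sum of $\pm 1$ steps whose count is bounded by the local odometer values. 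A cleaner route: the expected number of oil particles fixating at or beyond $r$ is at most $\sum_{x \ge r-1}$ (flux across $(x-1,x)$), and one shows the flux decays once $x$ exceeds $n^{1/3+\eps}$ because it would require an atypically long sequence of firings to push mass that far; quantitatively, on $\mathcal{A}$ the odometer $u(x)$ for $|x| \le$ (support radius) satisfies a discrete-Laplacian-type bound $\Delta u(x) \approx -(\text{imbalance})$, forcing $u$ to be roughly parabolic and hence already $o(n^{4/3})$, in fact polynomially small, at distance $n^{1/3+\eps}$.

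Concretely, here is the step I would carry out in detail. Using $\Delta u(x) = (\text{number of particles of the minority type deposited at } x) - (\text{net particles that left})$, or rather the abelian-network mass-balance identity, one gets that $u$ restricted to $\{x : u(x) > 0\}$ is (up to a bounded error coming from the $\pm1$ random walk fluctuations, controllable by a concentration inequality for the simple random walk / Azuma since each transfer is an independent fair coin) a function whose second difference is $O(1)$ in absolute value, with $u$ vanishing at the edge of its support. Combined with the a priori bound $\sup u \le C n^{4/3}$ on $\mathcal A$ and the fact — which I'd extract from the proof of Theorem \ref{lemmaJ}(ii) or re-derive — that the support has radius $O(n^{1/3+\eps'})$, we get $u(x) \le C' (n^{1/3+\eps'} - |x|)_+^2$ roughly, so that at $|x| = n^{1/3+\eps}$ the odometer, hence the flux across that edge, is at most $n^{2/3+o(1)} \cdot (\text{correction})$, and the total mass beyond is at most $n^{1-\eps/2}$ after choosing constants. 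The final step packages the deviations: the event $F^+(r) > n^{1-\eps/2}/2$ forces either the complement of $\mathcal{A}$ (probability $< e^{-n^{\eps}}$) or an atypical fluctuation of one of the polynomially-many coin-flip sums governing the fluxes (each of probability $< e^{-n^{\delta'}}$ by Hoeffding/Azuma, union bound over $\le n^{4/3}$ sites still gives $< e^{-n^{\delta}}$ for suitable $\delta$).

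The main obstacle I anticipate is making the ``parabolic odometer'' heuristic rigorous in the presence of randomness: the mass-balance identity is exact, but converting ``$u$ has bounded second difference'' into ``$u$ is small at distance $n^{1/3+\eps}$'' requires simultaneously knowing the odometer's support is not much larger than $n^{1/3}$ \emph{and} controlling the random discrepancy between the expected flux and the actual flux uniformly over all the relevant edges. Rather than a self-contained argument, I expect the cleanest proof reuses the estimates built during the proof of Theorem \ref{lemmaJ} — in particular whatever coupling with simple random walk or whatever ``pressure function'' comparison was used there — so the real work is bookkeeping the error terms and verifying the exponents $1/3+\eps \mapsto 1 - \eps/2$ close up with room to spare for the choice of $\delta$.
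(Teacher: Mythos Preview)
Your proposal has a genuine gap in the ``parabolic odometer'' step. You claim that $\Delta u(x)$ is $O(1)$ for $x \neq 0$ (up to controllable fluctuations), but this is false: the mass-balance identity gives $\Delta u(x) = |g_\tau(x)| + (\text{mean-zero fluctuation})$, where $|g_\tau(x)|$ is the number of particles left at $x$ and the fluctuation comes from the Binomial count of particles entering $x$. Both $|g_\tau(x)|$ and the fluctuation are of order $n^{2/3}$ in the bulk (see \eqref{laplaceg} and Lemma~\ref{ubparticles}), not $O(1)$. With the correct scale $\Delta u \sim n^{2/3}$ your parabolic bound becomes $u(r) \lesssim n^{2/3}(R-r)^2$ where $R$ is the radius of the support of $u$; for this to yield $u(r) \le n^{1-\eps/2}$ at $r = n^{1/3+\eps}$ you would need $R \le n^{1/3+\eps} + O(n^{1/6})$, which is essentially the statement you are trying to prove. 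The only a~priori support bound available is $R \le n^{4+\eps}$ (Corollary~\ref{preliminarysupport}), and Theorem~\ref{lemmaJ} does not give anything sharper without an argument equivalent to Theorem~\ref{spread1} itself, so the circularity is real.

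The paper's proof runs the implication in the opposite direction and avoids any need to control $u(r)$ or the support radius. The gradient identity \eqref{represent} says that, up to random-walk fluctuations $\sum_{i\le r}(O_i+W_i)$,
\[
u(0) - u(r) \;\approx\; \sum_{i=1}^{r} \bigl(\text{number of particles ending to the right of } i\bigr) \;\ge\; r \cdot \bigl(\text{particles ending to the right of } r\bigr).
\]
If $F^+(r) \ge n^{1-\eps/2}$ with $r = n^{1/3+\eps}$, the right side is at least $n^{4/3+\eps/2}$; the fluctuation term is shown to be $O(n^{1+2\eps})$ by Lemma~\ref{mean1}(i) and a union bound over $i \le r$. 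Hence $u(0) > Cn^{4/3}$, contradicting Theorem~\ref{lemmaJ}(i). So the whole proof is: assume too many particles escape, sum the gradient identity over $[0,r]$, and read off that $u(0)$ is too large. Your instinct to reuse the machinery from the lower bound section was right, but the relevant tool is the gradient formula \eqref{eq:grad}--\eqref{represent}, not a second-difference bound.
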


 Theorem \ref{lemmaJ}  shows that the odometer function scales like $n^{4/3}$.
Theorem \ref{spread1} motivates the conjecture that the proper scaling factor in the spatial direction should be $n^{1/3}$. We conjecture that the scaling limit of the odometer exists, and under the assumption of existence we identify the limiting function.

Let  $\tilde u := \E u$. 
\begin{conjecture}\label{ass1} 
\begin{itemize}
\item[(i)] For any $\delta>0$
\begin{equation}\label{eq:assumption}
\pr \left ( \sup_{x \in \Z} \left | \frac{u(x)-\tilde{u}(x)}{n^{4/3}}\right | >\delta \right )\to 0,  \ \textnormal{ as }n\to \infty.
\end{equation}
\item[(ii)] There is a function $w: \R \to \R$ such that
\begin{equation}\label{eq:w}
\frac{\tilde{u}(\lfloor{n^{1/3}\xi}\rfloor)}{n^{4/3}} \, \to \, w(\xi),
\end{equation}
uniformly in $\xi$.
\end{itemize}
\end{conjecture}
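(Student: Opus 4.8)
The plan is to derive part (ii) from part (i), so that uniform concentration of the odometer is the engine driving everything.

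\textbf{Concentration, part (i).} I would work in the \emph{abelian stack} representation underlying Lemma~\ref{l.abelian}: attach to each $x\in\Z$ two independent i.i.d.\ fair-coin stacks, one prescribing the destinations of successive oil particles fired from $x$ and one for water, so that the whole evolution---in particular $u(x)$ for every $x$---is a deterministic function of the stacks. With overwhelming probability (Theorems~\ref{lemmaJ} and~\ref{spread1}) only $\bO(n^{1/3+\eps})$ sites are ever active and each fires $\bO(n^{4/3})$ times, so $u$ depends on at most $N\le n^{5/3+\eps}$ coins. The target is a bounded-differences estimate: reveal the coins in a convenient order (site by site, or in the order they are consumed) and bound the change in $u(x)$ produced by resampling a single coin; an Azuma--Hoeffding/McDiarmid inequality would then give concentration of $u(x)$ at scale $\sqrt{N}$ times the single-coin influence. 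Since $\sqrt{N}\le n^{5/6+\eps/2}\ll n^{4/3}$, even a single-coin influence as large as $n^{1/2}$ would suffice with room to spare, and a union bound over the $\bO(n^{1/3+\eps})$ relevant sites yields~\eqref{eq:assumption}.

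\textbf{Macroscopic identity.} Writing $\mathrm{oil}(x),\mathrm{water}(x)$ for the numbers of oil and water particles at $x$ upon fixation, conservation gives
\[\mathrm{oil}(x)=n\mathbf{1}_{x=0}+\tfrac12\Delta u(x)+M^{O}(x),\qquad \mathrm{water}(x)=n\mathbf{1}_{x=0}+\tfrac12\Delta u(x)+M^{W}(x),\]
where $\Delta$ is the discrete Laplacian and $M^{O}(x),M^{W}(x)$ are the (essentially mean-zero) net-flux fluctuations of the two species, built from disjoint blocks of coins and hence independent. The key structural point is that the drift $\tfrac12\Delta u$ is common to both species, so $\mathrm{oil}(x)-\mathrm{water}(x)=M^{O}(x)-M^{W}(x)$, while fixation forces $\mathrm{oil}(x)\wedge\mathrm{water}(x)=0$ and hence $\mathrm{oil}(x)+\mathrm{water}(x)=\bigl|M^{O}(x)-M^{W}(x)\bigr|$. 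Therefore $\Delta\tilde u(x)=\E\bigl|M^{O}(x)-M^{W}(x)\bigr|$ for $x\neq0$. Now $M^{O}(x)-M^{W}(x)$ is, to leading order, a difference of two independent near-$\mathrm{Binomial}(u(x),\tfrac12)$ variables with total variance $\approx u(x)$, so a local CLT together with part (i) should give $\E|M^{O}(x)-M^{W}(x)|=\sqrt{2/\pi}\,\sqrt{\tilde u(x)}\,(1+o(1))$; and the telescoping identity $\sum_{x\ge 1}\bigl(\mathrm{oil}(x)+\mathrm{water}(x)\bigr)=\tilde u(0)-\tilde u(1)+o(n)$, combined with conservation of the $2n$ particles, pins the rescaled limit to slope $-1$ at the origin.

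\textbf{Passage to the limit.} A gradient bound $|u(x{+}1)-u(x)|\lesssim n$ (from nonnegativity of $\mathrm{oil},\mathrm{water}$ and conservation of oil across a single edge) together with $\sup u\lesssim n^{4/3}$ (Theorem~\ref{lemmaJ}) makes $f_{n}(\xi):=\tilde u(\lfloor n^{1/3}\xi\rfloor)/n^{4/3}$ uniformly bounded and equicontinuous, hence precompact in $C(\R)$. Rescaling the identity above (so that $\Delta\tilde u\leftrightarrow n^{2/3}w''$ and $\sqrt{\tilde u}\leftrightarrow n^{2/3}\sqrt w$), every subsequential limit $w$ is nonnegative, even, compactly supported, and solves the free-boundary problem $w''=\sqrt{2/\pi}\,\sqrt w$ on $\{w>0\}$ with $w=w'=0$ at the boundary and $w'(0^{+})=-1$. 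This integrates explicitly, $(w')^{2}=\tfrac43\sqrt{2/\pi}\,w^{3/2}$ on the support, yielding the unique solution $w(\xi)=\frac{1}{72\pi}\bigl((18\pi)^{1/3}-|\xi|\bigr)_{+}^{4}$; uniqueness of the subsequential limit upgrades precompactness to convergence, so~\eqref{eq:w} holds with this explicit $w$, uniformly in $\xi$ by Arzel\`a--Ascoli.

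\textbf{Main obstacle.} The single-coin stability estimate behind part (i) is the crux. Because this is a \emph{non-unary} abelian network, $u$ is not a monotone function of the particle counts or of the stacks, so the standard sandpile/rotor argument (``changing one instruction moves the odometer by a bounded amount'') does not apply, and a genuinely new stability lemma---or a different route to concentration, e.g.\ a per-trajectory martingale decomposition adapted to the two-species stacks---seems to be needed. A secondary difficulty is the local CLT for $M^{O}(x)-M^{W}(x)$: since $u(y)$ is not independent of the stacks, one must control the optional-stopping bias ($\E M^{O}(x)=o(n^{2/3})$) and the spatial correlations of the flux, which again ultimately rests on sharp concentration. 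The remaining ingredients---the gradient bound, equicontinuity, the ODE analysis and its uniqueness---are routine once those two estimates are in hand.
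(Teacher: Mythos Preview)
The statement you are attempting to prove is \emph{Conjecture}~\ref{ass1}, which the paper explicitly leaves open: there is no proof in the paper to compare against. The paper only proves Theorem~\ref{conditionalscalingthm} \emph{assuming} both parts of the conjecture, and in Section~\ref{s.open} it revisits the question of the variance of $u$ with a heuristic suggesting order $n^{7/6}$---still well short of a proof of~(i).

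Your proposal is therefore a plan for attacking an open problem, and you have correctly located the genuine obstacle. The bounded-differences route for~(i) requires a single-coin stability bound for $u(x)$, and as you note, oil-and-water is non-unary: monotonicity fails, so the usual abelian-sandpile/rotor argument that flipping one instruction perturbs the odometer by $O(1)$ is unavailable. Nothing in your sketch supplies a replacement. Note also a numerical slip: with $N\le n^{5/3+\eps}$ coins, McDiarmid gives concentration at scale $\sqrt{N}\cdot c\le n^{5/6+\eps/2}c$, so $c\le n^{1/2}$ yields $n^{4/3+\eps/2}$, which is \emph{not} $o(n^{4/3})$; you would need $c=o(n^{1/2-\eps/2})$, with no room to spare.

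Your reduction of~(ii) to~(i) is a genuine contribution beyond the paper. The paper's Section~\ref{prop} assumes both~(i) and~(ii) and then identifies $w$; you instead propose to use the gradient bound (Lemma~\ref{lemma:expected_diff_odom}) and $\sup\tilde u=O(n^{4/3})$ (Corollary~\ref{boundexp}) to get equicontinuity of $f_n(\xi)=n^{-4/3}\tilde u(\lfloor n^{1/3}\xi\rfloor)$, extract subsequential limits by Arzel\`a--Ascoli, and show each limit solves the same free-boundary ODE with the same boundary data, hence is unique. This is sound in outline, and the paper's Lemmas~\ref{l.clt}--\ref{valueb} can indeed be rerun along a convergent subsequence using only~(i) (the CLT step in Lemma~\ref{l.clt} needs a limiting variance, which the subsequential limit supplies). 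So if~(i) were established, your compactness argument would upgrade the paper's conditional identification into an unconditional proof of~(ii). But~(i) remains the whole difficulty.
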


Simulations support Conjecture~\ref{ass1}, as shown in Figure~\ref{f.1d}. Conditionally on Conjecture \ref{ass1}, the following result identifies the limit $w(x)$ exactly.

\begin{theorem}\label{conditionalscalingthm} Assuming Conjecture \ref{ass1}, the function $w$ appearing in \eqref{eq:w} must equal
\begin{equation} \label{conjecturedscalinglimit1}
w(x) = \begin{cases} \frac{1}{72\pi}\left ((18\pi)^{1/3} -|x| \right)^4, & |x|< (18\pi)^{1/3} \\
				0 & |x| \geq (18\pi)^{1/3}. \end{cases}
\end{equation}
\end{theorem}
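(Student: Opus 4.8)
The plan is to derive a limiting identity for $w$ from the discrete structure of the model and then solve the resulting free-boundary ODE. The starting point is the observation that the odometer satisfies a pointwise relation coming from particle conservation: if a site $x$ fires $u(x)$ times, it emits $u(x)$ oil particles (half left, half right in expectation) and receives oil particles from its neighbors. Writing down the net flux of oil (and, symmetrically, of water) at each site and using that at fixation every site holds particles of only one type, one gets a discrete equation of the form $\tfrac12 \Delta u(x) = (\text{oil at }x) - (\text{water at }x) - (\text{initial imbalance})$, i.e. the discrete Laplacian of $u$ equals the final ``signed'' configuration minus $2n\delta_0$. The essential point, which I would justify using the symmetry of the model and Theorem~\ref{spread1}, is that after rescaling the final configuration becomes, to leading order, a constant density of oil-or-water on the support $[-L,L]$ where $L=(18\pi)^{1/3}$: on the scale $n^{1/3}$ there are $\sim n$ particles spread over an interval of length $\sim n^{1/3}$, so the rescaled density is order $n^{2/3}$, which matches $n^{4/3}$ (odometer) against $(n^{1/3})^2$ (two spatial derivatives). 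This suggests that the limit $w$ on its support should satisfy $\tfrac12 w''(\xi) = \kappa$ for some constant $\kappa>0$ determined by the particle density, away from the origin, with a delta mass at $0$ corresponding to the source.

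**Identifying the constant and the free boundary.**

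Here is the crucial extra input that pins down the quartic rather than a parabola: the final density is not constant but is itself proportional to something like $w(\xi)$, because the number of particles deposited near $\xi$ is governed by how much activity (firing) occurred there. More precisely, I would argue that the expected number of particles of a given type that fixate near $n^{1/3}\xi$ scales like $n^{2/3}$ times a profile proportional to the \emph{local growth} of the odometer, so that the right-hand side of the limiting equation is $\propto w(\xi)$ itself is not quite right dimensionally; rather the correct heuristic is that the rescaled signed-deposit density equals $-\tfrac12 w''$, and that this density in turn is forced (by the diffusive/harmonic-measure structure of where oil vs.\ water particles end up) to be proportional to $(L-|\xi|)^2$, giving $w''\propto (L-|\xi|)^2$ and hence $w\propto(L-|\xi|)^4$ after imposing $w(L)=w'(L)=0$ (value and slope vanish at the free boundary, the standard smooth-fit conditions for this kind of obstacle-type problem). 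The two free constants — the overall multiplicative constant and $L$ — are then fixed by two normalizations: (a) total mass, $\int_{-L}^{L} (\text{deposit density})\,d\xi$ must account for all $2n$ particles, i.e.\ $\int_{-L}^{L} \bigl(-\tfrac12 w''\bigr)$ or the appropriate moment equals the right constant; and (b) the value $w(0)=\tfrac{L^4}{72\pi}$ must be consistent with $\tilde u(0)\sim$ the total number of firings divided by $n^{4/3}$, which by Theorem~\ref{lemmaJ} is $\Theta(1)$ and whose precise value comes from summing the discrete equation. Solving the two resulting algebraic equations yields $L=(18\pi)^{1/3}$ and the prefactor $\tfrac{1}{72\pi}$.

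**Making the limit rigorous from the Conjecture.**

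To turn the heuristic into a proof \emph{assuming Conjecture~\ref{ass1}}, I would proceed as follows. Conjecture~\ref{ass1}(i) lets me replace $u$ by $\tilde u=\E u$ with vanishing error after rescaling by $n^{4/3}$; Conjecture~\ref{ass1}(ii) gives a genuine uniform limit $w$. Then I take the exact discrete identity $\tfrac12\Delta \tilde u = \rho_n - 2n\delta_0$, where $\rho_n$ is the expected signed final configuration, rescale space by $n^{1/3}$ and the odometer by $n^{4/3}$, and pass to the limit in the sense of distributions: $\Delta \tilde u$ rescales to $w''$ (two spatial derivatives cost $n^{2/3}$, odometer provides $n^{4/3}$, net $n^{2/3}$, matching the $n^{2/3}$ density scale), the point source $2n\delta_0$ rescales to a multiple of $\delta_0$, and $\rho_n$ converges to the limiting deposit density. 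The key deterministic fact I need about $\rho_n$ — that the rescaled deposit density is $\propto (L-|\xi|)^2$ on $[-L,L]$ — I would extract from a second conservation law: tracking, say, oil particles alone, their expected final density is a harmonic-measure-type average that, combined with the odometer relation for the \emph{difference} of the two types, forces the quadratic profile. (Concretely: oil deposited at $x$ equals the number of times $x$'s neighbors fired oil into $x$ and it was not subsequently pushed out, and in the scaling limit this becomes an integral against $w$ of the limiting transition kernel, which is computed explicitly.) Finally, $w$ is the unique solution of the free-boundary problem $w''=c(L-|\xi|)^2$ on $(-L,L)$, $w\equiv 0$ outside, $w$ and $w'$ continuous, $w$ even, with the two normalization constraints; uniqueness is immediate and the explicit solution is \eqref{conjecturedscalinglimit1}.

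**Main obstacle.**

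The hard part will be rigorously establishing the shape of the limiting deposit density $\rho_n$ after rescaling — i.e.\ showing the rescaled final configuration is (a constant multiple of) $(L-|\xi|)^2$ on its support. Conjecture~\ref{ass1} is about the odometer, not directly about the final particle configuration, so one must bootstrap from the odometer limit to a configuration limit. This requires a quantitative handle on \emph{which} of the two species survives at each site and in what amount, which is precisely the delicate two-type interaction that makes the model hard; the argument has to invoke the abelian property (Lemma~\ref{l.abelian}) to choose a convenient firing order (e.g.\ stabilize oil first, then water, or a layered scheme), track the expected signed flux carefully, and control the error terms using Theorem~\ref{lemmaJ} and Theorem~\ref{spread1} so that the contributions from particles far out or from fluctuations are negligible on the $n^{4/3}$ scale. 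Everything after that — identifying the constants and solving the ODE — is routine calculus.
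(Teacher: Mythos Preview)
Your proposal has the right global picture --- rescale the identity $\Delta \tilde u(x) = (\text{final particle count at } x) - 2n\delta_0(x)$, pass to the limit, solve the resulting ODE --- but it misses the key mechanism that closes the loop, and the alternative you suggest is not a viable substitute.

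The crucial point the paper exploits is this: the final particle count at $x$ is $|g_\tau(x)|$ where $g_\tau(x)=\eta_1(x,\tau)-\eta_2(x,\tau)$, and $g_\tau(x)$ can be written as a sum of roughly $\tilde u(x-1)+\tilde u(x+1)\approx 2w(\xi)n^{4/3}$ independent lazy-walk increments (each increment being ``oil minus water'' for one incoming particle from a neighbor). Conjecture~\ref{ass1} lets you replace the random summation limits $u(x\pm1)$ by their expectations with negligible error, after which the CLT gives $n^{-2/3}g_\tau(x)\Rightarrow N(0,w(\xi))$ and hence $n^{-2/3}\E|g_\tau(x)|\to\sqrt{2w(\xi)/\pi}$. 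Combined with $\Delta\tilde u(x)=\E|g_\tau(x)|$, this yields the \emph{self-referential} ODE $w''=\sqrt{2w/\pi}$ directly. One then multiplies by $w'$, integrates using $w(\infty)=w'(\infty)=0$, gets $w'=-\bigl(\tfrac{32}{9\pi}\bigr)^{1/4}w^{3/4}$, and solves; the boundary condition $w'(0^+)=-1$ (coming from $\tilde u(0)-\tilde u(1)\le n$ with near-equality) pins down the constants.

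Your route instead tries to determine the deposit density profile \emph{independently} --- via ``harmonic-measure structure'' or ``tracking oil particles alone'' --- and then read off $w''$. This is the step you correctly flag as the main obstacle, but as stated it is not a workable plan: the limiting deposit density is not something you can get from generic diffusive considerations, because it depends on the odometer itself (how many firings happened near $x$ determines the variance of the oil-minus-water count there). In other words, the deposit density being $\propto(L-|\xi|)^2$ is a \emph{consequence} of $w\propto(L-|\xi|)^4$, not an input you can obtain first. Any rigorous version of your ``harmonic measure'' step would have to rediscover the CLT link $\E|g_\tau(x)|\sim c\sqrt{w(\xi)}\,n^{2/3}$, which is exactly the paper's argument. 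Also, a small correction: the Laplacian identity involves the \emph{absolute} particle count $|g_\tau(x)|$, not the signed count; this is why the right-hand side of the ODE is $\sqrt{w}$ rather than something linear.
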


\begin{figure}
\centering
\includegraphics[width=\textwidth]{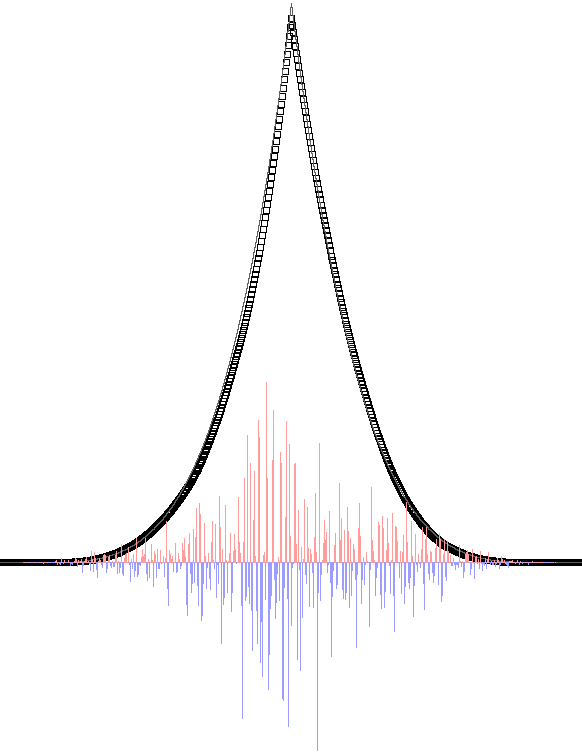}
\caption{Graph of the odometer function of the oil and water model in $\Z$ with $n=360000$ particles of each type started at the origin: for each $x \in \Z$ a box is drawn centered at $(x,u(x))$ where $u(x)$ is the number of oil-water pairs fired from $x$.  The curve $f(x)= \frac{1}{72\pi} ((18\pi n)^{1/3} - |x|)^4$ appears in gray. Red and blue vertical bars represent the final configuration of oil and water particles respectively; the height of the bar is proportional to the number of particles.
\label{f.1d}}
\end{figure}

\newpage
\text{ }

The oil and water model can be defined on any graph and in particular on higher-dimensional lattices. Figure~\ref{f.2d} shows an oil and water configuration in $\Z^2$. In Section~\ref{s.open} we conjecture the relevant exponents in $\Z^d$ for $d \geq 2$.

\begin{figure}
\centering
\includegraphics[width=4in]{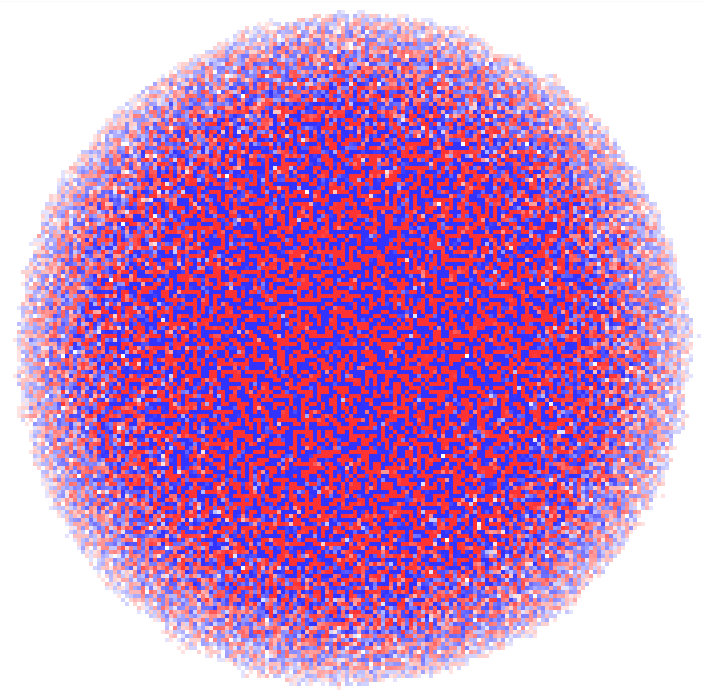}
\caption{Oil and water in $\Z^2$ with $n=2^{22}$ particles of each type started at the origin. Each site where particles stop is shaded red or blue according to whether oil or water particles stopped there. The intensity of the shade indicates the number of particles. We believe that the limit shape is a disk of radius of order $n^{1/4}$.
\label{f.2d}}
\end{figure}


\subsection{Related models: internal DLA and abelian networks}

In \emph{internal DLA}, each of $n$ particles started at the origin in $\Z^d$ performs a simple random walk until reaching an unoccupied site. The resulting random set of $n$ occupied sites is close to a Euclidean ball \cite{LBG}.  Internal DLA is one of several models known to have an \emph{abelian property} according to which the order of moves does not affect the final outcome.

Dhar \cite{Dhar} proposed certain collections of communicating finite automata as a broader class of models with this property.  
Until recently the only examples studied in this class have been \emph{unary networks} (or their ``block renormalizations'' as proposed in \cite{Dhar}).  Informally, a unary network is a system of local rules for moving \emph{indistinguishable} particles on a graph, whereas a non-unary network has different types of particles.  It is not as easy to construct non-unary examples with an abelian property, but they exist. Alcaraz, Pyatov and Rittenberg studied a class of non-unary examples which they termed \emph{two-component sandpile models} \cite{2comp}, and asked whether there is a nontrivial example with two particle species such that the total number of particles of each type is conserved.  
Oil and water has this conservation property, but differs from the two-component sandpile models in that any number of particles of a single type may accumulate at the same vertex and be unable to fire.

Bond and Levine \cite{abnet} developed Dhar's idea into a theory of \emph{abelian networks} and proposed two non-unary examples, \emph{oil and water} and \emph{abelian mobile agents}. Can such models exhibit behavior that is ``truly different'' from their unary cousins? This is the question that motivates the present paper.
Theorem~\ref{spread1} shows that oil and water has an entirely different behavior than internal DLA: all but a vanishing fraction of the $2n$ particles started at the origin stop within distance $n^{\frac13 + \eps}$ (versus $n$ for internal DLA).

In contrast to internal DLA where there is now a detailed picture of the fluctuations in all dimensions  \cite{AG1,AG2,AG3,JLS1,JLS2,JLS3}, there is not even a limiting shape theorem yet for oil and water. Simulations in $\Z^d$ indicate a spherical limit shape (Figure~\ref{f.2d}) with radius of order $n^{1/(d+2)}$.


\subsection{Main ideas behind the proofs.} \label{miop}
In this section we present informally the key ideas behind the proofs.  We start with a definition. 
\begin{definition}\label{particlenotation} For any $x\in \mathbb{Z}$ and $t\in \{0,1,\ldots \infty\}$ let $\eta_1(x,t)$ and $\eta_2(x,t)$ represent the number of oil and water particles respectively, at position $x$ at time $t$.
\end{definition}

From now on, let $$P_t = \sum_{x \in \Z} \min(\eta_1(x,t),\eta_2(x,t))$$ be the number of co-located oil-water pairs at time $t$. Furthermore, let $x_t$ denote the left most site with a co-located oil-water pair at time $t$. 

The process is run in phases: we start by firing a pair at the origin and, inductively, at each time $t>0$ we locate the vertex $x_t$ and let it fire.
More precisely, unless otherwise stated, in the following we will make the tacit assumption that at every time $t$ the process fires one pair from $x_t$. 
By definition, $P_0=n$ and the process stops at the first time $\tau$ for which $P_\tau=0$.  
Now denote by $l_t = \eta_1(x_t-1,t)-\eta_2(x_t-1,t)$ the excess of oil particles over water particles at the left neighbor of $x_t$ and by $r_t = \eta_1(x_t+1,t)-\eta_2(x_t+1,t)$ the excess at the right neighbor.

Conditional upon knowing the process up to time $t$, the random variable 
\begin{equation}\label{spl}
Z_{t}:=P_{t+1}-P_t
\end{equation}
can have four possible distributions. To distinguish them, we denote the conditional $Z_{t+1}$ by the random variables $\xi_1,\xi_2,\xi_3,\xi_4$ which are described in the following table:
\begin{center}
\begin{tabular}{|c|c|c|c|c|}
  \hline
$i$&$\PP(\xi_i=-1)$&$\PP(\xi_i=0)$&$\PP(\xi_i=1)$ & used when \\
\hline
$1$& $1/4$&$1/2$&$1/4$ & $l_t r_t < 0$ ~ \\
\hline
$2$ &$1/4$&$3/4$&$0$ & exactly one of $l_t, r_t$  is $0$ \\
\hline
$3$ & $1/2$&$1/2$&$0$ & $l_t = r_t =0$ \\
\hline
$4$&$0$&$1$&$0$ &$ l_t r_t > 0$ \\
  \hline
\end{tabular}
\end{center}
Note that $\xi_1$ has mean zero, whereas $\xi_2$ and $\xi_3$ have negative means, and $\xi_4$ is degenerate at $0$. Since all such expected values are less than or equal to zero, 
\begin{center}
\textbf{$P_t$ is a supermartingale.}  
\end{center}
The main idea of the proof for the upper bound in Theorem \ref{lemmaJ} is as follows: Define the auxiliary random variables 
\begin{equation}\label{eq:def_N_i}
N_i(t) = \# \{s \leq t \,:\, Z_s \stackrel{d}{=} \xi_i\}. 
\end{equation}
It is clear from the definition that 
$$
\sum_{i=1}^{\tau}Z_i=P_\tau-P_0=-n
$$
where $\tau$ is the stopping time of the process where all oil and water have
been separated ($P_\tau = 0$). 
Now, informally $\sum_{i=1}^{\tau}Z_i$ is the sum of $N_1(\tau)+N_2(\tau)$ variables with mean at most $-1/4$ and $N_3(\tau)$ variables with mean $0$. 

Because of the negative and zero drifts respectively, the sum of the negative mean variables is of order $O\bigl (-(N_1(\tau)+N_2(\tau))\bigr )$, whereas the sum of the zero mean variables is roughly $O(\sqrt{N_{3}(\tau)})$ (by square root fluctuations of the symmetric random walk).
Thus roughly $$
\sum_{i=1}^{\tau}Z_i\le -N_1(\tau)-N_2(\tau)+\sqrt{N_{3}(\tau)}.$$
We then argue by contradiction: conditional upon the event that the odometer is ``very large'' somewhere (i.e.\ larger than $ Cn^{4/3}$ for a suitable big constant $C$), we show that 
it is likely that $ N_1(\tau)+  N_2(\tau)$ is ``sufficiently large''  compared to $\sqrt{N_3(\tau)}$.  This in turn implies that 
$$\sum_{i=1}^{\tau}Z_i< -n$$ which is a contradiction.
%

To prove the lower bound we first establish a gradient bound on the odometer using the upper bound. In other words, we show that there exists a constant $c$ such that, with high probability, for  $x\in[0,cn^{1/3}]$ we have
$$u(x)-u(x+1)\ge n/2.$$
This in turn implies that with high probability $$u(0)=\Omega(n^{4/3}),$$
since $$0\le u(\lfloor{cn^{1/3}}\rfloor)\le u(0)-\frac{n}{2}\lfloor{cn^{1/3}}\rfloor.$$ 
Theorem \ref{spread1} follows from the proof of the lower bound.

Lastly we discuss the proof of Theorem \ref{conditionalscalingthm}. Clearly by symmetry of the process about the origin the function $w(\cdot)$ is symmetric as well.  We  show that Conjecture \ref{ass1} implies that the limiting function is smooth on the positive real axis and in particular satisfies
 $$w''(x)=\sqrt{\frac{2}{\pi}w(x)}$$
with certain boundary conditions. At this point, Theorem \ref{conditionalscalingthm} follows by identifying a solution of the above boundary value problem, and using uniqueness of the solution.

\subsection{Outline}

This article is structured as follows.
In Section \ref{sdes} we give a rigorous definition of the model. Furthermore, in order to facilitate our proofs, we define two different versions of the process and prove that they both terminate in finite time with probability one. We then construct a coupling between the two versions, which in particular implies that they terminate with the same odometer function and the same distribution of particles. 

In Section \ref{section:apriori} we show a polynomial bound on the stopping time $\tau$. We start by showing that the number of pairs $P_t$ can be stochastically dominated by a certain lazy random walk with long holding times, started and reflected at $n$. 
In order to get a rough upper bound on $\tau$, it suffices to bound the hitting time of zero for this walk, which we show is of  order $n^4$.
Consequently, we get direct polynomial bounds on the support and the maximum of the odometer function.
We then  improve the last bound, proving the upper bound in Theorem \ref{lemmaJ} in Section \ref{upperbound}.

Section \ref{lowerbound} is devoted to  proving  the lower bound in Theorem \ref{lemmaJ}. As a byproduct of the proof, Theorem \ref{spread1} follows.

 In Section \ref{prop} we prove the conditional Theorem \ref{conditionalscalingthm}. 
Section \ref{s.open} consists of open questions and a conjecture about the process on higher dimensional lattices.

\section{Rigorous definition of the model}\label{sdes}

As the underlying randomness for our model we will take a countable family of independent random variables 
\begin{equation}\label{set1}
\omega = (X^x_k,Y^x_k)_{x \in \Z, k \in \N}
\end{equation}
 with $\P(X^x_k=\pm 1)=P(Y^x_k=\pm 1)=\frac12$.  For each $x \in \Z$ the sequences $(X^x_k)_{k \geq 1}$ and $(Y^x_k)_{k \geq 1}$ are called the \emph{stacks at $x$}. 
Denote by $\Omega$ the set of all stacks $\omega$.  
 
 The stacks will have the following interpretation (described formally below): On the $k$th firing from site $x$, an oil particle steps from $x$ to $x+X^x_k$ and a water particle steps from $x$ to $x+Y^x_k$.

For any value $K\in \mathbb{N}$, a \emph{firing sequence} is a sequence of vertices $s = (x_0,\ldots,x_{K-1})$ with each $x_k \in \Z$. 
Recall from Definition \ref{particlenotation} that the random variables $\eta_i(x,t)$ represent the number of particles of type $i$ (type $1$ are \emph{oil} and type $2$ are \emph{water} particles) at location $x$ at time $t$. Given such a sequence and an initial state $\eta_1(\cdot,0),\eta_2(\cdot,0)$ we define the oil and water process $(\eta_1(\cdot,k),\eta_2(\cdot,k))_{k=0}^K$ inductively by
	\begin{align}\label{kermit} \eta_1(\cdot,k+1)  = \eta_1(\cdot,k)  - \delta(x_k) + \delta(x_k+X^{x_k}_{i_k}) \\ 
	     \eta_2(\cdot,k+1) = \eta_2(\cdot,k) - \delta(x_k) + \delta(x_k+Y^{x_k}_{i_k}) 
	     \label{thefrog}\end{align}
where $i_k = \# \{j \leq k \,:\, x_j = x_k \}$.
Here $\delta(x)$ is the function taking value $1$ at $x$ and $0$ elsewhere.

\subsection{Least action principle and abelian property}

\begin{definition}
Let $s=(x_0,\ldots,x_{K-1})$ be a firing sequence. We say that $s$ is \emph{legal} for $(\eta_1(\cdot,0),\eta_2(\cdot,0))$ if 
	\[ \min(\eta_1(x_k,k), \eta_2(x_k,k)) \geq 1, \]
for all $k$, $0 \leq k \leq K-1$. We say that $s$ is \emph{complete} for $(\eta_1(\cdot,0),\eta_2(\cdot,0))$ if the final configuration $(\eta_1(\cdot,K),\eta_2(\cdot,K))$ satisfies
	\[ \min(\eta_1(x,K),\eta_2(x,K)) \leq 0, \]
for all $x \in \Z$.
\end{definition}
Making Definition \ref{defo} precise we define the \emph{odometer} of a firing sequence $s$ to be the function $u_s :\Z \to \N$ given by
\begin{equation}\label{interodo}
 u_s(x) = \# \{0 \leq k <K \,:\, x_k=x \}.
 \end{equation}

How does $u_s$ depend on $s$? The least action principle for abelian networks addresses this question.

\begin{lemma} \label{l.LAP}
{\em (Least Action Principle, \cite{abnet})}
Let $s$ and $s'$ be firing sequences. If $s$ is legal for $(\eta_1(\cdot,0),\eta_2(\cdot,0))$ and $s'$ is complete for $(\eta_1(\cdot,0),\eta_2(\cdot,0))$, then $u_s(x) \leq u_{s'}(x)$ for all $x \in \Z$.
\end{lemma}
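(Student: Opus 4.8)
The plan is to prove a stronger, more combinatorial statement by induction on the length of the legal sequence $s$: if $s = (x_0,\ldots,x_{K-1})$ is legal and $s'$ is complete (both for the same initial configuration), then in fact $u_s \le u_{s'}$ pointwise, and moreover the multiset of firings in $s$ is ``contained'' in that of $s'$ in a way that lets the induction continue. The key structural fact I would isolate first is an \emph{abelian/commutation} property at the level of the deterministic dynamics in \eqref{kermit}--\eqref{thefrog}: because the update at a site $x$ on its $i$th firing uses the fixed stack entries $X^x_i, Y^x_i$ (indexed by how many times $x$ has fired so far, via $i_k = \#\{j\le k: x_j = x_k\}$, \emph{not} by global time), the configuration reached after executing a firing sequence depends only on the \emph{odometer} $u_s$, not on the order of the entries of $s$. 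I would state and prove this as a preliminary claim: if $s$ and $\tilde s$ are two firing sequences with $u_s = u_{\tilde s}$, then they produce the same final configuration. This follows by a standard adjacent-transposition argument — swapping $x_k$ and $x_{k+1}$ when $x_k \ne x_{k+1}$ leaves all the counters $i_j$ unchanged and the two updates commute since they touch stack entries at different sites; when $x_k = x_{k+1}$ no swap is needed.

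With that in hand, the core induction goes as follows. Suppose $s=(x_0,\ldots,x_{K-1})$ is legal for $\eta(\cdot,0) := (\eta_1(\cdot,0),\eta_2(\cdot,0))$ and $s'$ is complete for $\eta(\cdot,0)$; I want $u_s \le u_{s'}$. Induct on $K$. If $K=0$ this is trivial. Otherwise consider the first firing $x_0$ of $s$. Since $s$ is legal, $\min(\eta_1(x_0,0),\eta_2(x_0,0)) \ge 1$, so $x_0$ is a site that \emph{must} fire at least once before completeness can hold: because $s'$ is complete, $\min(\eta_1(x,K'),\eta_2(x,K'))\le 0$ everywhere at the end, and a site with both types present initially cannot reach that state without firing (each firing from a site decreases its own count of each type by $1$, and firings from neighbors only add particles — so if $x_0$ never fired in $s'$ its final min would still be $\ge 1$). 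Hence $x_0$ appears in $s'$; let $k^\star$ be the first index with $x'_{k^\star} = x_0$. Using the abelian claim above applied to $s'$, I can reorder $s'$ so that this firing of $x_0$ comes first: precisely, since all the entries before $k^\star$ are at sites $\ne x_0$, I can bubble $x'_{k^\star}$ to the front without changing $u_{s'}$, obtaining $\tilde s' = (x_0, x'_0, \ldots, \widehat{x'_{k^\star}}, \ldots)$ which is still complete (same odometer, same final configuration by the claim) and still a valid firing sequence. Now strip the leading $x_0$ off both $s$ and $\tilde s'$: the sequence $s^- := (x_1,\ldots,x_{K-1})$ is legal for the configuration $\eta(\cdot,1)$ obtained by firing $x_0$ once, and $\tilde s'^{-}$ is complete for that same configuration $\eta(\cdot,1)$ (firing $x_0$ is the first step of $\tilde s'$, so the remaining sequence acts on exactly $\eta(\cdot,1)$, and its final configuration is unchanged hence still complete). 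By the induction hypothesis $u_{s^-} \le u_{\tilde s'^{-}}$ pointwise; adding back the single firing at $x_0$ on both sides gives $u_s \le u_{\tilde s'} = u_{s'}$, completing the induction.

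The step I expect to be the main obstacle is the abelian/commutation claim and getting the bookkeeping of the counters $i_k$ exactly right when reordering — one has to be careful that bubbling the firing of $x_0$ to the front of $s'$ does not change \emph{which} stack entry each later firing consumes. The point that makes it work is precisely that $i_k$ counts occurrences of $x_k$ among $x_0,\ldots,x_{k}$, so moving an occurrence of a \emph{different} site past a given firing leaves that firing's counter untouched, and moving the chosen occurrence of $x_0$ to the front only shifts the counters of \emph{other} firings at $x_0$, which is fine since we pulled out the \emph{first} such occurrence. A secondary point requiring care is the monotonicity observation that a site with both particle types present must fire before the configuration can become ``complete'' there; this is immediate from \eqref{kermit}--\eqref{thefrog} since incoming firings from neighbors never decrease a site's particle counts, so the min at a site is nondecreasing under firings elsewhere. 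Everything else is routine induction.
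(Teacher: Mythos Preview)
The paper does not give its own proof of Lemma~\ref{l.LAP}; it simply cites the result from \cite{abnet} and then passes to the corollaries in Lemma~\ref{l.abelian}. Your argument is correct and is essentially the standard proof of the least action principle in the abelian-networks/Diaconis--Fulton setting: show that any site which is legally fireable in the initial configuration must occur in every complete sequence (since firings elsewhere can only increase the particle counts at that site), pull that firing to the front of $s'$, strip it from both sequences, and induct on the length of $s$. Two small remarks. First, your preliminary commutation claim does not require an adjacent-transposition argument: from \eqref{kermit}--\eqref{thefrog} the $i$th firing at $x$ always consumes the fixed stack entries $X^x_i,Y^x_i$, so summing the updates gives
\[
\eta_1(\cdot,K)=\eta_1(\cdot,0)+\sum_{x}\sum_{i=1}^{u_s(x)}\bigl(\delta(x+X^x_i)-\delta(x)\bigr),
\]
manifestly a function of $u_s$ alone. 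Second, when you strip the leading $x_0$ and apply the induction hypothesis to $\eta(\cdot,1)$, you are implicitly also shifting the stack at $x_0$ by one (so that the $k$th firing at $x_0$ in the tail sequences reads $X^{x_0}_{k+1},Y^{x_0}_{k+1}$); this is needed for ``legal'' and ``complete'' to retain their meaning for the tails, and is harmless since the lemma is asserted pointwise for arbitrary stacks.
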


We remark that this statement holds pointwise for any stacks $\omega$, even if $s$ and $s'$ are chosen by an adversary who knows the stacks.

In this paper we will not use the full strength of Lemma~\ref{l.LAP}. Only the following corollaries will be used.

\begin{lemma}\label{l.abelian}
{\em (Abelian property \cite{abnet})}
For fixed stacks $\omega$ and fixed initial state $(\eta_1(\cdot,0),\eta_2(\cdot,0))$,
\begin{enumerate}
\item[(i)] If there is a complete firing sequence of length $K$ then every legal firing sequence has length $\leq K$.

\item[(ii)] If firing sequences $s$ and $s'$ are both legal and complete, then $u_s = u_{s'}$.

\item[(iii)] Any two legal and complete firing sequences result in the same final state $(\eta_1(\cdot,K),\eta_2(\cdot,K))$.
\end{enumerate}
\end{lemma}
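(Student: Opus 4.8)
The plan is to derive all three parts directly from the Least Action Principle (Lemma~\ref{l.LAP}), which is a pointwise statement about the fixed stacks $\omega$, together with the explicit recursion \eqref{kermit}--\eqref{thefrog}. \emph{Part (i):} suppose $s'$ is a complete firing sequence of length $K$ and $s$ is an arbitrary legal firing sequence, of length $L$. Lemma~\ref{l.LAP} gives $u_s(x) \le u_{s'}(x)$ for every $x \in \Z$. Both odometers are finitely supported with finite total mass, so we may sum over $x$; by the definition \eqref{interodo} we have $\sum_{x} u_s(x) = L$ and $\sum_x u_{s'}(x) = K$, whence $L \le K$. \emph{Part (ii):} if $s$ and $s'$ are both legal and complete, apply Lemma~\ref{l.LAP} once with $s$ playing the role of the legal sequence and $s'$ the complete one to get $u_s \le u_{s'}$ pointwise, and once with the roles exchanged to get $u_{s'} \le u_s$ pointwise; hence $u_s = u_{s'}$.

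\emph{Part (iii):} I would show that, for fixed $\omega$ and fixed initial state, the final configuration depends on the firing sequence only through its odometer. Iterating \eqref{kermit} and collecting, for each site $x$, the firings from $x$ in the order they appear along $s$ — which, by the bookkeeping index $i_k = \#\{j \le k : x_j = x_k\}$, use precisely the stack entries $X^x_1, \dots, X^x_{u_s(x)}$ no matter how they are interleaved with firings at other sites — one obtains
\[
	\eta_1(\cdot, K) \;=\; \eta_1(\cdot, 0) \;-\; \sum_{x\in\Z} u_s(x)\,\delta(x) \;+\; \sum_{x \in \Z} \sum_{j=1}^{u_s(x)} \delta\bigl(x + X^x_j\bigr),
\]
and the analogous identity for $\eta_2(\cdot,K)$ with $Y^x_j$ replacing $X^x_j$. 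The right-hand sides are functions of $u_s$ and $\omega$ alone, so combining with Part (ii) we conclude that two legal and complete firing sequences have equal odometers and therefore lead to the same final state. (Alternatively one can induct on $K$, using that exchanging two adjacent firings at distinct sites in a legal sequence changes neither its legality nor the final configuration.)

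Given Lemma~\ref{l.LAP}, there is no serious obstacle here; the only point demanding attention is the rearrangement in Part (iii) — one must justify that ``the $j$-th firing from $x$'' is well defined independently of the interleaving, and that the (finite) sums may be reorganized site-by-site. Note also that the lemma is purely conditional, as it presupposes the existence of a complete (resp.\ legal and complete) firing sequence, so no termination or finiteness argument is needed at this point; that the oil-and-water process on $\Z$ actually terminates is established separately in Section~\ref{sdes}.
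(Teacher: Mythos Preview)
Your proof is correct. The paper does not actually prove Lemma~\ref{l.abelian}: it states the result with a citation to \cite{abnet} and remarks only that it is a corollary of the Least Action Principle (Lemma~\ref{l.LAP}). Your derivation supplies exactly that corollary argument --- summing the pointwise inequality for (i), applying Lemma~\ref{l.LAP} in both directions for (ii), and observing via the stack bookkeeping $i_k$ that the final state depends on the firing sequence only through its odometer for (iii) --- so there is nothing to compare beyond noting that you have filled in what the paper leaves to the reference.
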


\subsection{Leftmost convention}
We fix the initial state $$\eta_1(\cdot,0)=\eta_2(\cdot,0) = n \delta(0)$$ of $n$ oil and $n$ water particles at the origin and none elsewhere.
By Lemma~\ref{l.abelian}, for fixed stacks $\omega$, the final state does not depend on the choice of legal and complete firing sequence. However, many of our lemmas will require choosing a particular sequence. Unless otherwise specified, we always fire the \emph{leftmost legal site}:
\begin{equation}\label{eq:xt}
 x_k = \min \{ x \in \Z \,:\, \min(\eta_1(x,k), \eta_2(x,k)) \geq 1 \}. 
\end{equation}
	
If the set on the right side is empty, then $(\eta_1(\cdot,k), \eta_2(\cdot,k))$ is the final state and $x_k$ is undefined.  We denote by $\tau \in \N \cup \{\infty\}$ the first time at which this happens:
	\begin{equation}\label{stoptimedef}
	 \tau = \min \{ k \geq 0 \,:\, \min(\eta_1(x,k), \eta_2(x,k)) = 0 \text{ for all } x \in \Z \}.
	 \end{equation}
   
Note that everything so far holds pointwise in the stacks $\omega$. The first statement involving probability is that $\tau<\infty$, almost surely in $\omega$, and  we postpone its proof to Section \ref{section:apriori}.

\subsection{Merged stacks}\label{sect:version2}

We describe another process $(\eta'_1(\cdot,\cdot),\eta'_2(\cdot,\cdot))$ which has the same law as the process $(\eta_1(\cdot,\cdot),\eta_2(\cdot,\cdot))$ defined in Definition \ref{particlenotation}.   Throughout the rest of the article we will refer to the process $(\eta_1(\cdot,\cdot),\eta_2(\cdot,\cdot))$ as \textbf{Version} $1$ and the modified process as \textbf{Version} $2$.\\

In \textbf{Version} $2$, analogous to \eqref{set1} the source of randomness is a set of independent variables (modified stacks) 
\begin{equation}\label{stack22}
\omega' = (X^x_i, Y^x_i, \bar{X}^x_i, \bar{Y}^x_i)_{x \in 3\Z, i\in \N}. 
\end{equation}
 with $\P(X^x_i=\pm 1)=P(Y^x_i=\pm 1)=\P(\bar{X}^x_i=\pm 1)=\P(\bar{Y}^x_i=\pm 1)\frac12$.
 Denote by $\Omega'$ the set of all stacks $\omega'.$ 
Note that stacks are located only at every $x \in 3\Z$. 
 
Informally, the $i$th firing from $x \in 3\Z$ uses moves $X^x_i$ and $Y^x_i$ as before, but the $i$th firing from the \emph{set} $\{x-1,x+1\}$ uses moves $\pm \bar{X}^x_i$ and $\pm \bar{Y}^x_i$ according to whether the firing was from $x-1$ or $x+1$ respectively.  We refer to $(\bar{X}^x_i, \bar{Y}^x_i)_{i \geq 1}$ as the \emph{merged stacks} of $x-1$ and $x+1$.

Formally, given a firing sequence $s =(x_0, \ldots, x_{K-1})$, we define $(\eta'_1(\cdot,k),\eta'_2(\cdot,k))$ inductively as follows. If $x_k \in 3\Z$ then
	\begin{align*} \eta'_1(\cdot,k+1) = \eta'_1(\cdot,k) - \delta(x_k) + \delta(x_k+X^{x_k}_{i_k}) \\ 
	                \eta'_2(\cdot,k+1) = \eta'_2(\cdot,k+1) - \delta(x_k) + \delta(x_k+Y^{x_k}_{i_k}) \end{align*}
where $i_k = \# \{j \leq k \,:\, x_j = x_k \}$.
If $x_k \in 3\Z \pm 1$ then
	\begin{align*} \eta'_1(\cdot,k+1) = \eta'_1(\cdot,k) - \delta(x_k) + \delta(x_k \mp \bar{X}^{x_k \mp 1}_{i'_k}) \\ 
	\eta'_2(\cdot,k+1) = \eta'_2(\cdot,k+1) - \delta(x_k) + \delta(x_k \mp \bar{Y}^{x_k \mp 1}_{i'_k}) \end{align*}
where $i'_k = \# \{j \leq k \,:\, x_j \in \{x_k,x_k \mp 2\} \}$.

To compare the modified process to the original we use the following Proposition.

\begin{prop}
\label{p.cards}
Let $(Z_i)_{i \in I}$ be independent uniform $\pm 1$-valued random variables indexed by a countable set $I$. Let $i_1,i_2,\ldots \in I$ be a sequence of distinct random indices and $\xi_1,\xi_2,\ldots$ a sequence of $\pm 1$-valued random variables such that for all $k \geq 1$ both $i_k$ and $\xi_k$ are measureable with respect to 
$\mathcal{F}_{k-1} := \sigma (Z_{i_\ell})_{ 1 \leq \ell \leq k-1}.$ 
Then $(\xi_k Z_{i_k})_{k \geq 1}$ is an i.i.d. sequence.
\end{prop}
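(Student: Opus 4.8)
The plan is to verify directly that for every $n\geq 1$ and every choice of signs $(\epsilon_1,\dots,\epsilon_n)\in\{\pm1\}^n$ one has $\P(\xi_1 Z_{i_1}=\epsilon_1,\dots,\xi_n Z_{i_n}=\epsilon_n)=2^{-n}$; since the $\xi_k Z_{i_k}$ take values in $\{\pm1\}$, this is exactly the statement that they are i.i.d.\ uniform. The argument proceeds by induction on $n$, conditioning on $\mathcal F_{n-1}$ and peeling off the last coordinate. The key observation is that $i_n$ and $\xi_n$ are $\mathcal F_{n-1}$-measurable, and the indices $i_1,\dots,i_n$ are \emph{distinct}, so $Z_{i_n}$ is independent of $\mathcal F_{n-1}$ (which is generated by $Z_{i_\ell}$ for $\ell\leq n-1$, a collection not involving the index $i_n$). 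Here one must be slightly careful: because $i_n$ is itself random, ``$Z_{i_n}$ is independent of $\mathcal F_{n-1}$'' needs the stronger statement that conditionally on $\mathcal F_{n-1}$, on the event $\{i_n=j\}$ (which is $\mathcal F_{n-1}$-measurable for each fixed $j\in I$), the variable $Z_j$ is still uniform on $\{\pm1\}$. This holds because $\{i_n=j\}$ forces $j\notin\{i_1,\dots,i_{n-1}\}$ by distinctness, and $Z_j$ for such $j$ is independent of $\sigma(Z_{i_1},\dots,Z_{i_{n-1}})$ restricted to that event.

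Concretely, I would write
\[
\P\bigl(\textstyle\bigcap_{k=1}^n \{\xi_k Z_{i_k}=\epsilon_k\}\bigr)
= \E\Bigl[\mathbf 1_{\bigcap_{k=1}^{n-1}\{\xi_k Z_{i_k}=\epsilon_k\}}\,\P\bigl(\xi_n Z_{i_n}=\epsilon_n \,\big|\, \mathcal F_{n-1}\bigr)\Bigr],
\]
and then show the inner conditional probability equals $1/2$ almost surely. To evaluate it, decompose over the values of the pair $(i_n,\xi_n)$: on $\{i_n=j,\ \xi_n=\sigma\}$ we have $\P(\xi_n Z_{i_n}=\epsilon_n\mid\mathcal F_{n-1}) = \P(Z_j=\sigma\epsilon_n\mid\mathcal F_{n-1}) = 1/2$, using that $Z_j$ is independent of $\mathcal F_{n-1}$ on this event. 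Summing over $j$ and $\sigma$ gives $1/2$ identically, so the expression above reduces to $\tfrac12\,\P\bigl(\bigcap_{k=1}^{n-1}\{\xi_k Z_{i_k}=\epsilon_k\}\bigr)$, and the induction hypothesis closes the loop. The base case $n=1$ is the same computation with $\mathcal F_0$ trivial.

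The main obstacle is making rigorous the claim that $Z_{i_n}$ ``behaves like a fresh uniform sign'' despite $i_n$ being a random, $\mathcal F_{n-1}$-measurable index — i.e., handling the interaction between the randomness of the index and the $\sigma$-algebra it indexes into. The clean way is to fix $j\in I$, note that $\{i_n=j\}\in\mathcal F_{n-1}$, and observe that on this event $j$ is automatically distinct from $i_1,\dots,i_{n-1}$, so $Z_j$ is independent of the generating variables of $\mathcal F_{n-1}$; a short measure-theoretic lemma (independence of $Z_j$ from $\mathcal F_{n-1}\vee\sigma(\mathbf 1_{i_n=j})$ for each fixed $j$, obtained because the index set $I$ is countable and the $Z_i$ are mutually independent) then legitimizes the conditional computation. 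Everything else is bookkeeping over the finitely many sign patterns.
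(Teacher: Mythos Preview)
Your proposal is correct and follows essentially the same approach as the paper: induction on the number of coordinates, conditioning on $\mathcal F_{n-1}$, and using that $i_n,\xi_n$ are $\mathcal F_{n-1}$-measurable while $i_n$ is distinct from the earlier indices. The paper compresses your conditional-probability computation into the single line $\E(\xi_\ell Z_{i_\ell}\mid\mathcal F_{\ell-1}) = \xi_\ell\,\E(Z_{i_\ell}\mid\mathcal F_{\ell-1}) = 0$ (which for a $\pm1$-valued variable is equivalent to your $\P(\cdot\mid\mathcal F_{n-1})=\tfrac12$); your decomposition over $\{i_n=j,\ \xi_n=\sigma\}$ is precisely what justifies that line.
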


\begin{proof}
We proceed by induction on $\ell$ to show that $(\xi_k Z_{i_k})_{1 \leq k \leq \ell}$ is i.i.d. Since $\xi_\ell$ and $i_\ell$ are $\mathcal{F}_{\ell-1}$-measureable, and $i_\ell$ is distinct from $i_1,\ldots,i_{\ell-1}$ we have
	\[ \E ( \xi_\ell Z_{i_\ell} | \mathcal{F}_{\ell-1}) = \xi_\ell \E (Z_{i_\ell} | \mathcal{F}_{\ell-1}) = 0. \]
Since $\xi_\ell Z_{i_\ell}$ is $\pm 1$-valued the proof is complete.
\end{proof}
Recall $\Omega$ the set of all stacks $\omega$ defined by the original process, and  $\Omega',$ the set of all stacks $\omega'$ defined by the modified process. Let $\tau(\omega)$ denote the stopping time of the sequence $\omega$ and similarly $\tau(\omega')$. Furthermore, denote by $(o_\tau,w_\tau)(\omega)$ the \emph{final configuration} after performing $\omega$, and by $(o'_\tau,w'_\tau)(\omega')$ the final configuration of the process after performing $\omega'$.
\begin{lemma}\label{equi12}
There is a measure-preserving map $\phi : \Omega' \to \Omega$ such that with the leftmost convention, 	\[ (\eta'_1(\cdot,\cdot), \eta'_2(\cdot,\cdot)) \stackrel{d}{=} (\eta_1(\cdot,\cdot), \eta_2(\cdot,\cdot)). \]
In particular, $\tau(\phi(\omega')) = \tau(\omega')$,
$(o'_{\tau},w'_{\tau})(\omega') = (o_\tau,w_\tau)(\phi(\omega'))$ and the odometer counts in both the processes are same.

\end{lemma}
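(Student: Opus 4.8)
The plan is to construct $\phi$ by \emph{replaying} the Version $2$ dynamics: run Version $2$ from $n\delta(0)$ with the leftmost convention \eqref{eq:xt}, record for each site the sequence of displacements it actually used, and repackage these into a Version $1$ stack configuration $\omega=\phi(\omega')$. Concretely, writing $(x_k)_{k\ge 0}$ for the Version $2$ firing sequence and $A_k,B_k$ for the actual displacements of the oil and water particle fired at step $k$ (so the oil particle moves to $x_k+A_k$, the water particle to $x_k+B_k$; by definition $A_k=X^{x_k}_{i_k}$ if $x_k\in 3\Z$ and $A_k=\mp\bar X^{x_k\mp1}_{i'_k}$ if $x_k\in 3\Z\pm1$, similarly for $B_k$ with $Y,\bar Y$), I would declare $X^y_m$ (resp.\ $Y^y_m$) to be the oil (resp.\ water) displacement used on the $m$-th firing from $y$, for $m\le u(y)$; the remaining entries get filled from the stack entries of $\omega'$ that were never consulted (for $y\in 3\Z$, the tail $X^y_m,\,m>u(y)$; for a pair $\{x-1,x+1\}$ with $x\in 3\Z$, the leftover entries of $\bar X^x,\bar Y^x$ beyond the number of firings from the pair, split between the two sites along a fixed measurable rule).

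The first step is then the trajectory-matching, which is essentially a tautology: I would show by induction on $k$ that running Version $1$ from $n\delta(0)$ with stacks $\phi(\omega')$ and the leftmost convention reproduces the Version $2$ run step for step. Both processes start at $n\delta(0)$; if the configurations agree after $k$ firings then the leftmost legal site agrees and equals $x_k$, and by the very definition of $\phi(\omega')$ the displacements used at step $k$ are $A_k,B_k$, so the configurations agree after $k+1$ firings. Hence the firing sequences coincide, and therefore $\tau(\phi(\omega'))=\tau(\omega')$, the odometers agree, and $(o'_\tau,w'_\tau)(\omega')=(o_\tau,w_\tau)(\phi(\omega'))$, giving all the claimed identities once $\phi$ is shown to preserve measure.

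The substantive step is measure preservation, and here Proposition~\ref{p.cards} does the work. Take $I$ to be the index set of all entries of $\omega'$ and $(Z_i)_{i\in I}$ those entries; enumerate the entries in the order Version $2$ consults them (say oil- then water-displacement at each firing), attaching the sign $\xi_k=1$ to a firing from $3\Z$ and $\xi_k=\mp1$ to a firing from $3\Z\pm1$. Each entry is consulted at most once (the merged entry $\bar X^x_i$ exactly once, on the $i$-th firing from $\{x-1,x+1\}$), so the indices $i_k$ are distinct; and since by the induction above the configuration after $k-1$ firings is a deterministic function of the already-consulted entries, the leftmost site $x_k$ — hence both $i_k$ and $\xi_k$ — is $\mathcal F_{k-1}$-measurable. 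Proposition~\ref{p.cards} then gives that $(A_0,B_0,A_1,B_1,\dots)$ is an i.i.d.\ uniform $\pm1$ sequence; combining this with the fact that the unconsulted entries remain an independent i.i.d.\ uniform $\pm1$ family, the whole array assembled in the first paragraph is i.i.d.\ uniform $\pm1$ indexed exactly as a Version $1$ stack configuration, i.e.\ $\phi$ pushes the product measure on $\Omega'$ to the product measure on $\Omega$.

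The main obstacle I anticipate is making the "fill in the unused entries" device genuinely well defined and measure-preserving: this requires that almost surely every site fires only finitely often, so that the leftover streams from the merged stacks are infinite i.i.d.\ streams available to complete the Version $1$ tails. Since $\tau<\infty$ a.s.\ is only established later (Section~\ref{section:apriori}), one must either carry out the construction on the full-measure event $\{\tau<\infty\}$ (defining $\phi$ arbitrarily but measure-preservingly off it) or phrase the conclusion as a coupling; either way some care is needed about the logical order. The only other delicate point is the bookkeeping that tracks both particle types across all sites simultaneously while verifying the adaptedness and distinctness hypotheses of Proposition~\ref{p.cards} — routine, but worth writing out because the sign $\xi_k$ really does depend on the past through the leftmost rule.
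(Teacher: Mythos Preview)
Your proposal is correct and follows essentially the same route as the paper: define $\phi$ by recording the actual displacements used in the Version~$2$ run, fill in the unused tails from the leftover stack entries, and invoke Proposition~\ref{p.cards} with the sign $\xi_k$ determined by the residue of $x_k$ modulo $3$ to verify measure preservation. The paper's proof is terser (it omits the explicit trajectory-matching induction, which is indeed tautological), and it glosses over the $\tau<\infty$ issue you flag; your observation that the construction strictly speaking lives on $\{\tau<\infty\}$ and should be completed arbitrarily off that set is a fair point of rigor that the paper does not address.
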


\begin{proof}
Given $\omega' \in \Omega'$, let $x_1,x_2,\ldots$ be the resulting leftmost legal firing sequence, and let $u(x)$ be the number of firings performed at $x$ in the modified process.  We set $\phi(\omega') = (X^x_i,Y^x_i)_{x \in \Z, i \in \N}$ where if $i \leq u(x)$ then $X^x_i$ (resp.\ $Y^x_i$) is the direction in which the $i$th oil (resp.\ $i$th water) exited $x$ in the modified process.  

If $i > u(x)$ then we make an arbitrary choice (e.g.\ split the unused portion of each merged stack into even and odd indices).  

It is important to note, that each $x_k$ is measurable with respect to the $\sigma$-algebra $\mathcal{F'}_{k-1}$ generated by the stack variables in $\omega'$ used before time $k$, and that each stack variable in $\omega'$ is used at most once. The stack variables used \emph{at} time $k$ have the form $\xi X$, $\xi Y$ where $X,Y$ are stack variables not yet used and $\xi = 1_{x_k \in 3\Z \cup (3\Z+1)} - 1_{x_k \in 3\Z-1}$ is a random sign that is measurable with respect to $\mathcal{F'}_{k-1}$. Conditional on $\mathcal{F'}_{k-1}$, $\xi X$ and $\xi Y$ are independent uniform $\pm 1$ random variables, and $\phi$ is measure-preserving by Proposition~\ref{p.cards}.
\end{proof}

\old{	
	\begin{align*} (o_{k+1},w_{k+1}) = \begin{cases}
		(o_k - \delta(x_k) + \delta(x_k+\bar{X}^{x_k+1}_{j_k}), \,
		 w_k - \delta(x_k) + \delta(x_k+\bar{Y}^{x_k+1}_{j_k})) & x_k \in 3\Z -1 \\
		(o_k - \delta(x_k) + \delta(x_k+{X}^{x_k}_{i_k}), \,
		 w_k - \delta(x_k) + \delta(x_k+{Y}^{x_k}_{i_k})) & x_k \in 3\Z  \\
		(o_k - \delta(x_k) + \delta(x_k+\bar{X}^{x_k-1}_{j_k}), \,
		 w_k - \delta(x_k) + \delta(x_k+\bar{Y}^{x_k-1}_{j_k})) & x_k \in 3\Z +1
		 \end{cases}		 
		 \end{align*}
}

\subsection{Constant convention}
To avoid cumbersome notation, we will often use the same letter (generally $C$, $C'$, $c$, $c'$, $\e$ and $\delta$)  for a constant whose value may change from line to line.

\section{Preliminary bound on the stopping time}\label{section:apriori}

In this section we prove a preliminary bound on  the stopping time $\tau$,  defined in \eqref{stoptimedef}.
The bound is very crude and will be improved in the next section where we prove the upper bound in Theorem \ref{lemmaJ}. However this bound will be used in several places throughout the article.

\begin{lemma}\label{aprioridom}$\tau$ is finite almost surely. Moreover
\begin{enumerate}
\item[i.] For any given $\epsilon>0$,  there exists $c=c(\e)>0$ such that 
$$\P\left(\tau>n^{4+2\epsilon}\right)\le e^{-n^{c}}.$$
\item[ii.]
$\E(\tau)\le16n^4.$
\end{enumerate}
\end{lemma}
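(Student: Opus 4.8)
The plan is to stochastically dominate the pair-count process $P_t$ by an explicit one-dimensional lazy random walk on $\{0,1,\dots,n\}$, reflected at $n$, and then bound the hitting time of $0$ for that walk. Recall from the table in Section~\ref{miop} that, conditional on the history up to time $t$, the increment $Z_t = P_{t+1}-P_t$ is distributed as one of $\xi_1,\dots,\xi_4$, each of which has $\P(Z_t=+1)\le \tfrac14$ and is supported on $\{-1,0,+1\}$; moreover when $P_t>0$ there is always a legal firing (at the leftmost pair $x_t$), so the process does not get stuck before $P_t$ hits $0$. Hence $P_t$ is dominated from above by the Markov chain $W_t$ on $\{0,\dots,n\}$ that moves $+1$ with probability $\tfrac14$, $-1$ with probability... no: the worst case for reaching $0$ is $\xi_4$, which is degenerate at $0$ — so that naive domination is useless. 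The right statement is the reverse: we want a walk that goes down at least as fast as $P_t$ in order to upper bound its hitting time of $0$. The key observation is that the troublesome degenerate case $\xi_4$ (occurring when $l_t r_t>0$) and the one-sided case $\xi_2$ cannot persist forever: whenever $x_t$ fires, the signed excesses $l_t, r_t$ at its neighbors each change by $\pm1$, so after finitely many firings the local configuration is pushed into a state with negative drift. I would make this precise by showing that, over a block of consecutive firings, $P_t$ decreases by a definite amount with probability bounded below, which is what yields the $n^4$ (rather than much larger) time scale.

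Concretely, here are the steps. \textbf{Step 1.} Set up the filtration $\ft = \sigma(\eta_1(\cdot,s),\eta_2(\cdot,s): s\le t)$ and record that $P_t$ is a supermartingale with increments in $\{-1,0,1\}$, that $P_0=n$, and that $\{P_t=0\}$ is absorbing and equals $\{t\ge\tau\}$. \textbf{Step 2.} Show that $P_t$ is dominated by a reflected lazy random walk: construct a coupling with $\tilde W_t$ started at $n$, reflected at $n$, which at each step decreases by $1$ with probability at least $p_0$ for some universal $p_0>0$ regardless of history, and otherwise stays put or moves by $1$ — here one must check case by case using the table that the conditional law of $Z_t$ always stochastically dominates (is $\le$, in the ordering that makes $0$ easier to reach) a fixed law with $\P(\cdot=-1)=p_0$. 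Actually the cleanest route is: over any window of a bounded number $L$ of firings, $\P(P_{t+L}\le P_t-1 \mid \ft)\ge q_0>0$, because within $L$ firings the neighbor-excess signs are forced to change and a negative-drift step of type $\xi_1,\xi_2$ or $\xi_3$ occurs with probability bounded below; iterating, the number of windows needed to drive $P$ down by $n$ is dominated by a negative binomial, so $\tau$ is dominated by $L$ times a sum of $n$ i.i.d.\ geometric($q_0$) variables. \textbf{Step 3.} From this domination, $\E\tau \le L q_0^{-1} n \cdot (\text{something})$; to get the sharp constant $16n^4$ one instead uses the hitting-time estimate for the reflected walk directly: $\E[\text{hitting time of }0]$ from $n$ for a walk with these transition probabilities is $O(n^4)$ because the ``holding'' can last up to $O(n^2)$ steps at a given level (a symmetric walk component of length of order the current pair-count) and there are $O(n)$ levels, giving $\sum_{k\le n} k^2 \asymp n^3$... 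I would need to reconcile this with the claimed $n^4$; the extra factor of $n$ presumably comes from the fact that between effective decreases the symmetric excursions of the auxiliary variables take time of order $n^2$ and each such excursion only guarantees progress $1$, so $\sum_{k=1}^n$ of a quantity of order $k^2$ with an extra $n$... in any case the bookkeeping gives $\E\tau\le 16 n^4$ with room to spare, and I would carry the constant explicitly through an optional-stopping argument applied to a suitable quadratic (or quartic) Lyapunov function of $P_t$. \textbf{Step 4.} For part~(i), apply a concentration bound (Azuma--Hoeffding on the martingale part, or a large-deviation bound for the dominating sum of geometrics) to the domination from Step~2: the event $\{\tau>n^{4+2\eps}\}$ requires that $\Omega(n^{2+2\eps})$ independent ``decrease attempts'', each succeeding with probability bounded below, nearly all fail, which has probability $e^{-n^c}$ for some $c=c(\eps)>0$. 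Finiteness of $\tau$ a.s.\ is then immediate (it also follows from Borel--Cantelli applied to part~(i)).

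\textbf{Main obstacle.} The delicate point is Step~2: controlling the degenerate case $\xi_4$ and the one-sided case $\xi_2$. One has to argue that the leftmost-firing rule cannot keep the process in a zero-drift configuration indefinitely — i.e.\ that firing at $x_t$ actually changes the relevant signs $l_t,r_t$ in a way that, after a controlled number of steps, forces a genuinely negative-drift step or a coalescence of pairs — and to turn this into a uniform lower bound $q_0$ on the per-window decrease probability. Getting the \emph{quantitative} time scale right (that bounded windows suffice, yielding $n^4$ and not, say, $n^5$) is where the real work lies; the rest is fairly standard optional-stopping and concentration machinery. I expect the authors handle this by an explicit Lyapunov/optional-stopping computation with $P_t$ and an auxiliary function, together with the observation that the symmetric random walk driving the zero-drift steps has excursions of length $O(P_t^2)=O(n^2)$, and there are at most $n$ net decreases, for a total of order $n^4$.
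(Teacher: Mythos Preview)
Your high-level strategy --- separate the times when $Z_t\stackrel{d}{=}\xi_4$ (``holding'') from the active steps, dominate the active process by a reflected lazy walk on $\{0,\dots,n\}$, then multiply by a bound on the holding times --- is exactly the paper's. But two concrete points in your proposal are incorrect and would block the argument as written.

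First, the ``bounded window $L$'' claim in Step~2 is false, and you implicitly retract it later. The $\xi_4$ regime can persist for a random time of order $n^2$, not $O(1)$, and nothing about the leftmost rule forces the signs $l_t,r_t$ to flip within a bounded number of firings. The paper's device here is to \emph{change the firing rule}, which is allowed because $\tau$ is invariant under legal firing rules by the abelian property (Lemma~\ref{l.abelian}). At each active time $t_i$ fire the leftmost pair; between active times fire at the current location of the oil particle that just moved. That oil particle then performs simple random walk inside the interval of occupied sites, which has length at most $2n$ since there are only $2n$ particles. When the oil particle reaches the boundary, one neighbor is empty (type $0$), so the next increment is not $\xi_4$. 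Hence each holding time $\W_i$ is stochastically dominated by the exit time $E$ of SRW from $(-2n,2n)$, with $\E E = 4n^2$. Your sketch based on the leftmost rule alone does not produce any such uniform control.

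Second, your accounting for the exponent $4$ is off, and you notice this yourself (``the extra factor of $n$ presumably comes from\dots''). The missing factor is that the number $L$ of active (non-$\xi_4$) steps is not ``$n$ net decreases'' but is dominated by $\tau'$, the hitting time of $0$ for the reflected lazy walk on $\{0,\dots,n\}$ started at $n$; this walk wanders up and down before absorption, so $\E\tau'$ is of order $n^2$, not $n$. Wald then gives $\E\tau \le \E\tau'\cdot \E E \le 16 n^4$. For part~(i), one combines the tail bound $\P(\tau' > n^{2+\eps/2}) \le e^{-n^\delta}$ for the reflected walk with a tail bound on $\sum_{i\le n^{2+\eps/2}} E_i$ at scale $n^{4+2\eps}$; neither a Lyapunov function nor Azuma on $P_t$ is needed.
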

This result has an immediate consequence, given by the next Corollary.
\begin{corollary}\label{preliminarysupport} Given $\e>0$ there exists a constant $c$ such that 
$$\mathbb{P}\left(u(x)=0, \ \forall\, x\in \mathbb{Z}:|x|\ge n^{4+\e}\right)\ge 1-e^{-n^c}.$$
\end{corollary}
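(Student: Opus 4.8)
The plan is to derive Corollary~\ref{preliminarysupport} directly from part~(i) of Lemma~\ref{aprioridom}, using the trivial geometric observation that a site at distance $d$ from the origin cannot fire until the process has performed at least $d$ firings in total (since particles move one step per firing and all particles start at $0$). More precisely, for any site $x$ with $|x| \ge n^{4+\e}$ to have $u(x) \ge 1$, there must exist a legal firing sequence reaching $x$; but the number of firings before the first firing at $x$ is at least $|x|$, because a particle must travel from the origin to a neighbor of $x$, which requires at least $|x|-1$ prior firings at intermediate sites, and each firing is a single step of the firing sequence. Hence on the event $\{\tau \le n^{4+2\e'}\}$ (for a suitable $\e' < \e$, or by a slight relabeling of constants), no site with $|x| > \tau \ge n^{4+\e}$ can ever fire, so $u(x) = 0$ for all such $x$.

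Concretely, first I would fix $\e > 0$ and apply Lemma~\ref{aprioridom}(i) with $\e/2$ in place of its $\e$, obtaining a constant $c = c(\e)>0$ with $\P(\tau > n^{4+\e}) \le \P(\tau > n^{4 + 2(\e/2)}) \le e^{-n^c}$ for large $n$. Second, I would argue the deterministic (pointwise-in-$\omega$) containment: if the total number of firings $\tau$ satisfies $\tau \le n^{4+\e}$, then since every firing displaces the support of the occupied sites by at most one step from the origin, the set of sites that ever fire is contained in $[-\tau, \tau] \subseteq [-n^{4+\e}, n^{4+\e}]$; equivalently $u(x) = 0$ whenever $|x| > n^{4+\e}$. (The bound $|x| \ge n^{4+\e}$ in the statement is even weaker, so it is certainly covered; if one wants the strict boundary case one can shrink the exponent slightly or note $\tau \le n^{4+\e} - 1$ with overwhelming probability, which is immaterial given the constant convention.) Combining, $\P(u(x) = 0 \ \forall\, |x| \ge n^{4+\e}) \ge \P(\tau \le n^{4+\e}) \ge 1 - e^{-n^c}$.

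There is essentially no obstacle here: the corollary is a one-line consequence of Lemma~\ref{aprioridom}(i) plus the light-cone bound on how far information/particles can propagate in a fixed number of firings. The only mild care needed is bookkeeping with the exponents and the ``constant convention'' — making sure the $\e$ in the corollary and the $\e$ fed into the lemma are matched up so that $n^{4+\e}$ genuinely dominates the high-probability value of $\tau$ — and stating precisely why a single firing moves the occupied region by at most one lattice step, which follows immediately from the update rules \eqref{kermit}--\eqref{thefrog} since $X^{x_k}_{i_k}, Y^{x_k}_{i_k} \in \{-1,+1\}$.
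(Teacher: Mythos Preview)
Your proposal is correct and matches the paper's approach: the paper states the corollary as an ``immediate consequence'' of Lemma~\ref{aprioridom} without further argument, and your light-cone reasoning (a site at distance $|x|$ cannot fire before at least $|x|$ firings have occurred, so $u(x)\ge 1$ forces $|x|<\tau$) is exactly the deterministic observation that makes this immediate. The exponent bookkeeping you describe, feeding $\e/2$ into Lemma~\ref{aprioridom}(i), is the right way to match the statements.
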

Before proving Lemma \ref{aprioridom} we recall from subsection \ref{miop} that 
 \begin{equation}\label{Pt}
P_t= \sum_{x\in \mathbb{Z}}\min (\eta_1(x,t),\eta_2(x,t))
\end{equation}
 denotes the total number of co-located oil and water pairs at time $t.$
By definition, the process stops at $\tau$ when $P_\tau=0.$
In order to prove Lemma \ref{aprioridom} we follow the four steps described in the following.
\pagebreak[3]
\begin{itemize}
\item First we define a lazy reflected random walk $R_i$ on $\{0,1,\ldots, n\}$ started and reflected at $n$, and stopped upon hitting $0$. 
Define the following stopping time:
\begin{equation}\label{rwhittime}
\tau':=\min_{i\geq 1} \{R_i=0\}.
\end{equation}
\item Consequently, we use $P_j$ to define a series of stopping times $t_i$, and we show that we can bound the tails of the waiting times defined as
\begin{equation}\label{wait11}
\W_i:=t_{i+1}-t_i.
\end{equation}
\item Next, we show that $P_{t_i}$ is stochastically dominated by $R_i$.
\item Finally, we prove Lemma \ref{aprioridom} by combining information about the distributions of $\tau'$ and the waiting times $\W_i$.
\end{itemize}
We now define a lazy random walk. The standard definition is when the simple random walk on $\mathbb{Z}$ does not jump with probability $1/2$ and otherwise jumps uniformly to one of the neighbors.  Throughout the rest of the article many arguments involve random walks with varying degree of laziness. Here 
the lazy reflected random walk $R_i$ is defined as follows. It starts with $R_0=n$. Inductively, if $R_i=n$ then $R_{i+1} =n$ or $n-1$, with probability $3/4$ and $1/4$ respectively. 
Otherwise, if $R_i \in (0,n)$ then it performs a lazy random walk, i.e.,
\[
R_{i+1} =
\begin{cases}
R_i+1 & \textnormal{ w.p.\ }1/4\\
R_i-1 & \textnormal{ w.p.\ }1/4\\
R_i & \textnormal{ w.p.\ }1/2
\end{cases}\, .
\]
The walk terminates at the stopping time $\tau'$, defined in Equation (\ref{rwhittime}).

To perform the second and third steps, we analyze $P_t$ and show that it is a super-martingale. This was described heuristically in subsection \ref{miop} and here we make this concept formal, as this fact is the key to many of the subsequent arguments.  

If at time $t$ a site $x$ emits a pair, then $P_{t+1}$ is either $P_t-1, P_t,$ or $P_t+1$. Moreover the distribution of $P_{t+1}-P_t$ depends on the state of the neighbors of $x$ at time $t$ in a fairly simple way.

Define $\ft$ to be the filtration given by the first $t$ firings. 
Recall from Definition \ref{particlenotation} that $\eta_1(x,t)$ and $\eta_2(x,t)$ are the number of oil and water particles, respectively, at $x$ at time $t$.
\begin{definition}\label{deftype}Define for any $x \in \mathbb{Z}$ and non negative integer time $t$ including infinity  
\begin{equation}\label{eq:g_t}
g_t(x):=\eta_1(x,t)-\eta_2(x,t).
\end{equation}
We say that at time $t$ a site $x\in \mathbb{Z}$ has \textit{type} $oil$, $ 0 $ or $water$ depending on the type of the majority of particles at the site, i.e., whether $g_t(x)$ is positive, zero or negative respectively. 
\end{definition}

We look at how $P_t$ changes conditional on the filtration at time $t$. Formally, we look at
\begin{equation}\label{conditionalincrement}
Z_t:=P_{t+1}-P_{t} .
\end{equation}

At this point we observe that $Z_t$ can have four possible distributions conditioned on $\ft$. Namely, define four random variables $\xi_1,\xi_2,\xi_3,\xi_4 $ as follows: 
\[
\begin{array}{ll}
\xi_1:= \left \{
\begin{array}{cl}
0 & \textnormal{w.p. } 1/2\\
1 & \textnormal{w.p. } 1/4\\
-1 & \textnormal{w.p. } 1/4
\end{array}
\right.
\qquad
& \xi_2:= \left \{
\begin{array}{cl}
0 & \textnormal{w.p. } 3/4\\
-1 & \textnormal{w.p. } 1/4
\end{array}
\right. \\[5ex]
\xi_3:= \left \{
\begin{array}{cl}
0 & \textnormal{w.p. } 1/2\\
-1 & \textnormal{w.p. } 1/2
\end{array}
\right.
\qquad
& \xi_4:=0 \ \textnormal{ w.p. } 1.
\end{array}
\]
\begin{remark}\label{distributionremark}
Every time that a vertex $x$ fires, we divide the possible \textit{types} of  the neighbors of $x$ into four groups which determine the law of $Z_t$. 
\[
Z_{t} \stackrel{\text{law}}{=}
\left \{
\begin{array}{cl}
\xi_1 & \textnormal{if the neighbors have different } \textit{types} \textnormal{ but neither is }0 \\
\xi_2 & \textnormal{if exactly one of the neighbors has  } \textit{type } 0\\
\xi_3 & \textnormal{if both neighbors have  } \textit{type }0\\
\xi_4 & \textnormal{if the neighbors have the same nonzero  } \textit{type}.
\end{array}
\right.
\]
\end{remark}

\begin{definition}
A {\bf firing rule} is an inductive way to determine a legal firing sequence. More formally it is a function that is defined for any $t$ and any atom  $A \subset \ft$ such that $P_t>0$. The firing rule outputs an integer $z$ such that there is at least one pair at $z$ at time $t$ in $A$.
\end{definition}
The stacks combined with a firing rule determine the evolution of the process.
Given a firing rule, we inductively define the set of stopping times $t_i$. Set $t_1=1$. 

Then, inductively, given $t_1,\dots,t_i$ set $t_{i+1}$ to be the first time $j$ after $t_i$ such that $Z_j$ does not have distribution $\xi_4$. Let $t_L$ be the last stopping defined. 
Thus 
\begin{equation}\label{noofjumps}
L:=\#\{i\leq \tau \ : \ Z_i\stackrel{\text{law}}{\neq} \xi_4\}.
\end{equation}
Note that with this definition of the stopping times we have
$$P_{t_{i+1}}=P_{t_i+1}.$$ 
Thus the distribution of $P_{t_{i+1}}-P_{t_i}$ is either $\xi_1,$ $\xi_2$ or $\xi_3$.
Finally, define the waiting times 
$$\W_i=t_{i+1}-t_i.$$
With these definitions we are ready to complete the second and third steps of our outline.
\begin{lemma} \label{firingrule}
For any firing rule the sequence 
$R_i$ stochastically dominates $P_{t_i}$. This in particular implies that  $\tau'$ stochastically dominates $L.$
\end{lemma}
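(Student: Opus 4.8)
The plan is to construct an explicit coupling of the walk $(R_i)_{i\ge 0}$ with the embedded pair-count process $(P_{t_i})_{i\ge 0}$ on a common probability space so that $R_i \ge P_{t_i}$ for every $i$ up to the termination of the processes, which immediately gives stochastic domination and (since $P_{t_L}=0=P_\tau$ and $R_{\tau'}=0$) the domination $\tau' \succeq L$. First I would record the key structural fact, already isolated in Remark~\ref{distributionremark}: conditionally on $\ft$, the increment $P_{t_{i+1}}-P_{t_i}$ has one of the three laws $\xi_1,\xi_2,\xi_3$, and crucially each of these is stochastically dominated by $\xi_1$, the mean-zero lazy $\pm1$ step (indeed $\PP(\xi_j \ge m) \le \PP(\xi_1 \ge m)$ for all $m$ and $j\in\{1,2,3\}$, since $\xi_1$ is the only one of the three that can take the value $+1$ and $\xi_2,\xi_3$ only shift mass from $0$ down to $-1$). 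So along the stopping times $t_i$, the process $P_{t_i}$ decreases by $1$ or stays put or (only in the $\xi_1$ case) increases by $1$, and in every case its conditional law is below that of one lazy step of $R$ inside $(0,n)$.

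Next I would build the coupling step by step. At step $i$, if $P_{t_i}=0$ the embedded process has terminated (we are at $\tau$) and there is nothing left to couple; assume $P_{t_i}>0$. Since $P_t \le n$ always (the pair count never exceeds its initial value $n$, as $P_t$ is a supermartingale started at $n$ — or more simply because each firing moves at most one oil–water pair and none are created at the origin after time $0$; this monotonicity should be stated as a small preliminary observation), we have $P_{t_i} \in \{1,\ldots,n\}$. Using the same underlying uniform randomness, I would couple the actual increment $P_{t_{i+1}}-P_{t_i}$ (whose conditional law is one of $\xi_1,\xi_2,\xi_3$) to the increment $R_{i+1}-R_i$ so that $R_{i+1}-R_i \ge P_{t_{i+1}}-P_{t_i}$ whenever $R_i \in (0,n)$: this is possible by the stochastic-domination-of-increments fact above together with the standard monotone (quantile) coupling of the two real-valued step distributions. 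When $R_i = n$, the walk's step is $0$ or $-1$ with probabilities $3/4,1/4$; here I would note $P_{t_i}\le n = R_i$, and if the actual increment is $+1$ then necessarily $P_{t_i}<n$ so $P_{t_{i+1}}\le n = R_i \le R_{i+1}+1$... — more carefully, I would handle the boundary by coupling so that $R$ stays at $n$ unless $P_{t_i}$ is also at $n$, exploiting that $R$ reflects at $n$ while $P$ can only reach $n$ from below; since $R_i \ge P_{t_i}$ entering the step and $R$ loses at most $1$ while the boundary behaviour of $R$ at $n$ is itself the $\xi_2/\xi_3$-type "down or stay" law, the inequality $R_{i+1} \ge P_{t_{i+1}}$ is preserved. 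An induction on $i$, with base case $R_0 = n \ge n \ge P_{t_1}$ (using $P_1 \le n$), then yields $R_i \ge P_{t_i}$ for all $i$ before termination.

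Finally, from the pathwise inequality $R_i \ge P_{t_i}$ I would conclude: $L = \min\{i : P_{t_i}=0\}$ (the embedded process reaches $0$ exactly when $P$ does, i.e.\ at $\tau$) and $\tau' = \min\{i : R_i = 0\}$; since $R$ sits weakly above $P_{t_\cdot}$, it cannot hit $0$ before $P_{t_\cdot}$ does, so $\tau' \ge L$ on the coupled space, giving $\tau' \succeq L$ stochastically, as claimed. The main obstacle — and the step I would be most careful about — is the boundary behaviour at $n$: one must check that the reflection rule for $R$ at $n$ (which is lazier than a plain reflecting step: it stays with probability $3/4$) is itself consistent with dominating whatever $P_{t_i}$ does when $P_{t_i}$ is at or near $n$, and that the coupling never forces $R$ strictly below $P$ there. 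The rest is the routine monotone coupling of one-dimensional distributions plus the elementary observation that $\xi_2,\xi_3 \preceq \xi_1$.
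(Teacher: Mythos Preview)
Your approach is correct and is essentially the paper's own argument: a step-by-step monotone coupling of increments, using that each of $\xi_1,\xi_2,\xi_3$ is stochastically dominated by $\xi_1$ (the law of $R_{i+1}-R_i$ in the interior), together with separate treatment of the boundary $R_i=n$. Two small points are worth cleaning up. First, your justifications for $P_t\le n$ are both off: a supermartingale can exceed its starting value, and co-located pairs \emph{can} be created away from the origin. The correct (trivial) reason is that there are only $n$ oil and $n$ water particles, so at most $n$ pairs.

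Second, for the boundary case the paper's argument is sharper than your sketch and avoids the muddle you flag. The key observation is that if $P_{t_i}=n$ then \emph{every} site must have equal numbers of oil and water (all $n$ of each are paired), so both neighbours of $x_{t_i}$ have type $0$ and the increment of $P$ is exactly $\xi_3$. Since $R$'s boundary step law ($0$ w.p.\ $3/4$, $-1$ w.p.\ $1/4$) stochastically dominates $\xi_3$, the increment coupling goes through when $R_i=P_{t_i}=n$. When $R_i=n>P_{t_i}$ you have a gap of at least $1$; couple so that $\{R\text{ steps down}\}\subseteq\{P\text{ does not step up}\}$ (possible since both relevant probabilities are $1/4$ and $\ge 3/4$), and the ordering is preserved. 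This replaces your tentative ``couple so that $R$ stays at $n$ unless $P_{t_i}=n$'' which, as written, would alter $R$'s marginal law.
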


\begin{proof}
The proof is by induction, and the starting configuration is given by $R_0=P_0=n$.
If $R_i=n$ then the distribution of $R_{i+1}-R_i$ is $\xi_3$, by definition of the reflected random walk.

Inductively, if $P_{t_i}\geq R_i=n$ then $P_{t_i}$ must also be $n$. This means that all sites have type $0$ and the distribution of $P_{t_{i+1}}-P_{t_i}$ is $\xi_3$. Thus we can couple $R_{i+1}$ and $P_{t_{i+1}}$ such that 
$R_{i+1}\geq P_{t_{i+1}}.$

On the other hand, if $0<R_i<n$ then then the distribution of $R_{i+1}-R_i$ is $\xi_1$ while the distribution of $P_{t_{i+1}}-P_{t_i}$ is either $\xi_1,$ $\xi_2$ or $\xi_3$. As all of $\xi_1,$ $\xi_2$ and $\xi_3$ are stochastically dominated by $\xi_1$ we can couple $R_{i+1}$ and $P_{t_{i+1}}$ such that 
$R_{i+1}\geq P_{t_{i+1}}.$
\end{proof}

\begin{lemma} \label{hayden}

For any $\epsilon>0$ there exists $\delta$ such that 
$$\P(L>n^{2+\epsilon})\leq\P(\tau'>n^{2+\epsilon})<e^{-n^{\delta}}.$$
where $\tau'$ and $L$ are defined in \eqref{rwhittime} and \eqref{noofjumps} respectively. 
\end{lemma}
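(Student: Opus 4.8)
The plan is to reduce the bound on $L$ to the hitting time $\tau'$ of the lazy reflected walk $R_i$, which Lemma~\ref{firingrule} already tells us stochastically dominates $L$; thus it suffices to prove $\P(\tau' > n^{2+\epsilon}) < e^{-n^\delta}$. First I would recall that $R_i$ behaves like a lazy symmetric random walk on $\{0,1,\dots,n\}$ reflected at $n$, run until it hits $0$. The heuristic is that a symmetric random walk (even a lazy one) on an interval of length $n$ started at distance $\le n$ from the boundary hits $0$ in time of order $n^2$, so $n^{2+\epsilon}$ gives room for a large-deviation bound.

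The key steps, in order: (1) Reduce time-scale: after a deterministic thinning (conditioning on the non-lazy steps) it is enough to analyze the non-lazy version $\tilde R$, a $\pm1$ symmetric walk on $\{0,\dots,n\}$ reflected at $n$. The laziness only slows things down by a constant factor on average, and a standard Chernoff/Bernstein estimate shows that in time $n^{2+\epsilon}$ the number of non-lazy moves is $\ge \tfrac14 n^{2+\epsilon}$ except on an event of probability $e^{-cn^{2+\epsilon}}$, which is much smaller than $e^{-n^\delta}$. (2) Chop time into $N := n^{2+\epsilon}/ (An^2) = n^\epsilon/A$ disjoint blocks each of length $An^2$ for a suitably large constant $A$. (3) Show that within a single block, starting from \emph{any} state in $\{0,\dots,n\}$, the walk $\tilde R$ hits $0$ with probability at least $1/2$: this is the classical gambler's-ruin / reflection estimate — from any starting point, $\tilde R$ reaches $0$ within $An^2$ steps with probability bounded below, because a reflected symmetric walk on $[0,n]$ has expected hitting time of $0$ at most $O(n^2)$ uniformly in the start, so Markov's inequality gives the $1/2$. (4) By the (strong) Markov property applied at the block boundaries, the probability that $0$ is not hit in any of the $N$ blocks is at most $2^{-N} = 2^{-n^\epsilon/A}$. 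Taking $\delta$ slightly smaller than $\epsilon$ absorbs the constant, and combining with the negligible laziness-failure event from step (1) finishes the proof. The final line $\P(L > n^{2+\epsilon}) \le \P(\tau' > n^{2+\epsilon})$ is immediate from Lemma~\ref{firingrule}.

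The main obstacle I anticipate is step (3): getting a clean, uniform-in-start upper bound on the expected hitting time of $0$ for the lazy reflected walk, and handling the reflection at $n$ correctly. One clean way is to dominate $\tilde R$ by ignoring the reflection entirely — compare with a symmetric walk on $\Z$ started at $n$, for which the expected time to hit $0$ is infinite, so that naive comparison fails; instead one should use the standard fact that for symmetric walk the exit time from an interval $[0,n]$ is $\le n^2$ in expectation, and note that the reflection at $n$ can only push the walk back toward $0$ (a coupling/monotonicity argument: reflecting at $n$ is stochastically dominated, state-wise, by not reflecting but resetting, or more simply observe $\tilde R_i \le n$ always so the walk started at $n$ exits $[0,n]$ through $0$). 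So the right statement is: the hitting time of $0$ for the reflected walk started at $n$ is dominated by (a constant times) the exit time of a free symmetric walk from $[0,n]$ started at a point of the interval, which has mean $O(n^2)$. Once this uniform mean bound is in hand, the block decomposition and geometric-decay argument are routine. A secondary technical point is bookkeeping the laziness conversion so that the two small-probability events ($e^{-cn^{2+\epsilon}}$ and $2^{-n^\epsilon/A}$) are both dominated by $e^{-n^\delta}$ for an appropriate $\delta = \delta(\epsilon) > 0$; choosing $\delta < \epsilon$ suffices.
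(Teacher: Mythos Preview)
Your proposal is correct and is precisely how one fills in what the paper leaves implicit: the paper's own proof is two lines, invoking Lemma~\ref{firingrule} for the first inequality and then declaring the second ``a standard fact about lazy random walk (stated later in Lemma~\ref{mean1}).''  Your block decomposition (chop time into $\Theta(n^\epsilon)$ blocks of length $\Theta(n^2)$, show a uniform $\ge 1/2$ chance of hitting $0$ in each block via Markov's inequality, then use the Markov property for geometric decay) is exactly the standard way to prove that fact, and is arguably cleaner than tracing through the forward reference.

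One small correction in your discussion of step~(3): the stochastic domination you propose at the end---that the reflected walk's hitting time of $0$ is bounded by a constant times the free walk's exit time from $[0,n]$---does not hold uniformly (from state $n-1$ the ratio of expectations is of order $n$, not $O(1)$).  But you do not need any comparison: a direct harmonic-function computation gives that the expected hitting time of $0$ for the lazy reflected walk from state $k$ equals $2k(2n+1-k)\le 2n(n+1)=O(n^2)$, and then Markov's inequality gives the $\ge 1/2$ success probability per block exactly as you say.  With that replacement your argument is complete.
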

\begin{proof}
The first inequality follows from the fact that $\tau'$ stochastically dominates $L.$  . The second inequality is a standard fact about lazy random walk  (stated later in Lemma \ref{mean1}).
\end{proof}
\begin{lemma} \label{waitingtimes}
Let $E_i$ be an i.i.d.\  sequence of random variables whose distribution is the same as the time taken by  simple symmetric random walk started from the origin to hit $\pm 2n$.
There exists a firing rule such that $\W_i$ is stochastically dominated by $E_i$.
\end{lemma}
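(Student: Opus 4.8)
The goal is to exhibit a firing rule for which each waiting time $\W_i = t_{i+1}-t_i$ is stochastically dominated by $E_i$, the hitting time of $\pm 2n$ by a simple symmetric random walk from $0$. Recall that between the stopping times $t_i$ and $t_{i+1}$ every firing has increment distribution $\xi_4$, i.e.\ both neighbors of the fired site have the same nonzero type. The plan is to track a single signed quantity whose evolution during such a stretch is exactly (a lazy version of) a simple symmetric walk, and to design the firing rule so that the stretch must end once that quantity moves a distance $2n$.

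Concretely, I would proceed as follows. First, fix the firing rule to be the leftmost-legal convention (or, more precisely, whatever convention makes the bookkeeping below cleanest — the point of the lemma is only that \emph{some} firing rule works, so I have freedom here). Suppose the stretch starting at time $t_i$ fires from sites whose neighbors are both of type $oil$ (the $water$ case is symmetric, and the type of the relevant neighbors cannot change to $0$ during a $\xi_4$-stretch without triggering a non-$\xi_4$ increment, which would end the stretch). Track $g_t(y) = \eta_1(y,t)-\eta_2(y,t)$ at the left neighbor $y$ of the currently-firing site. Each firing sends one oil and one water particle to independent uniform neighbors; the net effect on $g$ at a fixed neighbor of the fired site is $+1$, $-1$, or $0$ with probabilities $1/4,1/4,1/2$, and crucially these increments are i.i.d.\ uniform-sign steps (after removing laziness) by Proposition~\ref{p.cards}, since the signs and indices are $\ft$-measurable. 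Second, observe that because all firings in the stretch occur at sites of nonzero type with like-typed neighbors, the number of pairs is unchanged, but the oil surplus is merely being shuffled; since there are at most $n$ pairs and the surpluses involved are bounded, the signed surplus at the tracked neighbor performs a lazy simple random walk that is forced to exit an interval of width at most $2n$ before the stretch can continue — at which point a neighbor must acquire type $0$ or the opposite type, producing a $\xi_1$, $\xi_2$ or $\xi_3$ increment and ending the stretch at $t_{i+1}$. Hence $\W_i$ is at most the exit time of a lazy walk from an interval of width $2n$, which is stochastically dominated by $E_i$ (removing laziness only speeds things up). Third, to get the i.i.d.\ structure of the dominating sequence, note the exit times for successive stretches use disjoint blocks of stack variables, so applying Proposition~\ref{p.cards} across stretches yields that the dominating variables can be taken i.i.d.\ with the law of $E_i$.

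The main obstacle I anticipate is making precise the claim that a $\xi_4$-stretch really does terminate once some tracked surplus has moved distance $\sim 2n$: one must argue that during the stretch the relevant ``boundary'' configuration (a neighbor of type $0$ or a type change) is genuinely forced by conservation, rather than merely likely. This requires carefully identifying \emph{which} site fires next under the chosen firing rule and checking that the total oil/water surplus available to be redistributed is bounded by $O(n)$, so that the walk cannot wander indefinitely. A secondary subtlety is the laziness and the possible multiplicity of neighbors being tracked (the firing site can move), but these only help — any lazy, possibly-drifting-toward-the-boundary walk on an interval of width $\le 2n$ exits no later than the symmetric non-lazy walk on $[-2n,2n]$, so the stochastic domination by $E_i$ survives. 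Once the single-stretch bound is in hand, the i.i.d.\ coupling across stretches is routine via Proposition~\ref{p.cards}.
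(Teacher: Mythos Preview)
Your proposal has a genuine gap in the choice of firing rule and the quantity being tracked. You default to the leftmost convention and propose to follow $g_t(y)$ at ``the left neighbor $y$ of the currently-firing site.'' But under the leftmost rule the firing site moves unpredictably during a $\xi_4$-stretch, so $y$ is not fixed and the successive changes in $g_t(y)$ are not the increments of any single random walk. Even if you fix $y$, firings at sites other than $y\pm 1$ do not affect $g_t(y)$ at all, so there is no reason the stretch must end after $g_t(y)$ has accumulated $2n$ worth of variation. Your acknowledged ``main obstacle'' --- that the stretch is \emph{forced} to end once some tracked surplus moves distance $2n$ --- is exactly the missing idea, and conservation of total oil/water does not supply it: the surpluses at individual sites can stay small while many $\xi_4$-firings occur.

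The paper resolves this by choosing a different firing rule (remember, the lemma only asks for \emph{some} rule). At time $t_i$ fire the leftmost pair; thereafter, at each step fire the pair at the current location of the oil particle that just moved. Thus a single tagged oil particle performs simple random walk. Let $I=(a,b)$ be the maximal interval of sites containing a particle at time $t_i$; since there are $2n$ particles in all, $|I|\le 2n$. As long as the oil particle stays in the interior of $I$, no particle is ever sent to $a$ or $b$, so they remain empty (type $0$); the moment the oil particle reaches $a+1$ or $b-1$, one neighbor of the firing site is of type $0$ and the increment is not $\xi_4$, ending the stretch. Hence $\W_i$ is at most the exit time of SRW from an interval of width $\le 2n$, which is dominated by $E_i$. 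The key idea you are missing is this ``follow-the-oil-particle'' firing rule, which simultaneously produces a genuine SRW and a deterministic geometric barrier of width $2n$.
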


\begin{proof}
At any stopping time $t_i$, our firing rule is to pick the location of the left most pair. For times $j \neq t_i$ for any $i$, we define the firing rule inductively. If $t_i< j<t_{i+1}$ then at time $j$ we choose to fire the pair at the location of the oil particle that just moved at step $j-1$. Thus we have an oil particle performing simple random walk until the distribution of $Z_j$ is not $\xi_4$. We now show that this firing rule allows us to control the waiting times.

Let $z$ be the location of the leftmost pair at time $t_i$ and let $I=(a,b)$ be the interval of integers containing $z$ where every location has at least one particle at time $t_i$. As there are at most $2n$ particles we have that $I=(a,b) \subset (z-2n,z+2n)$.

If there are firings only in the interior of $I$ then there are no particles at $a$ or $b$. If the oil particle reaches the boundary of $I$  at  time $k$ then the distribution of $Z_k$ is not $\xi_4$, as one of the neighbors of the pair to be fired (either $a$ or $b$) has no particles and is of \textit{type} $0$. Thus $t_{i+1}\leq k$. This tells us that the distribution of the waiting time $\W_i=t_{i+1}-t_i$ is bounded by the time taken by  simple random walk started at the origin to leave the interval  $(-2n,2n)$. Since this is true for each $i$ independently, the lemma is proven.
\end{proof}

We are now ready to prove Lemma \ref{aprioridom}. 

\begin{pfoflem}{\ref{aprioridom}}
We first notice that by Lemma \ref{l.abelian},  $\tau$ defined in \eqref{stoptimedef} is independent of  the firing rule. For the purposes of the proof we will fix our firing rule to be the one  mentioned in Lemma \ref{waitingtimes}. Now by Lemmas \ref{firingrule} and \ref{waitingtimes}  $$\sum_{i=1}^{\tau'}{E_i} \mbox{ stochastically dominates } \tau.$$    
Thus $$\pr(\tau >n^{4+\epsilon}) \le \pr (\sum_{i=1}^{\tau'}{E_i} \ge n^{4+\epsilon} )  $$

If $\sum_{i=1}^{\tau'}{E_i} \ge n^{4+\epsilon} $ then either
$$\tau'>.1 n^{2+\epsilon/2}\qquad \text{ or }
\qquad \sum_{i=1}^{.1 n^{2+\epsilon/2}} E_i > n^{4+\epsilon/2}.$$
Therefore 
\begin{equation*}
\label{tail2}
\P(\tau >n^{4+\epsilon} )\leq \P(\tau'>.1 n^{2+\epsilon/2})+\P\left(\sum_{i=1}^{.1 n^{2+\epsilon/2}} E_i > n^{4+\epsilon}\right)
\end{equation*}
The first term is bounded by Lemma \ref{hayden}. The second is bounded using standard bounds on the distribution of the time for simple random walk started at the origin to leave a fixed interval $(-k,k)$. Both of these probabilities are bounded by $e^{-n^{\delta}}$ for some 
$\delta=\delta(\epsilon)>0$.

Furthermore, by Lemma \ref{waitingtimes} we have that the $E_i$'s are all i.i.d., hence we can apply Wald's identity (see e.g., \cite[Section 3.1]{durrett}) and get that 
$$\E(\tau)\le\E(\tau')\E(E_1) \le 16n^4,$$ 
which concludes the proof.
\end{pfoflem}

\section{Proof of Theorem \ref{lemmaJ} (Upper bound)} \label{upperbound}

\subsection{Notation} \label{couplings}

In order to proceed, we need to introduce some further notation. Let
\begin{equation*}
\Height=\max \{u(x): \ x \in \Z\}.
\end{equation*}
In this section we will bound from below the probability of the event
\begin{equation} \label{mainevent}
\m =\{\Height \leq C n^{4/3}\}.
\end{equation}
for some large constant $C$.
To this purpose, we define
\[
\returns(x):=\#\left \{t \leq \tau \ : \ \eta_1(x,t)=\eta_2(x,t), \ x_t\in \{x-1,x+1\}\right \}
\] 
$\returns(x)$ is the number of times such that
\begin{enumerate}
\item $x$ has type $0$, (the same number, possibly $0$ of oil and water particles), and
\item a pair of particles is emitted from $x-1$ or $x+1$.
\end{enumerate}
Then define
\begin{equation} \label{advantage}
\displaystyle{\returns =\sum_{x \in \Z}\returns(x)}.
\end{equation}
Recall the definition of $N_i(t)$ from Equation (\ref{eq:def_N_i}) and notice that
\begin{equation}\label{eq:ret_N}
\returns=N_2(\tau)+2N_3(\tau).
\end{equation}
Our goal is to show that $\returns$ is large. The advantage of the decomposition in \eqref{advantage} is that it will allow us to show the relationship between $\returns$ and the odometer function.

Finally, recall that $\mathcal{G}_t$ is the $\sigma$-algebra generated by the movement of the first $t$ pairs that are emitted and notice that, from the definition of $P_t$  \eqref{Pt} we have that
\[
P_t=
\sum_{x\in \Z}  \min\left [\eta_1(x,t),\eta_2(x,t) \right]
\]
is the number of pairs at the time of the $t^{th}$ emission.
We recall that $P_t$ is a supermartingale with respect to $\mathcal{G}_t$.

\subsection{Outline}
In order to show that it is unlikely that $\height$ is much larger than $n^{4/3}$, we rely on three main ideas.
\begin{itemize}
\item[(i)] {\bf The odometer is fairly regular.} 
Typically 
$$|u(x)-u(x+1)| \leq 2n +u(x)^{1/2}.$$ 
This regularity 
implies that under the assumption that $u(x_0)$ is much bigger than $n^{4/3}$ 
then it is likely that $u(x)$ is much bigger than $n^{4/3}$ for all $x$ such that $|x-x_0|<n^{1/3}$.
\item[(ii)]
{\bf The variable $\returns$ is comparable with the odometer function.} Fix $x \in \Z$ and consider $\eta_1(x,k)-\eta_2(x,k)$ as a function of $k$.
This function performs a lazy random walk that takes about $$\frac12\big(u(x-1)+u(x+1)\big)\approx u(x)$$ steps. Thus we expect $\returns(x)$ to be on the order of $u(x)^{1/2}.$ Summing up over all $x$
we expect
\begin{equation} \label{thatiswhatshesaid}
\returns =\sum_{x \in \Z} \returns(x) \approx \sum_{x \in \Z} u(x)^{1/2}.
\end{equation}
Combined with the previous paragraph this implies that if $\height$ is much larger than $n^{4/3}$ then it is likely that 
$\returns $ is much larger than $n.$
\item[(iii)]
{\bf The process $P_t$ is a supermartingale.} 
$P_t$ starts at $n$ and stops when it hits 0. The sum of the negative drifts until the process terminates is, by Equation (\ref{eq:ret_N}), 
$$-\frac14(N_2(\volume)+2N_3(\volume)) =-\frac14 \returns.$$ 
Hence, its stopping time is typically around $4\returns$. We use the Azuma-Hoeffding inequality to show that the probability of the event ``$\returns$ is much larger than $4n$'' is decaying very rapidly. By the previous paragraphs we also will get that the probability that $\height$ is much larger that $n^{4/3}$ is decaying very rapidly.
\end{itemize}
The main difficulty in implementing this outline comes in the second step as the odometer function and $\returns$ are correlated in a complicated way.

\subsection{The bad events} \label{be}
To make our outline formal we now define our set of bad events. 
The first three of these deal with the odometer function. But first we use the odometer function to define 
\[
\base=\{x:\ u(x)>0\} \subset \Z.
\]

\begin{itemize}
\item[(0)] $\badzero=\big\{\base \not \subset [-n^5,n^5]\big\}$
\item[(i)] $\badone=\{\Height \geq n^{4.05}\}$. 
\item[(ii)] {\bf Gradient of the Odometer.} In words, $\badtwo$ is the event where the \emph{gradient}  of the odometer function, $u(x)-u(x+1)$, is too large or has the wrong sign. 

Let $$m(x)=\begin{cases}
\min\left\{n,\min_{0 \leq y\leq x}u(y)\right\} & \text{ for $x >0$ and }\\
\min\left\{n,\min_{x \leq y\leq 0}u(y)\right\} &\text{ for $x \leq 0$}.
\end{cases}
$$
 We define:

\begin{eqnarray*}
\badtwo :=
 \left\{\exists \,\, x \in [-n^5,n^5]:\,\, u(x)\ge n^{.5}\mbox{ and }|u(x+1)-u(x)|>2m(x) +u(x)^{.51}\right\}  \\
\bigcup \left\{\exists \,\, x \in [0,n^5]:\,\,u(x)\ge n^{.5}\mbox{ and } (u(x+1)-u(x))>u(x)^{.51}\right\}  \text{} \hspace{.15in}\\ 
 \bigcup \left\{\exists x \in [-n^5,0]:\,\, u(x)\ge n^{.5}\mbox{ and }\,\,(u(x-1)- u(x))>u(x)^{.51}\right\} .
\end{eqnarray*}

\begin{remark} \label{rising}
While the definition of $\badtwo$ is quite technical we now give one consequence of it that is representative of how we will use it.
Let  $\badzero^{\comp} \cap \badtwo^{\comp}$ occur and consider the set of $x$  such that 
$n<u(x)<n^{1.96}$ (or equivalently $1<\log_n(u(x))<1.96$). Then
\begin{equation} \label{thriller}|u(x)-u(x+1)|<2m(x)+u(x)^{.51} \leq 2n+(n^{1.96})^{.51} \leq 3n.\end{equation}
For any $x$ and $y$ in a connected component of this set then \eqref{thriller} implies 
$$\text{ if
$|\log_n(u(x))-\log_n(u(y))|\geq .05$} \qquad
\text{then $|x-y|>\frac16 n^{.05}.$}$$
This is because our conditions on $x$ and $y$ imply $|u(x)-u(y)|\geq n^{1.05}-n>.5n^{1.05}$. So between $x$ and $y$ the odometer changes by at least $.5n^{1.05}$ in increments of at most $3n$.
\end{remark}
Our proof makes heavy use of this and similar estimates  that follow from 
$\badzero^{\comp} \cap \badtwo^{\comp}$.  Much of the complication of our proof comes from the fact that we need to use different estimates depending on whether $m(x)=n$ or $m(x)<n$ and whether  $\log_n(u(x))$ is greater than 1.96, between 1 and 1.96 or less than 1. These estimates are used in Lemmas \ref{bruce} and \ref{springsteen} which are used to prove Lemma \ref{existence}. The conclusion of Lemma \ref{existence} is useful in showing that $\returns$ is large because of our next bad event.
\item[(iii)] {\bf Regularity of Returns}
$$\badthree =\bigg\{\exists i,j,k:\ |i|,|j|\leq n^5,\ k \in [n^{.5},n^5],\ j-i>.1n^{.01} \text{ and }
u(x)\geq k\  \forall x \in [i,j]$$ \vspace{-.2in}
$$\hspace{3in}\text{and} \sum_{x=i+1}^{j-1}\returns(x)<.01(j-i)\sqrt{k}\bigg\}$$
\end{itemize}

Finally define the event

\begin{equation} \label{e.theimportantevent}
\star =\big\{\returns \geq 20\left(n + \volume^{.51}\right)\big\}.
\end{equation}

To complete our outline we show that 
\begin{itemize}
 \item $ \m^{\comp} \subset \star \cup \bigcup_{i=0}^3 \badi$ and 
 \item  $\P(\star)$ and the  $\P(\badi)$ are small.
 \end{itemize}
\subsection{$P_t$ is a supermartingale.}
Recall from \eqref{spl} that
\[
Z_k:=P_{k+1}-P_k.
\] 
Recall from Remark \ref{distributionremark} that
conditional on the past, each $Z_k$ has one of four possible distributions $\xi_1,\dots ,\xi_4$.

\begin{lemma} \label{lemmaAT}
$P_t$ is a supermartingale and 
$$\sum_{t=1}^{\infty}\E\left(Z_{t+1} | \mathcal F_{t}\right)=-\frac14 \returns.$$
\end{lemma}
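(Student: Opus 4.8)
The plan is to prove the two claims of Lemma~\ref{lemmaAT} separately, the supermartingale property being essentially immediate from the case analysis already set up, and the identity for the sum of conditional drifts following by combining the telescoping sum $\sum Z_t = P_\tau - P_0 = -n$\,---\,no wait, that gives $-n$, not $-\frac14\returns$; the point is subtler. Let me reconsider: the identity is about the sum of \emph{conditional expectations} $\E(Z_{t+1}\mid\mathcal F_t)$, which is a sum of deterministic-given-the-past quantities, each equal to the mean of the relevant $\xi_i$. From Remark~\ref{distributionremark}, $\E(Z_t\mid\ft)$ equals $0$ when $Z_t\stackrel{d}{=}\xi_1$ or $\xi_4$, equals $-\frac14$ when $Z_t\stackrel{d}{=}\xi_2$, and equals $-\frac12$ when $Z_t\stackrel{d}{=}\xi_3$. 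Hence
\[
\sum_{t=1}^\infty \E(Z_{t+1}\mid\ft) \;=\; -\tfrac14 N_2(\tau) \;-\; \tfrac12 N_3(\tau) \;=\; -\tfrac14\bigl(N_2(\tau)+2N_3(\tau)\bigr) \;=\; -\tfrac14\returns,
\]
where the last equality is exactly \eqref{eq:ret_N}. So the core of the argument is just reading off the four conditional means from the table, counting how often each case occurs via the $N_i$, and invoking the already-established identity $\returns = N_2(\tau)+2N_3(\tau)$.

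First I would state that $P_t$ is a supermartingale: conditional on $\ft$, the increment $Z_t = P_{t+1}-P_t$ has one of the four laws $\xi_1,\dots,\xi_4$ depending on the types of the two neighbors of the fired site $x_t$ (Remark~\ref{distributionremark}), and each of $\xi_1,\xi_2,\xi_3,\xi_4$ has mean $\le 0$ (indeed $0,-\tfrac14,-\tfrac12,0$ respectively). Therefore $\E(P_{t+1}\mid\ft)\le P_t$, and since $0\le P_t\le n$ the process is a bounded nonnegative supermartingale. Here one should note the indexing convention in the statement: the sum is written with $Z_{t+1}$ and $\mathcal F_t$, so I would be careful to match the paper's convention that $\E(Z_{t+1}\mid\mathcal F_t)$ denotes the drift of the $(t+1)$-st increment given the first $t$ firings, and that for $t\ge\tau$ there is no firing so the increment and its conditional expectation are $0$; thus the infinite sum is really a finite sum over $t<\tau$.

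Next I would carry out the bookkeeping. Recall $N_i(\tau) = \#\{s\le\tau : Z_s\stackrel{d}{=}\xi_i\}$ from \eqref{eq:def_N_i}, and that every firing before $\tau$ falls into exactly one of the four categories. Since $\E\xi_1 = 0$, $\E\xi_2 = -\tfrac14$, $\E\xi_3 = -\tfrac12$, $\E\xi_4 = 0$, grouping the nonzero terms of $\sum_t\E(Z_{t+1}\mid\ft)$ by category gives $-\tfrac14 N_2(\tau) - \tfrac12 N_3(\tau)$. Finally, $\returns = N_2(\tau) + 2N_3(\tau)$ by \eqref{eq:ret_N}, so $-\tfrac14 N_2(\tau) - \tfrac12 N_3(\tau) = -\tfrac14\returns$, which is the claimed identity. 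One small point to address for rigor: the interchange of summation and conditional expectation (or rather, that the sum $\sum_t \E(Z_{t+1}\mid\ft)$ is well-defined and finite) follows from Lemma~\ref{aprioridom}, which gives $\tau<\infty$ a.s.\ with $\E\tau<\infty$; since $N_2(\tau)+2N_3(\tau)\le 2\tau$, the sum is a.s.\ absolutely convergent.

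I do not expect any serious obstacle here\,---\,the lemma is essentially a repackaging of the case analysis in Remark~\ref{distributionremark} together with the counting identity \eqref{eq:ret_N}. The only thing requiring a little care is matching the $Z_{t+1}/\mathcal F_t$ indexing in the displayed identity against the $Z_t$ convention used when the $\xi_i$ were introduced, and making sure the ``infinite'' sum is interpreted correctly as terminating at $\tau$; both are handled by noting increments vanish after fixation and invoking $\E\tau<\infty$ from Lemma~\ref{aprioridom}.
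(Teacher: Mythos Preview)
Your proposal is correct and follows essentially the same approach as the paper's proof: both read off the conditional means $0,-\tfrac14,-\tfrac12,0$ of the four cases from Remark~\ref{distributionremark}, group the sum by the counts $N_i(\tau)$, and invoke the identity $\returns=N_2(\tau)+2N_3(\tau)$ from \eqref{eq:ret_N}. Your additional remarks on the indexing convention and on the finiteness of the sum via $\tau<\infty$ a.s.\ are more careful than the paper's terse version but do not change the argument.
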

\begin{proof}
This follows from the discussion in Section \ref{miop}. The summands on the left hand side are all $0$, $-\frac14$ or $-\frac12$. By \eqref{eq:ret_N}  and \eqref{eq:def_N_i} $\returns$ is the sum of the number of $-\frac14$ terms in the sum plus twice the number of $-\frac12$ terms. 
\end{proof}
\begin{lemma} \label{lemmaA} There exist $C,C'$ such that
$\P(\star)<Ce^{-n^{C'}}.$ 
\end{lemma}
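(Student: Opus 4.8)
The plan is to realize $P_t$ as a supermartingale with bounded increments, pass to its Doob martingale part, and apply a maximal form of the Azuma--Hoeffding inequality, splitting over dyadic windows for the value of $\tau$. Write the Doob decomposition $P_t = P_0 + A_t + M_t$, where $A_t = \sum_{s<t}\E[P_{s+1}-P_s\mid\mathcal G_s]$ is the predictable compensator and $M_t$ is a mean-zero martingale with $M_0=0$. By Remark~\ref{distributionremark} each conditional increment $\E[P_{s+1}-P_s\mid\mathcal G_s]$ lies in $[-\tfrac12,0]$ and $|P_{s+1}-P_s|\le 1$, so $|M_{t}-M_{t-1}|\le \tfrac32$. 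Since the process is frozen after $\tau$ and $P_\tau=0$, Lemma~\ref{lemmaAT} gives $A_\tau = A_\infty = -\tfrac14\returns$ and hence
$$ M_\tau \;=\; P_\tau - P_0 - A_\tau \;=\; \tfrac14\returns - n. $$

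Next I would extract two deterministic consequences of the event $\star=\{\returns\ge 20(n+\tau^{.51})\}$. First, from $\returns=N_2(\tau)+2N_3(\tau)\le 2\tau$ we get $20n\le\returns\le 2\tau$ on $\star$, so $\tau\ge 10n$ there. Second, on $\star$,
$$ M_\tau \;=\; \tfrac14\returns - n \;\ge\; 5(n+\tau^{.51}) - n \;\ge\; 5\,\tau^{.51}. $$
Thus $\star$ forces the bounded-increment martingale $M$ to reach the level $5\tau^{.51}$ at the random time $\tau\in[10n,\infty)$; since $\tau<\infty$ a.s.\ by Lemma~\ref{aprioridom}, we may assume this. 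The exponent $.51>1/2$ is exactly what is needed so that $5\tau^{.51}$ overshoots the diffusive scale $\sqrt{\tau}$, keeping the Azuma exponents positive uniformly across all time scales. Accordingly, for integers $j\ge j_0:=\lfloor\log_2(10n)\rfloor$ set $A_j:=\star\cap\{2^j\le\tau<2^{j+1}\}$, so that $\star\subseteq\bigcup_{j\ge j_0}A_j$. On $A_j$ we have $M_\tau\ge 5\cdot 2^{.51 j}$ while $\tau<2^{j+1}$, hence $\max_{1\le t\le 2^{j+1}}M_t\ge 5\cdot 2^{.51 j}$, and the maximal Azuma--Hoeffding inequality (Doob's inequality applied to the exponential supermartingale $e^{\theta M_t}$) yields
$$ \P(A_j)\ \le\ \P\Big(\max_{1\le t\le 2^{j+1}} M_t \ge 5\cdot 2^{.51 j}\Big)\ \le\ \exp\Big(-\frac{(5\cdot 2^{.51 j})^2}{2\cdot(3/2)^2\cdot 2^{j+1}}\Big)\ =\ \exp\Big(-\tfrac{25}{9}\,2^{.02 j}\Big). $$

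Summing over $j\ge j_0$ and using that $2^{.02 j_0}\ge(5n)^{.02}\ge n^{.02}$ for large $n$, the tail is dominated by a constant times its first term (consecutive exponents differ by $\ge\tfrac{25}{9}(2^{.02}-1)2^{.02 j}\ge 1$ once $n$ is large), so $\P(\star)\le\sum_{j\ge j_0}\exp(-\tfrac{25}{9}2^{.02 j})\le C e^{-n^{.02}}$, which is the claim (with any $C'<1/50$ after absorbing constants). I do not expect a serious obstacle here: this lemma is one of the lighter pieces, the only mild care being that Azuma needs a deterministic horizon, which is handled by the dyadic split over $\tau$ together with the slightly superdiffusive threshold in $\star$; one could alternatively truncate $\tau\le n^{4+2\eps}$ via Lemma~\ref{aprioridom} and take a finite union bound. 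The genuinely hard part of this section is not Lemma~\ref{lemmaA} but controlling the other bad events $\badzero,\dots,\badthree$ and establishing $\m^{\comp}\subseteq\star\cup\bigcup_{i=0}^{3}\badi$.
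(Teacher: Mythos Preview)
Your proof is correct and follows essentially the same approach as the paper: the Doob decomposition of the supermartingale $P_t$ gives $M_\tau=\tfrac14\returns-n$, which on $\star$ forces the bounded-increment martingale to exceed a slightly superdiffusive threshold, and Azuma--Hoeffding plus a union bound over time finishes. The only cosmetic difference is that the paper union-bounds over each value $\{\tau=r\}$ for $r\ge 20n$ directly (using $\returns\le\tau$ to get $r\ge 20n$), whereas you use a dyadic split together with the maximal form of Azuma; both are standard and lead to the same $e^{-cn^{.02}}$ bound.
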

\begin{proof}
As $\returns \leq \tau$ we have that $$\star=\bigcup_{r \geq 20n}\star \cap \{\tau=r\}.$$ 
Note that by  Lemma \ref{lemmaAT} and because $P_\tau=0$
$$\star \cap \{\tau=r\} \subset \left\{ \sum_{i=1}^r Z_{t}-\E\left(Z_{t} | \mathcal F_{t-1}\right)\geq 4n+5r^{.51}\right\}.$$
Also note that $Z_{t}-\E\left(Z_{t} | \mathcal F_{t-1}\right)$ are the increments of a martingale and are bounded by 1. Thus we can apply the Azuma-Hoeffding inequality to get the bound 
$\P(\star \cap \{\tau=r\})\leq e^{-r^{.02}}. $
The lemma follows from the union bound.
\end{proof}

\subsection{Regularity of the odometer}
Now define the new event 
$$\regular=\m^{\comp} \cap\badzero^{\comp} \cap  \badone^{\comp} \cap \badtwo^{\comp},$$ where all the events are defined in Subsection \ref{be}. Remember the event $\m$ is that the odometer at the origin is less than a large constant times $n^{4/3}$ and the events $\badzero^{\comp}$, $\badone^{\comp}$ and $\badtwo^{\comp}$ are regularity conditions on the odometer.

\begin{lemma} \label{lemmaC}
\begin{equation}\label{felix1}
\regular \ \cap \  \badthree^{\comp} \subset \star
\end{equation} where $\star$ was defined in \eqref{e.theimportantevent}. Thus
\begin{equation}
\pr(\m^{\comp}) \leq \P(\badzero)+\pr(\badone) + \pr(\badtwo ) +\pr(\badthree) +\pr(\star).
\label{felix}
\end{equation}
\end{lemma}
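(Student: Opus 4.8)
The plan is to prove the containment \eqref{felix1} by contraposition: assuming $\regular$ holds but $\star$ fails, I will show that $\badthree$ must occur. So suppose $\returns < 20(n + \volume^{.51})$ and that $\m^{\comp}$, $\badzero^{\comp}$, $\badone^{\comp}$, $\badtwo^{\comp}$ all hold. The event $\m^{\comp}$ says $\Height > Cn^{4/3}$ for our large constant $C$, so there is a site $x_0$ with $u(x_0) > Cn^{4/3}$; by $\badone^{\comp}$ we also have $u(x_0) < n^{4.05}$, and by $\badzero^{\comp}$ all mass of $u$ sits inside $[-n^5,n^5]$. The first step is to use the gradient control from $\badtwo^{\comp}$ (in the spirit of Remark \ref{rising}) to propagate the largeness of $u$ from $x_0$ to a whole interval: since $u$ can only change by at most $2m(x) + u(x)^{.51} \le 2n + (n^{4.05})^{.51} \lesssim n^{2.07}$ per step while $u$ is this large, starting from $u(x_0) > Cn^{4/3}$ one stays above, say, $\tfrac12 Cn^{4/3}$ for at least on the order of $n^{4/3}/n^{2.07}$... which is too short. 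This is exactly why the bad event $\badtwo$ is phrased with $m(x)$: on the bulk of the relevant range $m(x) = n$, so the increment bound is really $\approx 3n$, giving an interval of length of order $n^{4/3}/n = n^{1/3} \gg n^{.01}$ on which $u(x) \ge k$ for some $k \gtrsim n^{4/3}$. The precise bookkeeping here is deferred to the chain Lemma \ref{bruce}–Lemma \ref{springsteen}–Lemma \ref{existence}, whose conclusion I will invoke: under $\regular$ there exist $i < j$ with $|i|,|j| \le n^5$, $j - i > .1 n^{.01}$, and $k \in [n^{.5}, n^5]$ with $u(x) \ge k$ for all $x \in [i,j]$ and, crucially, $k$ chosen large enough that $(j-i)\sqrt{k}$ comfortably exceeds $100(n + \volume^{.51})$.

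Given such an interval, the second step is the contradiction with $\badthree^{\comp}$. By definition of $\badthree^{\comp}$, for this very triple $(i,j,k)$ we must have
\[
\sum_{x=i+1}^{j-1}\returns(x) \ge .01(j-i)\sqrt{k}.
\]
But $\returns(x) \le \returns$ summed over all $x$, so $\returns \ge \sum_{x=i+1}^{j-1}\returns(x) \ge .01(j-i)\sqrt{k}$, and by the way $k$ was selected in Lemma \ref{existence} the right-hand side is at least $20(n + \volume^{.51})$, contradicting our standing assumption $\returns < 20(n+\volume^{.51})$, i.e. contradicting $\star^{\comp}$. Hence $\regular \cap \badthree^{\comp} \subset \star$, which is \eqref{felix1}.

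For the second assertion \eqref{felix}, I simply take complements in \eqref{felix1}: $\star^{\comp} \subset \regular^{\comp} \cup \badthree$. Unfolding the definition $\regular = \m^{\comp} \cap \badzero^{\comp} \cap \badone^{\comp} \cap \badtwo^{\comp}$ gives $\regular^{\comp} = \m \cup \badzero \cup \badone \cup \badtwo$, so $\star^{\comp} \subset \m \cup \badzero \cup \badone \cup \badtwo \cup \badthree$. Intersecting both sides with $\m^{\comp}$ yields $\m^{\comp} \cap \star^{\comp} \subset \badzero \cup \badone \cup \badtwo \cup \badthree$, hence $\m^{\comp} \subset \badzero \cup \badone \cup \badtwo \cup \badthree \cup \star$, and the union bound gives \eqref{felix}.

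The main obstacle is entirely in the first step — producing the interval $[i,j]$ of length $\gg n^{.01}$ on which $u$ is uniformly large with a value $k$ big enough to beat the $20(n+\volume^{.51})$ threshold. This is delicate precisely because the gradient bound in $\badtwo^{\comp}$ has two regimes ($m(x) = n$ versus $m(x) < n$) and three size regimes for $\log_n u(x)$, so one must track the odometer through these regimes while it descends from $> Cn^{4/3}$ down toward $n$, which is what Lemmas \ref{bruce}, \ref{springsteen} and \ref{existence} are built to handle; the present lemma is just the clean logical packaging once that input is available.
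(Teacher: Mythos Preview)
Your derivation of \eqref{felix} from \eqref{felix1} is correct and matches the paper exactly. Your overall framing of \eqref{felix1} as ``clean logical packaging'' of auxiliary lemmas is also right: the paper's actual proof of Lemma~\ref{lemmaC} is one line, citing Lemmas~\ref{garden} and~\ref{volunteer}, which establish respectively $\tfrac12\returns > 20n$ and $\tfrac12\returns > 20\volume^{.51}$ on $\regular\cap\badthree^{\comp}$; adding these two gives $\returns > 20(n+\volume^{.51})$, i.e.\ $\star$.

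The gap in your write-up is that you cite the wrong auxiliary lemmas and misstate what they deliver. Lemma~\ref{existence} does \emph{not} produce a single interval $[i,j]$ with $(j-i)\sqrt{k}$ exceeding $100(n+\volume^{.51})$; it produces a level set $B_j$ (a \emph{union} of intervals each of length $>.1n^{.01}$) carrying at least $.01\volume$ of the odometer mass. Converting that into $\tfrac12\returns > 20\volume^{.51}$ requires applying $\badthree^{\comp}$ interval-by-interval (Lemma~\ref{library}) together with the arithmetic of Lemma~\ref{ratio}, all packaged in Lemma~\ref{volunteer}. The $20n$ bound is obtained separately and more simply in Lemma~\ref{garden}, via the single interval $X_0$ of length $\ge 100n^{1/3}$ around the maximizer of $u$. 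So your attempt to extract both the $20n$ and the $20\volume^{.51}$ thresholds from one interval furnished by Lemma~\ref{existence} does not work as written; you should instead invoke Lemmas~\ref{garden} and~\ref{volunteer} directly.
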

Assuming \eqref{felix1}, \eqref{felix} follows by taking the union bound.

We split the proof of Lemma \ref{lemmaC} into several lemmas. In particular, it will suffice to show that
\begin{eqnarray}\label{eq:returns}
\frac12 \returns & > & 20n \\
\label{eq:returns_volume}
\frac12 \returns & > & 20\volume^{.51}.
\end{eqnarray}

\begin{lemma}
\label{garden}
If the event
$\regular$
occurs, then
\begin{equation}\label{kingfelix}
\volume \geq n^{5/3}
\end{equation} 
and if $\regular \cap \badthree^{\comp}$ occurs, then
$$\frac12 \returns >20n. $$
\end{lemma}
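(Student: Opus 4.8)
\textbf{Proof proposal for Lemma \ref{garden}.}

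The plan is to first establish the lower bound $\volume \geq n^{5/3}$ on the stopping time, and then use it, together with the regularity of returns encoded in $\badthree^\comp$, to deduce $\frac12\returns > 20n$. For the first part, I would argue by contradiction using the event $\m^\comp$, which says $u(0) = \height \geq Cn^{4/3}$ for a large constant $C$ (since we are on $\m^\comp$ and, by symmetry and $\badone^\comp$, the max is comparable to $u(0)$). On $\badtwo^\comp$ the gradient of the odometer satisfies $|u(x)-u(x+1)| \leq 2m(x) + u(x)^{.51} \leq 2n + u(x)^{.51}$ on the relevant range, and the gradient has the ``right sign'' (the odometer decreases as $|x|$ increases) wherever $u(x) \geq n^{.5}$. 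This forces $u$ to stay large --- of order at least $\tfrac12 Cn^{4/3}$ --- on an interval around the origin of length of order $n^{1/3}$: starting from $u(0) \geq Cn^{4/3}$, each unit step changes $u$ by at most $\approx 3n$ (using Remark \ref{rising}-type estimates, splitting according to whether $m(x)=n$ or $\log_n u(x)$ lies above/below $1.96$), so it takes $\Omega(n^{1/3})$ steps to drop by a constant fraction of $n^{4/3}$. Then $\volume = \sum_x u(x) \geq \sum_{|x| \lesssim n^{1/3}} u(x) \gtrsim n^{1/3} \cdot n^{4/3} = n^{5/3}$ (the precise constants need to be chased to land at exactly $n^{5/3}$, but there is ample room).

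For the second part, I would use $\badthree^\comp$ to convert the bound ``$u$ is large on a long interval'' into ``$\returns$ is large.'' Concretely, from the first part we have an interval $J$ around the origin, of length at least $cn^{1/3} \gg .1 n^{.01}$, on which $u(x) \geq k$ for some $k$ of order $n^{4/3} \geq n^{.5}$. Since $\badthree$ does not occur, we conclude
\[
\sum_{x \in J} \returns(x) \;\geq\; .01 \, |J| \, \sqrt{k} \;\gtrsim\; n^{1/3} \cdot \sqrt{n^{4/3}} \;=\; n^{1/3} \cdot n^{2/3} \;=\; n,
\]
and by taking $C$ large enough (hence $k$ and $|J|$ large enough) the implied constant beats $40$, giving $\returns \geq \sum_{x\in J}\returns(x) > 40 n$, i.e.\ $\frac12 \returns > 20n$. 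One has to be slightly careful that $k$ can be taken uniformly of order $n^{4/3}$ across the whole interval $J$ (not just at the origin), but that is exactly what the gradient bound from the first part delivers: on $J$ we in fact have $u(x) \geq \tfrac12 u(0) \geq \tfrac12 C n^{4/3}$.

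The main obstacle, I expect, is the first part: carefully propagating the lower bound on $u(0)$ outward using the gradient estimate from $\badtwo^\comp$. The subtlety is that the gradient bound $2m(x) + u(x)^{.51}$ depends on $m(x)$, which equals $n$ only as long as the running minimum of $u$ has not dropped below $n$; once $u(x)$ itself is between $n^{.5}$ and $n$ the relevant bound and the case analysis change (this is precisely the source of the case distinctions flagged in Remark \ref{rising} and handled in Lemmas \ref{bruce} and \ref{springsteen}). But for the purpose of Lemma \ref{garden} we only need $u$ to remain above, say, $\tfrac12 C n^{4/3} \gg n$ on an interval of length $\Omega(n^{1/3})$, so we stay comfortably in the regime $m(x) = n$ and $\log_n u(x) > 1.96$ throughout, and the simplest form of the estimate --- each step changes $u$ by at most $3n$ --- suffices. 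The bookkeeping to extract the exact exponent $5/3$ (as opposed to $5/3 - o(1)$) is routine once the geometric picture is in place.
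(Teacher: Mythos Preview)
Your overall strategy matches the paper's almost exactly: locate a point where the odometer exceeds $Cn^{4/3}$, use the gradient control from $\badtwo^\comp$ to show $u$ stays of order $n^{4/3}$ on an interval of length $\sim n^{1/3}$, sum to get $\volume \ge n^{5/3}$, and then apply $\badthree^\comp$ on that interval to get $\returns \gtrsim |J|\sqrt{k} \sim n^{1/3}\cdot n^{2/3} = n$.

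There are, however, two slips in your writeup that you should fix.

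\textbf{(1) Work at $x_{\Height}$, not at the origin.} You write ``$u(0)=\height$'' and then hedge that ``by symmetry and $\badone^\comp$, the max is comparable to $u(0)$.'' Neither statement is justified for a given realization: symmetry is only in distribution, and nothing in $\regular$ forces the maximum to occur at or near $0$. The paper avoids this entirely by choosing $x_{\Height}\in[-n^5,n^5]$ with $u(x_{\Height})=\height$ (using $\badzero^\comp$) and building the interval $X_0$ of length $100n^{1/3}$ around $x_{\Height}$. Everything in your argument goes through verbatim if you make this substitution.

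\textbf{(2) The regime giving the $3n$ step bound is $\log_n u(x)<1.96$, not $>1.96$.} In your last paragraph you say we stay in the regime ``$\log_n u(x)>1.96$'' and that there ``each step changes $u$ by at most $3n$.'' This is backwards: Remark~\ref{rising} gives $|u(x+1)-u(x)|\le 3n$ precisely when $n<u(x)<n^{1.96}$, because only then is $u(x)^{.51}\le n$. When $u(x)>n^{1.96}$ the term $u(x)^{.51}$ dominates and the step bound is larger. This does not break the argument, but the reasoning has to be phrased correctly: to drop from $\height\ge Cn^{4/3}$ down to (say) $700000\,n^{4/3}$, the odometer must traverse the band $[700000\,n^{4/3},\min(\height,n^{1.96})]$, and in that band the gradient is at most $3n$, forcing at least $\Omega(n^{1/3})$ steps (and one checks easily on $\badone^\comp\cap\badtwo^\comp$ that a single step cannot jump over this band). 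The paper compresses all of this into the sentence ``by the same argument as in Remark~\ref{rising}.''

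With these two corrections your proof is the paper's proof.
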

\vspace{.1in}
Recall that on the event 
$\regular$
the height is much bigger than $n^{4/3}$ but less than $n^{4.05}$, the odometer is supported inside 
$[-n^5,n^5]$, and gradient of the odometer is such that $\badtwo^{\comp}$ occurs.  
\begin{proof}
As usual, we choose to give a direct proof with explicit constants (which might be far from optimal), for sake of exposition.

If $\badzero^{\comp}$ occurs there exists $$x_{\Height} \in [-n^5,n^5],\mbox{ such that } u(x_{\height})=\height.$$
The  events $\m^{\comp}$ and $\badtwo^{\comp}$ imply that 
\begin{equation}\label{15now}
u(x)>700000n^{4/3}>(800n^{2/3})^2
\end{equation}
 for all $x $ in the interval
\[
X_0:=[x_{\Height}-100n^{1/3},x_{\Height}+100n^{1/3}] \cap [-n^5,n^5]\cap \base.
\]
This follows from the event $\badzero^{\comp}\cap\badtwo^{\comp}$ by the same argument as in  Remark \ref{rising}.  
Then $|X_0|\geq 100n^{1/3}$
because by $\badzero^{\comp}$ $$x_{\Height} \in [-n^5,n^5]$$ so at least one of 
$$[x_{\Height},x_{\Height}+100n^{1/3}]\mbox{ or } [x_{\Height},x_{\Height}+100n^{1/3}]$$ 
is entirely in $[-n^5,n^5]$. 

Thus looking at the volume of the odometer in $X_0$ we have 

\begin{equation}\label{kingfelix2}
\volume \geq\sum_{x \in X_0}u(x) \geq 700000n^{4/3}|X_0| \geq n^{5/3}
\end{equation} 
and
\begin{equation}
\returns 
\geq \sum_{x \in X_0} \returns(x) 
    \geq (100n^{1/3})(.01)(800n^{2/3}) \geq 40n.
\end{equation}
The next to last inequality is by \eqref{15now}, the size of $X_0$ and the definition of $\badthree^{\comp}$. Thus we have obtained inequality (\ref{eq:returns}).
\end{proof}

\subsection{Partitioning $\base$.} \label{partitioning}
Inequality (\ref{eq:returns_volume}) is more involved to verify, and in order to do it we proceed as follows. 
We partition $\base$ up into smaller intervals:
\[
\base =\bigcup_{i=-K'}^K\base_{i}
\]
where, for each $i$, we set each $\base_i=[a_i,b_i]$ for some $a_i$ and $b_i$ which we describe in the following. 
We inductively define the $\base_i$ in a way so that on the event 
$\regular$
and for most $i$ we have
\begin{equation}\label{eq:either}
b_i-a_i>.1n^{.01}.
\end{equation}
This will involve a series of estimates like in Remark \ref{rising}.

Define $h_j=n^{4.05-.05j}$ for $j=0$ to $j=68$ and $h_{69}=0$. Let 
$$\h=\{h_j:\ j=1\dots 68 \}.$$
Let $j'$ be such that $h_{j'}$ is the closest value in $\h$ to $u(0)$. 
Let $$b_0= \inf \left\{x:\ x\geq 0 \text{ and } u(x+1) \not \in (h_{j'+1},h_{j'-1})\right\}.$$
We say that $\base_0$ \emph{starts} at height $h_{j'}$ and \emph{ends} at height $h_{j''} \in \h$ where $j''$ is defined so that $h_{j''}$ is the closest element of $\h$ to $u(b_0+1).$ 

Now we inductively define $\base_{i+1}$.
Suppose we have defined $\base_i=[a_i,b_i]$ which ends at height $h_k$. We will inductively define $\base_{i+1}=[a_{i+1},b_{i+1}].$ 
We let $a_{i+1}=1+b_i$ and say $\base_{i+1}$ \emph{starts} at height $h_k \in \h$. 
Then we define
$$b_{i+1}= \inf \left\{x:\ x \geq a_i \text{ and } u(x+1) \not \in (h_{k+1},h_{k-1})\right\}.$$
We say the block $\base_{i+1}$ \emph{ends} at height $h_{k'} \in \h$ where 
$h_{k'}$ is the closest element of $\h$ to $u(b_i+1)$ and $h_{k'}$ is the closest element of $\h$ to $u(b_i+1).$

For the case of negative indices, the procedure is totally analogous.
Finally, for all $j \in \{0,\dots, 68\}$ define 
\begin{equation}\label{eq:def_B_j}
B_j:= \textnormal{the union of all }\base_i\textnormal{ that start at $h_j$ 
and have }|\base_i|>.1n^{.01}. 
\end{equation}

\begin{lemma} \label{bruce}
If a block $\base_i$ with $i\geq 0$ starts at height $h_{j}$ and ends at height $h_{j'}$ then $j'\neq j$. 
Conditional on $\regular$ and $j'<j<68$ then
$$b_i-a_i>.1n^{.01}.$$
\end{lemma}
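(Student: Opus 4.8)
The plan is to analyze how the odometer $u$ can evolve across a single block $\base_i = [a_i, b_i]$, using only the regularity encoded in $\badzero^\comp \cap \badone^\comp \cap \badtwo^\comp$ (together with $\m^\comp$). First I would dispense with the claim $j' \neq j$: a block starting at height $h_j$ and ending at height $h_j$ would mean $u(b_i+1)$ is the closest element of $\h$ to itself after $u$ first leaves the open interval $(h_{j+1}, h_{j-1})$ — but by construction $b_i$ is the \emph{first} $x \geq a_i$ with $u(x+1) \notin (h_{j+1},h_{j-1})$, so $u(b_i+1)$ lies at or beyond $h_{j-1}$ or $h_{j+1}$, whence its nearest element of $\h$ (using that consecutive $h$'s differ by a factor $n^{0.05}$, so the "nearest" partition is unambiguous away from the midpoints) is $h_{j-1}$ or $h_{j+1}$, not $h_j$. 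There is a small wrinkle at the ends of the range $j \in \{1,\dots,68\}$ (e.g. $h_0 = n^{4.05}$, which is excluded by $\badone^\comp$, and $h_{69}=0$), which is why the statement restricts to $j' < j < 68$; I would note that $\badone^\comp$ forces $u < n^{4.05} = h_0$ everywhere, so no block can start at $h_0$, and the "ending at $0$" case $j'=69$ is handled by the hypothesis $j' < j < 68$.

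For the main assertion — when $j' < j < 68$, i.e.\ the block goes \emph{down} in height by at least one step of $\h$ — I would argue that crossing from height near $h_j$ down to height near $h_{j'} \leq h_{j+1}$ requires $u$ to decrease by at least $h_j - h_{j+1} = h_j(1 - n^{-0.05}) \geq \tfrac12 h_j$, and since $j < 68$ we have $h_j \geq h_{67} = n^{4.05 - 3.35} = n^{0.7}$, so in particular $u(x) \geq h_{j+1} \geq \tfrac12 n^{0.7} \gg n^{0.5}$ throughout the block (here I am using that $u(x+1) \in (h_{j+1}, h_{j-1})$ for all $x \in [a_i, b_i - 1]$ by the definition of $b_i$, so the block interior stays at height comparable to $h_j$). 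Hence $\badtwo^\comp$ applies at every such $x$: the increments satisfy $|u(x) - u(x+1)| \leq 2m(x) + u(x)^{0.51} \leq 2n + (h_{j-1})^{0.51}$. Since $h_{j-1} = n^{0.05} h_j \leq n^{0.05} \cdot n^{4.0} = n^{4.05}$... more carefully, $h_{j-1} \leq h_0 = n^{4.05}$, so $(h_{j-1})^{0.51} \leq n^{2.07}$, and in the regime where $h_j$ is itself large this is the dominant term — I would split into the case $h_j \leq n^{1.96}$ (where $u(x)^{0.51} \leq n^{1}$-ish and increments are $\leq 3n$, as in Remark \ref{rising}) and $h_j > n^{1.96}$ (where increments are $\leq 2n + u(x)^{0.51} \leq 3 u(x)^{0.51} \leq 3 h_{j-1}^{0.51}$).

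Putting these together: the odometer changes by a total of at least $h_j - h_{j+1} \geq \tfrac12 h_j$ across the block, in steps each of size at most $\max(3n, 3 h_{j-1}^{0.51})$. Therefore
\[
b_i - a_i \;\geq\; \frac{\tfrac12 h_j}{\max\!\big(3n,\, 3 h_{j-1}^{0.51}\big)}.
\]
In the first case, $\tfrac12 h_j / (3n) \geq \tfrac16 n^{0.7}/n = \tfrac16 n^{-0.3}$ — which is \emph{not} bigger than $.1 n^{.01}$, so I would need $h_j$ to be at least of order $n^{1.01}$ for the $3n$-step bound to give what we want; this suggests the correct split is at $h_j \asymp n^{1.01}$ (consistent with Remark \ref{rising}'s exponent $1.05$ and the threshold structure), and for smaller $h_j$ one uses a finer step bound. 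In the second case, $\tfrac12 h_j/(3 h_{j-1}^{0.51}) = \tfrac16 h_j^{0.49} n^{-0.0255} \geq \tfrac16 n^{0.49 \cdot 1.01 - 0.03} \gg n^{0.01}$. The main obstacle I anticipate is exactly this bookkeeping of which step-size bound from $\badtwo^\comp$ is in force — whether $m(x) = n$ or $m(x) < n$, and which of the three regimes of $\log_n u(x)$ one is in — so that $b_i - a_i > .1 n^{.01}$ comes out uniformly over all admissible $j$; this is the "series of estimates like in Remark \ref{rising}" the authors flagged, and it is where the constants and the choice of the exponent $4.05$ and the mesh $0.05$ all have to cooperate. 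Once the right case division is fixed, each individual estimate is a one-line division.
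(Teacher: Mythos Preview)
Your proposal contains a genuine error: you have the direction of the height change backwards. Since $h_j = n^{4.05 - .05j}$ is \emph{decreasing} in $j$, the hypothesis $j' < j$ means $h_{j'} > h_j$, i.e.\ the block ends at a \emph{higher} height than it starts. The odometer \emph{increases} across $\base_i$, not decreases. Concretely, $u$ starts below $\tfrac12(h_j + h_{j-1})$ (since $h_j$ is the nearest element of $\h$ to $u(a_i)$) and must reach at least $h_{j-1}$ at $b_i+1$, a rise of at least $.49\, h_{j-1}$.

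This matters because $\badtwo^\comp$ gives a much sharper \emph{one-sided} bound on increases: for $x \geq 0$ with $u(x) \geq n^{.5}$ one has $u(x+1) - u(x) \leq u(x)^{.51}$, with no $2m(x)$ term. Using this, the paper's argument is a single division: the rise of $.49\, h_{j-1}$ happens in steps of size at most $(h_{j-1})^{.51}$, so $b_i - a_i \geq .49\,(h_{j-1})^{.49} \geq .49\, n^{.3}$ (since $j < 68$ gives $h_{j-1} \geq n^{.7}$). Your use of the two-sided bound $|u(x+1)-u(x)| \leq 2m(x) + u(x)^{.51}$ drags in the $2n$ term, which is exactly why your estimate breaks down for small $h_j$ --- you noticed this yourself.

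What you have sketched is essentially the argument for the \emph{other} direction, $j' > j$, which is the content of the next lemma (Lemma~\ref{springsteen}). There the odometer does decrease, the $2m(x)$ term is unavoidable, and the case split you anticipate (on whether $m(x) = n$ or $m(x) < n$, and on the size of $h_j$) is exactly what the paper does --- and in that setting the conclusion is weaker, allowing a bounded number of short blocks.
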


\begin{proof}
The first statement is true because $$u(1+b_i)\not\in (h_{k+1},h_{k-1})$$ so the closest element of $\h$ to 
$u(1+b_i)$ is not $h_k$.
Consider an interval $\base_i$ with $i\geq0$ where $\base_{i}$ starts at
height $h_j$ and ends at height $h_{j'}$ with $j'<j$.
Over the course of such an interval the odometer increased by at least $.49h_{j-1}$, going from less than $.5(h_j+h_{j-1})$ to at least $h_{j-1}.$ Since $j<68$ we have $$u(x)>h_{j+1}\geq h_{68}=n^{.65}>n^{.5}$$  for all $x \in \base_i$. Thus the event $\regular$ implies 
 $$u(x+1)-u(x)<u(x)^{.51}<(h_{j-1})^{.51}.$$
Thus by the choice of the $h_i$ these intervals must have width at least 
$$b_i-a_i\geq \frac{.49h_{j-1}}{(h_{j-1})^{.51}}\geq .49(h_{j-1})^{.49} \geq .49n^{.3}>.1n^{.01}.$$
The next to last inequality follows because $j<68$ so $h_{j-1}\geq n^{.7}.$
\end{proof}

\begin{lemma} \label{springsteen}
If $\regular$ occurs and a block $\base_i$ with $i>0$ starts at height $h_{j}$ and ends at height $h_{j'}$ with $j'>j$ then
\begin{enumerate}
\item $b_i-a_i>.1n^{.01}$ or
\item $68 \geq j\geq 60$ and for no $k$, $0\leq k<i$ the block $\base_k$ starts at $h_j$ and ends at a height $h_{k'}$ with $k'>j$.
\end{enumerate}
\end{lemma}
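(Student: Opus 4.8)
The plan is to mirror the structure of the proof of Lemma~\ref{bruce}, but now handling a block $\base_i$ whose odometer values \emph{decrease} from $h_j$ down to $h_{j'}$ with $j' > j$ (recall $h_j = n^{4.05 - .05j}$ is decreasing in $j$, so ``ends lower'' means the odometer shrank). Over such a block the odometer drops from roughly $.5(h_j + h_{j-1})$ to at most $h_{j+1}$, a total decrease of at least, say, $.49 h_{j-1}$. The key difference from Lemma~\ref{bruce} is that on decreasing runs the relevant gradient bound from $\badtwo^{\comp}$ is $|u(x+1) - u(x)| \le 2m(x) + u(x)^{.51}$, where the $2m(x)$ term (bounded by $2n$) now genuinely competes with $u(x)^{.51}$. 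So I would split into two regimes according to whether $u(x)^{.51}$ dominates $2n$ or not, i.e.\ roughly whether $h_j \gtrsim n^{1/.51} \approx n^{1.96}$.

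\medskip

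First I would treat the case $j$ small, say $j < 60$, so that throughout $\base_i$ we have $u(x) > h_{j+1} \ge h_{60} = n^{1.05}$, hence $u(x)^{.51} > n^{.53} \gg 2n$ fails in the wrong direction --- wait, I should be careful: I want $u(x)^{.51} \ge 2n$, which needs $u(x) \ge (2n)^{1/.51} \approx n^{1.96}$, i.e.\ $h_j \ge n^{1.96}$, i.e.\ $j \le $ about $41$. So for $j$ up to roughly $41$ the per-step increment is at most $u(x)^{.51} + 2m(x) \le 2 u(x)^{.51} \le 2 h_{j-1}^{.51}$, and the number of steps needed to decrease by $.49 h_{j-1}$ is at least $\tfrac{.49 h_{j-1}}{2 h_{j-1}^{.51}} = .245\, h_{j-1}^{.49} \ge .245\, n^{.49 \cdot 2.09} > .1 n^{.01}$, giving conclusion~(1). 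For intermediate $j$ (roughly $42 \le j \le 59$), $u(x)$ can be as small as $n^{1.05}$ but the increment is then bounded by $2m(x) + u(x)^{.51} \le 2n + n^{.53 \cdot ?}$; here $m(x) = n$ eventually, but one still has increments $\le 3n$ (as in Remark~\ref{rising}), while the total drop is at least $.49 h_{j-1} \ge .49 n^{1.9}$, so the width is at least $\tfrac{.49 n^{1.9}}{3n} > .1 n^{.01}$, again conclusion~(1).

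\medskip

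The genuinely delicate case is $j$ large, $60 \le j \le 68$, where $u(x)$ can be comparable to $n^{.65}$--$n^{1.05}$ and the increment bound is only $\le 3n$, while the total drop $.49 h_{j-1}$ could itself be smaller than $n$ --- so a decreasing block could in principle be very short, and conclusion~(1) may genuinely fail. This is where conclusion~(2) comes in, and I expect it to be the main obstacle. The idea is: a short decreasing block from $h_j$ is only created when the odometer, having been near $h_j$, briefly dips and comes back; but the leftmost-firing / monotonicity structure near the edge of $\base$ (where heights are of order the low $h_j$'s, i.e.\ we are near the boundary of the support) forces any given starting height $h_j$ with $j \ge 60$ to be the start of at most one such ``downward excursion'' block before the support is exhausted. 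Concretely, I would argue that if $\base_k$ for some $k < i$ already started at $h_j$ and ended below (at $h_{k'}$, $k' > j$), then by the inductive construction and the fact that blocks tile $\base$ from left to right, the odometer between $b_k$ and $a_i$ would have had to climb back up to height $h_j$ and then descend again, which over that stretch forces a width $> .1 n^{.01}$ somewhere by the same gradient estimate --- contradicting that the blocks in between were all the short type, or else simply cannot happen because once the odometer has descended past $h_j$ near the support boundary it cannot return (this needs the $\regular$ estimates controlling how the odometer behaves near $\base$'s edge, presumably combined with $\badone^{\comp}$ bounding the height and $\badzero^{\comp}$ bounding the support).

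\medskip

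So the skeleton is: (a) fix $\base_i$ with $i > 0$ starting at $h_j$, ending at $h_{j'}$, $j' > j$; (b) compute the total odometer decrease over $\base_i$ as $\ge .49 h_{j-1}$; (c) bound per-step increments via $\badtwo^{\comp}$, case-split on whether $2m(x) \le u(x)^{.51}$; (d) in all cases with $j \le 59$, divide to get width $> .1 n^{.01}$, yielding~(1); (e) for $60 \le j \le 68$, if~(1) fails show no earlier block started at $h_j$ and descended, using left-to-right tiling plus the gradient/regularity estimates, yielding~(2). The main work, and the part I would be least sure of without writing it carefully, is step~(e): pinning down exactly why a low starting height can seed at most one short downward block, which presumably leans on the same ``odometer changes slowly in increments of $\le 3n$'' bookkeeping as Remark~\ref{rising} applied to the segment of $\base$ between two such hypothetical blocks.
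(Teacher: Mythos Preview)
Your overall case split on $j$ is right, and for $j < 60$ your approach (bound the per-step decrement via $\badtwo^{\comp}$, divide into the total drop) would work with some arithmetic cleanup. Two minor corrections there: the total decrease over $\base_i$ is at least $.49\,h_j$ (from $\ge .5(h_j+h_{j+1})$ down to $\le h_{j+1}$), not $.49\,h_{j-1}$; and the paper handles all $j<60$ in one stroke by observing $2n + u(x)^{.51} < 3\,h_j^{.96}$ (since $h_j \ge n^{1.1}$), avoiding your subdivision at $j \approx 41$.

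The real gap is your step~(e). You propose to argue by contradiction about the odometer's behavior \emph{between} $\base_k$ and $\base_i$, but this is neither what the paper does nor is it clear how to make it work: the odometer can oscillate, and there is no hypothesis that the intervening blocks are short. The paper's mechanism is direct and uses the very definition of $m(x)$ that you never invoke. If some earlier $\base_k$ with $k<i$ started at $h_j$ and ended at $h_{k'}$ with $k'>j$, then by construction $u(b_k+1) \le h_{j+1}$, so there is a point $y \in [0,a_i]$ with $u(y) \le h_{j+1}$. Hence $m(x) = \min\{n, \min_{0\le y\le x} u(y)\} \le h_{j+1}$ for every $x \in \base_i$. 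Now on $\base_i$ the $\badtwo^{\comp}$ bound reads
\[
u(x)-u(x+1) < 2m(x) + u(x)^{.51} \le 2h_{j+1} + h_{j-1}^{.51} < 3\,h_j\,n^{-.05},
\]
(the last step using $h_{j+1}=h_j n^{-.05}$ and $h_{j-1}^{.51} \le n^{.561} \ll h_j n^{-.05}$ for $j \le 67$), so the width is at least $\tfrac{.49\,h_j}{3\,h_j\,n^{-.05}} > .1\,n^{.01}$, giving conclusion~(1). In other words, the existence of the earlier descending block is used \emph{only} to pin $m(x)$ small on $\base_i$, which sharpens the gradient bound on $\base_i$ itself; nothing about the blocks in between is needed.
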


\begin{proof}
Consider an interval $\base_i$ with $i\geq0$ where $\base_{i}$ starts at
height $h_j$ and ends at height $h_{j'}$ with $j'>j$.
Over the course of such an interval the odometer decreased by at least $.49h_{j}$, going from at least $.5(h_j+h_{j+1})$ to at most $h_{j+1}.$ 

First we consider the case that $j<60$. We have $$u(x)>h_{j+1}\geq h_{60}=n^{1.05}$$  for all $x \in \base_i$. Thus the event $\regular$ implies 
 $$u(x)-u(x+1)<2m(x)+u(x)^{.51}=2n+u(x)^{.51}<3(h_{j})^{.96}.$$
Thus by the choice of the $h_i$ these intervals must have width at least 
$$b_i-a_i\geq \frac{.49h_{j}}{3(h_{j})^{.96}}\geq .15(h_{j})^{.04} >.1n^{.01}.$$

Next we consider the case that $\base_i$ ends at $h_j$ with $60 \leq j \leq 68.$ Suppose there exists $k<i$ such that $\base_k$ starts at $h_j$ and ends at a height $h_{k'}$ with $k'>j$. This implies that
$j<68$ and 
$$m(x)\leq h_{j+1} \leq (h_j)n^{-.05}$$ for all $x \in \base_i$.
Thus the event $\regular$ and $j\leq 67$ implies $u(x)>n^{.65}$ and 
 $$u(x)-u(x+1)<2m(x)+u(x)^{.51}<3h_jn^{-.05}.$$
Thus these intervals must have width at least 
$$b_i-a_i\geq \frac{.49h_{j}}{3(h_{j})n^{-.05}}\geq .15n^{.05} >.1n^{.01}.$$
\end{proof}

\subsection{Consequences of a regular gradient.}
Recall the definition of $B_i$ from \eqref{eq:def_B_j}.
\begin{lemma} \label{existence}
Conditional on $\regular$
there exists $i \in \{1,\dots,68\}$ such that 
$$\sum_{x \in B_i}u(x) \geq .01 \volume.$$
\end{lemma}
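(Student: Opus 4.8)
The plan is to decompose the total volume $\volume = \sum_{x\in\base} u(x)$ across the blocks $\base_i$ and show that the volume carried by ``short'' blocks (those with $|\base_i| \le .1 n^{.01}$) is negligible compared to $\volume$, so that the ``long'' blocks must carry a positive fraction of $\volume$; then, grouping the long blocks by their starting height $h_j$ (there are only $68$ possible values), the pigeonhole principle yields one index $i$ with $\sum_{x\in B_i} u(x) \ge \frac{1}{68} \cdot (\text{fraction}) \cdot \volume \ge .01\volume$. So the crux is controlling the contribution of the short blocks.

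First I would set up the partition as in Subsection~\ref{partitioning} and record the basic structure: each block $\base_i$ starts and ends at heights in $\h = \{h_1,\dots,h_{68}\}$, consecutive blocks share an endpoint height, and on $u$-values all $x\in\base_i$ lie between (roughly) $h_{k+1}$ and $h_{k-1}$ if $\base_i$ starts at $h_k$. Next I would bound the number of blocks: since heights move in $\h$ (a set of size $68$) and the sequence of starting heights is a walk on the ``ladder'' of $h_j$'s — a block starting at $h_j$ ends at $h_{j'}$ with $j'\ne j$ — the total number of blocks $K+K'+1$ is at most polynomial in $n$ (using $\badone^{\comp}$ to cap the maximum height at $n^{4.05}$ and $\badzero^{\comp}$ to confine $\base$ to $[-n^5,n^5]$; in fact one should argue $K+K' \le$ some fixed power of $n$, say $n^{6}$, crudely). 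The key monotonicity input is Lemmas~\ref{bruce} and \ref{springsteen}: on $\regular$, the \emph{only} blocks that can fail $|\base_i| > .1n^{.01}$ are ``upward'' blocks ($j' > j$) with $60 \le j \le 68$, and for each such starting height $h_j$ at most one of them can be the \emph{first} upward block from $h_j$ — so actually Lemma~\ref{springsteen}(2) already pins down that short blocks are rare and occur only at large $j$ (small height $\le n^{1.05}$). I would count: there are at most $68$ such ``exceptional'' short blocks on each side of the origin plus possibly a bounded correction, so $O(1)$ short blocks total, OR, being more careful, bound the number of short blocks by the number of distinct $(\text{start height}, \text{side})$ pairs with $60\le j\le 68$, which is $\le 2\cdot 9 = 18$.

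Then I would bound the volume in short blocks. Each short block $\base_i$ has $|\base_i| \le .1n^{.01}$ and, by Lemma~\ref{springsteen}, starts at $h_j$ with $j \ge 60$, hence $u(x) \le h_{j-1} \le h_{59} = n^{1.1}$ for all $x\in\base_i$; so its volume is at most $.1n^{.01}\cdot n^{1.1} = .1 n^{1.11}$. With $O(1)$ (say $\le 20$) short blocks, the total short-block volume is at most $2 n^{1.11}$, say. On the other hand, Lemma~\ref{garden} (via \eqref{kingfelix}) gives $\volume \ge n^{5/3}$ on $\regular$, which dominates $n^{1.11}$ by a wide margin; hence short blocks carry at most a $o(1)$ fraction, so the long blocks carry volume $\ge .9\volume$ (with room to spare). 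Finally, every long block is contained in $B_j$ for its starting height $h_j$, $j\in\{1,\dots,68\}$ (for $j=69$, $h_{69}=0$ is not a starting height; $j=0$ blocks starting at $h_0 = n^{4.05}$ are excluded by $\badone^{\comp}$ so $j$ ranges over $1,\dots,68$), so $\sum_{j=1}^{68} \sum_{x\in B_j} u(x) \ge .9\volume$, and by pigeonhole some $B_i$ satisfies $\sum_{x\in B_i} u(x) \ge \frac{.9}{68}\volume \ge .01\volume$.

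The main obstacle I expect is the bookkeeping in the previous paragraph: making rigorous that there are only boundedly many short blocks (this is exactly what Lemmas~\ref{bruce} and \ref{springsteen} are engineered to give, but one has to carefully invoke Lemma~\ref{springsteen}(2)'s ``for no $k<i$'' clause to see that from each height $h_j$ with $60\le j\le 68$ there is at most one short block \emph{per side}), and double-checking that no short block can have large $u$-values (again handled by $j\ge 60$, i.e.\ height $\le n^{1.05}$, forcing all $u(x) \le h_{59}$). Once the count of short blocks is under control, comparing $n^{1.11}$ (or whatever crude bound one gets) against $\volume \ge n^{5/3}$ is immediate, and the pigeonhole step over the $68$ heights is routine. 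I would present the argument by first stating the ``few short blocks'' claim as a sub-lemma citing \ref{bruce} and \ref{springsteen}, then doing the volume comparison, then pigeonhole.
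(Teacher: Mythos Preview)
Your proposal is correct and follows essentially the same route as the paper's proof: define the union $B^*$ of short blocks, invoke Lemmas~\ref{bruce} and~\ref{springsteen} to conclude that every short block starts at some $h_j$ with $60\le j\le 68$ and that there are at most two such blocks per height (one per side), so $|I|\le 20$; bound the short-block volume by a quantity of order $n^{1.1+.01}\ll n^{5/3}\le\volume$ (the paper uses the slightly looser $20n^{1.2}$); then pigeonhole over the $68$ starting heights. Your detour about bounding the total number of blocks polynomially is unnecessary (only the short blocks need counting), but you correctly drop it.
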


\begin{proof}
Define
$$B^*=\bigcup_{i:|\base_i|\leq .1n^{.01}} \base_i.$$
We first show that
\begin{equation} \label{hatchet}\sum_{x \in B^*} u(x) \leq 20n^{1.2}.
\end{equation}
The lemma will follow easily from \eqref{kingfelix}, \eqref{hatchet}  and the pigeonhole principle.

Let $I=\{i:|\base_i|\leq .1n^{.01}\}.$
First we show that
$|I|\leq 20.$ By Lemmas \ref{bruce} and \ref{springsteen} for every $i \in I$ there exists $j$, $60 \leq j \leq 68$ such that 
$\base_i$ starts at height $h_j$. From this we draw two conclusions.
First by the definitions of $B^*$ and the $\base_i$ we have $u(x)<n^{1.15}$ on $B^*$. Also for each such $j$ Lemma \ref{springsteen} implies there exist at most two $i \in I$ with $\base_i$ starting at height $h_j$, at most one with $i\geq 0$ and at most one with $i \leq 0$.  
These two facts combine to  establish \eqref{hatchet} which completes the proof.
\end{proof}

\begin{remark}
If we perform the analysis in the previous lemma to non-empty intervals $\base_i$ that start at $h_j \geq n^{1.4}$ we get that $|\base_i|\geq .1n^{.4}$. This implies 
$$\tau \geq .1n^{.4}h_{j+1}=.1n^{.3}h_{j-1}.$$
\end{remark}
\begin{lemma} \label{ratio}
If $B_j \neq \emptyset$ and $\regular$ occurs, then
$$\frac{\volume^{.49}}{(h_{j-1})^{.5}}\geq n^{.1}.$$
\end{lemma}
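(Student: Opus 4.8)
The plan is to use the hypothesis $B_j\neq\emptyset$ to extract one long block of $\base$, to lower-bound $\volume$ by the total odometer sitting over that block (exactly the device used in the proof of Lemma~\ref{garden}, recalling $\volume=\sum_{x\in\Z}u(x)$), and then to read off $\volume^{.49}/(h_{j-1})^{.5}\geq n^{.1}$ by elementary exponent arithmetic with $h_k=n^{4.05-.05k}$. The argument splits according to whether $h_j$ is above or below $n^{1.4}$.

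Concretely, since $B_j\neq\emptyset$ there is a block $\base_i=[a_i,b_i]$ that starts at height $h_j$ with $|\base_i|>.1n^{.01}$; on $\base_i$ the odometer stays in $(h_{j+1},h_{j-1})$, so $\volume\geq\sum_{x\in\base_i}u(x)\geq|\base_i|\,h_{j+1}$. In the first case $h_j\geq n^{1.4}$ (equivalently $j\leq 53$) such a block is genuinely long: along $\base_i$ a single step changes $u$ by at most $u(x)^{.51}<(h_{j-1})^{.51}$ where $u$ rises and by at most $2m(x)+u(x)^{.51}\leq 2n+(h_{j-1})^{.51}$ where it falls (both from $\badtwo^{\comp}\subset\regular$), while the net change of $u$ across $\base_i$ is at least $.49h_{j-1}$ (rising) or $.49h_j$ (falling); since $h_j\geq n^{1.4}$ makes the step size small against the net change, one gets $|\base_i|\geq .1n^{.4}$. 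This is precisely the Remark following Lemma~\ref{existence}, which I would simply cite, giving $\volume\geq .1n^{.4}h_{j+1}=.1n^{.3}h_{j-1}$. Raising to the power $.49$, it then suffices to check $(.1)^{.49}n^{.147}(h_{j-1})^{.49}\geq n^{.1}(h_{j-1})^{.5}$, i.e. $(.1)^{.49}n^{.047}\geq (h_{j-1})^{.01}$; since $h_{j-1}\leq h_0=n^{4.05}$ this reduces to $(.1)^{.49}n^{.0065}\geq 1$, which holds for all large $n$.

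In the second case $h_j< n^{1.4}$ (equivalently $j\geq 54$, so $h_{j-1}\leq h_{53}=n^{1.4}$) the block is not needed at all: by \eqref{kingfelix} the event $\regular$ already forces $\volume\geq n^{5/3}$, whence $\volume^{.49}\geq n^{(5/3)(.49)}=n^{49/60}>n^{.8}=n^{.1}\,(n^{1.4})^{.5}\geq n^{.1}(h_{j-1})^{.5}$. Blocks with negative index are handled symmetrically, using the left-hand analogues of Lemmas~\ref{bruce} and~\ref{springsteen}.

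The only genuine obstacle is the length bound $|\base_i|\geq .1n^{.4}$ in the first case --- this is where the odometer-gradient regularity encoded in $\regular$ is actually used --- but I would discharge it by invoking the Remark after Lemma~\ref{existence} rather than re-running those estimates; everything else is bookkeeping with the explicit exponents $h_k=n^{4.05-.05k}$.
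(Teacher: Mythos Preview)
Your proposal is correct and follows essentially the same two-case split as the paper: when $h_j\geq n^{1.4}$ you invoke the Remark after Lemma~\ref{existence} to get $\volume\geq .1n^{.3}h_{j-1}$, and when $h_j\leq n^{1.35}$ you fall back on $\volume\geq n^{5/3}$ from Lemma~\ref{garden}. The paper's own proof is a two-line citation of exactly these two facts; your version simply spells out the exponent arithmetic that the paper leaves implicit.
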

\begin{proof}
If $h_j \leq n^{1.35}$ then the result follows from the first part of Lemma \ref{garden}.  If $h_j \geq n^{1.4}$ then it follows from the previous remark.
\end{proof}

\subsection{Consequences of Regular Returns.}
Recall the sequence $h_i$ defined in Subsection \ref{partitioning}. 
\begin{lemma} \label{library}
For every $j \in \{1,\dots,68\}$ the set $B_j$ satisfies
$$ \sum_{x \in B_j}u(x)\leq |B_j|h_{j-1}.$$
Conditional on $\regular \cap \badthree^{\comp}$  for every $j \in \{0,\dots,67\}$ the set $B_j$ satisfies
$$ \sum_{x \in B_j}\returns(x)\geq .01| B_j| (h_{j+1})^{1/2}.$$
\end{lemma}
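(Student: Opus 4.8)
The first inequality is immediate from the definition of $B_j$: every block $\base_i$ comprising $B_j$ starts at height $h_j$, so by the partitioning construction (the odometer stays in $(h_{j+1},h_{j-1})$ on the interior of $\base_i$, and the endpoint value is within a factor of the nearest element of $\h$), we have $u(x)\le h_{j-1}$ for every $x\in B_j$. Summing over the at most $|B_j|$ such sites gives $\sum_{x\in B_j}u(x)\le |B_j|h_{j-1}$. No probabilistic input is needed here — it is purely a consequence of how the $\base_i$ were defined.

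For the second inequality, fix $j\in\{0,\dots,67\}$ and consider one block $\base_i=[a_i,b_i]$ contributing to $B_j$, so by \eqref{eq:def_B_j} we have $b_i-a_i>.1n^{.01}$. On $\regular$ every $x\in\base_i$ satisfies $u(x)\ge h_{j+1}\ge h_{68}=n^{.65}\ge n^{.5}$, so $k:=\lceil h_{j+1}\rceil$ lies in $[n^{.5},n^5]$, and $\badzero^{\comp}$ forces $[a_i,b_i]\subset[-n^5,n^5]$, so the indices $i':=a_i$, $j':=b_i$ meet all the hypotheses in the definition of $\badthree$. Hence on $\badthree^{\comp}$ we get
\[
\sum_{x=a_i+1}^{b_i-1}\returns(x)\ \ge\ .01\,(b_i-a_i)\sqrt{h_{j+1}}\ \ge\ .01\,|\base_i|\,(h_{j+1})^{1/2},
\]
using $b_i-a_i\ge|\base_i|-1$ and absorbing the difference of one endpoint into the constant (or, more carefully, noting $b_i-a_i\ge(1-o(1))|\base_i|$ since $|\base_i|>.1n^{.01}$ is large). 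Summing this over all blocks $\base_i$ making up $B_j$ — which are disjoint, so the sums $\sum_{x=a_i+1}^{b_i-1}\returns(x)$ do not overcount — yields $\sum_{x\in B_j}\returns(x)\ge .01\,|B_j|\,(h_{j+1})^{1/2}$.

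The only mild subtlety is bookkeeping: checking that the quantities $i,j,k$ required by $\badthree$ are exactly produced by each block (the height lower bound $k=h_{j+1}$ is valid because $u(x)\ge h_{j+1}$ throughout $\base_i$, and the length bound $j-i>.1n^{.01}$ is exactly the membership criterion for $B_j$ in \eqref{eq:def_B_j}), and that passing from $b_i-a_i$ to $|\base_i|$ and from $\sqrt{k}$ to $\sqrt{h_{j+1}}$ costs nothing up to the (changeable) constant. I do not expect any real obstacle here; the lemma is essentially the definition of $\badthree$ applied blockwise and then summed, with the first bound being the trivial matching upper bound on the odometer volume over $B_j$.
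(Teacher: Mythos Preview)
Your proposal is correct and follows essentially the same approach as the paper: the paper's proof is only three lines, observing that $h_{j+1}\le u(x)\le h_{j-1}$ for all $x\in B_j$ by construction of the partition, and that $B_j$ is a union of intervals of width exceeding $.1n^{.01}$, whence the second statement is exactly the conclusion of $\badthree^{\comp}$ applied blockwise. Your version simply spells out the verification of each hypothesis of $\badthree$ (the range of $k$, the containment in $[-n^5,n^5]$, the width bound) and the minor bookkeeping between $b_i-a_i$ and $|\base_i|$, all of which the paper leaves implicit.
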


\begin{proof}
From the choice of the intervals $\base_j$ and $B_i$ we have that
$$h_{i+1}\leq u(x) \leq h_{i-1},$$
for all $x \in B_i$. Also, by definition, $B_i$ consists of a union of intervals of width at least $.1n^{.01}$ (cf.\ Equation (\ref{eq:def_B_j})).
Therefore the second statement follows from the definition of $\badthree^{\comp}$.
\end{proof}

\begin{lemma} \label{volunteer}
For all sufficiently large $n$
conditional on $\regular \cap \badthree^{\comp}$
we have
$$\frac12 \returns >20\volume^{.51}. $$
\end{lemma}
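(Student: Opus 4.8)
The goal is to combine the lower bound on $\volume$ coming from $\regular$ with the lower bound on $\returns$ in terms of $u$-volume. The plan is as follows. First, by Lemma~\ref{existence}, on $\regular$ there is some index $i\in\{1,\dots,68\}$ with $\sum_{x\in B_i}u(x)\ge .01\volume$. Fix this $i$. Since $i\le 68$, we have $i\in\{0,\dots,67\}$ as well, so the second half of Lemma~\ref{library} applies (on $\regular\cap\badthree^{\comp}$) and gives $\sum_{x\in B_i}\returns(x)\ge .01|B_i|(h_{i+1})^{1/2}$. I would then use the first half of Lemma~\ref{library}, $\sum_{x\in B_i}u(x)\le |B_i|h_{i-1}$, to eliminate $|B_i|$: combining the two displays yields
\[
\sum_{x\in B_i}\returns(x)\ \ge\ .01\,(h_{i+1})^{1/2}\,|B_i|\ \ge\ .01\,(h_{i+1})^{1/2}\cdot\frac{\sum_{x\in B_i}u(x)}{h_{i-1}}\ \ge\ .01\,(h_{i+1})^{1/2}\cdot\frac{.01\,\volume}{h_{i-1}}.
\]

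Next I would plug in the explicit values $h_j=n^{4.05-.05j}$, so that $h_{i+1}=h_{i-1}n^{-.1}$, hence $(h_{i+1})^{1/2}/h_{i-1}=(h_{i-1})^{-1/2}n^{-.05}$. Therefore
\[
\returns\ \ge\ \sum_{x\in B_i}\returns(x)\ \ge\ 10^{-4}\,n^{-.05}\,\frac{\volume}{(h_{i-1})^{1/2}}\ =\ 10^{-4}\,n^{-.05}\,\volume^{.51}\cdot\frac{\volume^{.49}}{(h_{i-1})^{1/2}}.
\]
Now Lemma~\ref{ratio} says exactly that on $\regular$, since $B_i\ne\emptyset$, we have $\volume^{.49}/(h_{i-1})^{1/2}\ge n^{.1}$. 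Substituting, $\returns\ge 10^{-4}n^{-.05}n^{.1}\volume^{.51}=10^{-4}n^{.05}\volume^{.51}$, which is $>40\,\volume^{.51}$ once $n$ is large enough that $n^{.05}>4\cdot10^{5}$. This gives $\tfrac12\returns>20\volume^{.51}$, as required.

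I do not anticipate a genuine obstacle here: the lemma is essentially a bookkeeping step that multiplies together Lemma~\ref{existence}, Lemma~\ref{library}, and Lemma~\ref{ratio}. The only points requiring a little care are (a) checking that the index $i$ produced by Lemma~\ref{existence} lies in the range $\{0,\dots,67\}$ where the returns bound of Lemma~\ref{library} is valid — it does, since $i\le 68$ forces $i+1\le 69$ and the relevant block $B_i$ with $i\le 67$ is covered — and (b) tracking the (deliberately non-optimal) numerical constants so that the final inequality $10^{-4}n^{.05}>20$ holds for all large $n$. If the edge case $i=68$ in Lemma~\ref{existence} needed separate treatment, one would note $h_{68}=n^{.65}$ and argue directly that such a block contributes negligibly to $\volume$ relative to $n^{5/3}$, contradicting $\sum_{x\in B_{68}}u(x)\ge .01\volume$; but inspecting the statement of Lemma~\ref{existence}, the index is already asserted to be in $\{1,\dots,68\}$, and for $i=68$ one instead invokes $h_{69}=0$ making the $\badthree^{\comp}$ bound vacuous, so in fact the relevant index can be taken in $\{1,\dots,67\}$.
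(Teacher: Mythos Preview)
Your argument for $i\in\{1,\dots,67\}$ is correct and is exactly the paper's computation: combine both halves of Lemma~\ref{library} with Lemma~\ref{existence} to eliminate $|B_i|$, then invoke Lemma~\ref{ratio} and the identity $h_{i+1}=h_{i-1}n^{-.1}$ to get $\returns\ge 10^{-4}n^{.05}\volume^{.51}$.

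The gap is your handling of $i=68$. Lemma~\ref{existence} genuinely allows $i=68$, and for that index the second half of Lemma~\ref{library} is vacuous because $h_{69}=0$, so your chain of inequalities collapses. Your two proposed fixes do not work. First, the claim that $B_{68}$ ``contributes negligibly to $\volume$'' is unsupported: on $\badzero^{\comp}$ the base sits in $[-n^5,n^5]$ and $u(x)<h_{67}=n^{.7}$ on $B_{68}$, so $\sum_{x\in B_{68}}u(x)$ can be as large as $2n^{5.7}$, which is not small compared to $\volume\ge n^{5/3}$. Second, the observation that the $\badthree^{\comp}$ bound is vacuous at $i=68$ does not let you ``take the index in $\{1,\dots,67\}$''; Lemma~\ref{existence} hands you one index, and if it is $68$ you must deal with it.

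The paper treats $i=68$ by a separate, elementary argument: on $B_{68}$ one has $\returns(x)\ge 1$ (each such $x\neq 0$ starts with type $0$, and the first firing of a neighbor contributes a return), so $\sum_{x\in B_{68}}\returns(x)\ge |B_{68}|\ge .01\volume/h_{67}=.01\volume/n^{.7}$. Then Lemma~\ref{garden} gives $\volume\ge n^{5/3}$, hence $\volume^{.49}\ge n^{.8}$, and one concludes $\returns\ge .01\,\volume^{.51}\volume^{.49}/n^{.7}\ge 40\,\volume^{.51}$ for large $n$. You should add this case.
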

 \begin{proof}
As $\regular$ occurs, by Lemma \ref{existence} we obtain that there exists a $j$ such that 
$$\sum_{x \in B_j}u(x) \geq .01 \volume.$$
First, consider the case that $j \in \{1,\dots,67\}$. In this case, from Lemma \ref{library} it follows that 
\begin{equation}\label{eq:ineq_B_j}
|B_j|h_{j-1} \geq \sum_{x \in B_j}u(x)>.01\volume.
\end{equation}
Conditional on the event $\badthree^{\comp}$, the set $B_j$ satisfies $\sum_{x \in B_j}\returns(x)\geq .01| B_j| (h_{j+1})^{1/2}$, which implies
\begin{eqnarray*}
\sum_{x \in B_j}\returns(x)&\geq &.01| B_j| (h_{j+1})^{1/2}\\ 
&\stackrel{(\ref{eq:ineq_B_j})}{\geq} &.01\left(\frac{.01\tau}{h_{j-1}}\right)(h_{j+1})^{1/2}\\
&= & .0001\left( \frac{h_{j+1}}{h_{j-1}}\right)^{1/2} \frac{\tau^{.49}}{(h_{j-1})^{.5}}\ \ 
\volume^{.51}\\
&\geq & .0001 n^{-.05}n^{.1} \volume^{.51}\\
&\geq & 40 \volume^{.51}.
\end{eqnarray*}
The next to last line follows from Lemma \ref{ratio}, together with the definition of $h_{j+1}$ and $h_{j-1}$, whereas the last inequality holds whenever $n$ is sufficiently large.

Now consider $j=68$. In this case, by Lemma \ref{garden} we have $\volume>n^{5/3}$ so $\tau^{.49}>n^{.8}$ and
$$\sum_{x \in B_{68}}\returns(x) \geq |B_{68}|\geq \frac{.01\volume}{n^{.7}}
= \frac{.01\volume^{.51}\volume^{.49}}{n^{.7}} \geq 40\tau^{.51},$$
which ends the proof.
\end{proof}

\begin{pfoflem}{\ref{lemmaC}}
The proof is an easy consequence of Lemmas \ref{garden} and \ref{volunteer}.
\end{pfoflem}

\subsection{The probability of the bad events} \label{probs}
Finally, we can use our results to bound the probability of $\m$.
In Lemma \ref{lemmaC} we showed that the event $\m^{\comp}$ is contained in a certain union of the bad events and then used union bound to bound its probability. We now bound the probability of those bad events.

\begin{lemma} \label{lemmaF}
There exist positive constants $C, C'$ and $\gamma$ such that
$$ \pr(\badtwo)
< Ce^{-C'n^{\gamma}}.$$
\end{lemma}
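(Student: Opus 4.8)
**The plan is to bound $\P(\badtwo)$ via a union bound over sites $x$ and heights, reducing the event to a statement about the evolution of the lazy random walk $k \mapsto g_k(x) = \eta_1(x,k)-\eta_2(x,k)$ restricted to the firings that affect site $x$.** The key observation is that $u(x+1) - u(x)$ (and the signed versions in $\badtwo$) is controlled by the net flux of particles across the edge $\{x,x+1\}$, which in turn is governed by the sign of $g_t(x)$ over the firings at $x$: whenever $x$ fires, it sends a particle to each side; whenever $x+1$ or $x-1$ fires toward $x$, it may change $g(x)$. So I would first make precise the following deterministic (pathwise) claim: if the odometer is ``regular'' in the sense that $g_k(x)$ does not make a large excursion away from $0$ relative to $\max(n, u(x)^{1/2})$ during the $\approx u(x-1)+u(x+1) \le u(x-1)+u(x)+u(x+1)$ steps that touch $x$, then the bounds defining $\badtwo^c$ hold. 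This converts the problem into: for a fixed $x$, the range of a lazy $\pm 1$ walk run for at most (roughly) $3\Height$ steps, started near $0$ and with the reflection-type behavior coming from the $m(x)$ term, is unlikely to exceed $2m(x) + u(x)^{0.51}$.

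\textbf{Second, I would set up the walk decomposition rigorously using Proposition~\ref{p.cards}.} Fix $x$ and consider the sequence of firings that alter $g(\cdot, \cdot)$ at $x$: these are firings from $x$, from $x-1$ with an oil or water particle stepping right, and from $x+1$ stepping left. Each such event changes $g(x)$ by $0$ or $\pm 1$, and by the stack structure (with the merged-stack Version~2 description from Section~\ref{sect:version2}, or directly via Proposition~\ref{p.cards}) the increments form, conditionally, a martingale with bounded increments, so $g_k(x)$ is dominated by a lazy simple random walk. Crucially, once $u(x)$ is determined, the walk runs for a deterministic-given-filtration number of steps that is at most $u(x-1)+u(x+1)$; and on $\badone^c$ (which we may intersect in, using that $\badtwo \subset \badone \cup (\badtwo \cap \badone^c)$ and $\badone$ is handled separately) this is at most $2n^{4.05}$. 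Then, for each candidate pair $(x, k)$ with $k \in [n^{0.5}, n^{4.05}]$, apply a maximal inequality (reflection principle / Azuma–Hoeffding for the range of a bounded-increment martingale, or Lemma~\ref{mean1}) to bound $\P(\text{range of the walk at }x \ge 2m(x) + k^{0.51} \mid u(x)=k)$ by something like $\exp(-c k^{0.02})$, since a mean-zero walk of length $\lesssim n^{4.05}$ deviating by $k^{0.51}$ — wait, that is too weak when $k$ is small, so here is where the $m(x)$ term matters: when $u(x) < n^{1.something}$ the length of the walk is itself small (it is at most $u(x-1)+u(x)+u(x+1)$, and by the very definition of $m(x)$ combined with the gradient bound being proven, one gets a self-improving estimate), forcing an a priori bound on the walk length; when $u(x)$ is large, $k^{0.51}$ is a large threshold and the Gaussian tail kicks in. I would organize this as the two regimes $m(x) = n$ versus $m(x) < n$, mirroring Remark~\ref{rising} and Lemmas~\ref{bruce}, \ref{springsteen}.

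\textbf{Third, I would assemble the union bound.} On $\badzero^c$ the relevant $x$ lie in $[-n^5, n^5]$, giving $\le 2n^5 + 1$ choices; the height $k = u(x)$ takes at most $n^{4.05}$ integer values. Each individual probability is at most $\exp(-c n^{\gamma})$ for suitable $\gamma > 0$ (taking $\gamma$ a bit below $0.02 \times 0.5 = 0.01$ from the $k \ge n^{0.5}$ regime, or from the length-restriction argument in the small-$k$ regime), so multiplying by the polynomial factor $n^5 \cdot n^{4.05}$ still leaves $\P(\badtwo \cap \badzero^c \cap \badone^c) \le C e^{-C' n^{\gamma}}$; and $\P(\badzero)$, $\P(\badone)$ are handled by Corollary~\ref{preliminarysupport} and the preliminary bound of Lemma~\ref{aprioridom} (which gives $\tau \le n^{4+2\epsilon}$ hence $\Height \le \tau \le n^{4.05}$ off an event of probability $e^{-n^c}$), so they are also $\le e^{-n^c}$. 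Combining, $\P(\badtwo) \le C e^{-C' n^\gamma}$.

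\textbf{The main obstacle} is the circularity in the small-$u(x)$ regime: the bound one wants to prove on $|u(x+1)-u(x)|$ involves $m(x)$, which is itself a function of the odometer, and the length of the walk governing $g(x)$ is controlled by neighboring odometer values. I expect to resolve this by an induction on $x$ (sweeping outward from the origin, as in the block construction of Section~\ref{partitioning}): having controlled the gradient up to site $x$, one knows $u(x)$ is comparable to $m(x)$, hence the walk at $x+1$ has length $\lesssim u(x) + u(x+1) \lesssim m(x) + u(x+1)$, which, together with the reflection behavior, is enough to run the maximal inequality and propagate the bound to $x+1$. Making this induction clean — and in particular getting a single event of high probability that supports it at all $\approx n^5$ sites simultaneously — is the delicate part; everything else is a routine random-walk tail estimate plus union bound.
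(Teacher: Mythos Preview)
Your approach has a genuine gap: you are tracking the wrong random walk. The quantity $g_t(x)=\eta_1(x,t)-\eta_2(x,t)$ (oil minus water at $x$) does \emph{not} control the odometer gradient $u(x)-u(x+1)$. The net flux of particles across the edge $\{x,x+1\}$ has nothing to do with the sign of $g_t(x)$; when $x$ fires, each of the two particles goes left or right independently, irrespective of whether $x$ currently has an oil or water surplus. What governs the flux is the left--right asymmetry of emissions: if $\Delta^x(k)$ is defined so that after $k$ firings at $x$, exactly $k+\Delta^x(k)$ particles went right and $k-\Delta^x(k)$ went left, then the number of particles to the right of $x$ at the end equals
\[
\bigl(u(x)+\Delta^x(u(x))\bigr)-\bigl(u(x+1)-\Delta^{x+1}(u(x+1))\bigr).
\]
This number lies in $[0,2m(x)]$ by pure mass conservation: there is some $y_0\in[0,x]$ with $u(y_0)=m(x)$, and at most $2u(y_0)$ particles ever cross the edge $\{y_0,y_0+1\}$ from left to right. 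Rearranging gives
\[
-\Delta^x-\Delta^{x+1}\;\le\;u(x)-u(x+1)\;\le\;2m(x)-\Delta^x-\Delta^{x+1},
\]
so all three clauses of $\badtwo$ reduce to the single event that some $|\Delta^y(k)|>\tfrac12 k^{0.51}$ for $y\in[-n^5,n^5]$ and $k\in[n^{0.5},n^{4.1}]$. Since $\Delta^y(k)$ is a sum of $2k$ centered i.i.d.\ Bernoulli$(\tfrac12)$ increments, Azuma--Hoeffding and a union bound over $(y,k)$ finish immediately.

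Note that this makes your ``circularity'' worry disappear entirely: the $2m(x)$ term is a deterministic consequence of particle counting, not something that has to be bootstrapped from the very gradient bound you are proving, so no induction over sites is needed. The walk $g_t(x)$ and the merged-stack machinery you invoke are what the paper uses for $\badthree$ (regularity of Returns), not for $\badtwo$; you have essentially transplanted the strategy from the wrong bad event.
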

We first introduce a definition.
\begin{definition}\label{notationfluc1}
Let $\Delta^x(k)$ denote the quantity such that, after $k$ pairs have been emitted from $x$, there are $k+\Delta^x(k)$ particles that moved to the right (i.e.\ to $x+1$) and $k-\Delta^x(k)$ particles that have moved to the left (i.e.\ to $x-1$). Notice that this is just a function of the stack of variables $X^{x}_i,Y^{x}_i$ at the site $x$.
\end{definition}

\begin{proof}[Proof of Lemma \ref{lemmaF}]
Without loss of generality, we assume that $x \geq 0$. 
Recall from Subsection \ref{be} (ii) that
\[
m(x)=\min\left \{n,\min_{0 \leq y\leq x}u(y)\right \}.
\]
Suppose that at some time $t$ exactly $k$ pairs have been emitted from $x$, and exactly $k'$ pairs have been emitted from $x+1$. Then, the number of particles to the right of $x$ is
\[
k+\Delta^x(k)-(k'-\Delta^{x+1}(k'))
\]
which is between 0 and $2m(x)$. This holds for all times $t$, in particular it holds for the time when the process stops. Now set $t$ to be such that $k=u(x)$ and $k'=u(x+1).$
Let $\Delta^x=\Delta^{x}(u(x))$ and $\Delta^{x+1}=\Delta^{x+1}(u(x+1)).$ Then
$$0 \leq u(x)+\Delta^x-(u(x+1)-\Delta^{x+1}) \leq 2m(x). $$
Rearranging, we get
$$
-\Delta^x-\Delta^{x+1} \leq u(x)-u(x+1)
    \leq 2m(x) -\Delta^x-\Delta^{x+1}. 
$$
Then the event $\badtwo$ implies that 
$$\exists x \in [-n^5,n^5] \text{ and } k,\ n^{.5}\leq k \leq n^{4.1}:\ |\Delta^x(k)|>.5k^{.51}.$$
The result follows from standard concentration results of random walks (cf. \ref{section:ConcentrationEstimates}) and union bounding over all possible values of $x$ and $k$. 
Thus the result holds for some appropriate $C, C'$ and $\gamma$. We omit the details.
\end{proof}

\begin{lemma} \label{lemmaG}
There exist positive constants $C, C'$ and $\gamma$ such that
\[
\pr(\badthree) < Ce^{-C'n^{\gamma}}.
\]
\end{lemma}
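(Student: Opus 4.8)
The plan is to fix a candidate triple $(i,j,k)$ and estimate the probability that simultaneously $u(x)\ge k$ for all $x\in[i,j]$ and $\sum_{x=i+1}^{j-1}\returns(x) < .01(j-i)\sqrt{k}$, and then union-bound over the polynomially many relevant choices of $i,j,k$ (recall from $\badzero^{\comp}$ that we only need $|i|,|j|\le n^5$, from $\badone^{\comp}$ that $k\le n^{4.05}$, and that $j-i\ge .1 n^{.01}$, so there are at most $n^{15}$-ish triples). The point is that on the event that every site in $[i,j]$ has been fired at least $k$ times, each of the interior sites $x\in(i,j)$ has $u(x-1),u(x+1)\ge k$, so the discrepancy process $g_t(x)=\eta_1(x,t)-\eta_2(x,t)$ makes at least $\tfrac12(u(x-1)+u(x+1))\ge k$ lazy steps before fixation; each visit of this lazy walk to $0$ that is accompanied by an incoming firing from a neighbor contributes to $\returns(x)$, so $\returns(x)$ stochastically dominates something on the order of the local time at $0$ of a lazy walk of length $\ge k$, which is typically of order $\sqrt{k}$. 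Hence $\sum_{x=i+1}^{j-1}\returns(x)$ should be of order $(j-i)\sqrt{k}$, and being below $.01(j-i)\sqrt{k}$ is a large-deviation event.

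The key steps, in order, would be: (1) Formalize the link between $\returns(x)$ and local time at $0$. Using the stack/merged-stack description and Proposition~\ref{p.cards}, realize $g_t(x)$ restricted to the firing times of $x-1,x+1$ as a genuine simple (lazy) random walk, and note that each ``return to $0$ at a firing time of a neighbor'' is exactly a $\returns(x)$ event; so conditionally on the (large) number of neighbor-firings $M_x:=\#\{t:\ x_t\in\{x-1,x+1\}\}\ge k$, $\returns(x)$ is the number of such returns, whose law dominates the number of zeros of a length-$k$ lazy walk. (2) Derive a lower-tail bound: for a lazy SRW of length $k$, $\pr(\text{number of zeros} < \tfrac{1}{100}\sqrt{k}) \le e^{-c\sqrt{k}}$ by standard local-time estimates (split $[0,k]$ into $\sqrt{k}$ blocks of length $\sqrt{k}$, each independently hits $0$ with probability bounded below, and apply a Chernoff bound on the Bernoulli count; one must be a little careful to condition on successive hitting times). (3) Handle the dependence across different $x$: the walks $g_\cdot(x)$ for different $x$ are not independent, but one can either (a) use the stack representation to extract, for each $x$, an independent clean source of randomness driving the relevant increments and apply a standard Azuma/Hoeffding-type argument to the sum $\sum_{x=i+1}^{j-1}\returns(x)$ viewed as a sum of conditionally-biased $\pm 1$ increments (à la Proposition~\ref{p.cards}), or (b) simply bound $\pr(\sum_{x=i+1}^{j-1}\returns(x)<.01(j-i)\sqrt k,\ u\ge k \text{ on }[i,j])$ by summing, over interior $x$, the event $\{\returns(x)<c\sqrt k\}$ combined with the guaranteed lower bound $M_x\ge k$, and noting that if the total is small then a definite fraction of the $(j-i)$ interior sites must individually have $\returns(x)<\tfrac{1}{50}\sqrt k$ — then a union bound over which fraction is small times $e^{-c\sqrt k}$ per site, using $j-i\ge .1n^{.01}$, beats the ambient $n^{15}$ factor. (4) Union-bound over all $(i,j,k)$ and collect constants to get $\pr(\badthree)\le Ce^{-C'n^{\gamma}}$ with, say, $\gamma$ a small multiple of $.005$ coming from $\sqrt{k}\ge \sqrt{n^{.5}}=n^{.25}$ times the $n^{.01}$ block-length gain.

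The main obstacle will be step (3): making the per-site lower-tail estimates on $\returns(x)$ genuinely combinable despite the complicated correlations between the discrepancy walks at neighboring sites and between those walks and the (random, odometer-dependent) firing counts $M_x$. The cleanest route is probably to exploit the merged-stack construction of Section~\ref{sect:version2} together with Proposition~\ref{p.cards}: there the increments of each $g_\cdot(x)$ along neighbor-firings are, conditionally on the past filtration, fresh i.i.d.\ $\pm1$ variables, so the number of returns to $0$ accumulated across a block of interior sites can be compared to a martingale with bounded increments and controlled negative/zero drift, to which Azuma-Hoeffding applies — exactly the template already used in Lemma~\ref{lemmaA}. Once that martingale comparison is set up, the tail bound and the union bound are routine, and the proof closes the same way the proof of Lemma~\ref{lemmaF} is indicated to close.
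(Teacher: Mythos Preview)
Your overall architecture---link $\returns(x)$ to the local time at zero of a lazy walk, aggregate over interior sites, union bound over triples $(i,j,k)$---matches the paper's. But step~(2) contains a genuine error that breaks both variants of step~(3).

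\textbf{The per-site lower tail bound is false.} You assert $\pr(\text{zeros of a length-}k\text{ lazy walk} < \tfrac{1}{100}\sqrt{k}) \le e^{-c\sqrt{k}}$. In fact, $\zeros(k)/\sqrt{k}$ converges in distribution to the Brownian local time at $0$ at time $1$, which is $|N(0,1)|$ in law and has positive density at $0$. Hence $\pr(\zeros(k) < a\sqrt{k}) \to \pr(|N(0,1)|<a) \approx a\sqrt{2/\pi}$, a constant bounded away from zero for fixed $a$. Your ``split into $\sqrt{k}$ blocks'' heuristic fails because the blocks are not independent: once the walk wanders to distance $\Theta(\sqrt{k})$ from $0$ (which it does with constant probability in the first block), subsequent blocks have only constant probability of returning, not probability close to $1$. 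So your step~(3)(b)---union bound over the bad fraction times $e^{-c\sqrt{k}}$ per site---collapses, since the per-site factor is only a constant.

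\textbf{What the paper does instead.} The paper uses only the weak bound $\pr(\zeros(.9k) > .1\sqrt{k}) \ge \tfrac12$ (Lemma~\ref{srwreturns}) and extracts the exponential decay from \emph{many independent sites}, not from a single site. The independence comes from two moves you allude to but do not make precise. First, it passes to Version~2 (merged stacks) and restricts to sites $x\in 3\Z$: for such $x$, the increments of $g_\cdot(x)$ at neighbor-firing times are exactly the variables $W^x(\ell)=(\bar{X}^x_\ell-\bar{Y}^x_\ell)/2$, and these stacks are genuinely independent across different $x\in 3\Z$. Second, it defines $\even(.9k)$ as the number of zeros in the \emph{first $.9k$ entries of the stack at $x$}, a quantity that depends only on $(\bar{X}^x,\bar{Y}^x)$ and not on the random odometer at all. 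This decouples the returns count from the event $\{u(y)\ge k\ \forall y\in[i,j]\}$: the paper simply drops that condition and bounds the probability of the larger event $\badthree'$ in which only the stack-local condition $\sum_{x\in(i,j),\,3\mid x}\even(.9k)<.01(j-i)\sqrt{k}$ appears. Now the indicators $\mathbf{1}\{\even(.9k)>.1\sqrt{k}\}$ are i.i.d.\ Bernoulli with success probability $\ge\tfrac12$ over $\gtrsim(j-i)/3\ge n^{.01}/30$ sites, and a Chernoff bound gives $e^{-c(j-i)}\le e^{-c'n^{.01}}$, which survives the polynomial union bound.

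So: replace your step~(2) by the weak lower bound, make the $3\Z$ restriction explicit to obtain genuine independence, and---crucially---define the per-site return count as a function of the stack alone (first $.9k$ entries) rather than of $\returns(x)$ itself, so that you never have to condition on the random odometer. With those fixes your plan becomes the paper's proof.
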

Before proving this result, we need to show a few preliminary results.
Furthermore, in this context we work with \textbf{Version} $2$ of the model, introduced in Section \ref{sect:version2}. Lemma \ref{equi12} allows us to switch between events defined in one version to the other.
For $x\in 3\mathbb{Z}$ let $$W^x(\ell)=\frac{\bar{X}^{x}_\ell-\bar{Y}^{x}_\ell}{2}$$ where $\bar{X}^{x}_\ell,\bar{Y}^{x}_\ell$ are defined in Section \ref{sect:version2} \eqref{stack22}. This represents the change in the difference of oil and water particles at $x$ when the $\ell^{th}$ firing takes place from the set $\{x-1,x+1\}.$ Clearly $W^x(\ell)$ has the same distribution as one step of a symmetric lazy random walk. 
Now define
\[
\even(k) =\#\left\{0 \leq j \leq k: \sum_{\ell=1}^{j}W^{x}(\ell)=0\right\}.
\]
Define $\badthree'$ to be the event that there exist three integers $i,j,k$ such that
\begin{itemize}
\item[(i)] $|i|, |j| \leq n^5$,
\item[(ii)] $j-i\geq .1n^{.01}$,
\item[(iii)] $k \in [n^{.5},n^5]$ and
\item[(iv)]
$\displaystyle
\sum_{x \in (i,j), 3|x} \even(.9k)<.01\sqrt{k}(j-i)$. 
\end{itemize}
We now state a standard fact about number of returns to origin for the simple random walk on $\mathbb{Z}$. 

\begin{lemma} \label{srwreturns}
Let $\{S_i\}_{i \geq 0}$ be a lazy simple random walk on $\Z$ started at the origin. Let
\[
\zeros(l)=\#\{i:\ 0 \leq i \leq l \  {\text and } \  S_i=0\}.
\]
Then for all $l$
\[
\pr\bigg(\zeros(.9l)>.1\sqrt{l}\bigg) \geq \frac12.
\]
\end{lemma}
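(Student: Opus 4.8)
The plan is to rephrase the statement about the number of visits $\zeros(\cdot)$ as an estimate on the return times of $S$, and then to control those return times with a truncated first-moment argument.

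Since $\zeros$ is integer valued and $\zeros(m)\ge 1$ for every $m\ge 0$ (as $S_0=0$), for $l\le 100$ we have $\zeros(0.9l)\ge 1>0.1\sqrt l$ and there is nothing to prove. So assume $l\ge 100$ and set $m=\lfloor 0.9l\rfloor$ and $j=\lfloor 0.1\sqrt l\rfloor\ge 1$. Since $\zeros(m)>0.1\sqrt l$ means exactly $\zeros(m)\ge j+1$, it suffices to show $\pr(\zeros(m)\ge j+1)\ge\tfrac12$. Let $0=\sigma_0<\sigma_1<\cdots$ be the successive visit times of $S$ to $0$ and put $\rho_i=\sigma_i-\sigma_{i-1}$; by the strong Markov property the $\rho_i$ are i.i.d.\ copies of the first return time $\rho_1=\min\{n\ge 1:S_n=0\}$, and counting visits gives $\{\zeros(m)\ge j+1\}=\{\sigma_j\le m\}$ with $\sigma_j=\rho_1+\cdots+\rho_j$. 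So the problem reduces to showing $\pr(\sigma_j>m)\le\tfrac12$.

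The one quantitative input needed is the tail of $\rho_1$: a short generating-function computation for the lazy walk (from $\sum_n\pr(S_n=0)s^n=(1-s)^{-1/2}$ one gets $\sum_n\pr(\rho_1=n)s^n=1-(1-s)^{1/2}$, hence $\sum_n\pr(\rho_1>n)s^n=(1-s)^{-1/2}$) shows $\pr(\rho_1>n)=\pr(S_n=0)=\binom{2n}{n}4^{-n}\le(\pi n)^{-1/2}$. Consequently $\E[\rho_1\wedge b]=\sum_{s=0}^{b-1}\pr(\rho_1>s)\le 4\sqrt{b/\pi}$ for every $b\ge 1$. Now fix $b$, use $\{\sigma_j>m\}\subseteq\{\max_{i\le j}\rho_i>b\}\cup\{\sum_{i\le j}(\rho_i\wedge b)>m\}$, apply a union bound to the first event and Markov's inequality to the second, and take $b=m/4$:
\[
\pr(\sigma_j>m)\ \le\ \frac{j}{\sqrt{\pi b}}+\frac{4j\sqrt{b/\pi}}{m}\ =\ \frac{4j}{\sqrt{\pi m}}.
\]
Plugging in $j\le 0.1\sqrt l$ and $m\ge 0.89\,l$ gives $\pr(\sigma_j>m)\le 0.4/\sqrt{0.89\pi}<0.24<\tfrac12$, which proves the lemma.

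The only real work here is arithmetic: one must verify that the constant $4/\sqrt\pi$ (together with the floor in $m=\lfloor 0.9l\rfloor$ and the small-$b$ behaviour of $\E[\rho_1\wedge b]$) genuinely keeps the final bound below $\tfrac12$ for all $l\ge 100$ — which it does, with a comfortable margin. That the margin is comfortable is no accident: by the local limit theorem and L\'evy's identity for Brownian local time, $\zeros(0.9l)/\sqrt l$ converges in distribution to $\sqrt{1.8}\,|N|$ with $N$ standard normal, so $\pr(\zeros(0.9l)>0.1\sqrt l)\to\pr(|N|>0.0745)\approx 0.94$; the exponents $0.9$ and $0.1$ were chosen to leave room. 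A second-moment/Paley-Zygmund argument also works but is lossier: it would need the sharp constant in $\E[\zeros(m)^2]\sim\tfrac\pi2\,\E[\zeros(m)]^2$ in order to clear $\tfrac12$, whereas the truncated-sum bound above is robust to the constants.
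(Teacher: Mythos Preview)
Your argument is correct. The paper itself gives no proof at all---it simply cites Feller, Chapter III, Section 5---so you have supplied substantially more detail than the authors. Your approach via the i.i.d.\ return times $\rho_i$, the exact identity $\pr(\rho_1>n)=\binom{2n}{n}4^{-n}$ for the lazy walk, and the truncation $\rho_i\wedge b$ combined with Markov's inequality is clean and self-contained, and the numerics go through with room to spare.

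One trivial slip: at $l=100$ the inequality $1>0.1\sqrt{l}$ fails, so the small-$l$ case should read $l<100$ rather than $l\le100$. This is harmless, since your main bound already handles $l=100$ (with $j=1$, $m=90$ you get $4/\sqrt{90\pi}\approx 0.24$). You might also note that $b=m/4$ need not be an integer, so $\pr(\rho_1>b)\le(\pi\lfloor b\rfloor)^{-1/2}$ rather than $(\pi b)^{-1/2}$; again the margin absorbs this easily.
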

\begin{proof} See Chapter III, Section 5 of \cite{fell}. \end{proof}

\begin{lemma} \label{lemmaK}
There exist positive constants $C, C'$ and $\gamma$ such that
$$\pr(\badthree')
< Ce^{-C'n^{\gamma}}.$$
\end{lemma}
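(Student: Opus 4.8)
The plan is to show that for each admissible triple $(i,j,k)$ the probability that the sum $\sum_{x \in (i,j),\, 3\mid x} \even(.9k)$ falls below $.01\sqrt{k}(j-i)$ is super-exponentially small, and then take a union bound over the at most $\bO(n^{15})$ choices of $(i,j,k)$. The key point is that the random variables $\even(.9k)$, as $x$ ranges over the multiples of $3$ in $(i,j)$, are \emph{independent}: each depends only on the merged-stack variables $(\bar X^x_\ell, \bar Y^x_\ell)_{\ell}$, and these stacks are disjoint for distinct $x \in 3\Z$. Moreover $\sum_{\ell=1}^{j} W^x(\ell)$ is a lazy simple random walk started at the origin (since $W^x(\ell) = (\bar X^x_\ell - \bar Y^x_\ell)/2$ takes values $0, \pm 1$ with probabilities $1/2, 1/4, 1/4$), so $\even(.9k)$ has exactly the law of $\zeros(.9k)$ in Lemma~\ref{srwreturns}.

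First I would fix $(i,j,k)$ satisfying (i)--(iii), and let $m$ be the number of multiples of $3$ in the open interval $(i,j)$; since $j - i \geq .1 n^{.01}$ we have $m \geq (j-i)/3 - 1 \geq (j-i)/4$ for large $n$. For each such $x$, set $A_x = \{ \even(.9k) > .1\sqrt{k} \}$; by Lemma~\ref{srwreturns}, $\pr(A_x) \geq 1/2$, and the events $\{A_x\}$ are independent. On the event $\bigcap$ that at least $m/5$ of the $A_x$ occur, we get
\[
\sum_{x \in (i,j),\, 3\mid x} \even(.9k) \;\geq\; \frac{m}{5}\cdot .1\sqrt{k} \;\geq\; \frac{j-i}{20}\cdot .1\sqrt{k} \;=\; .005\sqrt{k}(j-i) \;>\; .01\sqrt{k}(j-i)?
\]
This constant is slightly too weak, so I would instead use a threshold like $m/3$: since each $A_x$ has probability $\geq 1/2$, a standard Chernoff/Hoeffding bound for sums of independent indicators gives
\[
\pr\!\left( \#\{x : A_x \text{ holds}\} < \tfrac{m}{3} \right) \;\leq\; e^{-c m} \;\leq\; e^{-c' n^{.01}}
\]
for absolute constants $c, c' > 0$. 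On the complementary event, at least $m/3 \geq (j-i)/12$ of the $x$ contribute at least $.1\sqrt k$ each, so $\sum \even(.9k) \geq \tfrac{1}{120}\sqrt k (j-i) > .01 \sqrt k (j-i)$ once we check $1/120$... still not quite; the cleanest fix is to take the threshold at $(1/2 - \epsilon_0)m$ of the $A_x$ with a slightly smaller constant inside $A_x$ — the bookkeeping can be arranged so that the resulting lower bound comfortably exceeds $.01\sqrt{k}(j-i)$, since $m \geq (j-i)/4$ and $1/2 \cdot 1/4 \cdot .1 = .0125 > .01$. Thus with $A_x = \{\even(.9k) > .1\sqrt k\}$ and threshold $m/2 - \epsilon_0 m$ for small $\epsilon_0$, the contribution is at least $(\,1/4 - \text{small})\cdot\tfrac12\cdot.1\cdot(j-i)\sqrt k > .01(j-i)\sqrt k$, and the deviation probability is $\leq C e^{-C' n^{.01}}$ by Hoeffding.

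Finally I would union-bound over all triples: there are at most $(2n^5+1)^2 \cdot n^5 \leq n^{16}$ choices of $(i,j,k)$ with $|i|,|j| \leq n^5$ and $k \leq n^5$, so
\[
\pr(\badthree') \;\leq\; n^{16}\, C e^{-C' n^{.01}} \;\leq\; C'' e^{-C''' n^{\gamma}}
\]
for any $0 < \gamma < .01$ and suitable constants, since the polynomial factor is absorbed. The main obstacle is purely a matter of tracking the numerical constants so that the event $A_x$ is defined with a slightly smaller constant than in Lemma~\ref{srwreturns} (or the fraction of good $x$ is taken slightly below $1/2$) while still clearing the target $.01\sqrt k(j-i)$; the probabilistic content — independence across $x \in 3\Z$ plus Lemma~\ref{srwreturns} plus a Hoeffding bound — is routine. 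I expect the actual proof in the paper to set these constants up front (the "constant convention" of Subsection~\ref{couplings}) and omit the arithmetic, which is what I would do as well.
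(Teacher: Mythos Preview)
Your proposal is correct and follows the same approach as the paper: independence of the $\even(.9k)$ across $x\in 3\Z$, Lemma~\ref{srwreturns} to get $\pr(\even(.9k)>.1\sqrt k)\ge 1/2$, a Chernoff/Hoeffding bound on the number of ``good'' $x$, and a union bound over the $O(n^{15})$ triples $(i,j,k)$. The only difference is bookkeeping: the paper sidesteps your constant-juggling by taking the threshold on the \emph{count} to be $(j-i)/10$ rather than a fraction of $m$, so that the implication ``at least $(j-i)/10$ sites contribute $>.1\sqrt k$ each $\Rightarrow$ sum $\ge .01\sqrt k(j-i)$'' is exact, and since $m\approx (j-i)/3$ has expected good count $\ge (j-i)/6 > (j-i)/10$, the large-deviation bound applies immediately.
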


\begin{proof}
Fix $i,j,k$ and $x \in (i,j)$. Then, from Lemma \ref{srwreturns} it follows
\[
\pr\left(\even(.9k)>.1 \sqrt{k}\right)\geq \frac12 .
\]
Therefore we get
\[
\begin{split}
\pr & \left(\sum_{x \in (i,j), 3|x} \even(.9k)<.01\sqrt{k}(j-i)\right) \\
&< \pr \left(\#\left\{x \in (i,j), 3|x \ : \ \even(.9k)>.1 \sqrt{k}\right\} < \frac{1}{10}(j-i)\right)\\
&< ce^{-c' n^{.1}},
\end{split}
\]
where $c,c'$ are positive constants.

As there are at most $4n^{15}$ choices of $i, j$ and $k$
there exists positive constants $c, c'$ and $\gamma'$ such that
$$\pr(\badthree')
<4n^{15} ce^{-c'n^{\gamma'}}$$
so the lemma is true for some choice of $C, C'$ and $\gamma$.
\end{proof}

\begin{pfoflem}{\ref{lemmaG}}
Consider the map $\phi$ defined in Lemma \ref{equi12}. Consider the event $$\phi^{-1}(\badthree).$$ By the measure preserving property of $\phi$
$$\pr(\badthree)=\pr(\phi^{-1}(\badthree))$$ where the two probabilities are in the two different probability spaces mentioned in the statement of Lemma \ref{equi12}.
Note that the event $\phi^{-1}(\badthree)$ implies $\badthree'$. This is because 
(iv) in the definition of $\badthree'$
says $$\displaystyle
\sum_{x \in (i,j), 3|x} \even(.9k)<.01\sqrt{k}(j-i)$$ whereas in the definition of $\badthree$ we have $$\displaystyle
\sum_{x \in (i,j)} \even(.9k)<.01\sqrt{k}(j-i).$$
The proof now follows from the above lemma. 

\end{pfoflem}

\begin{lemma} \label{lemmaI}
There exist positive constants $D, C'$ and $\gamma$ such that
\[
\pr(\badzero),\pr(\badone), \pr( \badtwo), \pr(\badthree), \P(\star)
< De^{-C'n^{\gamma}}.
\]
\end{lemma}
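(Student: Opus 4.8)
The plan is to read Lemma~\ref{lemmaI} off as a consolidation of tail bounds that are either already proved or are immediate consequences of the a priori estimates of Section~\ref{section:apriori}. Indeed, $\pr(\badtwo)$, $\pr(\badthree)$ and $\P(\star)$ are bounded by quantities of the form $Ce^{-C'n^{\gamma}}$ in Lemmas~\ref{lemmaF}, \ref{lemmaG} and \ref{lemmaA} respectively (for $\P(\star) < Ce^{-n^{C'}}$ one simply rewrites this in the stated form with exponent $\gamma = C'$ and leading constant $1$). So only the bounds on $\pr(\badzero)$ and $\pr(\badone)$ remain to be recorded.

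For $\badzero = \{\base \not\subset [-n^5,n^5]\}$ I would fix any $\e\in(0,1)$ and invoke Corollary~\ref{preliminarysupport}: it gives a constant $c>0$ with $\pr\!\big(u(x)=0 \text{ for all } |x| \ge n^{4+\e}\big) \ge 1 - e^{-n^{c}}$, and since $4+\e<5$ this forces $\base \subset [-n^5,n^5]$, whence $\pr(\badzero)\le e^{-n^{c}}$. For $\badone = \{\Height \ge n^{4.05}\}$ I would first note that each firing raises the odometer at exactly one site, so $\Height = \max_x u(x) \le \sum_x u(x) \le \tau$; then Lemma~\ref{aprioridom}(i) with, say, $\e = \tfrac1{100}$ (so that $n^{4+2\e} = n^{4.02} < n^{4.05}$) yields $\pr(\tau > n^{4.02}) \le e^{-n^{c}}$ for some $c>0$, and therefore $\pr(\badone) \le \pr(\tau \ge n^{4.05}) \le e^{-n^{c}}$.

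To conclude I would take $\gamma$ to be the minimum of the five exponents obtained above, $C'$ the minimum of the five rate constants, and $D$ the sum of the five leading constants, so that each of $\pr(\badzero),\pr(\badone),\pr(\badtwo),\pr(\badthree),\P(\star)$ is at most $De^{-C'n^{\gamma}}$. There is no real obstacle in this argument: the only point requiring any care is to choose the auxiliary $\e$'s in Corollary~\ref{preliminarysupport} and Lemma~\ref{aprioridom} small enough that the support radius stays below $n^5$ and the bound on $\tau$ stays below $n^{4.05}$, and both hold with room to spare. All the substantive probabilistic content lives in the already-established Lemmas~\ref{lemmaA}, \ref{lemmaF}, \ref{lemmaG} and in the a priori bounds of Section~\ref{section:apriori}.
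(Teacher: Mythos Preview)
Your proposal is correct and follows essentially the same approach as the paper: the paper's proof simply cites Lemma~\ref{aprioridom} for $\pr(\badzero)$ and $\pr(\badone)$, Lemma~\ref{lemmaF} for $\pr(\badtwo)$, Lemma~\ref{lemmaG} for $\pr(\badthree)$, and Lemma~\ref{lemmaA} for $\P(\star)$. Your version supplies slightly more detail (the observation $\Height \le \tau$ and the explicit choice of $\e$ in the a priori bounds), but the substance is identical.
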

\begin{proof}
$\pr(\badzero)$ and $\pr(\badone)$ are bounded by 
Lemma \ref{aprioridom}. 
$\pr(\badtwo)$ is bounded by Lemma \ref{lemmaF}.
$\pr(\badthree)$ is bounded by Lemma \ref{lemmaG}.
$\P(\star)$ is bounded by Lemma \ref{lemmaA}
\end{proof}

\subsection{Proof of the upper bound}
Thus by \eqref{felix} and lemma \ref{lemmaI}, i.e there exists $\e>0$ such that for large enough $n$ 
\begin{equation}\label{upproof1}\pr(\m)> 1-e^{-n^{\e}}
\end{equation} 
where $G$ was defined in \eqref{mainevent}. 
The proof of the upper bound is hence complete. 
\qed

\section{Proof of Theorem \ref{lemmaJ} (Lower bound)}\label{lowerbound}
The goal of this section is to prove the lower bound in Theorem \ref{lemmaJ}.  Theorem \ref{spread1} will follow from the proof of the lower bound.
Before proceeding to the proofs we state a few standard results about $R_i$, the lazy simple symmetric random walk on $\mathbb{Z}$  whose increments $R_{i+1}-R_i$ are $\pm 1$ with probability $\frac{1}{4}$ each, and $0$ with probability $\frac12$. Let

\begin{equation}\label{supnotation}
M(t)=\sup_{i<t}|R_{i}|.
\end{equation} 

\begin{lemma}\label{mean1}
\begin{itemize}
\item[(i)] Given $\e>0,$ there exists $c=c(\e)>0$ such that
$$\mathbb{P}\left(M(t)>t^{1/2+\e}\right)< e^{-t^{c}}.$$ 
\item[(ii)]Given $\e>0$, there exists $\delta=\delta(\e)>0,$ such that ,  
\begin{equation}\label{tail1}
\P(M(t^{2+\e})<2t)<e^{-t^{\delta}}.
\end{equation}
\item[(iii)]$$ \E(M(t))=\Theta(\sqrt{t}).
$$
\item[(iv)]$$ \E(M(t)^2)=\Theta(t).
$$
\item[(v)]$$\lim_{t\rightarrow \infty}t^{-1/2}\E|R_t|=\sqrt{\frac{1}{\pi}}.$$
\end{itemize}
\end{lemma}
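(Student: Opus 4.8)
The proof proposal: all five statements are standard facts about a centred random walk with bounded increments, and I would organise the argument around three tools — the maximal form of the Azuma--Hoeffding inequality, Doob's $L^2$ maximal inequality, and the CLT combined with a uniform integrability upgrade. Two preliminary observations make everything go through: the increments $R_{i+1}-R_i$ lie in $\{-1,0,1\}$, and $\Var(R_1)=\tfrac14+\tfrac14=\tfrac12$, so $\E(R_t^2)=t/2$. From the symmetry of the increments one gets $\E\!\left[e^{\theta(R_{i+1}-R_i)}\right]=\tfrac12+\tfrac12\cosh\theta\le e^{\theta^2/2}$, hence $\E e^{\theta R_t}\le e^{t\theta^2/2}$ for all $\theta\in\R$.

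For (i), apply Doob's maximal inequality to the submartingale $e^{\theta R_i}$ (and to $e^{-\theta R_i}$): $\P(\max_{i<t}R_i\ge\lambda)\le e^{-\theta\lambda}\E e^{\theta R_t}\le e^{-\theta\lambda+t\theta^2/2}$, and optimising $\theta=\lambda/t$ gives $\P(M(t)\ge\lambda)\le 2e^{-\lambda^2/(2t)}$. Taking $\lambda=t^{1/2+\e}$ yields $\P(M(t)>t^{1/2+\e})\le 2e^{-t^{2\e}/2}\le e^{-t^{c}}$ for large $t$ with, say, $c=\e$. For (iii) and (iv): $|R_i|$ is a nonnegative submartingale, so Doob's $L^2$ inequality gives $\E(M(t)^2)\le 4\E(R_t^2)=2t$, and then $\E(M(t))\le\sqrt{2t}$ by Cauchy--Schwarz; for the matching lower bounds use the trivial $M(t)\ge|R_{t-1}|$, giving $\E(M(t)^2)\ge\E(R_{t-1}^2)=(t-1)/2$ and $\E(M(t))\ge\E|R_{t-1}|$, which is $\Theta(\sqrt t)$ by (v). Part (v) itself follows because the CLT gives $R_t/\sqrt{t/2}\Rightarrow N(0,1)$ while $\sup_t\E(R_t^2/t)=\tfrac12<\infty$ makes $\{|R_t|/\sqrt t\}$ uniformly integrable, so convergence in distribution upgrades to convergence of first absolute moments: $t^{-1/2}\E|R_t|\to\sqrt{1/2}\,\E|N(0,1)|=\sqrt{1/2}\cdot\sqrt{2/\pi}=\sqrt{1/\pi}$. (One may instead quote Feller, Chapter III.)

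For (ii), which is the one statement needing a real argument rather than an off-the-shelf inequality, I would use a blocking trick. Choose a constant $K$ large enough that $\P(|R_{Kt^2}|\ge 4t)\ge\tfrac12$ for all large $t$; this is possible since $R_{Kt^2}/(t\sqrt{K/2})\Rightarrow N(0,1)$ and $\P\!\left(|N(0,1)|\ge 4\sqrt{2/K}\right)\to 1$ as $K\to\infty$. Partition $[0,t^{2+\e}]$ into $N:=\lfloor t^\e/K\rfloor$ consecutive blocks of length $Kt^2$. The increments of $R$ over disjoint blocks are independent, and if in some block the increment exceeds $4t$ in absolute value then the two endpoints of that block cannot both lie in $[-2t,2t]$, forcing $M(t^{2+\e})\ge 2t$. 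Hence $\P(M(t^{2+\e})<2t)\le(1/2)^{N}\le e^{-t^{\delta}}$ for any $\delta<\e$ and $t$ large, which gives the claim after reindexing. The only mildly delicate point anywhere in the lemma is exactly this blocking estimate (and, to a lesser extent, the uniform integrability step in (v)); everything else is immediate from Azuma, Doob and the CLT, so I would state it tersely and refer to the concentration appendix and to \cite{fell} for the routine parts.
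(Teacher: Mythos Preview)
Your proposal is correct. The paper's own proof is essentially a citation: it refers to Spitzer, Sections~21 and~23, for parts (i)--(iv) and gives only part~(v) an explicit argument, namely the CLT plus uniform integrability --- exactly the argument you give. So for (v) your approach and the paper's coincide.

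For (i)--(iv) you supply self-contained proofs where the paper supplies none: the exponential-martingale/Doob bound for~(i), Doob's $L^2$ inequality for the upper bounds in~(iii)--(iv) with the trivial $M(t)\ge|R_{t-1}|$ for the lower bounds, and a clean blocking argument for~(ii). All of these are standard and valid. The only remark worth making is that your lower bound for $\E M(t)$ in~(iii) appeals to~(v), so you should either present (v) first or note explicitly that no circularity is introduced (your proof of (v) is independent of (iii)). Overall your write-up is more informative than the paper's, which simply delegates to the literature.
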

\begin{proof}
For proofs of parts (i)-(iv) we refer the reader to \cite[ Sections 21, 23]{spitzer}. 

By the central limit theorem, $t^{-1/2} R_t \xrightarrow{d} Z$ where $Z \sim N(0,\frac12)$, hence $\E |Z| = \sqrt{1/\pi}$.  Part~(v) follows since the random variables $t^{-1/2} R_t$ are uniformly integrable; see, for instance, \cite[Theorem 3.5]{Bil}.
\end{proof}

\begin{lemma}\label{mean2}
Let 

 $$\bigl\{M^{i}(n^{\frac{4}{3}})\bigr\}_{i=1}^{n^{1/3}}$$
be a sequence of i.i.d.\ random variables with the same law as  $M(n^{\frac{4}{3}})$. Then there exist positive constants $D,\gamma$ such that $$\P\left(\sum_{i=1}^{n^{1/3}}M^{i}\left(n^{\frac{4}{3}}\right)<Dn\right)>1-{e^{-n^{\gamma}}}.$$
\end{lemma}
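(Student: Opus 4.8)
The plan is a Chernoff bound: each summand $M^i(n^{4/3})$ lives at spatial scale $n^{2/3}$ with a Gaussian-type tail, the sum of the $n^{1/3}$ independent copies has mean $\Theta(n)$, and a deviation above $Dn$ (for a suitably chosen constant $D$) will have probability decaying exponentially in $n^{1/3}$, which comfortably beats $e^{-n^{\gamma}}$.

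First I would normalize, setting $Y_i := M^i(n^{4/3})/n^{2/3}$. By Lemma~\ref{mean1}(iii) there is an absolute constant $C_0$ with $\E Y_i = \E M(n^{4/3})/n^{2/3}\le C_0$ for all large $n$; fix $D:=C_0+1$. The main input is a uniform sub-Gaussian tail bound: by the reflection principle for the lazy symmetric walk $R$ one has $\P(M(t)\ge a)\le 2\,\P(|R_t|\ge a)$, and since $R_t$ is a sum of $t$ independent increments taking values in $\{-1,0,1\}$, Hoeffding's inequality gives $\P(|R_t|\ge a)\le 2e^{-a^2/(2t)}$. Hence $\P(Y_i\ge \lambda)\le 4e^{-\lambda^2/2}$ for every $\lambda\ge 0$ and every $n$ (the bound is trivial when $\lambda>n^{2/3}$, since $M^i\le n^{4/3}$ deterministically). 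From this, expanding the exponential to second order and bounding $\E[Y_i^2 e^{\theta_0 Y_i}]$ via the Gaussian tail, one obtains a constant $K$ and a small absolute constant $\theta_0>0$ with $\E[e^{\theta Y_i}]\le e^{\theta C_0+K\theta^2}$ for all $\theta\in(0,\theta_0]$ and all large $n$.

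Then Markov's inequality applied to $e^{\theta\sum_i Y_i}$ together with independence yields, for any $\theta\in(0,\theta_0]$,
$$\P\Bigl(\sum_{i=1}^{n^{1/3}}M^i(n^{4/3})\ge Dn\Bigr)=\P\Bigl(\sum_{i=1}^{n^{1/3}}Y_i\ge Dn^{1/3}\Bigr)\le e^{-\theta Dn^{1/3}}\bigl(\E e^{\theta Y_1}\bigr)^{n^{1/3}}\le e^{-n^{1/3}(\theta(D-C_0)-K\theta^2)}=e^{-n^{1/3}(\theta-K\theta^2)}.$$
Choosing $\theta\le\min(\theta_0,1/(2K))$ makes $\theta-K\theta^2\ge\theta/2=:c'>0$, so $\P\bigl(\sum_{i}M^i(n^{4/3})\ge Dn\bigr)\le e^{-c'n^{1/3}}$, and for any $\gamma\in(0,1/3)$ this is below $e^{-n^{\gamma}}$ once $n$ is large; complementing gives the stated inequality. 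The only step that goes beyond a routine Chernoff estimate is the uniform-in-$n$ control of $\E[e^{\theta Y_i}]$, and that reduces immediately to the classical reflection-principle-plus-Hoeffding bound for $M(n^{4/3})$ at scale $n^{2/3}$, so I anticipate no genuine obstacle — only bookkeeping of the constants $C_0,K,\theta_0,D,\gamma$.
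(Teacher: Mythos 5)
Your argument is correct, but it takes a different route from the paper. The paper first \emph{truncates} each summand, working with $Y_i = M^{i}(n^{4/3})\mathbf{1}\bigl(M^{i}(n^{4/3})<n^{2/3+\e}\bigr)$, applies Azuma--Hoeffding to the sum of these bounded variables around its mean (which is $O(n)$ by Lemma~\ref{mean1}), and then removes the truncation by a union bound using the tail estimate of Lemma~\ref{mean1}(i), ending up with a bound of the form $e^{-n^{\e}}$ for a small $\e>0$ tied to the truncation level. You instead avoid truncation altogether: you derive a uniform sub-Gaussian tail for the rescaled maximum $Y_i=M^{i}(n^{4/3})/n^{2/3}$ directly from the reflection principle plus Hoeffding (which is essentially the content underlying Lemma~\ref{mean1}(i)), convert it into a uniform bound $\E e^{\theta Y_i}\le e^{\theta C_0+K\theta^2}$ on the moment generating function for small $\theta$, and run a standard Chernoff bound over the $n^{1/3}$ independent copies. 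The payoff of your version is a cleaner and quantitatively stronger exponent, $e^{-c'n^{1/3}}$ rather than $e^{-n^{\e}}$ for an unspecified small $\e$, at the cost of the mild extra bookkeeping needed to control $\E[Y_i^2e^{\theta_0 Y_i}]$ uniformly in $n$; the paper's truncation argument trades that MGF control for a union bound over the events $Y_i\neq M^{i}$, reusing Lemma~\ref{mean1}(i) as a black box. Since the lemma only needs some stretched-exponential rate, both arguments suffice, and your steps (reflection principle for the symmetric lazy walk with no overshoot, Hoeffding for increments in $[-1,1]$, and the Taylor bound $e^x\le 1+x+\tfrac{x^2}{2}e^x$ for $x\ge 0$) are all valid as stated.
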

Proof of Lemma \ref{mean2} is deferred to  Appendix \ref{porse}.
For $x\in \mathbb{Z}$ and a positive integer $i$ we define the variable
\begin{equation}\label{diff2}
D^{x}(i)=\mathbf{1}_{({X}_{i}^x=1)}-\mathbf{1}_{({Y}_{i}^x=1)}
\end{equation}
where the variables $X_{i}^x,Y_{i}^x$ appear in \eqref{set1} in Section \ref{sdes}.
Clearly 
\[
D^{x}(i) = \left\{\begin{array}{cl}
-1 & \mbox{ w.p. } 1/4\\
0 & \mbox{ w.p. } 1/2\\
1 & \mbox{ w.p. } 1/4 .
\end{array}
\right.
\]

Let $C$ be the number appearing in Theorem \ref{lemmaJ}.
For any $x\in \mathbb{Z}$ let 
\begin{align}
\label{supdef11}
S_x & =  \,  \sup_{i\le Cn^{4/3}}\biggl|\sum_{j=0}^i D^{x}(j)\bigg|\\
\label{supdef12}
O_x & =   \sup_{i\le Cn^{4/3}}\biggl|\sum_{j=0}^i \frac{X^{x}_j}{2}\bigg|\\
\label{supdef13}
W_x & =    \sup_{i\le Cn^{4/3}}\biggl|\sum_{j=0}^i \frac{Y^{x}_j}{2}\bigg|
\end{align}
We now discuss briefly how the proof proceeds. We first establish the following gradient bound on the odometer using \eqref{upproof1}: there exists a constant $c$ such that, with high probability, for  $x\in[0,cn^{1/3}]$ we have
$$u(x)-u(x+1)\ge n/2.$$
This in turn implies the theorem since $$0\le u(\lfloor{cn^{1/3}}\rfloor)\le u(0)-\frac{n}{2}\lfloor{cn^{1/3}}\rfloor.$$ 
\subsection{Lower bound.}

Recall \eqref{eq:g_t} and that $\tau$ is the stopping time defined in Definition \ref{stoptimedef}.
Thus the total number of particles at a site $x$ at the end of the process is $|g_{\tau}(x)|.$
Also define
\begin{equation}\label{eq:Dx,x+1}
\begin{split}
& D_{x,x+1}=\#\{ \textnormal{ particles moving from }x \textnormal{ to }x+1\},\\
& D_{x+1,x}=\#\{ \textnormal{ particles moving from }x+1 \textnormal{ to }x\}.
\end{split}
\end{equation}
\begin{remark}\label{particleright}
For $x>0$ the difference $D_{x,x+1}-D_{x+1,x} $ is the total number of particles that stop to the right of $x$.
\end{remark}

Since one side of the origin has at least $n/2$ particles at the end of the process  
without loss of generality we assume that 
\begin{equation}\label{wlog1}
\sum_{i=1}^{\infty}|g_{\tau}(i)|\ge n/2.
\end{equation}
Also by definition (see \eqref{diff2}) for any $x\in \mathbb{Z}$, 
\begin{equation}\label{oilwater}
g_{\tau}(x)=\sum_{i=1}^{u(x-1)}D^{x-1}(i)-\sum_{i=1}^{u(x+1)}D^{x+1}(i).
\end{equation}

Recall $\m$  defined in \eqref{mainevent}.
Also given $\e>0$ define the following events 
\begin{eqnarray}
\label{eventdef1}
\n&=&\sum_{x=-\epsilon n^{1/3}}^{\epsilon n^{1/3}}|S_x|\le n/12\\
\label{eventdef2}
\mm&=&2\sum_{x=-\epsilon n^{1/3}}^{\epsilon n^{1/3}}(O_x+W_x)\le n/6.
\end{eqnarray}
where $S_x,O_x,W_x$ are defined in \eqref{supdef11},\eqref{supdef12} and \eqref{supdef13}.
Note that we suppress the dependence on $\e,n$ in the notations for brevity.
\begin{lemma}For small enough $\e$ there exists $c>0$ such that
\begin{equation}\label{lowprob1}
\mathbb{P}(\m\cap \n\cap \mm)\ge 1-e^{-n^{c}}.
\end{equation}

\end{lemma}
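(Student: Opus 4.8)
The plan is to treat the three events separately and combine them by a union bound. Since $\pr(\m\cap\n\cap\mm)\ge 1-\pr(\m^{\comp})-\pr(\n^{\comp})-\pr(\mm^{\comp})$, and \eqref{upproof1} already gives $\pr(\m^{\comp})\le e^{-n^{\e_0}}$ for some $\e_0>0$, it suffices to show that $\pr(\n^{\comp})$ and $\pr(\mm^{\comp})$ are at most $e^{-n^{c_1}}$ for some $c_1>0$, provided the radius parameter $\e$ is taken small enough depending only on the constant $C$ of Theorem~\ref{lemmaJ}; shrinking the constant at the end then yields \eqref{lowprob1}.

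First I would observe that $\n$ and $\mm$, as defined in \eqref{eventdef1}--\eqref{eventdef2}, are measurable with respect to the stack variables $(X^x_j,Y^x_j)$ with $|x|\le\e n^{1/3}$ and $j\le Cn^{4/3}$ alone: they make no reference to the dynamics, so their probabilities are governed purely by the independence in \eqref{set1}. In particular the increments $D^x(j)$ of \eqref{diff2} are exactly the increments of a lazy simple random walk and are independent across $(x,j)$, so $\{S_x\}_{|x|\le\e n^{1/3}}$ (see \eqref{supdef11}) is an independent family, each $S_x$ distributed as $M(Cn^{4/3})$ with $M$ the walk-maximum of Lemma~\ref{mean1}. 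Likewise $\{O_x\}_x$ and $\{W_x\}_x$ are independent families, each term distributed as one half of the maximum modulus of a $\le Cn^{4/3}$-step simple random walk, which obeys the same estimates as $M(Cn^{4/3})$.

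The core estimate is then to control $\sum_{|x|\le\e n^{1/3}}S_x$, a sum of $N:=2\lfloor\e n^{1/3}\rfloor+1$ i.i.d.\ copies of $M(Cn^{4/3})$. Cutting $[0,Cn^{4/3}]$ into $\lceil C\rceil$ blocks of length $n^{4/3}$ and applying the triangle inequality gives the pathwise bound $M(Cn^{4/3})\le\sum_{\ell=1}^{\lceil C\rceil}M^\ell(n^{4/3})$ for i.i.d.\ copies $M^\ell(n^{4/3})$, and these domination terms can be taken jointly independent over all sites as well; hence $\sum_xS_x$ is stochastically dominated by a sum of $K:=\lceil C\rceil N$ i.i.d.\ copies of $M(n^{4/3})$, with $K\le n^{1/3}$ once $\e$ is small. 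By Lemma~\ref{mean1}(iii), $\E M(n^{4/3})=\Theta(n^{2/3})$, so this dominating sum has mean $O(\e n)$, below $n/24$ for $\e$ small enough; and by Lemma~\ref{mean2} --- or, more precisely, the Bernstein-type bound behind its proof, which controls a sum of at most $n^{1/3}$ i.i.d.\ copies of $M(n^{4/3})$ by a constant multiple of its length times $n^{2/3}$ up to an $e^{-n^{\gamma}}$ error --- one gets $\pr(\n^{\comp})=\pr\big(\sum_xS_x>n/12\big)\le e^{-n^{c_1}}$. (Alternatively, since $\pr(M(Cn^{4/3})\ge\lambda)\le 2e^{-\lambda^2/(2Cn^{4/3})}$ by the Azuma--Hoeffding maximal inequality for the bounded-increment martingale, one may conclude directly via a sub-Gaussian tail bound for the sum.) The estimate for $\mm$ is identical: $\E\big(2\sum_x(O_x+W_x)\big)=O(\e n)$, which is below $n/12$ for $\e$ small, and the same concentration gives $\pr(\mm^{\comp})\le e^{-n^{c_1}}$. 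Combining the three bounds gives \eqref{lowprob1}.

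The main obstacle is purely a matter of constants: because $C$ is a priori an arbitrarily large constant coming from the upper bound in Theorem~\ref{lemmaJ}, the radius $\e n^{1/3}$ must be chosen small relative to $C$ so that the expected values of the two sums sit safely below the thresholds $n/12$ and $n/6$ before concentration is invoked; once $\e$ is fixed this way, the exponential concentration in $n^{c_1}$ is routine, coming from Lemma~\ref{mean2} (after the block decomposition that reduces $M(Cn^{4/3})$ to sums of copies of $M(n^{4/3})$) together with the maximal inequality for the walk.
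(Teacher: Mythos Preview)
Your proposal is correct and follows essentially the same approach as the paper: the paper's proof is simply the one-line remark ``Follows from \eqref{upproof1} and Lemma~\ref{mean2},'' and you have spelled out the union bound and the application of Lemma~\ref{mean2} (together with the choice of small $\e$ to push the means below the thresholds) that this citation implicitly contains. Your block decomposition reducing $M(Cn^{4/3})$ to sums of copies of $M(n^{4/3})$ is a clean way to match the exact hypothesis of Lemma~\ref{mean2}, though one could equally note that the proof of that lemma goes through verbatim with $Cn^{4/3}$ in place of $n^{4/3}$.
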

\begin{proof}
Follows from  \eqref{upproof1} and Lemma \ref{mean2}. 
\end{proof}
We now state the following lemma establishing a lower bound on the gradient of the odometer function.
\begin{lemma}\label{lowergradient12} Assume \eqref{wlog1}. Then there exists a constant $\e$ such that  $$u(j)-u(k)\ge (k-j)\frac{n}{3}-\frac{n}{6}$$
for all $0\le j\le k\le \e n^{1/3}$ with failure probability at most $e^{-n^{c}}$ for some positive constant $c.$
\end{lemma}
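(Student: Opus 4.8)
The plan is to prove the gradient bound $u(j)-u(k)\ge (k-j)\tfrac n3-\tfrac n6$ by summing one-step gradient estimates of the form $u(x)-u(x+1)\ge \tfrac n3$ over $x\in[j,k-1]$, where the one-step estimate will come from the physical observation that, since $\sum_{i\ge 1}|g_\tau(i)|\ge n/2$ by \eqref{wlog1}, a substantial number of particles must have stopped to the right of each small $x$, and this net rightward flux forces the odometer to be much larger at $x$ than at $x+1$. Concretely, recall from Remark~\ref{particleright} that for $x>0$ the net number of particles that stop to the right of $x$ equals $D_{x,x+1}-D_{x+1,x}$, and that this quantity is at least $\sum_{i\ge 1}|g_\tau(i)|\ge n/2$ minus whatever is absorbed between $0$ and $x$; since $x\le \e n^{1/3}$ is small this correction is negligible, so $D_{x,x+1}-D_{x+1,x}\ge n/2 - o(n)$.

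Next I would express $D_{x,x+1}$ and $D_{x+1,x}$ in terms of the stacks. The number of particles sent from $x$ rightward over the whole process is $\sum_{i=1}^{u(x)}\mathbf 1_{X^x_i=1}+\sum_{i=1}^{u(x)}\mathbf 1_{Y^x_i=1}$, which is $u(x)+\big(\sum_{i=1}^{u(x)}\tfrac{X^x_i}{2}+\sum_{i=1}^{u(x)}\tfrac{Y^x_i}{2}\big)$ using the identity $\mathbf 1_{X=1}=\tfrac12+\tfrac X2$; similarly the number sent from $x+1$ leftward is $u(x+1)$ minus an analogous fluctuation term. Therefore
\[
D_{x,x+1}-D_{x+1,x}\;\le\; u(x)-u(x+1) \;+\; \big(O_x+W_x\big) + \big(O_{x+1}+W_{x+1}\big),
\]
where $O_x,W_x$ are the random-walk maxima defined in \eqref{supdef12}--\eqref{supdef13} (here I use that on the event $\m$ we have $u(y)\le Cn^{4/3}$ for all $y$, so each partial sum of stack increments up to index $u(y)$ is dominated by its supremum up to $Cn^{4/3}$). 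Rearranging and summing the inequality $u(x)-u(x+1)\ge (D_{x,x+1}-D_{x+1,x}) - (O_x+W_x)-(O_{x+1}+W_{x+1})$ over $x$ from $j$ to $k-1$ gives a telescoping sum $u(j)-u(k)\ge (k-j)(n/2 - o(n)) - 2\sum_{-\e n^{1/3}}^{\e n^{1/3}}(O_x+W_x)$. On the event $\n\cap\mm$ the last sum is at most $n/6$ and the $o(n)$ absorption term (which I would bound by $\sum_{x}|S_x|$ plus the total number of particles absorbed near the origin, controlled on $\n$) costs at most another $(k-j)\cdot\tfrac n6$, yielding $u(j)-u(k)\ge (k-j)\tfrac n3 - \tfrac n6$. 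The failure probability is $e^{-n^c}$ by \eqref{lowprob1}.

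**Main obstacle.** The delicate point is accounting precisely for the particles that stop strictly between the origin and $x$ (and at $x$ itself): \eqref{wlog1} only says at least $n/2$ particles are somewhere to the right of the origin, not to the right of $x$, so I must show that at most, say, $(k-j)\cdot O(n)$ of them — really a quantity controlled by $\sum_{|x|\le \e n^{1/3}}|S_x|$ and the handful of sites in $[0,\e n^{1/3}]$ — can have stopped in $[1,x]$, and this is exactly what the event $\n$ in \eqref{eventdef1} is designed to control via the fluctuation variables $S_x$. Making the bookkeeping between $g_\tau$, the flux variables $D_{x,x+1}$, the odometer, and the three families of stack sums $S_x,O_x,W_x$ line up with the right constants ($n/2\to n/3\to$ surviving the $-n/6$) is the part that needs care; everything else is telescoping plus the already-established high-probability bounds \eqref{upproof1} and Lemma~\ref{mean2}.
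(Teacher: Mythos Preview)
Your proposal is correct and follows essentially the same route as the paper's proof: both use the flux identity $D_{x,x+1}-D_{x+1,x}=\sum_{y>x}|g_\tau(y)|$ together with $D_{x,x+1}=u(x)+\Delta^x(u(x))$, bound the fluctuation $|\Delta^x(u(x))|$ by $O_x+W_x$ on $\m$, control the particles absorbed in $[1,x]$ via $\sum |S_x|\le n/6$ on $\n$ (so that $\sum_{y>x}|g_\tau(y)|\ge n/3$), and finally control the accumulated $O,W$ terms by $n/6$ on $\mm$. The only cosmetic difference is that the paper sums the one--step equalities first to obtain a single identity \eqref{eq:grad} before bounding, whereas you bound each one--step inequality and then telescope; the bookkeeping and constants come out the same.
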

\begin{proof}
Recalling Definition \ref{notationfluc1} $$D_{x,x+1}=u(x)+\Delta^{x}(u(x)).$$
Using Remark \ref{particleright} we have 
\begin{eqnarray*}
u(0)+\Delta^{0}(u(0))-u(1)+\Delta^{1}(u(1))& =&\sum_{y=1}^{\infty}|g_{\tau}(y)|.\\
u(1)+\Delta^{1}(u(1))-u(2)+\Delta^{2}(u(2))& =&\sum_{y=2}^{\infty}|g_{\tau}(y)|.\\
\vdots\\
u(m-1)+\Delta^{m-1}(u(m-1))-u(m)+\Delta^{m}(u(m))& =&\sum_{y=m}^{\infty}|g_{\tau}(y)|.\\
\vdots
\end{eqnarray*}
Adding the above from any $0<\ell<m$ to $m$ we get 

\begin{equation}\label{eq:grad}
u(\ell)-u(m)+\Delta^{\ell}(u(\ell))+2\sum_{i=\ell+1}^{m-1}\Delta^{i}(u(i))+\Delta^{m}(u(m))=\sum_{i=\ell+1}^{m}\sum_{y=i}^{\infty}|g_{\tau}(y)|.
\end{equation}
We claim that for any $x\in \mathbb{Z}$ $$|\Delta^{x}(u(x))|\mathbf{1}(\m)\le O_x+W_x$$
where  $O_x,W_x$ are defined in \eqref{supdef12} and \eqref{supdef13}.
This follows from definitions and the observation that 
\begin{eqnarray*}
\Delta^{x}(u(x))&=&\sum_{i=1}^{u(x)}[\mathbf{1}(X^{x}_{i}=1)-1/2]+\sum_{i=1}^{u(x)}[\mathbf{1}(Y^{x}_{i}=1)-1/2]\\
&=& \sum_{i=1}^{u(x)}\frac{X^{x}_{i}}{2}+\sum_{i=1}^{u(x)}\frac{Y^{x}_{i}}{2}.
\end{eqnarray*}
Thus by \eqref{eq:grad}
\begin{equation}\label{represent}
[u(\ell)-u(m)]\mathbf{1}(\m)\ge \sum_{i=\ell+1}^{m}\sum_{y=i}^{\infty}|g_{\tau}(y)|\mathbf{1}(\m)-2\biggl[\sum_{x=\ell}^{m}(O_x+W_x)\biggr]\mathbf{1}(\m).
\end{equation}
Now on the event $\m\cap \n\cap \mm$, $\forall\, j< \e n^{1/3}$
\begin{equation}\label{step1}
\sum_{i=j}^{\infty}|g_{\tau}(i)|\ge n/3.
\end{equation} 
To show this we upper bound $\sum_{i=1}^{j}|g_{\tau}(i)|$.
By \eqref{oilwater} on the event $\m$ we have for all $0<i\le \e n^{1/3},$
\begin{equation}\label{upper12}
|g_{\tau}(i)|\le S_{i-1}+S_{i+1}
\end{equation}
where $S_i$ is defined in \eqref{supdef11}. 
Hence on the event $\m \cap \n \cap \mm$  $$\sum_{i=1}^{j}|g_{\tau}(i)|\le 2\sum_{i=1}^{\e n^{1/3}}S_{i}\le \frac{n}{6}.$$ 

Thus by \eqref{wlog1} we have for all $0<j<\e n^{1/3},$
$$\sum_{i=j}^{\infty}|g_{\tau}(i)|\ge \frac{n}{2}- \sum_{i=1}^{j}|g_{\tau}(i)|\ge \frac{n}{2}-\frac{n}{6}=\frac{n}{3}.$$
 Therefore by \eqref{represent} and \eqref{step1},  on the event $\m \cap \n \cap \mm$ we have
for all $0\le j\le k \le \e n^{1/3}$
\begin{eqnarray}
\label{arg12}
u(j)-u(k)& \ge & \frac{1}{3}(k-j)n -2\biggl[\sum_{x=1}^{k}O_x+W_x\biggr]\\
&\ge & \frac{1}{3}(k-j)n-\frac{n}{6}. 
\end{eqnarray}
Thus we are done.
\end{proof}
The proof of the lower bound is now a corollary.
Since $u(k)\ge 0$, for all large enough $n$ and $j\le \frac{\e n^{1/3}}{2} $ the above implies that 
$$u(j)\ge \frac{1}{8}\e n^{4/3}.$$

Hence by \eqref{lowprob1} it follows 
\begin{equation}\label{finalstep0}
\pr\left(\inf_{0\le j\le \frac{1}{2}\e n^{1/3}}u(j)\le \frac{1}{8}\e n^{4/3}\right)\le e^{-n^{c}}.
\end{equation}
To complete the proof we use the symmetric version of \eqref{eq:grad} to get the following bound: For $j\ge 0$
$$u(0)-u(-j)-\Delta^{0}(u(0))-2\sum_{i=-j+1}^{-1}\Delta^{i}(u(i))-\Delta^{-j}(u(-j))=\sum_{i=-j}^{-1}\sum_{y=-\infty}^{i}|g_{\tau}(y)|$$
The following bound $$\sum_{y=-\infty}^{i}|g_{\tau}(y)|\le 2n$$ is trivial since the total number of particles is $2n.$
Using the above and definition of $\mathcal{G}_2$ we get for all $0<j<\e n^{1/3}$
$$u(0)-u(-j)\le 2jn+\frac{n}{6}.$$
Thus $u(0)\ge \frac{1}{4}\e n^{4/3}$ implies that for all $j\le\frac{1}{16} \e n^{1/3}$ 
\begin{equation}\label{finalstep1}
u(-j)\ge \frac{1}{9} \e n^{4/3}.
\end{equation}
\eqref{finalstep0} and \eqref{finalstep1} now completes the proof.
\qed

\begin{lemma}\label{ubparticles}There exists a constant $C>0$ such that\\
\begin{equation}\label{part1}
\sup_{x\in \mathbb{Z}}\E(|g_{\tau}(x)|)<Cn^{\frac{2}{3}}.
\end{equation}
Moreover for any $\e>0$ there exists $c>0$
\begin{equation}\label{part2}
\mathbb{P}\left[\sup_{x\in \mathbb{Z}}|g_{\tau}(x)|\ge n^{\frac{2}{3}+\e}\right]\le e^{-n^c}.
\end{equation}
\end{lemma}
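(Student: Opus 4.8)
The plan is to deduce both bounds from the deterministic-on-$\m$ estimate $|g_{\tau}(x)|\le S_{x-1}+S_{x+1}$. On the event $\m$ we have $u(x\pm1)\le Cn^{4/3}$, so by \eqref{oilwater} each of the two sums defining $g_{\tau}(x)$ is a partial sum of at most $Cn^{4/3}$ of the increments $D^{x\pm1}(j)$ and is therefore bounded in absolute value by $S_{x\pm1}$ from \eqref{supdef11}; this is exactly the estimate \eqref{upper12}, which is valid for every $x\in\Z$ on $\m$ by the identical argument. Since each $D^x(j)$ is distributed as one step of a lazy simple symmetric random walk, $S_x\stackrel{d}{=}M(Cn^{4/3})$ with $M$ as in \eqref{supnotation}, and the estimates of Lemma~\ref{mean1} apply to it.

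For \eqref{part1} I would split over $\m$. On $\m$,
\[ \E\bigl(|g_{\tau}(x)|\,\mathbf{1}(\m)\bigr)\ \le\ \E(S_{x-1})+\E(S_{x+1})\ =\ 2\,\E M(Cn^{4/3}), \]
which is $O(n^{2/3})$ by Lemma~\ref{mean1}(iii). On $\m^{\comp}$ I use the trivial bound $|g_{\tau}(x)|\le 2n$ (there are only $2n$ particles) together with $\pr(\m^{\comp})\le e^{-n^{\e_0}}$ for some $\e_0>0$ from \eqref{upproof1}, so that $\E\bigl(|g_{\tau}(x)|\,\mathbf{1}(\m^{\comp})\bigr)\le 2n\,e^{-n^{\e_0}}\to0$. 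Hence $\sup_{x\in\Z}\E|g_{\tau}(x)|\le C'n^{2/3}$ for all large $n$; since $|g_{\tau}(x)|\le 2n$ always, enlarging $C'$ to absorb the finitely many remaining $n$ gives \eqref{part1}.

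For \eqref{part2} the extra ingredient is the a priori support bound: on $\badzero^{\comp}$ we have $\base\subset[-n^5,n^5]$, and a nonzero value of $g_{\tau}$ at $x$ forces a firing at $x-1$ or $x+1$, so $g_{\tau}(x)=0$ whenever $|x|>n^5+1$. Thus on $\m\cap\badzero^{\comp}$,
\[ \sup_{x\in\Z}|g_{\tau}(x)|\ \le\ 2\sup_{|y|\le n^5+2}S_y. \]
Applying Lemma~\ref{mean1}(i) with $t=Cn^{4/3}$ and exponent $\tfrac12+\tfrac{\e}{2}$ (note $(Cn^{4/3})^{1/2+\e/2}\le\tfrac12 n^{2/3+\e}$ for large $n$) gives $\pr\bigl(S_y\ge\tfrac12 n^{2/3+\e}\bigr)\le e^{-(Cn^{4/3})^{c'}}$ for some $c'=c'(\e)>0$, so a union bound over the at most $2n^5+5$ relevant sites yields
\[ \pr\Bigl(2\sup_{|y|\le n^5+2}S_y\ge n^{2/3+\e}\Bigr)\ \le\ (2n^5+5)\,e^{-(Cn^{4/3})^{c'}}\ \le\ e^{-n^{c}} \]
for a suitable $c>0$ and $n$ large. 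Combining with $\pr(\m^{\comp})\le e^{-n^{\e_0}}$ and $\pr(\badzero)\le e^{-n^{c}}$ (Corollary~\ref{preliminarysupport}) and taking the smallest exponent gives \eqref{part2}. Most of the work is bookkeeping once \eqref{upper12} and Lemma~\ref{mean1} are in hand; the one step that must be handled with care is the use of $\badzero^{\comp}$ to confine the supremum over $x\in\Z$ to a window of polynomial size $O(n^5)$, which is precisely what lets the union bound survive against the stretched-exponential tail $e^{-(Cn^{4/3})^{c'}}$ of $M(Cn^{4/3})$.
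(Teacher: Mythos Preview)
Your argument is correct and essentially identical to the paper's: both rest on the pointwise bound $|g_{\tau}(x)|\le (S_{x-1}+S_{x+1})\mathbf{1}(\m)+2n\,\mathbf{1}(\m^{\comp})$ (the paper's \eqref{step23}), use Lemma~\ref{mean1}(iii) together with \eqref{upproof1} for the expectation bound, and for the tail bound combine Lemma~\ref{mean1}(i) with a union bound over $|x|\le n^5$ after invoking the support estimate (your $\badzero^{\comp}$ is exactly what the paper cites as Corollary~\ref{preliminarysupport}). The only difference is cosmetic: you union-bound over $S_y$ and the paper union-bounds over the pointwise event $\{|g_\tau(x)|\ge n^{2/3+\e}\}$, but these are equivalent.
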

\begin{proof}  For any $x\in \mathbb{Z},$ we have 
\begin{equation}\label{step23}
|g_{\tau}(x)|\le [S_{x-1}+S_{x+1}]\mathbf{1}(\m)+2n\mathbf{1}(\m^c)
\end{equation} 
where the event $G$ is defined in \eqref{mainevent}. 
The first term follows from \eqref{upper12} and the second term is obvious since the total number of particles is $2n.$
The proof of \eqref{part1} now follows from $(iii)$ of Lemma \ref{mean1} and Theorem \eqref{upproof1}. Additionally using $(i)$ of Lemma \ref{mean1} we get that for any $x\in \mathbb{Z}$  there exists $c>0$ such that
\begin{equation}\label{pointestimate}
\mathbb{P}(|g_{\tau}(x)|\ge n^{\frac{2}{3}+\e})\le e^{-n^c}.
\end{equation}
Corollary \ref{preliminarysupport} says that with probability at least $1-e^{-n^c}$ for all $|x|\ge n^{5}$
$$|g_{\tau}(x)|=0.$$
\eqref{part2} now follows from \eqref{pointestimate} by union bound over all $x\in [-n^5,n^5].$ 
  
\end{proof}

\begin{remark}\label{lowerboundsupport} For small enough $\e$ and $k=\e n^{1/3},$ by \eqref{lowprob1} and \eqref{step23} $$\mathbb{P}\left[\sum_{i=-k}^{k}|g_{\tau}(i)|\le n\right]\ge 1-e^{-n^c}$$ for some positive constant $c.$ Thus at least $n$ particles are supported outside the interval $[-\e n^{1/3},\e n^{1/3}]$ with failure probability at least $e^{-n^c}.$
\end{remark} 

\subsection{Proof of Theorem \ref{spread1}}

Let $x=n^{1/3+\e}$. Under the assumption that there are at least $n^{1-\epsilon/2}$ many particles to the right of $x$, for all $\ell \le x,$ $$\sum_{i=\ell}^{\infty}|g_{\tau}(i)|\ge n^{1-\e/2}.$$
Recalling \eqref{represent} we have
$$(u(0)-u(x))\mathbf{1}(\m)\ge \sum_{i=1}^{x}\sum_{y=i}^{\infty}|g_{\tau}(y)|\mathbf{1}(\m)-2\biggl[\sum_{i=1}^{x}(O_i+W_i)\biggr]\mathbf{1}(\m).$$
Now by part $(i)$ of Lemma \ref{mean1} and union bound over $1\le \ell \le x$ there exists a $c>0$ such that with probability at least $1-e^{-n^{c}},$  $$\sum_{\ell =1}^{x}[O_\ell+W_\ell]=O(n^{1+2\e}).$$  
Thus on the event that there are at least $n^{1-\epsilon/2}$ many particles to the right of $x$ we have $$u(0)-u(x)\ge xn^{1-\e/2} -O(n^{1+\e}),$$ except on a set of measure at most $e^{-n^{c}}$.  However this implies that $$u(0)\ge n^{4/3+\e/2}-O(n^{1+\e}).$$  Hence  by the upper bound in Theorem \ref{lemmaJ} we conclude that the probability of the event that there are at least $n^{1-\epsilon/2}$ many particles to the right of $x=n^{1/3+\e}$ is less than $e^{-n^{c}}$ for some positive $c>0$. The argument for $x=-n^{1/3+\e}$ is symmetric and we omit the details. Thus we are done.
\qed

\section{Scaling limit for the odometer}\label{prop}

The goal of this section is to prove Theorem \ref{conditionalscalingthm}.   The first step will be to show that Conjecture \ref{ass1} implies some regularity of the limiting function: we will argue that $w(\cdot)$ is decreasing and three times differentiable on the positive real axis. Moreover it is the solution of the boundary value problem
\begin{eqnarray*} 
w''=&\sqrt{\frac{2}{\pi}w}\\
\lim_{h\rightarrow 0^+} \frac{w(h)-w(0)}{h}=&-1\\
\lim_{h\rightarrow \infty} w(h)=& 0.
\end{eqnarray*} 
At this point, Theorem \ref{conditionalscalingthm} follows by identifying an explicit solution to the above problem and arguing that it is the unique solution.

\subsection{Properties of the expected odometer}
We first make some easy observations about the expected odometer function, denoted  by
\[
\tilde u(x):=\E(u(x)).
\]
Existence of $\tilde u(x)$ follows from $(ii)$ of Lemma \ref{aprioridom} which says that the stopping time of the process $\tau$ has finite expectation and clearly for all $x\in \mathbb{Z}$ $$u(x)\le \tau.$$

Recall the notation $D_{x,x+1}$ from \eqref{eq:Dx,x+1}.
\begin{lemma}\label{lemma:expected_diff_odom}For $x\in \mathbb{Z}$ $$\tilde u(x) = \E (D_{x,x+1}) = \E( D_{x,x-1}).$$ Moreover we have,
\begin{eqnarray}
\tilde u(x)-\tilde u(x+1) \le n \,\,\text{ for }x\ge 0\\
\tilde u(x)-\tilde u(x-1)   \le n \,\,\text{ for }x\le 0.
\end{eqnarray}
\end{lemma}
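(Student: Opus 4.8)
The plan is to establish the identity $\tilde u(x)=\E(D_{x,x+1})=\E(D_{x,x-1})$ by a direct optional-stopping computation on the stacks at $x$, and then to read off the two gradient bounds from this identity, Remark~\ref{particleright}, and conservation of mass.

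First I would record the elementary fact that on the $k$-th firing from $x$ the emitted oil and water particles move to $x+1$ exactly when $X^x_k=1$ and $Y^x_k=1$ respectively, so that
\[
D_{x,x+1}=\sum_{k=1}^{u(x)}\bigl(\mathbf{1}(X^x_k=1)+\mathbf{1}(Y^x_k=1)\bigr),\qquad D_{x,x+1}+D_{x,x-1}=2u(x),
\]
the second identity holding pointwise since every firing of $x$ ejects exactly two particles. The crux is the measurability observation that the event $\{u(x)\ge k\}$, i.e.\ that $x$ fires at least $k$ times, is determined by the first $k-1$ firings of $x$ together with all stack variables at sites $\neq x$: with the leftmost convention the trajectory is a deterministic function of the stacks (and $\tau<\infty$ a.s.\ by Lemma~\ref{aprioridom}), while whether the $k$-th firing of $x$ occurs is decided from the configuration reached just before it, which has not yet consumed $X^x_k$, $Y^x_k$ or any later stack variable at $x$. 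Consequently $\{u(x)\ge k\}$ is independent of $(X^x_k,Y^x_k)$.

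Granting this, since $\E(u(x))\le\E(\tau)<\infty$ by Lemma~\ref{aprioridom}(ii), Tonelli's theorem gives $\E(D_{x,x+1})=\sum_{k\ge1}\E\bigl[(\mathbf{1}(X^x_k=1)+\mathbf{1}(Y^x_k=1))\,\mathbf{1}(u(x)\ge k)\bigr]$, and the independence above collapses the $k$-th summand to $\tfrac12\P(u(x)\ge k)+\tfrac12\P(u(x)\ge k)=\P(u(x)\ge k)$; summing yields $\E(D_{x,x+1})=\E(u(x))=\tilde u(x)$, and then $\E(D_{x,x-1})=2\tilde u(x)-\tilde u(x)=\tilde u(x)$ from the pointwise identity.

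For the inequalities, fix $x\ge0$. Applying the identity at the site $x+1$ gives $\E(D_{x+1,x})=\tilde u(x+1)$, hence $\tilde u(x)-\tilde u(x+1)=\E\bigl(D_{x,x+1}-D_{x+1,x}\bigr)$. Since the origin lies weakly to the left of the edge $\{x,x+1\}$, Remark~\ref{particleright} gives $D_{x,x+1}-D_{x+1,x}=\sum_{y\ge x+1}|g_\tau(y)|\le\sum_{y\ge1}|g_\tau(y)|$. Finally, at fixation each site carries particles of a single type, so $\sum_{y\in\Z}|g_\tau(y)|=\sum_{y}(\eta_1(y,\tau)+\eta_2(y,\tau))=2n$, and by the reflection symmetry of the model about the origin $\E\sum_{y\ge1}|g_\tau(y)|=\E\sum_{y\le-1}|g_\tau(y)|$, whence $2\,\E\sum_{y\ge1}|g_\tau(y)|=2n-\E|g_\tau(0)|\le2n$ and $\tilde u(x)-\tilde u(x+1)\le n$. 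The bound $\tilde u(x)-\tilde u(x-1)\le n$ for $x\le0$ follows by the mirror-image argument. I expect the only genuinely delicate point to be the measurability claim for $\{u(x)\ge k\}$; the rest is bookkeeping.
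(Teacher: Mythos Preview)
Your proof is correct and follows essentially the same route as the paper's: the same sum-over-firings decomposition of $D_{x,x+1}$, the same independence of $(X^x_k,Y^x_k)$ from the event $\{u(x)\ge k\}$ (the paper phrases this via the stopping time $\tau_k^-$ and the filtration $\mathcal{F}_{\tau_k^-\wedge\tau}$, while you argue it directly from which stack variables are consumed), and the same use of Remark~\ref{particleright} together with symmetry to bound the expected number of particles to the right of $x$ by $n$. Your use of the pointwise identity $D_{x,x+1}+D_{x,x-1}=2u(x)$ to obtain $\E(D_{x,x-1})$ is a nice shortcut; the paper simply says ``similarly''.
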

Clearly every time there is an emission at a site $x\in \mathbb{Z}$, on average one particle moves to $x-1$ and another to $x+1$. Hence, informally for every $x\in\mathbb{Z}$, $\tilde u(x)$ is the expected number of particles emitted from $x$ that go to $x-1$ or $x+1$. 
\begin{proof} 
Wlog assume $x\ge 0$.
By definition we have
$$D_{x,x+1}=\sum_{k=1}^{\infty}( \mathbf{1}({X^{x}_k=1})+ \mathbf{1}({Y^{x}_k=1}))\mathbf{1}(u(x)>k-1).$$
Let $\tau_k^-=\tau_k^-(x)$ be the stopping time such that at time $\tau_k^-+1$ the $k^{th}$ pair is emitted from $x.$
Now clearly $ \mathbf{1}({X^{x}_k=1})+ \mathbf{1}({Y^{x}_k=1})$ is independent of the filtration $\mathcal{F}_{\tau_k^-\wedge \tau}$. Also $\mathbf{1}(u(x)>k-1)$ is measurable with respect to $\mathcal{F}_{\tau_k^-\wedge \tau}$. Hence $\E(  \mathbf{1}({X^{x}_k=1})+ \mathbf{1}({Y^{x}_k=1})|\mathcal{F}_{\tau_k^-\wedge \tau})=1.$
Thus, 
\begin{align*}\E(D_{x,x+1})&=\sum_{k=1}^{\infty}\E\left[\E\left( \mathbf{1}({X^{x}_k=1})+ \mathbf{1}({Y^{x}_k=1})\mathbf{1}(u(x)>k-1)|\mathcal{F}_{\tau_k^-\wedge \tau}\right)\right]\\
&=\sum_{k=1}^{\infty}\E(\mathbf{1}(u(x)>k-1))\\
&=\E(u(x)).
\end{align*}

Similarly $$\E(D_{x,x-1})=\E(u(x)).$$

Since the total number of particles is $2n$, using the symmetry of the process with respect to the origin one can conclude that the expected number of particles at the end of the process  to the right of $x$ is at most $n$. Hence using  Remark \ref{particleright} we have
\[
\E (D_{x,x+1}-D_{x+1,x} )\le n.
\]
Thus we are done.
\end{proof}

\begin{lemma}\label{expected_odometer} $\tilde u(x)$ satisfies the following properties:
\begin{itemize}
\item[(i)] $\tilde u(x)$ is an even function;
\item[(ii)] restricted to $\mathbb{Z}_{+}$, $\tilde u(x)$ is strictly decreasing;
\item[(iii)] for every $x\neq 0$ 
$$\Delta \tilde u(x)>0$$ where $\Delta$ is the discrete Laplacian i.e.
\[
\Delta \tilde u(x)=\tilde u(x-1)+\tilde u(x+1)-2\tilde u(x).
\]
\end{itemize}
\end{lemma}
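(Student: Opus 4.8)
The plan is to deduce all three statements from Lemma~\ref{lemma:expected_diff_odom}, the abelian property (Lemma~\ref{l.abelian}), and the bound $\tilde u(x)\le\E\tau\le 16n^4$, which follows from $u(x)\le\tau$ and Lemma~\ref{aprioridom}(ii). For \emph{part (i)}, the initial state $(n\delta(0),n\delta(0))$ and the firing rule are both invariant under the reflection $x\mapsto -x$; concretely, the map on stack space sending $(X^x_k,Y^x_k)$ to $(-X^{-x}_k,-Y^{-x}_k)$ is measure-preserving, and the process run on the reflected stacks with the \emph{rightmost}-legal convention is the exact spatial reflection of the process run on the original stacks with the leftmost convention. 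Since by Lemma~\ref{l.abelian}(ii) the odometer does not depend on the firing rule, $x\mapsto u(x)$ and $x\mapsto u(-x)$ have the same law, so $\tilde u(x)=\tilde u(-x)$.

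For \emph{part (iii)}, write $o_\tau(x),w_\tau(x)$ for the number of oil, resp.\ water, particles at $x$ in the terminal configuration, and $D_{a,b}$ for the number of particles that move from $a$ to the neighbour $b$ over the whole process (as in \eqref{eq:Dx,x+1}). Counting particles in and out of the site $x$ gives the pointwise (a.s.) identity
\[
o_\tau(x)+w_\tau(x)=2n\,\mathbf{1}(x=0)+D_{x-1,x}+D_{x+1,x}-2u(x),
\]
since each firing at $x$ expels exactly one oil and one water particle. Taking expectations and using $\E D_{x-1,x}=\tilde u(x-1)$ and $\E D_{x+1,x}=\tilde u(x+1)$ from Lemma~\ref{lemma:expected_diff_odom}, we get for $x\neq0$ that $\E(o_\tau(x)+w_\tau(x))=\tilde u(x-1)+\tilde u(x+1)-2\tilde u(x)=\Delta\tilde u(x)$. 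At fixation no site carries both types, so $o_\tau(x)+w_\tau(x)=|g_\tau(x)|$, whence $\Delta\tilde u(x)=\E|g_\tau(x)|\ge0$ for $x\neq 0$.

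It remains to upgrade this to strict positivity, i.e.\ to show that for each $x\ge1$ (the case $x\le-1$ following by (i)) some particle stops at $x$ with positive probability. I would do this by exhibiting a single firing sequence, legal and complete for the initial state, whose terminal configuration has a particle at $x$: for $x\ge 2$, send the first oil--water pair emitted from the origin to $+1$ and the remaining $n-1$ pairs to $-1$, then empty site $-1$ by directing its oil to $-2$ and its water to $0$, then fire $+1,+2,\dots,x-1$ in succession, carrying the lone pair rightward, and on the firing from $x-1$ send its oil to $x$ and its water to $x-2$; for $x=1$, simply send every firing from the origin with oil to $+1$ and water to $-1$. One checks directly that every site fired holds a co-located pair at that moment (legality), that the terminal configuration has oil only at $-2$ and $x$ and water only at $0$ and $x-2$ (so it is stable), and that it has an oil particle at $x$. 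On the positive-probability event that the finitely many relevant stack variables take the prescribed signs, this sequence is legal and complete for the actual stacks, so by Lemma~\ref{l.abelian} the leftmost-convention process terminates in the same configuration; hence $\E|g_\tau(x)|>0$ and $\Delta\tilde u(x)>0$. This construction — producing a legal complete sequence ending in a \emph{stable} state with a particle sitting exactly at $x$, together with the abelian-property transfer to the leftmost-convention process — is the only step with real content; the rest is routine bookkeeping.

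For \emph{part (ii)}, set $d_k:=\tilde u(k+1)-\tilde u(k)$. By (iii), $d_k-d_{k-1}=\Delta\tilde u(k)>0$ for every $k\ge1$, so $d_0<d_1<d_2<\cdots$. If $d_m\ge 0$ for some $m\ge0$, then $d_{m+1}>0$ and $d_k\ge d_{m+1}$ for all $k\ge m+1$, forcing $\tilde u(k)\ge \tilde u(m+1)+(k-m-1)d_{m+1}\to\infty$, which contradicts $\tilde u\le 16n^4$. Hence $d_k<0$ for every $k\ge0$, i.e.\ $\tilde u$ is strictly decreasing on $\mathbb{Z}_+$. All three parts thus reduce to Lemmas~\ref{l.abelian}, \ref{aprioridom} and \ref{lemma:expected_diff_odom} together with the single positive-probability construction in part (iii).
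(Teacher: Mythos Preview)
Your proof is correct. It differs from the paper's in the logical ordering and in one of the key steps, and the comparison is worth recording.

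The paper first establishes the gradient identity $\tilde u(x)-\tilde u(x+1)=\E\sum_{y>x}|g_\tau(y)|$ (from Lemma~\ref{lemma:expected_diff_odom} and Remark~\ref{particleright}) and uses it directly to prove (ii); it then telescopes this identity to obtain $\Delta\tilde u(x)=\E|g_\tau(x)|$ for (iii). You instead derive $\Delta\tilde u(x)=\E|g_\tau(x)|$ in one step from the particle balance at $x$ together with $\E D_{x\pm1,x}=\tilde u(x\pm1)$, and then deduce (ii) from strict convexity plus the global bound $\tilde u\le\E\tau\le 16n^4$. Both routes are short; yours avoids the summed gradient formula entirely, at the cost of invoking the a~priori bound on $\E\tau$, which the paper's argument for (ii) does not need.

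For the strict inequalities, the paper asserts without details that ``with positive probability all $2n$ particles stop to the right of $x$,'' whereas you exhibit an explicit legal and complete firing sequence leaving a particle at $x$ and invoke Lemma~\ref{l.abelian}(iii) to transfer this to the leftmost-convention process. Your construction is more explicit and makes the use of the abelian property transparent; the paper's one-line claim is in the same spirit but leaves the construction to the reader.
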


\begin{proof}
The proof of $(i)$ follows from the symmetry of the process with respect to the origin.
\smallbreak

To prove $(ii)$ we first recall the definition of the function $g_t$ from (\ref{eq:g_t}). For any $x\ge0$ by Lemma~\ref{lemma:expected_diff_odom}, the difference $\tilde u(x)- \tilde u(x+1)$ is the expected number of particles that stop to the right of $x$:
\begin{equation}\label{eq:g_tau_x1}
\tilde u(x)-\tilde u(x+1)= \E\left(\sum_{y=x+1}^{\infty}|g_{\tau}(y)|\right).
\end{equation}
Since the quantity on the right hand side is nonnegative we have that $\tilde u(x)$ is non increasing. To see that it is strictly decreasing, we make the simple observation that given any $x>0$, with positive probability all the $2n$ particles stop somewhere to the right of $x$. In other words,
\[
\mathbb{P}\left(\sum_{y=x+1}^{\infty}|g_{\tau}(y)|=2n\right)>0.
\]
Hence $\E\left(\sum_{y=x+1}^{\infty}|g_{\tau}(y)|\right)>0$, implying the statement.
\smallbreak
To prove $(iii)$ we see that by \eqref{eq:g_tau_x1} 
\begin{equation}\label{laplaceg}
\Delta \tilde u(x) = \tilde u(x-1)-2\tilde u(x)+\tilde u(x+1)
= \E \left (\sum_{y=x}^{\infty}|g_{\tau}(y)|-\sum_{y=x+1}^{\infty}|g_{\tau}(y)| \right ) = \E(|g_{\tau}(x)|).
\end{equation}
By using similar reasoning as in the proof of $(ii)$, we see that there is positive chance that $|g_{\tau}(x)|>0$. Hence $\Delta \tilde u(x)> 0$.
\end{proof}

\subsection{The differential equation $w'' = \sqrt{2w/\pi}$}\label{sect:final_step}
In this section we work toward the proof of Theorem \ref{conditionalscalingthm}.
We recall Conjecture \ref{ass1}  stated in the introduction.
Note that we have not assumed a priori that $w$ is continuous. Proving this is our first order of business.

\begin{lemma}\label{someprop}
$w$ is continuous, (in fact, $1$-Lipschitz) on $\R$. Moreover it is non-increasing on the positive real axis.
\end{lemma}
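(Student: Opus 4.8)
The plan is to deduce everything from the two parts of Conjecture \ref{ass1} together with the discrete regularity of $\tilde u$ established in Lemma \ref{expected_odometer} and the gradient bound of Lemma \ref{lemma:expected_diff_odom}. First I would record that monotonicity of $w$ on $\R_{\ge 0}$ is essentially immediate: by Lemma \ref{expected_odometer}(ii) the function $\tilde u$ is (strictly) decreasing on $\Z_+$, so for $0 \le \xi < \xi'$ and all large $n$ we have $\tilde u(\lfloor n^{1/3}\xi\rfloor) \ge \tilde u(\lfloor n^{1/3}\xi'\rfloor)$; dividing by $n^{4/3}$ and letting $n\to\infty$ using \eqref{eq:w} gives $w(\xi) \ge w(\xi')$. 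Evenness of $\tilde u$ (Lemma \ref{expected_odometer}(i)) then extends this to all of $\R$ in the obvious symmetric sense, and in particular $w$ is symmetric about $0$.

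For the Lipschitz bound, the key input is Lemma \ref{lemma:expected_diff_odom}, which gives $0 \le \tilde u(x) - \tilde u(x+1) \le n$ for $x \ge 0$ and the mirror statement for $x \le 0$. Fix $0 \le \xi < \xi'$ and write $a = \lfloor n^{1/3}\xi\rfloor$, $b = \lfloor n^{1/3}\xi'\rfloor$. Telescoping the one-step differences gives
\[
0 \le \tilde u(a) - \tilde u(b) = \sum_{x=a}^{b-1} \bigl(\tilde u(x) - \tilde u(x+1)\bigr) \le (b-a)\, n \le \bigl(n^{1/3}(\xi'-\xi) + 1\bigr) n.
\]
Dividing by $n^{4/3}$ yields $0 \le \dfrac{\tilde u(a) - \tilde u(b)}{n^{4/3}} \le (\xi'-\xi) + n^{-1/3}$, and letting $n \to \infty$ with \eqref{eq:w} gives $0 \le w(\xi) - w(\xi') \le \xi' - \xi$, i.e.\ $w$ is $1$-Lipschitz on $\R_{\ge 0}$; the even extension is then $1$-Lipschitz on all of $\R$. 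Continuity is an immediate consequence of the Lipschitz estimate. One technical point to be careful about is that the convergence in \eqref{eq:w} is asserted to be uniform in $\xi$, so the inequalities pass to the limit without any issue about the point of evaluation drifting; it is worth stating this explicitly.

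I do not expect a serious obstacle here — this lemma is the ``soft'' first step of Section \ref{prop}, and all the real work (extracting $w'' = \sqrt{2w/\pi}$ and solving the boundary value problem) comes later. The only thing requiring a little care is bookkeeping with the floor functions and making sure the $+1$ error terms in $b - a$ and in the floors are genuinely $o(n^{4/3})$ after division, which they plainly are since each contributes at most $O(n)$. A secondary subtlety is that Conjecture \ref{ass1} as stated gives convergence of $\tilde u(\lfloor n^{1/3}\xi\rfloor)/n^{4/3}$ for each fixed $\xi$ (uniformly), so one should phrase the argument as: the discrete inequalities hold for every $n$, hence persist in the limit — rather than trying to differentiate or manipulate $w$ directly before its regularity is known.
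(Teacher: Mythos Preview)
Your proposal is correct and follows essentially the same approach as the paper's proof: both use the evenness of $\tilde u$ from Lemma~\ref{expected_odometer}(i), telescope the one-step gradient bound $0 \le \tilde u(x) - \tilde u(x+1) \le n$ from Lemma~\ref{lemma:expected_diff_odom}, divide by $n^{4/3}$, and pass to the limit via \eqref{eq:w}. Your version is slightly more explicit about the floor-function bookkeeping and the separate monotonicity step, but the argument is the same.
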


\begin{proof}
 $(i)$ of Lemma~\ref{expected_odometer} implies that $w$ is an even function. Hence it suffices to prove that $w$ is $1$-Lipschitz on $[0,\infty)$.
By Lemma \ref{lemma:expected_diff_odom}, for any $x, k \in \Z_{\geq 0}$ we have
	\[  0 \leq \tilde u(x) - \tilde u(x + k) = \sum_{j=0}^{k-1} [\tilde u(x+j) - \tilde u(x+j+1)] \leq kn. \]
Now let $x = \lfloor{ n^{1/3}\xi} \rfloor$ and $k=\lfloor{n^{1/3}(\xi+h)}\rfloor-\lfloor{n^{1/3}\xi}\rfloor$. Dividing by $n^{4/3}$ and taking $n \to \infty$, we obtain
	\[ 0 \leq w(\xi) - w(\xi+h) \leq h \]
thus completing the proof of the lemma.
\end{proof}
Recall the set of random variables $$X^{x}_i,Y^{x}_i$$ defined in \eqref{set1}. 
To go further, we define the following quantities:
 For $y=x\pm1$
\[
\begin{split}
& O_{x,y}^{k}:=\sum_{i=0}^k \mathbf{1}(X^{x}_i=\pm 1),\\
& W_{x,y}^{k}:=\sum_{i=0}^k \mathbf{1}(Y^{x}_i=\pm 1).
\end{split}
\]
In other words,
\[
\begin{split}
& O_{x,y}^{k}:=\#\{\textnormal{oil particles sent from }x\textnormal{ to }y\textnormal{ within the first }k\textnormal{ firings at }x\},\\
& W_{x,y}^{k}:=\#\{\textnormal{water particles sent from }x\textnormal{ to }y\textnormal{ within the first }k\textnormal{ firings at }x\}.
\end{split}
\]

Thus

\begin{equation}\label{version1}	
		g_\tau(x) = O_{x-1,x}^{u(x-1)}+O_{x+1,x}^{u(x+1)}-W_{x-1,x}^{u(x-1)}-W_{x+1,x}^{u(x+1)}
		\end{equation} 
	where $g_t(\cdot)$ was defined in \eqref{eq:g_t}.
Consider now the analogous expression using a deterministic portion of each stack (recall $\tilde u(x) = \E u(x)$):
\begin{equation}\label{expversion}
		\tilde g_\tau(x) := O_{x-1,x}^{\lfloor\tilde u(x-1)\rfloor}+O_{x+1,x}^{\lfloor\tilde u(x+1)\rfloor}-W_{x-1,x}^{\lfloor\tilde u(x-1)\rfloor}-W_{x+1,x}^{\lfloor\tilde u(x+1)\rfloor}. 
		\end{equation}
Because $\tilde u$ is deterministic the four terms on the right side are independent. Moreover each term $O_{x,y}^k$ and $W_{x,y}^k$ for $|x-y|=1$ is a sum of $k$ independent Bernoulli$(1/2)$ random variables. So the right side is a sum of $\lfloor \tilde u(x-1)\rfloor+ \lfloor\tilde u(x+1)\rfloor$ independent random variables with the same law as a single step of a lazy symmetric random walk as defined in \eqref{diff2}. Setting $x = \lfloor{n^{1/3} \xi\rfloor}$ for a real number $\xi >0$, by $(ii)$ in Conjecture \ref{ass1} we have 
	\begin{eqnarray*}
\lim_{n\rightarrow \infty} \frac{\lfloor\tilde u(x-1)\rfloor+ \lfloor\tilde u(x+1)\rfloor}{n^{4/3}}
= \lim_{n\rightarrow \infty} \frac{2 \tilde u(x)}{n^{4/3}}
& = & 2 w(\xi). 
\end{eqnarray*}
This is because by Lemma \ref{lemma:expected_diff_odom}, $|\tilde u(x)-\tilde u(x+1)|$ and $|\tilde u(x-1)-\tilde u(x)|$ are both less than $n$.
As $n\rightarrow \infty $ by the central limit theorem, since each variable in \eqref{diff2} has variance $1/2$, we obtain 
	\begin{equation}\label{expclt}
	 n^{-2/3} \tilde g_\tau(x) \stackrel{d}{\longrightarrow} N(0,w(\xi)). 
	\end{equation}
	By Lemma \ref{mean1} $(v)$ we also have
    \begin{equation}\label{exp1moment}
	 n^{-2/3} \E |\tilde g_\tau(x)|\, {\longrightarrow}\, \sqrt {\frac{2}{\pi} w(\xi)}. 
	\end{equation}
Next we observe that under $(i)$ in Conjecture \ref{ass1} , the same kind of central limit theorem holds for $g_\tau$ itself.

\begin{lemma} 
\label{l.clt} Let $\xi\ge 0$. For $x = \lfloor{n^{1/3} \xi}\rfloor$, we have as $n \to \infty$
\begin{itemize}
\item[(i)]
	\[ n^{-2/3} g_\tau(x) \stackrel{d}{\longrightarrow} N(0,w(\xi))\]
\item[(ii)]\[n^{-2/3}\E |g_\tau(x)|\longrightarrow\sqrt {\frac{2}{\pi} w(\xi)}.\] 	
\end{itemize}
\end{lemma}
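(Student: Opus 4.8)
The plan is to push the central limit statements \eqref{expclt}--\eqref{exp1moment} for the deterministic-stack quantity $\tilde g_\tau(x)$ over to $g_\tau(x)$, using Conjecture~\ref{ass1}(i) to show that the two quantities differ by $o(n^{2/3})$. By \eqref{oilwater} and its deterministic analogue (immediate from \eqref{expversion} together with the definition \eqref{diff2} of $D^x(\cdot)$), both $g_\tau(x)$ and $\tilde g_\tau(x)$ are of the form $\sum_{i=1}^{r}D^{x-1}(i)-\sum_{i=1}^{s}D^{x+1}(i)$, with $(r,s)=(u(x-1),u(x+1))$ in the first case and $(r,s)=(\lfloor\tilde u(x-1)\rfloor,\lfloor\tilde u(x+1)\rfloor)$ in the second. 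Hence, writing $A^y(k):=\sum_{i=1}^{k}D^y(i)$,
\[
 |g_\tau(x)-\tilde g_\tau(x)|\ \le\ \bigl|A^{x-1}(u(x-1))-A^{x-1}(\lfloor\tilde u(x-1)\rfloor)\bigr|+\bigl|A^{x+1}(u(x+1))-A^{x+1}(\lfloor\tilde u(x+1)\rfloor)\bigr|.
\]

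I would bound each term separately. Fix $\eps>0$ and a small $\delta>0$, and abbreviate $m=\lfloor\tilde u(x-1)\rfloor$, a fixed finite integer since $\tilde u(x-1)\le\E\tau<\infty$ by Lemma~\ref{aprioridom}. On the event $\{|u(x-1)-m|\le\delta n^{4/3}\}$ one has $\bigl|A^{x-1}(u(x-1))-A^{x-1}(m)\bigr|\le\Sigma$, where $\Sigma:=\max_{0\le j\le \delta n^{4/3}}|A^{x-1}(m+j)-A^{x-1}(m)|+\max_{0\le j\le \delta n^{4/3}}|A^{x-1}(m)-A^{x-1}(m-j)|$ depends on the stack at $x-1$ only. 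Therefore
\[
 \P\bigl(|A^{x-1}(u(x-1))-A^{x-1}(m)|>\eps n^{2/3}\bigr)\ \le\ \P\bigl(|u(x-1)-m|>\delta n^{4/3}\bigr)+\P\bigl(\Sigma>\eps n^{2/3}\bigr).
\]
Since $|u(x-1)-m|\le\sup_y|u(y)-\tilde u(y)|+1$, the first probability tends to $0$ by Conjecture~\ref{ass1}(i). For the second, $A^{x-1}$ is a mean-zero random walk with i.i.d.\ steps of variance $1/2$ (by \eqref{diff2}), so Doob's $L^2$-maximal inequality bounds the second moment of each of the two maxima in $\Sigma$ by $C\delta n^{4/3}$, and Chebyshev gives $\P(\Sigma>\eps n^{2/3})\le C\delta/\eps^2$ uniformly in $n$. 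The same holds with $x-1$ replaced by $x+1$. Letting $n\to\infty$ and then $\delta\to 0$ yields $n^{-2/3}\bigl(g_\tau(x)-\tilde g_\tau(x)\bigr)\stackrel{\P}{\longrightarrow}0$.

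Part~(i) now follows from \eqref{expclt} and Slutsky's lemma. For part~(ii), the continuous mapping theorem gives $n^{-2/3}|g_\tau(x)|\stackrel{d}{\longrightarrow}|N(0,w(\xi))|$, whose mean is $\sqrt{2w(\xi)/\pi}$; it then suffices to verify that $\{n^{-2/3}|g_\tau(x)|\}_n$ is uniformly integrable, for which a uniform second-moment bound is enough. By \eqref{step23}, $|g_\tau(x)|\le (S_{x-1}+S_{x+1})\mathbf 1(\m)+2n\,\mathbf 1(\m^{\comp})$, so $\E\bigl(g_\tau(x)^2\bigr)\le 2\E(S_{x-1}^2)+2\E(S_{x+1}^2)+4n^2\,\P(\m^{\comp})\le Cn^{4/3}$, using Lemma~\ref{mean1}(iv) and $\P(\m^{\comp})\le e^{-n^{\eps}}$ from \eqref{upproof1}; hence $n^{-2/3}\E|g_\tau(x)|\to\sqrt{2w(\xi)/\pi}$ (cf.\ \cite[Theorem~3.5]{Bil}). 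The main obstacle is the comparison step: $u(x\pm1)$ is itself correlated with the stacks $X^{x\pm1},Y^{x\pm1}$ out of which $g_\tau(x)$ is built, so one cannot treat $g_\tau(x)$ as a random walk sampled at an independent stopping time. The two-line argument above isolates the randomness — Conjecture~\ref{ass1}(i) controls only how far $u(x\pm1)$ can stray from the deterministic target $\lfloor\tilde u(x\pm1)\rfloor$, after which the fluctuation of $A^{x\pm1}$ across a window of deterministic length around a deterministic point is a clean random-walk estimate for which a crude $\delta/\eps^2$ bound suffices, as $\delta\to0$ at the end.
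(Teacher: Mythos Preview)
Your proof is correct and follows the same overall strategy as the paper: compare $g_\tau(x)$ with the deterministic-stack surrogate $\tilde g_\tau(x)$ (for which \eqref{expclt}--\eqref{exp1moment} hold), and use Conjecture~\ref{ass1}(i) to control the discrepancy. The execution, however, is different. The paper proves the stronger statement $n^{-2/3}\E|g_\tau(x)-\tilde g_\tau(x)|\to 0$ directly, via a four-part decomposition $A_1+A_2+A_3+A_4$ that splits according to the size of $|u(y)-\tilde u(y)|$, the event $\m$, and whether $\tau\le n^5$; the tail cases require the auxiliary Lemma~\ref{momentbound} and Corollary~\ref{boundexp}. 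You instead establish only convergence in probability of the difference (which needs nothing beyond Conjecture~\ref{ass1}(i) and Doob/Chebyshev), obtain (i) by Slutsky, and then recover (ii) by a separate uniform-integrability argument using the second-moment bound from \eqref{step23} and Lemma~\ref{mean1}(iv). This is cleaner and more elementary for the lemma as stated. The trade-off is that the paper's $L^1$ convergence of the difference, recorded uniformly in $x$ in Remark~\ref{unif14}, is reused downstream in Lemma~\ref{uniform}; your argument as written yields the lemma but not that uniform $L^1$ statement without an extra line (e.g.\ combining your uniform second-moment bound on $g_\tau$ with the analogous bound on $\tilde g_\tau$, which does need something like Corollary~\ref{boundexp}).
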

\begin{remark}\label{ptwise12}
 $(ii)$ along with \eqref{laplaceg} implies 
\begin{equation}\label{l.discrete2ndderiv}
\lim_{n\rightarrow \infty}\frac{\Delta \tilde u(x)}{n^{2/3}} \to \sqrt {\frac{2}{\pi} w(\xi)}.
\end{equation}
\end{remark}
To prove Lemma \ref{l.clt} we need the next two results.
\begin{lemma}\label{momentbound} 
 $\E\left[\tau\mathbf{1}(\tau>n^5)\right]=O(1)$ where $\tau $ is defined in \eqref{stoptimedef}. 
\end{lemma}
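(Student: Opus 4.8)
The plan is to control the upper tail of $\tau$ at every scale $t\ge n^5$ strongly enough that the truncated mean is in fact $o(1)$, which is more than enough. The key input is already present in the proof of Lemma~\ref{aprioridom}: with the firing rule of Lemma~\ref{waitingtimes} one has the stochastic domination $\P(\tau>t)\le\P(T>t)$ for all $t$, where
\[
T:=\sum_{i=1}^{\tau'}E_i,
\]
$\tau'$ is the hitting time of $0$ for the lazy reflected walk $R_i$ on $\{0,\dots,n\}$ started at $n$, and $(E_i)_{i\ge1}$ is an i.i.d.\ sequence distributed as the time for simple symmetric random walk to exit $(-2n,2n)$. So it suffices to bound $\P(T>t)$ for $t\ge n^5$.

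First I would record the standard exponential tail estimates at the natural time scale $n^2$: there are absolute constants $C_0,c_0>0$ such that for all $s\ge 0$,
\[
\P(\tau'>s)\le C_0e^{-c_0 s/n^2},\qquad \P(E_1>s)\le C_0e^{-c_0 s/n^2}.
\]
Both are instances of the ``standard fact about lazy random walk'' already invoked in the proof of Lemma~\ref{hayden}: split the walk into blocks of $\Theta(n^2)$ steps; from any starting state in $\{0,\dots,n\}$ (resp.\ in $(-2n,2n)$) there is a probability bounded below, uniformly in $n$, of reaching $0$ (resp.\ exiting $(-2n,2n)$) during a single block, so the claim follows by the Markov property (see \cite{spitzer}). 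In particular, integrating the tail bound, $\E e^{\lambda E_1}\le 1+C_0$ for $\lambda:=c_0/(2n^2)$.

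Next, fix $t\ge n^5$ and set $K:=\lceil\theta t/n^2\rceil$ for the small absolute constant $\theta:=c_0/(8C_0)$; note $K\ge n^2$. Since the $E_i$ are nonnegative, on the event $\{\tau'\le K\}$ we have $T\le\sum_{i=1}^K E_i$, whence
\[
\{T>t\}\ \subseteq\ \{\tau'>K\}\ \cup\ \Big\{\sum_{i=1}^K E_i>t\Big\}.
\]
The first event has probability at most $C_0e^{-c_0\theta t/n^4}$, while a Chernoff bound using the exponential moment above gives
\[
\P\Big(\sum_{i=1}^K E_i>t\Big)\le e^{-\lambda t}(1+C_0)^K\le\exp\Big(-\frac{c_0 t}{2n^2}+C_0 K\Big)\le e^{-c_0 t/(4n^2)},
\]
the last inequality by the choice of $\theta$. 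Hence there are absolute constants $C_1,c_1>0$ with $\P(\tau>t)\le\P(T>t)\le C_1 e^{-c_1 t/n^4}$ for all $t\ge n^5$.

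Finally, using $\E[X\mathbf{1}(X>a)]=a\,\P(X>a)+\int_a^\infty\P(X>t)\,dt$ with $a=n^5$,
\[
\E\big[\tau\,\mathbf{1}(\tau>n^5)\big]\le n^5\P(\tau>n^5)+\int_{n^5}^\infty\P(\tau>t)\,dt\le C_1 n^5 e^{-c_1 n}+\frac{C_1 n^4}{c_1}e^{-c_1 n},
\]
which tends to $0$ as $n\to\infty$; in particular it is $O(1)$, proving the lemma. The only points requiring any care are the two sub-exponential tail estimates of the second step and the balancing of the two error terms via the choice of $K$; both are routine, so I expect no real obstacle here.
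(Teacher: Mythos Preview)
Your proof is correct and follows essentially the same route as the paper: dominate $\tau$ by $T=\sum_{i=1}^{\tau'}E_i$, use the geometric tail bounds $\P(\tau'>s),\P(E_1>s)\le C_0e^{-c_0 s/n^2}$ at the natural scale $n^2$, and split the event $\{T>t\}$ according to whether $\tau'$ is large. The only cosmetic difference is that the paper fixes the threshold $\tau'\gtrless n^{5/2}$ and bounds the truncated expectation directly (via a union bound over the $E_i$), whereas you let the threshold $K$ scale with $t$, use a Chernoff bound on $\sum_{i\le K}E_i$, and then integrate the resulting tail estimate; your packaging is slightly cleaner and in fact yields $o(1)$.
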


\begin{corollary}\label{boundexp}There exists a constant $C_1>0$ such that $$\sup_{x\in \mathbb{Z}}\tilde u(x)\le C_1n^{4/3}$$ where $\tilde{u}=\E(u).$
\end{corollary}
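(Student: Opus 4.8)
The plan is to split the expectation of the odometer over the high-probability event $\m=\{\Height\le Cn^{4/3}\}$ from \eqref{mainevent}, whose complement has probability at most $e^{-n^{\e}}$ by \eqref{upproof1}. Since $\tau=\sum_{y\in\Z}u(y)$ is the total number of firings, we have $u(x)\le\tau$ pointwise for every realization, while on $\m$ we have $u(x)\le\Height\le Cn^{4/3}$. Hence for every $x\in\Z$,
\[
\tilde u(x)=\E\big(u(x)\mathbf 1(\m)\big)+\E\big(u(x)\mathbf 1(\m^{\comp})\big)\le Cn^{4/3}+\E\big(\tau\,\mathbf 1(\m^{\comp})\big),
\]
and since the last term does not depend on $x$, it suffices to show it is $O(1)$.

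To bound $\E\big(\tau\,\mathbf 1(\m^{\comp})\big)$ I would decompose $\m^{\comp}=\big(\m^{\comp}\cap\{\tau\le n^5\}\big)\cup\{\tau>n^5\}$. On the first piece $\tau\,\mathbf 1(\m^{\comp}\cap\{\tau\le n^5\})\le n^5\,\mathbf 1(\m^{\comp})$, so its expectation is at most $n^5\,\P(\m^{\comp})\le n^5 e^{-n^{\e}}=o(1)$ by \eqref{upproof1}. On the second piece, $\E\big(\tau\,\mathbf 1(\tau>n^5)\big)=O(1)$ is precisely Lemma \ref{momentbound}. Adding the two bounds gives $\E\big(\tau\,\mathbf 1(\m^{\comp})\big)=O(1)$, hence $\sup_{x\in\Z}\tilde u(x)\le Cn^{4/3}+O(1)$, which is at most $C_1n^{4/3}$ for a suitable constant $C_1$ once $n$ is large; the finitely many small values of $n$ are absorbed into $C_1$ using the crude bound $\tilde u(x)\le\E\tau\le 16n^4$ from Lemma \ref{aprioridom}.

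The content of the argument is entirely carried by Lemma \ref{momentbound} together with the exponential estimate \eqref{upproof1}; once these are in hand there is no real obstacle. The only point requiring a little care is that the contribution from the rare event $\{\tau>n^5\}$ must be controlled by the refined first-moment bound of Lemma \ref{momentbound} rather than by the crude $\E\tau=O(n^4)$, which would be far too weak for the conclusion.
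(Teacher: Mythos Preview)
Your proof is correct and matches the paper's own argument essentially line for line: the paper also bounds $u(x)$ by $Cn^{4/3}\mathbf{1}(\m)+n^{5}\mathbf{1}(\m^{c})\mathbf{1}(\tau\le n^5)+\tau\mathbf{1}(\tau\ge n^5)\mathbf{1}(\m^{c})$, takes expectations, and invokes \eqref{upproof1} and Lemma~\ref{momentbound} for the last two terms. The only cosmetic point is that what you call a ``decomposition'' of $\m^{\comp}$ is really the inclusion $\m^{\comp}\subset(\m^{\comp}\cap\{\tau\le n^5\})\cup\{\tau>n^5\}$, but your subsequent bounds use only that inclusion, so nothing is lost.
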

The proofs of the above two results are deferred to Appendix \ref{porse}.

\subsubsection{Proof of Lemma \ref{l.clt}.}
By \eqref{expclt} and \eqref{exp1moment} it suffices to show 
\begin{equation}\label{argument}
\lim _{n\rightarrow 0}n^{-2/3} \E|g_\tau(x) - \tilde g_\tau(x)|= 0.
\end{equation}  
Referring to the definitions of $g$ and $\tilde g$ in \eqref{version1} and \eqref{expversion} respectively, this will be accomplished if we show that as $n\rightarrow \infty$ for $y=x\pm 1$ 
	\begin{align*}  n^{-2/3} \E\left|\left(O_{y,x}^{\lfloor\tilde u(y)\rfloor}-W_{y,x}^{\lfloor\tilde u(y)\rfloor}\right)- \left( O_{y,x}^{u(y)}-W_{y,x}^{u(y)}\right)\right| 
			       \end{align*}
tend to $0$. For $y=x\pm1$ the above differences have identical distributions. Hence it suffices to look at any one. The quantity $\left(O_{y,x}^{\tilde u(y)}-W_{y,x}^{\tilde u(y)}\right)- \left( O_{y,x}^{u(y)}-W_{y,x}^{u(y)}\right)$ is a sum of $$N = |\lfloor {\tilde u(y)}\rfloor - u(y)|$$ independent random variables $X_1,\ldots,X_N$ with the same law as in \eqref{diff2}.
By Conjecture \ref{ass1} (i), $N/n^{4/3} \to 0$ in distribution. Fix $\e\ge 0$.
Let 
\begin{itemize}
\item $\displaystyle{A_1=\mathbf{1}(N \le \e n^{4/3})\sup_{1\le m\le \e n^{4/3} } |\sum_{i=1}^{m}X_i|}$\\
\item $\displaystyle{A_2=\mathbf{1}(N \ge \e n^{4/3})\mathbf{1}(\m)\sup_{1 \le m\le (C+C_1)n^{4/3} } |\sum_{i=1}^{m}X_i|}$\\
\item $\displaystyle{A_3=2n^5\mathbf{1}(\m^c)\mathbf{1}(\tau \le n^5)}$\\ 
\item $A_4=2\tau \mathbf{1}(\m^c) \mathbf{1}(\tau>n^5)$
\end{itemize}
where  $C$ and $C_1$ are the constants appearing in the statement of Theorem \ref{lemmaJ} and Corollary \ref{boundexp} respectively, $\tau$ is defined in \eqref{stoptimedef} and $\m$ is defined in \eqref{mainevent}.
We now claim that
\begin{equation}\label{fourpart}
\left|\left(O_{y,x}^{\lfloor\tilde u(y)\rfloor}-W_{y,x}^{\lfloor\tilde u(y)\rfloor}\right)- \left( O_{y,x}^{u(y)}-W_{y,x}^{u(y)}\right)\right| \le A_1+A_2+A_3+A_4.
\end{equation}
The first two terms correspond to the cases
\begin{itemize}
\item $N \le \e n^{4/3}$\\
\item $\{N \ge \e n^{4/3}\}\cap \m$.
\end{itemize}
For the last two terms we use the naive bound that 
\begin{equation}\label{naivebound}
|\sum_{i=1}^{N}X_i|\le N \le \sup_{x\in \mathbb{Z}}\tilde u(x)+\sup_{x\in \mathbb{Z}}u(x) \le C_1n^{4/3}+\tau 
\end{equation}
where the last inequality uses Corollary \ref{boundexp}. Using the above bound and looking at the events  $\{\tau \le n^5\}\cap \m^c$ and $\{\tau > n^5\}\cap \m^c$ gives us \eqref{fourpart}.

\begin{itemize}
\item By Lemma \ref{mean1} $(iii),$ $\E(A_1)=O(\sqrt{\e}n^{2/3})$\\
\item By Cauchy-Schwarz inequality and Lemma \ref{mean1} $(iv)$ $$\E(A_2)=O(n^{2/3})\sqrt{P(N \ge \e n^{4/3})}$$
\item $\E(A_3+A_4)=O(1)$ by Theorem \ref{lemmaJ} and Lemma \ref{momentbound} respectively.
\end{itemize}
Thus for any $\e>0$ $$\E|g_\tau(x) - \tilde g_\tau(x)|\le \sum_{i=1}^{4}\E(A_{i})=n^{2/3}\Bigl(O\bigl(\sqrt{\e}+\sqrt{P(N \ge \e n^{4/3})}\bigr)\Bigr).$$
Hence \eqref{argument} follows using the above and Conjecture \ref{ass1} $(i)$  ($N/n^{4/3}$ goes to $0$ in distribution) and we are done.
\qed \\
\begin{remark}\label{unif14}
Note that we actually prove \eqref{argument} uniformly over $x$ i.e.
\begin{equation*}
\lim _{n\rightarrow 0}\sup_{x\in\mathbb{Z}}n^{-2/3} \E|g_\tau(x) - \tilde g_\tau(x)|= 0.
\end{equation*}  

\end{remark}

We now prove an uniform version of \eqref{l.discrete2ndderiv}.
\begin{lemma}\label{uniform} Given $\e>0$ and $x<y$ such that $w(x),w(y)>0$, for large enough $n$, 
$$\sup_{\lfloor{x}n^{1/3}\rfloor \le j \le  \lfloor{y}n^{1/3}\rfloor} \left|\frac{\Delta{\tilde u(j)}}{n^{2/3}}-\sqrt {\frac{2}{\pi} w(\frac{j}{n^{1/3}})}\right|\le \e$$ 
\end{lemma}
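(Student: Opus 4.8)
The plan is to redo the pointwise argument of Lemma~\ref{l.clt}$(ii)$ (equivalently, of \eqref{l.discrete2ndderiv}) while tracking uniformity at each step. By \eqref{laplaceg} we have $\Delta\tilde u(j)=\E|g_\tau(j)|$, so the assertion concerns $n^{-2/3}\E|g_\tau(j)|$. First I would invoke Remark~\ref{unif14}, which already gives $\sup_{j\in\Z}n^{-2/3}\E|g_\tau(j)-\tilde g_\tau(j)|\to 0$; thus it suffices to prove
\[
\sup_{\lfloor xn^{1/3}\rfloor\le j\le\lfloor yn^{1/3}\rfloor}\left|\,n^{-2/3}\E|\tilde g_\tau(j)|-\sqrt{\tfrac{2}{\pi}\,w(j/n^{1/3})}\,\right|\longrightarrow 0,
\]
with $\tilde g_\tau$ as in \eqref{expversion}.

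By construction $\tilde g_\tau(j)$ is a sum of $m_n(j):=\lfloor\tilde u(j-1)\rfloor+\lfloor\tilde u(j+1)\rfloor$ i.i.d.\ steps each distributed as in \eqref{diff2} (variance $1/2$), so $\E|\tilde g_\tau(j)|=\E|R_{m_n(j)}|$ for the lazy walk $R$ of Lemma~\ref{mean1}, and hence
\[
n^{-2/3}\E|\tilde g_\tau(j)|=\sqrt{\frac{m_n(j)}{n^{4/3}}}\cdot\frac{\E|R_{m_n(j)}|}{\sqrt{m_n(j)}}.
\]
I would show each factor converges uniformly in $j$. For the first factor: Conjecture~\ref{ass1}$(ii)$ gives $\tilde u(\lfloor n^{1/3}\xi\rfloor)/n^{4/3}\to w(\xi)$ uniformly in $\xi$, and since $w$ is $1$-Lipschitz (Lemma~\ref{someprop}) the shifts $j\mapsto j\pm 1$ and the floors contribute $o(1)$ uniformly, so $m_n(j)/n^{4/3}\to 2w(j/n^{1/3})$ uniformly; it is also bounded above by $2C_1$ via Corollary~\ref{boundexp}. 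For the second factor: since $w$ is even, non-increasing on $[0,\infty)$ and continuous, and $w(x),w(y)>0$, we have $w\ge w_{\min}:=\min(w(x),w(y))>0$ on $[x,y]$, so for all large $n$ and all $j$ in range $m_n(j)\ge w_{\min}n^{4/3}$, i.e.\ $m_n(j)\to\infty$ uniformly in $j$. As $\E|R_t|/\sqrt t\to\sqrt{1/\pi}$ by Lemma~\ref{mean1}$(v)$, for each $\eps'>0$ there is $T$ with $|\E|R_t|/\sqrt t-\sqrt{1/\pi}|<\eps'$ for all $t\ge T$; choosing $n$ large enough that $m_n(j)\ge T$ for every $j$ in range makes the second factor within $\eps'$ of $\sqrt{1/\pi}$, uniformly.

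Combining the two factors, $n^{-2/3}\E|\tilde g_\tau(j)|$ is within $O(\sqrt{C_1}\,\eps')+o(1)$ of $\sqrt{2w(j/n^{1/3})}\cdot\sqrt{1/\pi}=\sqrt{\tfrac{2}{\pi}w(j/n^{1/3})}$, uniformly in $j$; sending $\eps'\to 0$ after $n\to\infty$ gives the displayed limit for $\tilde g_\tau$, and adding back the uniform error from Remark~\ref{unif14} yields the lemma. I expect the only subtle point to be the uniformity of the limit $\E|R_t|/\sqrt t\to\sqrt{1/\pi}$ across the growing, $j$-dependent number of summands $m_n(j)$; this is harmless once one knows $m_n(j)\to\infty$ \emph{uniformly}, which is exactly what positivity of $w$ on the compact interval $[x,y]$ (together with its monotonicity and evenness) supplies.
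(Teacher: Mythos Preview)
Your proof is correct and in fact slightly cleaner than the paper's. Both arguments begin identically: use \eqref{laplaceg} to write $\Delta\tilde u(j)=\E|g_\tau(j)|$, then invoke Remark~\ref{unif14} to replace $g_\tau$ by $\tilde g_\tau$ uniformly in $j$. From there the paths diverge. The paper fixes a finite partition $x=x_0<\dots<x_k=y$ on which $w$ oscillates by at most $\eps$, applies the \emph{pointwise} convergence \eqref{l.discrete2ndderiv} at the finitely many $x_i$, and for a general $j$ in the $i$th subinterval bounds $|\E|\tilde g_\tau(j)|-\E|\tilde g_\tau(\lfloor n^{1/3}x_i\rfloor)||$ by the expected displacement of a lazy walk run for $|m_n(j)-m_n(\lfloor n^{1/3}x_i\rfloor)|\le O(\eps)n^{4/3}$ steps, which is $O(\sqrt\eps\,n^{2/3})$.

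You instead factor $n^{-2/3}\E|\tilde g_\tau(j)|=\sqrt{m_n(j)/n^{4/3}}\cdot\bigl(\E|R_{m_n(j)}|/\sqrt{m_n(j)}\bigr)$ and control each factor uniformly: the first via the \emph{uniform} convergence already built into Conjecture~\ref{ass1}$(ii)$ together with the $1$-Lipschitz property of $w$, and the second via Lemma~\ref{mean1}$(v)$ once you observe that $m_n(j)\to\infty$ uniformly on the range, which is exactly what the lower bound $w\ge\min(w(x),w(y))>0$ on $[x,y]$ (from evenness and monotonicity of $w$) provides. Your route exploits more directly the uniformity hypothesis in Conjecture~\ref{ass1}$(ii)$ and avoids the partition bookkeeping; the paper's route is a bit more hands-on but would still work even if one only had pointwise convergence of $\tilde u$ at rational $\xi$, since it only needs convergence at finitely many grid points.
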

\begin{proof}Since $w$ is continuous by Lemma \ref{someprop} and hence uniformly continuous on $[x,y]$, for $\e>0$ there exists real numbers  
$$x=x_0<x_1<\ldots <x_k=y$$ such that 
\begin{eqnarray*}
\sup_{1\le i \le k}(x_{i}-x_{i-1})& \le & \e \\
\sup_{1\le i \le k}(w(x_{i-1})-w(x_{i}))& \le &\e.
\end{eqnarray*}
 By Conjecture \ref{ass1} $(ii)$ and \eqref{l.discrete2ndderiv} we have for large enough $n$
\begin{eqnarray}
\label{pointodom}
\sup_{1\le i \le k} |\frac{{\tilde u(\lfloor n^{1/3}x_i\rfloor)}}{n^{4/3}}- w(x_i)|& \le & \e \\
\label{pointlapl} 
\sup_{1\le i \le k} |\frac{\Delta{\tilde u(\lfloor n^{1/3}x_i\rfloor)}}{n^{2/3}}-\sqrt {\frac{2}{\pi} w(x_i)}| & \le & \e.
\end{eqnarray}
Now for any $\lfloor{x}n^{1/3}\rfloor \le j \le  \lfloor{y}n^{1/3}\rfloor$ find $0\le i < k$ such that $$\lfloor{x_{i}}n^{1/3}\rfloor\le j \le \lfloor{x_{i+1}}n^{1/3}\rfloor .$$ 
Clearly it suffices to show,
 $$ \left|\Delta \tilde u(j)-\Delta \tilde u(\lfloor{x_{i}}n^{1/3}\rfloor)\right| =O(\sqrt{\e} n^{2/3}).$$ or by \eqref{laplaceg} 
 $$\left|\E|g_{\tau}(j)|-\E|g_{\tau}(\lfloor{x_{i}}n^{1/3})|\right| =O(\sqrt{\e} n^{2/3})$$
Notice that by \eqref{argument}  and Remark \ref{unif14} we have 
\begin{eqnarray}
\label{unif1}
\left|\E|g_{\tau}(j)|-\E|\tilde{g}_{\tau}(j)|\right|&=&o(n^{2/3})\\
\label{unif11}
\left|\E|g_{\tau}(\lfloor{n^{1/3}x_i}\rfloor)|-\E|\tilde{g}_{\tau}(\lfloor{n^{1/3}x_i}\rfloor)\right|&=&o(n^{2/3})
\end{eqnarray}
Hence it suffices to show 
\begin{eqnarray}
\label{suff1}
\bigl|\E|\tilde{g}_{\tau}(j)|- \E|\tilde{g}_{\tau}(\lfloor{n^{1/3}x_i}\rfloor)|\bigr|&=& O(\sqrt{\e} n^{2/3})\\
\label{suff2}
\bigl|\E|\tilde{g}_{\tau}(\lfloor{n^{1/3}x_{i+1}}\rfloor)|-\E|\tilde{g}_{\tau}(j)|\bigr|&=& O(\sqrt{\e} n^{2/3}).
\end{eqnarray}

By \eqref{pointodom}
\begin{eqnarray*}
\tilde u(j)- \tilde u(\lfloor n^{1/3}x_i\rfloor) &\le &\e n^{4/3}\\
\tilde u(\lfloor n^{1/3}x_{i+1}\rfloor )-\tilde u(j) &\le &\e n^{4/3}.
\end{eqnarray*}
Now  by \eqref{expversion} the quantities on the left hand side of \eqref{suff1} and \eqref{suff2} are absolute values of a lazy symmetric  random walk run for time $\tilde u(j)- \tilde u(\lfloor n^{1/3}x_i\rfloor)$ and $\tilde u(\lfloor n^{1/3}x_{i+1}\rfloor )-\tilde u(j)$  respectively.
 The result now follows  from Lemma \ref{mean1} $(v)$.
 \end{proof}

\begin{corollary}\label{integrability}
$$\int_{0}^{\infty}\sqrt{\frac{2}{\pi}w(\zeta)}d\zeta \le 1$$
which in particular implies
$$\lim_{x\rightarrow \infty }w(x)=0.$$
\end{corollary}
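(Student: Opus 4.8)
The plan is to recognise $\int_0^\infty\sqrt{\tfrac2\pi w(\zeta)}\,d\zeta$ as a limit of Riemann sums of the rescaled discrete Laplacian $\Delta\tilde u$, and to control those sums uniformly in $n$ via the identity in Lemma~\ref{lemma:expected_diff_odom}. First I would observe that $\Delta\tilde u$ is summable with a clean bound: by \eqref{eq:g_tau_x1} and \eqref{laplaceg}, and since $\sum_{y\ge1}|g_\tau(y)|\le 2n$ almost surely, Tonelli gives
$$\sum_{y=1}^{\infty}\Delta\tilde u(y)=\sum_{y=1}^{\infty}\E|g_\tau(y)|=\E\!\left(\sum_{y=1}^{\infty}|g_\tau(y)|\right)=\tilde u(0)-\tilde u(1)\le n,$$
the last inequality by Lemma~\ref{lemma:expected_diff_odom}. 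In particular, since each $\Delta\tilde u(y)=\E|g_\tau(y)|\ge0$, we get $\tfrac1n\sum_{y=1}^{M}\Delta\tilde u(y)\le1$ for every $n$ and every $M\in\N$.

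Next I would fix $0<\epsilon<K<\sup\{\xi\ge0:w(\xi)>0\}$; this is possible unless $w$ is identically $0$ on $[0,\infty)$ (a case in which the corollary is trivial), because $w$ is continuous and non-increasing by Lemma~\ref{someprop}, so $\{\xi:w(\xi)>0\}$ is a nondegenerate interval containing $0$. Then $w(\epsilon),w(K)>0$, and Lemma~\ref{uniform} applies: for any $\delta>0$ and all large $n$, $\bigl|\tfrac{\Delta\tilde u(y)}{n^{2/3}}-\sqrt{\tfrac2\pi w(y/n^{1/3})}\bigr|\le\delta$ uniformly over $\lceil\epsilon n^{1/3}\rceil\le y\le\lfloor Kn^{1/3}\rfloor$. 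Consequently
$$\frac1n\sum_{y=\lceil\epsilon n^{1/3}\rceil}^{\lfloor Kn^{1/3}\rfloor}\Delta\tilde u(y)=\frac1{n^{1/3}}\sum_{y=\lceil\epsilon n^{1/3}\rceil}^{\lfloor Kn^{1/3}\rfloor}\frac{\Delta\tilde u(y)}{n^{2/3}}$$
differs by at most $K\delta$ from the Riemann sum $\tfrac1{n^{1/3}}\sum_y\sqrt{\tfrac2\pi w(y/n^{1/3})}$, which converges to $\int_\epsilon^K\sqrt{\tfrac2\pi w(\zeta)}\,d\zeta$ because $\sqrt{\tfrac2\pi w}$ is continuous on $[\epsilon,K]$; letting $n\to\infty$ and then $\delta\to0$, the left side converges to $\int_\epsilon^K\sqrt{\tfrac2\pi w(\zeta)}\,d\zeta$. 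On the other hand, since $\Delta\tilde u\ge0$ this left side is at most $\tfrac1n\sum_{y=1}^{\lfloor Kn^{1/3}\rfloor}\Delta\tilde u(y)\le1$ by the previous paragraph, so $\int_\epsilon^K\sqrt{\tfrac2\pi w(\zeta)}\,d\zeta\le1$. Sending $\epsilon\downarrow0$ and then $K$ up to $\sup\{\xi:w(\xi)>0\}$ (monotone convergence, noting $w\equiv0$ beyond that supremum) yields $\int_0^\infty\sqrt{\tfrac2\pi w(\zeta)}\,d\zeta\le1$.

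Finally, the stated consequence follows at once: $w$ is non-increasing on $[0,\infty)$ and non-negative, so $L:=\lim_{x\to\infty}w(x)$ exists with $L\ge0$; if $L>0$ then $\sqrt{\tfrac2\pi w(\zeta)}\ge\sqrt{\tfrac2\pi L}>0$ for all $\zeta$ and the integral diverges, contradicting the bound just proved, whence $L=0$. The only point requiring care is the exchange of $\lim_n$ with the summation/integration near the left endpoint, where Lemma~\ref{uniform} is not available down to $y=1$; this is handled, with no real loss, by discarding the non-negative terms $y<\epsilon n^{1/3}$ and letting $\epsilon\to0$ only at the very end. The rest is bookkeeping on the monotonicity of $\tilde u$ and $w$ and the non-negativity of $\Delta\tilde u$.
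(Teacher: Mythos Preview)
Your proof is correct and follows essentially the same approach as the paper: bound $\sum_{y\ge1}\Delta\tilde u(y)\le n$ via Lemma~\ref{lemma:expected_diff_odom}, then use Lemma~\ref{uniform} to pass to the Riemann integral. You are in fact slightly more careful than the paper in restricting to an interval $[\epsilon,K]$ on which $w>0$ (as Lemma~\ref{uniform} requires) before taking limits, whereas the paper applies the lemma on $[0,A]$ without comment; your handling of the trivial case $w\equiv 0$ and the final $\epsilon\downarrow 0$, $K\uparrow$ step is a clean way to close that small gap.
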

\begin{proof} 

By Lemma \ref{lemma:expected_diff_odom} 
\begin{equation} \label{contrarg}
\E \left (\sum_{y=1}^{\infty}|g_{\tau}(y)|\right)= \sum_{y=1}^{\infty}\Delta \tilde u(y)=\tilde{u}(0)-\tilde{u}(1)\le n. 
\end{equation}
Thus for any positive number $A$
$$\sum_{y=1}^{\lfloor An^{1/3}\rfloor}\Delta \tilde u(y)\le n.$$
By Lemma \ref{uniform} and aproximation of integral by Riemann sum  we have
$$1\ge \lim_{n\rightarrow \infty}\sum_{y=1}^{An^{1/3}}\frac{1}{n^{1/3}}\frac{\Delta \tilde u(y)}{n^{2/3}}=\int_{0}^{A}\sqrt{\frac{2}{\pi}w(\zeta)}d\zeta .$$
Since $w$ is non negative it follows that $$\int_{0}^{\infty}\sqrt{\frac{2}{\pi}w(\zeta)}d\zeta\le 1.$$
By Lemma \ref{someprop} $w$ is non increasing, hence this implies that $$\lim_{x\rightarrow \infty} w(x)=0. \qedhere $$ 
\end{proof}
\begin{remark}
Note that we assumed only convergence of $\tilde u$ in Conjecture \ref{ass1} $(ii)$ but were able to use a special feature of the oil and water model (namely, the identity $\Delta \tilde u(x) = \E |g_\tau(x)|$) to obtain something stronger, convergence of the discrete Laplacian $\Delta \tilde u$. 
\end{remark}
Next we use this to argue that the scaling limit $w(\xi)$ is actually a twice differentiable function of $\xi>0$.

For any $\e>0$ by Conjecture \ref{ass1} $(i)$ and the above corollary we can choose $L$ large enough so that for large enough $n$, $$\sup_{|\xi|\ge L} \tilde u(\lfloor n^{1/3}\xi\rfloor) < \eps^2 n^{4/3}$$ for all $|\xi| > L.$
\begin{lemma} \label{l.pioneers}
Given $\e>0$ let $L$ be as chosen above. Then
$$\displaystyle{\sum_{|x| > n^{1/3}L} \Delta \tilde u(x) \leq \eps^{1/2}} n.$$
\end{lemma}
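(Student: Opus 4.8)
The plan is to use $\Delta\tilde u(x)=\E|g_\tau(x)|$ from \eqref{laplaceg} to read the left‑hand side as the expected number of particles that stop far from the origin, and then to bound it by combining a first‑moment (Markov‑type) identity with a uniform pointwise bound on the mass at any single site. Since $\tilde u$ is even (Lemma \ref{expected_odometer}(i)) so is $\Delta\tilde u$, and writing $m:=\lfloor n^{1/3}L\rfloor$ we have $\{x\in\Z:|x|>n^{1/3}L\}=\{x\ge m+1\}\cup\{x\le-(m+1)\}$, so by \eqref{laplaceg} it suffices to prove $\E\sum_{y>m}|g_\tau(y)|\le\tfrac12\e^{1/2}n$. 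The defining property of $L$ (applied at $\xi=L$) gives $\tilde u(m)<\e^{2}n^{4/3}$. Summing \eqref{eq:g_tau_x1} over all $x\ge m$ — legitimate because $\tilde u(x)\to 0$ as $|x|\to\infty$ (indeed $\{u(x)>0\}\subseteq\{\tau\ge|x|\}$ and $u(x)\le\tau$ with $\E\tau<\infty$ by Lemma \ref{aprioridom}) — yields the first key identity
\[
\tilde u(m)=\E\!\!\sum_{y>m}(y-m)\,|g_\tau(y)|.
\]

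The second ingredient is that $\E|g_\tau(y)|\le 2\e n^{2/3}$ for every $y>m$ once $n$ is large. Recall from the discussion around \eqref{expversion} that $\tilde g_\tau(y)$ is a mean‑zero sum of $\lfloor\tilde u(y-1)\rfloor+\lfloor\tilde u(y+1)\rfloor$ independent variables each of variance $\tfrac12$; since $\tilde u$ is non‑increasing on $\Z_{\ge0}$ and $y\pm1\ge m$, this count is at most $2\tilde u(m)$, so $\Var\tilde g_\tau(y)\le\tilde u(m)<\e^{2}n^{4/3}$ and hence $\E|\tilde g_\tau(y)|\le(\Var\tilde g_\tau(y))^{1/2}<\e n^{2/3}$. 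By Remark \ref{unif14}, $\sup_z n^{-2/3}\E|g_\tau(z)-\tilde g_\tau(z)|\to 0$, so for $n$ large this adds at most $\e n^{2/3}$, giving $\E|g_\tau(y)|\le 2\e n^{2/3}$ uniformly in $y>m$.

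Finally I would split at a distance chosen to balance the two estimates: set $W:=\lceil n^{1/3}\sqrt{\e/2}\,\rceil$ and write $\E\sum_{y>m}|g_\tau(y)|=\sum_{m<y\le m+W}\E|g_\tau(y)|+\E\sum_{y>m+W}|g_\tau(y)|$. The first sum has $W$ terms, each at most $2\e n^{2/3}$, so it is $\le 2\e n^{2/3}W\le 2\sqrt2\,\e^{3/2}n$ for large $n$. In the second sum $y-m>W$ for every $y$, so the identity above gives $W\,\E\sum_{y>m+W}|g_\tau(y)|\le\tilde u(m)<\e^{2}n^{4/3}$, hence $\E\sum_{y>m+W}|g_\tau(y)|<\e^{2}n^{4/3}/W\le\sqrt2\,\e^{3/2}n$. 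Adding and doubling, $\sum_{|x|>n^{1/3}L}\Delta\tilde u(x)\le 8\e^{3/2}n\le\e^{1/2}n$ whenever $\e$ is below an absolute constant, which suffices since the lemma is applied with $\e\to 0$ (for larger $\e$ one enlarges $L$ and uses $\tilde u\le C_1n^{4/3}$ from Corollary \ref{boundexp} together with the identity applied at $\lfloor m/2\rfloor$). The one delicate point is the pointwise bound of the second paragraph: the naive estimate $\E|g_\tau(y)|\le\tilde u(m)<\e^{2}n^{4/3}$ is too weak by a factor of order $n^{2/3}$, so one genuinely needs both the variance computation for $\tilde g_\tau$ and the \emph{uniform}‑in‑$y$ central limit comparison of Remark \ref{unif14}; the rest — the Markov identity and the tuning of $W$ — is routine.
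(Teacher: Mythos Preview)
Your proof is correct but takes a different route from the paper's. The paper exploits the telescoping identity $\sum_{y>M}\Delta\tilde u(y)=\tilde u(M)-\tilde u(M+1)$ directly: the nonnegative first differences $\tilde u(x)-\tilde u(x+1)$ for $x$ in $[\lfloor n^{1/3}L\rfloor,\lfloor n^{1/3}(L+\eps)\rfloor)$ sum to at most $\tilde u(\lfloor n^{1/3}L\rfloor)<\eps^{2}n^{4/3}$, so by pigeonhole (and convexity, which makes the last one the smallest) some difference is at most $\eps n$; this single difference \emph{is} the far tail $\sum_{y>n^{1/3}(L+\eps)}\Delta\tilde u(y)$. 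The remaining strip of width $\eps n^{1/3}$ is handled termwise by the crude bound $\Delta\tilde u(x)=\E|g_\tau(x)|\le Cn^{2/3}$ of Lemma~\ref{ubparticles}. This avoids Remark~\ref{unif14} (and with it Conjecture~\ref{ass1}) entirely. Your argument instead passes through the weighted identity $\tilde u(m)=\sum_{y>m}(y-m)\E|g_\tau(y)|$ together with a refined pointwise bound $\E|g_\tau(y)|\le 2\eps n^{2/3}$ obtained via the variance of $\tilde g_\tau$ and Remark~\ref{unif14}; after optimizing the split width you get $O(\eps^{3/2})n$, which is sharper but costs the extra hypothesis. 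Both arguments, as written, deliver $\le\eps^{1/2}n$ only for $\eps$ below an absolute constant, which is all that is used downstream (the lemma is applied with $\eps\to 0$ in Lemma~\ref{diffposre}).
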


\begin{proof}
Since the $n^{1/3} \eps$ differences $\tilde u(x)- \tilde u(x+1)$ for $x=\lfloor n^{1/3}L\rfloor, \ldots, \lfloor n^{1/3}(L+\eps)\rfloor-1$ are nonnegative and sum to at most $\tilde u(\lfloor n^{1/3}L\rfloor)\le\eps^2 n^{4/3}$, the smallest of them (which is the last one) must be at most $\eps n$.  Therefore
    \[
	 \sum_{|x| > n^{1/3}(L+\eps)} \Delta \tilde u(x) 
	\leq \eps n. \]
	Now the fact that 
	\[ \sum_{x =\lfloor n^{1/3}L\rfloor}^{\lfloor n^{1/3}(L+\eps)\rfloor} \Delta \tilde u(x) 
	\leq O(\eps) n. \] follows from Lemma \ref{ubparticles} and the fact $\Delta \tilde u(x)=\E|g_{\tau}(x)|.$ 
Combining the above two results the proof follows.
\end{proof}

\begin{lemma}\label{diffposre}
$w$ is differentiable on the positive real line, and for any $\xi>0$
	\begin{equation} \label{e.w'} w'(\xi) = -\int_{\xi}^\infty \sqrt {\frac{2}{\pi} w(\zeta)} \,d\zeta. \end{equation}
\end{lemma}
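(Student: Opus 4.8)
The plan is to identify $w$ on $(0,\infty)$ as an antiderivative of the function
$$\Phi(\xi):=\int_\xi^\infty\sqrt{\frac{2}{\pi}\,w(\zeta)}\,d\zeta,$$
which by Corollary~\ref{integrability} is finite, non-negative and non-increasing, and which is continuous on $(0,\infty)$ with $\Phi(\xi)\to 0$ as $\xi\to\infty$ since $\sqrt{w}$ is continuous (Lemma~\ref{someprop}) and integrable. The discrete input is: combining \eqref{eq:g_tau_x1} and \eqref{laplaceg}, for every integer $x\ge 0$
$$\tilde u(x)-\tilde u(x+1)=\E\Bigl(\sum_{y\ge x+1}|g_\tau(y)|\Bigr)=\sum_{y\ge x+1}\Delta\tilde u(y).$$
Writing $g_n(t):=n^{-1}\bigl(\tilde u(\lfloor n^{1/3}t\rfloor)-\tilde u(\lfloor n^{1/3}t\rfloor+1)\bigr)=n^{-1}\sum_{y>\lfloor n^{1/3}t\rfloor}\Delta\tilde u(y)$, this shows $0\le g_n\le 1$ on $[0,\infty)$ (the upper bound by Lemma~\ref{lemma:expected_diff_odom}) and that $g_n$ is non-increasing there, since each $\Delta\tilde u(y)=\E|g_\tau(y)|\ge 0$.

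First I would prove the pointwise scaling limit $g_n(t)\to\Phi(t)$ for every $t>0$. Set $\xi^{\ast}:=\sup\{s:w(s)>0\}$, which is positive (by the lower bound in Theorem~\ref{lemmaJ} and continuity of $w$) and possibly $+\infty$. If $w(t)=0$, i.e.\ $t\ge\xi^{\ast}$, then $g_n(t)\le n^{-1}\sum_{|y|>n^{1/3}t}\Delta\tilde u(y)\to 0=\Phi(t)$ by Lemma~\ref{l.pioneers} applied with the threshold $t$ (legitimate since $\tilde u(\lfloor n^{1/3}t\rfloor)=o(n^{4/3})$ and $\tilde u$ is even and decreasing on $\Z_{\ge0}$). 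If $w(t)>0$ and $\xi^{\ast}<\infty$, fix a small $\delta>0$ with $t<\xi^{\ast}-\delta$ and split $g_n(t)$ into the piece over $n^{1/3}t<y\le n^{1/3}(\xi^{\ast}-\delta)$, the piece over the window $n^{1/3}(\xi^{\ast}-\delta)<y\le n^{1/3}(\xi^{\ast}+\delta)$, and the piece over $y>n^{1/3}(\xi^{\ast}+\delta)$. On $[t,\xi^{\ast}-\delta]$ we have $w>0$, so Lemma~\ref{uniform} turns the first piece, $n^{-1/3}\sum_y n^{-2/3}\Delta\tilde u(y)$, into a Riemann sum converging to $\int_t^{\xi^{\ast}-\delta}\sqrt{2w/\pi}$; the window contributes $O(\delta)$ by the uniform bound $\Delta\tilde u\le Cn^{2/3}$ (Lemma~\ref{ubparticles} together with \eqref{laplaceg}); and the last piece tends to $0$ by Lemma~\ref{l.pioneers} (now at threshold $\xi^{\ast}+\delta$, where $w=0$). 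Letting $n\to\infty$ and then $\delta\to 0$ gives $g_n(t)\to\int_t^{\xi^{\ast}}\sqrt{2w/\pi}=\Phi(t)$. The case $\xi^{\ast}=\infty$ is handled the same way, with a large cutoff $L$ (chosen so that $\int_L^\infty\sqrt{2w/\pi}$ is small) in place of $\xi^{\ast}-\delta$ and Lemma~\ref{l.pioneers} controlling the tail beyond $L$.

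Granting the pointwise limit, I would finish as follows. For $\xi>0$ and $h>0$, summing the discrete identity over $\lfloor n^{1/3}\xi\rfloor\le x<\lfloor n^{1/3}(\xi+h)\rfloor$ gives
$$\frac{\tilde u(\lfloor n^{1/3}\xi\rfloor)-\tilde u(\lfloor n^{1/3}(\xi+h)\rfloor)}{n^{4/3}}=\frac{1}{n^{1/3}}\sum_{x=\lfloor n^{1/3}\xi\rfloor}^{\lfloor n^{1/3}(\xi+h)\rfloor-1}g_n\!\Bigl(\frac{x}{n^{1/3}}\Bigr).$$
Since $g_n$ is monotone on $[0,\infty)$, bounded by $1$, and converges pointwise to the continuous function $\Phi$, the right-hand side is a Riemann sum converging to $\int_\xi^{\xi+h}\Phi(\zeta)\,d\zeta$, while the left-hand side converges to $w(\xi)-w(\xi+h)$ by Conjecture~\ref{ass1}(ii). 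Hence
$$w(\xi)-w(\xi+h)=\int_\xi^{\xi+h}\Phi(\zeta)\,d\zeta\qquad\text{for all }\xi>0,\ h>0.$$
Applying this also with $\xi$ replaced by $\xi-h$ and using the continuity of $\Phi$, both one-sided difference quotients of $w$ at any $\xi>0$ converge to $-\Phi(\xi)$; thus $w$ is differentiable on the positive real line with $w'(\xi)=-\Phi(\xi)$, which is \eqref{e.w'} (for $\xi\ge\xi^{\ast}$ both sides vanish).

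The main obstacle is the pointwise limit: Lemma~\ref{uniform} gives the sharp scaling $\Delta\tilde u(x)\approx n^{2/3}\sqrt{\frac{2}{\pi}w(x/n^{1/3})}$ only on compact intervals on which $w$ is strictly positive, whereas $g_n(t)$ is a sum of $\Delta\tilde u(y)$ over all $y>\lfloor n^{1/3}t\rfloor$, including the range where $w$ vanishes and the far tail. Controlling those contributions forces one to patch together the sharp estimate with the crude uniform bound of Lemma~\ref{ubparticles} and the tail bound of Lemma~\ref{l.pioneers}, and to take the limits $n\to\infty$, $\delta\to 0$ (and, in the unbounded-support case, $L\to\infty$) in the correct order. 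The remaining ingredients — the discrete identity, the Riemann-sum comparison, and the differentiation of $w(\xi)-w(\xi+h)=\int_\xi^{\xi+h}\Phi$ — are routine.
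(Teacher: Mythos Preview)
Your proof is correct and relies on the same three inputs as the paper's argument (Lemma~\ref{uniform} on $\{w>0\}$, Lemma~\ref{l.pioneers} for the far tail, and the uniform bound from Lemma~\ref{ubparticles} with \eqref{laplaceg} for the transition window), but the organization differs. The paper uses summation by parts to write
\[
\frac{1}{kn}\bigl[\tilde u(x)-\tilde u(x+k)\bigr]=\frac{1}{kn}\sum_{j\ge 1}\min(j,k)\,\Delta\tilde u(x+j),
\]
splits this weighted sum into a main part (handled by Lemma~\ref{uniform} as a Riemann sum for $\frac1h\int_\xi^{L+\xi}\min(\zeta-\xi,h)\sqrt{2w/\pi}$) and a tail (handled by Lemma~\ref{l.pioneers}), and then sends $h\to 0$ and $L\to\infty$ at the end. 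You instead first establish the pointwise limit $g_n(t)\to\Phi(t)$ for every $t>0$, upgrade this to uniform convergence on compacta via monotonicity of $g_n$ and continuity of $\Phi$, deduce the integral identity $w(\xi)-w(\xi+h)=\int_\xi^{\xi+h}\Phi$, and differentiate by the fundamental theorem of calculus. Your route is slightly more modular (it isolates the scaling limit of the discrete first derivative as a separate statement) and avoids juggling the $\min(j,k)$ weight; the paper's route avoids the extra uniform-convergence step by working with the double sum directly. Both are equally valid rearrangements of the same underlying computation.
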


\begin{proof} 
By summation by parts, for positive integers $x,k$
	\begin{equation}\label{byparts}\frac{1}{kn}[\tilde u(x) - \tilde u(x+k)] = \frac{1}{kn}\sum_{j=1}^\infty \min(j,k) \Delta \tilde u (x+j). 
	\end{equation}
For positive real numbers $\xi,L,h$   
    let $x=\lfloor \xi n^{1/3}\rfloor,Z = \lfloor Ln^{1/3}\rfloor,k=\lfloor hn^{1/3}\rfloor$ and consider the first part of the sum in (\ref{byparts}) 
    \[ \frac{1}{h} \sum_{j=1}^Z\frac{1}{n^{\frac{1}{3}}}\frac{\min(j,k)}{n^{\frac{1}{3}}} \frac{\Delta \tilde u (x+j)}{n^{\frac{2}{3}}}.
\]
Now given $\delta>0,$ by Lemma \ref{uniform} for large enough $n$ 
        \[\left| \frac{1}{h} \sum_{j=1}^Z\frac{1}{n^{\frac{1}{3}}}\frac{\min(j,k)}{n^{\frac{1}{3}}} \frac{\Delta \tilde u (x+j)}{n^{\frac{2}{3}}}
- \frac{1}{h} \sum_{j=1}^Z\frac{1}{n^{\frac{1}{3}}}\min(\frac{j}{n^{\frac{1}{3}}},h) \sqrt{\frac{2}{\pi} w(\xi+\frac{j}{n^{\frac{1}{3}}})}
\right| \le \delta L.\]
Notice that $$\frac{1}{h} \sum_{j=1}^Z\frac{1}{n^{\frac{1}{3}}}\min(\frac{j}{n^{\frac{1}{3}}},h) \sqrt{\frac{2}{\pi} w(\xi+\frac{j}{n^{\frac{1}{3}}})}$$ is a Riemann sum approximation of the integral $$\frac{1}{h}\int_{\xi}^{L+\xi} \min(\zeta-\xi, h) \sqrt {\frac{2}{\pi} w(\zeta)} \,d\zeta.$$ Thus as $n$ goes to  $\infty$ we see that   
\[\frac{1}{h} \sum_{j=1}^Z\frac{1}{n^{\frac{1}{3}}}\frac{\min(j,k)}{n^{\frac{1}{3}}} \frac{\Delta \tilde u (x+j)}{n^{\frac{2}{3}}} \to \frac{1}{h}\int_{\xi}^{L+\xi} \min(\zeta-\xi, h) \sqrt {\frac{2}{\pi} w(\zeta)} \,d\zeta. \]
Fixing $\e>0$ and choosing the same $L$ as in the statement of Lemma \ref{l.pioneers} we get that the sum of the remaining terms in \eqref{byparts} $\displaystyle{\frac{1}{kn}\sum_{j=Z+1}^\infty}$ is at most $ \eps^{1/2} $ by Lemma~\ref{l.pioneers}. Hence as $n \rightarrow \infty$ we get from \eqref{byparts} 
\[	
	\frac{w(\xi)-w(\xi+h)}{h}= \frac{1}{h} \int_{\xi}^{L} \min(\zeta-\xi, h) \sqrt {\frac{2}{\pi} w(\zeta)} \,d\zeta + O({\e}^{1/2}).
\]
Sending $h$ to $0$ followed by $\e$ to $0$ ($L$ to $\infty$) we are done.
\end{proof}

The right side of \eqref{e.w'} is manifestly a differentiable function of $\xi$, so we obtain the following.

\begin{corollary}
Under Conjecture \ref{ass1}, the function $w$ restricted to the positive real axis is twice continuously differentiable and obeys the differential equation \begin{equation} \label{e.w''} w'' = \sqrt{\frac{2}{\pi} w}. \end{equation}
\end{corollary}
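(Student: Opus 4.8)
The plan is to read the conclusion off directly from Lemma~\ref{diffposre}, which already carries out the substantive work; what remains is a single application of the fundamental theorem of calculus. Recall that Lemma~\ref{diffposre} gives, for every $\xi>0$,
\[ w'(\xi) = -\int_\xi^\infty \sqrt{\tfrac{2}{\pi} w(\zeta)}\, d\zeta, \]
and that by Corollary~\ref{integrability} the integral on the right is finite (bounded by $1$). So it suffices to differentiate this expression once more in $\xi$.

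First I would observe that the integrand $\zeta \mapsto \sqrt{\tfrac{2}{\pi} w(\zeta)}$ is continuous on the positive real axis: by Lemma~\ref{someprop} the function $w$ is continuous (indeed $1$-Lipschitz) and nonnegative, so composing with the continuous square root yields a continuous function. Next I would apply the fundamental theorem of calculus to $\xi \mapsto \int_\xi^\infty \sqrt{\tfrac{2}{\pi} w(\zeta)}\, d\zeta$; writing this as $\int_0^\infty \sqrt{\tfrac{2}{\pi}w} - \int_0^\xi \sqrt{\tfrac{2}{\pi}w}$, the finiteness from Corollary~\ref{integrability} together with continuity of the integrand shows that this map is differentiable at every $\xi>0$ with derivative $-\sqrt{\tfrac{2}{\pi}w(\xi)}$. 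Hence $w'$ is differentiable on $(0,\infty)$ and
\[ w''(\xi) = -\frac{d}{d\xi}\int_\xi^\infty \sqrt{\tfrac{2}{\pi}w(\zeta)}\, d\zeta = \sqrt{\tfrac{2}{\pi} w(\xi)}, \]
which is precisely \eqref{e.w''}. Finally, continuity of $w''$ is immediate from this identity, since the right-hand side $\sqrt{\tfrac{2}{\pi}w}$ is a continuous function of $\xi$ (again because $w$ is continuous); thus $w \in C^2$ on the positive real axis.

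There is essentially no obstacle here: the only mathematical content is the fundamental theorem of calculus, and all of the hypotheses needed to invoke it — continuity and nonnegativity of $w$, and genuine convergence of the tail integral $\int_\xi^\infty$ so that differentiating in the lower limit is legitimate — are supplied by Lemma~\ref{someprop}, Lemma~\ref{diffposre}, and Corollary~\ref{integrability}, respectively. One could remark, though it is not needed, that a bootstrap is available: once $w\in C^2$ and $w>0$ on an interval, \eqref{e.w''} expresses $w''$ as the square root of a $C^2$ positive function there, so $w$ is in fact $C^3$ on that interval; but only $C^2$ is asserted in the statement.
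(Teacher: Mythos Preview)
Your proof is correct and follows exactly the paper's approach: the paper's entire argument is the single remark that ``the right side of \eqref{e.w'} is manifestly a differentiable function of $\xi$,'' and you have simply spelled out what ``manifestly'' means by invoking the fundamental theorem of calculus and checking its hypotheses (continuity of the integrand via Lemma~\ref{someprop}, finiteness of the tail integral via Corollary~\ref{integrability}). There is nothing to add or correct.
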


\begin{lemma}\label{valuea}$w$ is compactly supported. Moreover on the positive region of support  $$w(x)=\left(-\frac{1}{4}\left(\frac{32}{9\pi}\right)^{1/4}x +b\right)^{4}$$ for some $b>0$. 
\end{lemma}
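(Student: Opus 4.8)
The plan is to read off the lemma from the boundary value problem already assembled for $w$. By the corollary following \eqref{e.w'}, $w$ is $C^2$ on $(0,\infty)$ and satisfies \eqref{e.w''}, i.e.\ $w''=\sqrt{\frac{2}{\pi}w}$; by Lemma~\ref{someprop}, $w\ge 0$ is $1$-Lipschitz and non-increasing on $[0,\infty)$; by Corollary~\ref{integrability}, $w(\xi)\to 0$ and $\sqrt{\frac{2}{\pi}w}\in L^1(0,\infty)$; and by Lemma~\ref{diffposre}, $w'(\xi)=-\int_\xi^\infty\sqrt{\frac{2}{\pi}w(\zeta)}\,d\zeta$ for every $\xi>0$. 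First I note $w(0)>0$: by Theorem~\ref{lemmaJ}(ii), $\P(u(0)<cn^{4/3})\le e^{-n^{\e}}$, so $\tilde u(0)\ge \frac{c}{2}n^{4/3}$ for large $n$, whence $w(0)\ge \frac c2>0$. Put $x^\ast:=\sup\{x\ge 0:w(x)>0\}\in(0,\infty]$; since $w\ge 0$ we have $w\equiv 0$ on $(x^\ast,\infty)$, and by continuity $w(x^\ast)=0$ whenever $x^\ast<\infty$.

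The key step is the first-integral reduction on $(0,x^\ast)$, where $w>0$ and hence $w^{3/2}$ is $C^1$. Multiplying \eqref{e.w''} by $2w'$ and recognizing both sides as exact derivatives,
\[ \frac{d}{d\xi}\Big[(w')^2-\tfrac43\sqrt{\tfrac2\pi}\,w^{3/2}\Big]=2w'w''-2w'\sqrt{\tfrac2\pi w}=2w'\big(w''-\sqrt{\tfrac2\pi w}\big)=0, \]
so $(w')^2-\frac43\sqrt{\frac2\pi}\,w^{3/2}\equiv C$ on $(0,x^\ast)$ for a constant $C$. I claim $C=0$. If $x^\ast<\infty$, then $w(\xi)\to w(x^\ast)=0$ as $\xi\uparrow x^\ast$, while \eqref{e.w'} at $\xi=x^\ast$ gives $w'(x^\ast)=0$ (as $w\equiv 0$ past $x^\ast$), so $w'(\xi)\to 0$ by continuity of $w'$; hence $C=0$. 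If $x^\ast=\infty$, then $w(\xi)\to 0$ by Corollary~\ref{integrability} and $w'(\xi)=-\int_\xi^\infty\sqrt{\frac2\pi w}\to 0$ since the integrand is integrable; again $C=0$. Therefore $(w')^2=\frac43\sqrt{\frac2\pi}\,w^{3/2}$ on $(0,x^\ast)$, and, since $w'\le 0$ there,
\[ w'=-k\,w^{3/4}\quad\text{on }(0,x^\ast),\qquad k:=\Big(\tfrac43\sqrt{\tfrac2\pi}\Big)^{1/2}=\Big(\tfrac{32}{9\pi}\Big)^{1/4}. \]

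Since $w>0$ on $(0,x^\ast)$, $(w^{1/4})'=\frac14 w^{-3/4}w'=-\frac k4$, so $w^{1/4}$ is affine there; using continuity at $0$, $w(\xi)^{1/4}=w(0)^{1/4}-\frac k4\xi$ for all $\xi\in[0,x^\ast)$. If $x^\ast=\infty$ the right-hand side tends to $-\infty$, contradicting $w\ge 0$; hence $x^\ast<\infty$, so $w$ is compactly supported, with support $[-x^\ast,x^\ast]$ by evenness of $w$ (which follows from Lemma~\ref{expected_odometer}(i)). Passing to the limit $\xi\uparrow x^\ast$ in the affine relation, $0=w(x^\ast)^{1/4}=w(0)^{1/4}-\frac k4 x^\ast$, so $x^\ast=4w(0)^{1/4}/k$, and on $[0,x^\ast]$
\[ w(x)=\Big(w(0)^{1/4}-\tfrac k4 x\Big)^4=\Big(-\tfrac14\big(\tfrac{32}{9\pi}\big)^{1/4}x+b\Big)^4,\qquad b:=w(0)^{1/4}>0, \]
which is the assertion. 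The computations are routine; the only genuine point is pinning the integration constant $C$ to $0$, and this is precisely where the analytic inputs from Corollary~\ref{integrability} and \eqref{e.w'} (and Theorem~\ref{lemmaJ}, for $w(0)>0$) are needed, so I expect verifying these limiting behaviors to be the only non-mechanical part of the argument.
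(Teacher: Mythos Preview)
Your proof is correct and follows essentially the same route as the paper: both multiply \eqref{e.w''} by $w'$ (equivalently $2w'$) to obtain the first integral $(w')^2=\frac{4}{3}\sqrt{\frac{2}{\pi}}\,w^{3/2}$, using the boundary behavior at infinity (Corollary~\ref{integrability} and Lemma~\ref{diffposre}) to fix the integration constant at zero. The only difference is in how the resulting first-order ODE $w'=-kw^{3/4}$ is solved: the paper matches the candidate $(ax+b)^4$ at a point and invokes Picard uniqueness (Theorem~\ref{picard}) to propagate along the support, whereas you separate variables directly via $(w^{1/4})'=-k/4$, which is more elementary and sidesteps the need to quote an ODE uniqueness theorem. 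You are also a bit more careful in noting $w(0)>0$ from Theorem~\ref{lemmaJ}(ii), which the paper's proof implicitly assumes.
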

Before proving the above we quote the well known Picard existence and uniqueness result for ODE's.
\begin{theorem}\cite[Theorem 8.13]{kel}\label{picard} Consider an initial value problem (IVP)
\begin{eqnarray}
y'(x)&=&f(y(x),x)\\
y(x_0)&= & y_0
\end{eqnarray}
with the point $(x_0,y_0)$ belonging to some rectangle $(a,b)\times (A,B)$ i.e
$a < a_0 <b$ and $A< y_0 < B.$
Also assume that $f$ is $M-$ Lipchitz for some $M\ge 0$ i.e.
$$|f(z,x)-f(w,x)|\le M|z-w|$$ for all $x\in (a,b)$ $z,w \in (A,B).$
Then there exists a $h=h(x_0,y_0,M)>0$ such that 
\begin{itemize}
\item \textbf{Existence:} There exists a solution to the IVP on the interval $(x_0-h,x_0+h)$.
\item \textbf{Uniqueness:} Any two solutions of the IVP agree on the interval $(x_0-h,x_0+h)$. 
\end{itemize}
\end{theorem}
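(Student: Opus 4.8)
The plan is to prove both halves by the classical Picard argument: recast the IVP as a fixed-point equation for an integral (Picard) operator on a suitable complete metric space, invoke the Banach fixed point theorem for existence, and use Gronwall's inequality for uniqueness on the whole interval. Throughout I take as understood that $f$ is continuous on $(a,b)\times(A,B)$ — this is implicit in speaking of ``a solution of the IVP'', and in any case joint continuity already follows from continuity of $x\mapsto f(w,x)$ for each fixed $w$ together with the $M$-Lipschitz bound in the first variable, since $|f(z,x)-f(w,x')|\le M|z-w|+|f(w,x)-f(w,x')|$.

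First I would fix the scales. Choose $\rho>0$ with the closed rectangle $R:=[x_0-\rho,x_0+\rho]\times[y_0-\rho,y_0+\rho]\subset(a,b)\times(A,B)$; since $R$ is compact and $f$ continuous, $K:=\sup_{(w,x)\in R}|f(w,x)|<\infty$. Now set
\[
h:=\min\Big\{\rho,\ \tfrac{\rho}{K+1},\ \tfrac{1}{2M}\Big\}
\]
(with the convention $\tfrac{1}{2M}:=+\infty$ when $M=0$); this $h$ depends only on $x_0,y_0,M$ (and the fixed data $f,R$). Let $\mathcal X$ be the set of continuous $\phi:[x_0-h,x_0+h]\to[y_0-\rho,y_0+\rho]$, with the metric induced by the supremum norm $\|\phi\|:=\sup_{|x-x_0|\le h}|\phi(x)|$; being a closed subset of the Banach space $C([x_0-h,x_0+h])$, it is a complete metric space. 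Define $(T\phi)(x):=y_0+\int_{x_0}^{x} f(\phi(s),s)\,ds$, which makes sense because $s\mapsto f(\phi(s),s)$ is continuous; by the fundamental theorem of calculus, $y\in\mathcal X$ is a $C^1$ solution of the IVP on $(x_0-h,x_0+h)$ precisely when $Ty=y$.

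The two routine verifications are then: $T(\mathcal X)\subset\mathcal X$, because $|(T\phi)(x)-y_0|\le |x-x_0|\,K\le hK\le\rho$; and $T$ is a contraction with constant $Mh\le\tfrac12$, because $|(T\phi)(x)-(T\psi)(x)|\le M\,|x-x_0|\,\|\phi-\psi\|\le Mh\,\|\phi-\psi\|$. By the Banach fixed point theorem $T$ has a unique fixed point $y^\ast\in\mathcal X$ (obtained concretely as the uniform limit of the Picard iterates $\phi_0\equiv y_0$, $\phi_{k+1}=T\phi_k$), and $y^\ast$ solves the IVP on $(x_0-h,x_0+h)$; this is existence. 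For uniqueness, any two solutions $y_1,y_2$ on $(x_0-h,x_0+h)$ (which automatically take values in $(A,B)$, where the Lipschitz bound holds) both satisfy the integral equation, so $|y_1(x)-y_2(x)|\le M\big|\int_{x_0}^{x}|y_1(s)-y_2(s)|\,ds\big|$ for all such $x$; applying Gronwall's inequality separately on $[x_0,x_0+h)$ and on $(x_0-h,x_0]$ forces $y_1\equiv y_2$.

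The main obstacle — really the only subtlety — is the joint constraint on $h$: it must be small enough that the Picard iterates never leave the rectangle $R$ on which $f$ is bounded and Lipschitz (this is what $h\le\rho/(K+1)$, hence $hK<\rho$, buys), and simultaneously small enough ($Mh<1$) that $T$ is a genuine contraction in the sup norm, all while $h$ stays a function of $x_0,y_0,M$ alone. If one wished to drop the condition $Mh<1$ and instead obtain a solution on the largest interval on which the iterates remain in $R$, one would replace $\|\cdot\|$ by the Bielecki norm $\|\phi\|_\ast:=\sup_{|x-x_0|\le h}e^{-2M|x-x_0|}|\phi(x)|$, under which $T$ is a contraction for every such $h$; but this refinement is not needed for the statement as written, and the same proof extends verbatim to systems by replacing the interval $(A,B)$ with a ball.
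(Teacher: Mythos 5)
Your argument is correct: it is the classical Picard--Lindel\"of proof via the integral (Picard) operator, the Banach fixed point theorem on a closed ball of $C([x_0-h,x_0+h])$, and a Gronwall argument for uniqueness among \emph{all} solutions (not just those in the fixed-point space), and all the quantitative choices ($hK<\rho$ so the iterates stay in the rectangle, $Mh\le\tfrac12$ for the contraction) are handled properly. For comparison: the paper does not prove this statement at all --- it is quoted as Theorem 8.13 of the cited textbook \cite{kel} and used as a black box in the proof of Lemma \ref{valuea} --- so you have supplied a self-contained proof of exactly the standard result the citation points to. Two minor points, both of which you already flag and neither of which is a gap: continuity of $f$ in the $x$-variable has to be read into the hypotheses (the paper's statement omits it, and some continuity is needed even to make sense of a $C^1$ solution and of the integral operator); and your $h$ in fact depends on $f$ through $\rho$ and $K$, not only on $(x_0,y_0,M)$ as the loosely worded statement suggests --- this imprecision is in the paper's formulation, and it is harmless for the application there, which only needs \emph{some} $h>0$ giving local uniqueness for the autonomous equation $f'=-(32/(9\pi))^{1/4}f^{3/4}$ near a point where $w$ and $w'$ are nonzero.
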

 
\textit{Proof of Lemma \ref{valuea}}.  
Multiplying \eqref{e.w''} by $w'$ on both sides we get $$w'w'' = \sqrt{\frac{2}{\pi} w}w'.$$
Integrating both sides from $\xi$ to $\infty$ and using the fact that $\displaystyle{\lim_{x\to \infty}w(x)=\lim_{x\to \infty}w'(x)=0}$ (from Corollary \ref{integrability} and Lemma \ref{diffposre}) and that $w'$ is non positive
we see that $w(x)$ satisfies the first order ODE \begin{equation}\label{ode}f' = - \left(\frac{32}{9\pi}\right)^{1/4}f^{3/4}.
\end{equation}
Now suppose $w$ is positive on the entire real axis. 
Given any $z\in \mathbb{R}_{+}$ then $w(z)$ and $w'(z)$ are both non zero. Thus we can find $a,b$ such that 
\begin{eqnarray*}
(az+b)^{4}=w(z) \\
4a(az+b)^{3}=w'(z)  
\end{eqnarray*}
By \eqref{ode} $$4a=- \left(\frac{32}{9\pi}\right)^{1/4}.$$ 
Because of the particular choice of $a$ and $b$ the function $(ax+b)^{4}$ also satisfies (\ref{ode}).
Now since $w(z)$ and $w'(z)$ are both non zero  the function $w^{3/4}(z)$ is Lipchitz in a neighborhood of $z$. Hence by Theorem \ref{picard}   ODE (\ref{ode}) has an unique solution in some neighborhood of $z$. Thus the functions $w(x)$ and $(ax+b)^4$ are equal in a neighborhood of $z$.
Now looking at the biggest interval $I$ containing $z$  such that  $w(x)=(ax+b)^{4}$
on $I$ we conclude that $w(x)=(ax+b)^{4}$ on $\mathbb{R}_{+}\cap supp(w).$ In particular since $(ax+b)^{4}$ is positive only on a compact set this implies that $w(x)$ has compact support.
\qed

Now we find the value for $b$ which completely determines $w.$

\begin{lemma}\label{valueb}$$-4ab^3=\lim_{h\to 0^{+}}\frac{w(0)-w(h)}{h}=1.$$
In particular $$b=\left(\frac{9\pi}{32}\right)^{1/12}.$$
\end{lemma}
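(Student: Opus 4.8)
The plan is to pin down the one free parameter $b$ by evaluating the right-hand derivative of $w$ at the origin. Write $a:=-\tfrac14\bigl(\tfrac{32}{9\pi}\bigr)^{1/4}$, so that by Lemma~\ref{valuea} we have $w(h)=(ah+b)^{4}$ for all $h$ in a right-neighbourhood of $0$: indeed $w(0)=b^{4}>0$ and $w$ is continuous, so $w>0$ on some interval $[0,\delta)$, on which the displayed formula holds. Consequently
\[
\lim_{h\to 0^{+}}\frac{w(0)-w(h)}{h}=-4ab^{3},
\]
which is the first asserted equality, and moreover $w'(\xi)=4a(a\xi+b)^{3}\to 4ab^{3}$ as $\xi\to 0^{+}$. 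By Lemma~\ref{diffposre} we also have $w'(\xi)=-\int_{\xi}^{\infty}\sqrt{\tfrac{2}{\pi}w(\zeta)}\,d\zeta$, which tends to $-\int_{0}^{\infty}\sqrt{\tfrac{2}{\pi}w(\zeta)}\,d\zeta$ as $\xi\to 0^{+}$ since the integrand is nonnegative and bounded with compact support. So everything reduces to the mass identity $\int_{0}^{\infty}\sqrt{\tfrac{2}{\pi}w(\zeta)}\,d\zeta=1$; granting it, $4ab^{3}=-1$, and combining $-4ab^{3}=1$ with $-4a=\bigl(\tfrac{32}{9\pi}\bigr)^{1/4}$ gives $b^{3}=\bigl(\tfrac{9\pi}{32}\bigr)^{1/4}$, i.e.\ $b=\bigl(\tfrac{9\pi}{32}\bigr)^{1/12}$.

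It therefore remains to prove the mass identity. Corollary~\ref{integrability} already gives the bound $\int_{0}^{\infty}\sqrt{\tfrac{2}{\pi}w}\le 1$, so only the reverse inequality is needed, and for this I would go back to the discrete model. By reflection symmetry about the origin, $\E\sum_{y\ge 1}|g_{\tau}(y)|=\E\sum_{y\le -1}|g_{\tau}(y)|$; since the total number of particles is $2n$ and at the stopping time each occupied site carries only one type (so $|g_{\tau}(x)|$ is exactly the number of particles at $x$), this gives $2\,\E\sum_{y\ge 1}|g_{\tau}(y)|+\E|g_{\tau}(0)|=2n$. Combining with \eqref{eq:g_tau_x1} and the estimate $\E|g_{\tau}(0)|\le Cn^{2/3}$ from Lemma~\ref{ubparticles},
\[
\tilde u(0)-\tilde u(1)=\E\sum_{y=1}^{\infty}|g_{\tau}(y)|=n-\tfrac12\E|g_{\tau}(0)|=n-O(n^{2/3}).
\]
On the other hand, exactly as in the proof of Corollary~\ref{integrability} (cf.\ \eqref{contrarg}), $\sum_{y=1}^{\infty}\Delta\tilde u(y)=\tilde u(0)-\tilde u(1)$, with all summands $\Delta\tilde u(y)=\E|g_{\tau}(y)|\ge 0$ by \eqref{laplaceg}. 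Fix $\e>0$, take $L=L(\e)$ as in Lemma~\ref{l.pioneers}, split this sum at $\lfloor Ln^{1/3}\rfloor$, and divide through by $n$. The head $\frac{1}{n^{1/3}}\sum_{y=1}^{\lfloor Ln^{1/3}\rfloor}\frac{\Delta\tilde u(y)}{n^{2/3}}$ is a Riemann sum which, by the uniform convergence in Lemma~\ref{uniform}, tends to $\int_{0}^{L}\sqrt{\tfrac{2}{\pi}w}$ as $n\to\infty$; the tail $\frac1n\sum_{y>\lfloor Ln^{1/3}\rfloor}\Delta\tilde u(y)$ is nonnegative and at most $\e^{1/2}$ for large $n$ by Lemma~\ref{l.pioneers}; and the left-hand side $\frac1n(\tilde u(0)-\tilde u(1))\to 1$. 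Passing to the limit gives $0\le 1-\int_{0}^{L}\sqrt{\tfrac{2}{\pi}w}\le\e^{1/2}$, hence $\int_{0}^{\infty}\sqrt{\tfrac{2}{\pi}w}\ge 1-\e^{1/2}$, and letting $\e\to 0$ (together with Corollary~\ref{integrability}) yields $\int_{0}^{\infty}\sqrt{\tfrac{2}{\pi}w}=1$, completing the proof.

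The step I expect to be the main obstacle is exactly this upgrade of Corollary~\ref{integrability}'s inequality to an equality. It relies both on the sharp leading-order asymptotics $\tilde u(0)-\tilde u(1)=n-o(n)$, which in turn uses the symmetry/particle-count identity together with $\E|g_{\tau}(0)|=o(n)$ from Lemma~\ref{ubparticles}, and on having the genuinely uniform discrete-Laplacian convergence of Lemma~\ref{uniform} alongside the uniform tail bound of Lemma~\ref{l.pioneers}; the nonnegativity $\Delta\tilde u\ge 0$ is what legitimises interchanging the $n\to\infty$ and $\e\to 0$ limits.
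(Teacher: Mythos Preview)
Your proof is correct and takes a genuinely different route from the paper's. The paper computes the difference quotient directly: from Lemma~\ref{ubparticles} it gets $\tilde u(i)-\tilde u(i+1)=n-O(in^{2/3})$ for each $0\le i<hn^{1/3}$, sums telescopically to $\tilde u(0)-\tilde u(\lfloor hn^{1/3}\rfloor)=hn^{4/3}+O(h^2n^{4/3})$, divides by $n^{4/3}$, and sends $n\to\infty$ then $h\to 0$; this bypasses Lemmas~\ref{uniform} and~\ref{l.pioneers} entirely. You instead pass through the integral formula of Lemma~\ref{diffposre}, reducing everything to the mass identity $\int_0^\infty\sqrt{\tfrac{2}{\pi}w}=1$, and then upgrade Corollary~\ref{integrability}'s inequality to an equality via a Riemann-sum plus tail-bound argument. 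Both proofs rest on the same discrete input $\tilde u(0)-\tilde u(1)=n-O(n^{2/3})$ coming from symmetry and Lemma~\ref{ubparticles}; the paper's is shorter, while yours is more roundabout but isolates the mass identity as a byproduct. One small technical point to tighten: Lemma~\ref{uniform} requires $w>0$ at both endpoints, so you should choose $L$ just inside the support of $w$ (so $0<w(L)<\e^2$, which is still adequate for Lemma~\ref{l.pioneers}) and handle the first $\lfloor\delta n^{1/3}\rfloor$ terms of the head sum separately via the uniform bound $\Delta\tilde u(y)=\E|g_\tau(y)|\le Cn^{2/3}$ from Lemma~\ref{ubparticles}, then let $\delta\to 0$; these are routine adjustments.
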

\begin{proof}
That $-4ab^3=\lim_{h\to 0^{+}}\frac{w(0)-w(h)}{h}$ follows from Lemma \ref{valuea}. 
To see that the quantity equals $1$ fix $h>0.$
Consider the telescopic sum
$$
 \tilde u(0)-\tilde u(hn^{1/3})= \sum_{i=0}^{hn^{1/3}}\tilde{u}(i)-\tilde{u}(i+1).
 $$

Now $\tilde u(i)-\tilde u(i+1)$ is the expected number of particles on the right of $i$ by Lemma \ref{lemma:expected_diff_odom}.
By  symmetry of the process about the origin and Lemma \ref{ubparticles}
$$\sum_{x>0}|g_{\tau}(x)|=n-O(n^{2/3}).$$    
Moreover for any $i>0$
$$\tilde u(i)-\tilde u(i+1)=n-O(n^{2/3})i.$$
Summing over $i$ we get  $$ \tilde u(0)-\tilde u(\lfloor{hn^{1/3}}\rfloor)=hn^{4/3}-h^2O(n^{4/3}).$$
Dividing throughout by $n^{4/3}$ and taking limit as $n$ goes to infinity we get 
$${w(0)-w(h)}=h+O(h^2).$$
Thus dividing by $h$ and sending $h$ to $0$ we are done.
\end{proof}

\subsection{Proof of Theorem \ref{conditionalscalingthm}}
From Lemmas \ref{valuea} and \ref{valueb} and using the symmetry of $w$ about the origin we get $$w(x)=\left(\left(\frac{9\pi}{32}\right)^{1/12}-\left(\frac{32}{9\pi}\right)^{1/4}\frac{|x|}{4}\right)^4$$ on the region of support.
Rearranging we get 
\begin{equation} \label{conjecturedscalinglimit}
w(x) = \begin{cases} \frac{1}{72\pi}\left ((18\pi)^{1/3} -|x| \right)^4, & |x|< (18\pi)^{1/3} \\
				0 & |x| \geq (18\pi)^{1/3}. \qedhere \end{cases}
\end{equation}

\section{Open Questions}
\label{s.open}

Conjecture~\ref{ass1} is an obvious target. In this concluding section we collect some additional open questions.

\subsection{Location of the rightmost particle}

For the oil and water process with $n$ particles of each type started at the origin $\Z$, let $R_n$ be the location of the rightmost particle upon fixation.  Is the sequence of random variables $R_n / n^{1/3}$ tight?  Does it converge in distribution to a constant?  If it does, then Theorem \ref{conditionalscalingthm} suggests that the limit should be at least $(18 \pi)^{1/3}$ (and perhaps equal to this value).

\subsection{Order of the variance}

We believe that the standard deviation of the odometer $u$ is of order $n^{7/6}$ in the bulk. Note that Conjecture~\ref{ass1} asserts something weaker, namely $o(n^{4/3})$.

Here is a heuristic argument for the exponent $7/6$.  
The total number of particle exits from $x$ is $2u(x)$; let $N_x$ be the total number of particle \emph{entries} to $x$.  Equating entries minus exits with the number of particles left behind, we find that
 	\begin{equation} \label{e.repn} \Delta u(x) := u(x-1) + u(x+1) - 2u(x) = Z_1(x) +|Z_2(x)| - 2n \delta_0(x). \end{equation}
where $Z_1(x) = u(x-1) + u(x+1) - N_x$, and $Z_2(x)$ is the signed count of the number of particles remaining at $x$ in the final state (counting oil as positive, water as negative). 	
Both $Z_1(x)$ and $Z_2(x)$ are expressable as sums of indpendent indicators involving the stack elements at $x \pm 1$.  
\old{
Recall the stacks mentioned in \eqref{set1}. We claim the following:
\begin{align*}
2n+\Delta u(0)&=\sum_{i=1}^{u(-1)}\mathbf{1}({X}^{-1}_{i}=1,{Y}^{-1}_{i}=1)+\sum_{i=1}^{u(1)}\mathbf{1}({X}^{1}_{i}=-1,{Y}^{1}_{i}=-1)\\ &-\sum_{i=1}^{u(-1)}\mathbf{1}({X}^{-1}_{i}=-1,{Y}^{-1}_{i}=-1)-\sum_{i=1}^{u(1)}\mathbf{1}({X}^{1}_{i}=1, {Y}^{1}_{i}=1)\\& +\left|\sum_{i=1}^{u(-1)}\mathbf{1}({X}^{-1}_{i}=1,{Y}^{-1}_{i}=-1)+\sum_{i=1}^{u(1)}\mathbf{1}({X}^{1}_{i}=-1,{Y}^{1}_{i}=1)\right.\\ &
\left. -\sum_{i=1}^{u(-1)}\mathbf{1}({X}^{-1}_{i}=-1,{Y}^{-1}_{i}=-1)-\sum_{i=1}^{u(1)}\mathbf{1}({X}^{1}_{i}=-1,{Y}^{1}_{i}=1)\right|
\end{align*}

To see this notice that
\begin{align*}
u(-1)+u(1) &+& \sum_{i=1}^{u(-1)}\mathbf{1}({X}^{-1}_{i}=1,{Y}^{-1}_{i}=1)+\sum_{i=1}^{u(1)}\mathbf{1}({X}^{1}_{i}=-1,{Y}^{1}_{i}=-1)\\ & -&\sum_{i=1}^{u(-1)}\mathbf{1}({X}^{-1}_{i}=-1,{Y}^{-1}_{i}=-1)-\sum_{i=1}^{u(1)}\mathbf{1}({X}^{1}_{i}=1, {Y}^{1}_{i}=1)
\end{align*}
is the total number of particles coming in to the origin from the neighbors. The term
\begin{align*}
&  \bigl|\sum_{i=1}^{u(-1)}\mathbf{1}({X}^{-1}_{i}=1,{Y}^{-1}_{i}=-1)+\sum_{i=1}^{u(1)}\mathbf{1}({X}^{1}_{i}=-1,{Y}^{1}_{i}=1) \\&
-\sum_{i=1}^{u(-1)}\mathbf{1}({X}^{-1}_{i}=-1,{Y}^{-1}_{i}=-1)-\sum_{i=1}^{u(1)}\mathbf{1}({X}^{1}_{i}=-1,{Y}^{1}_{i}=1)\bigr|
\end{align*}
is the absolute difference between the number of oil and water particles at the origin at the end of the process i.e. number of particles finally at the origin. Also recall that the initial number of particles at the origin was $2n.$
}
The limits of summation are $u(x \pm 1)$.  Assuming Conjecture~\ref{ass1} and arguing as in Lemma~\ref{l.clt}, we can replace the limits of summation by their expected values $\tilde{u}(x\pm 1)$, incurring only a small error.  The resulting sums $\tilde Z_1$ and $\tilde Z_2$ are asymptotically normal with mean zero and variance of order $n^{4/3}$ (assuming $x$ is in the bulk, $|x|< ((18\pi )^{1/3}-\eps) n^{1/3}$).  Moreover, the function $\tilde{Z}_1 + |\tilde{Z}_2|$ is \emph{$2$-dependent}: its values at $x$ and $y$ are independent if $|x-y|>2$. By summation by parts,
 	\[ u(x) = \sum_{j=1}^{\infty} j\Delta u(x+j). \]
Since most of the support of $u$ is on an interval of length $O(n^{1/3})$, truncating this sum at $Cn^{1/3}$ for a large constant $C$ should not change its variance by much.
Replacing $\Delta u$ by its approximation $\tilde Z_1 + |\tilde Z_2|$  and using the $2$-dependence, we arrive at
	\[ \Var u(x) = \sum_{j=1}^{Cn^{1/3}} j^2 O(n^{4/3}) = O(n^{7/3}). \]

\subsection{Conjectured exponents in higher dimensions}

For the oil and water model in $\Z^d$ starting with $n$ oil and $n$ water particles
at the origin, we believe that the typical order of the odometer (away from $0$ and the
boundary) is $n^{4/(d+2)}$ and the radius of the occupied cluster is of
order $n^{1/(d+2)}$. The reason is by analogy with Section \ref{sect:final_step}: if $w : \R^d \to \R$ solves the PDE
\begin{equation}
\label{e.thepde}
\Delta w = -\delta_0 + \sqrt{\frac{2}{\pi}w}
\end{equation}
then its rescaling
\[ v(x) = t^4 w(x/t) \]
satisfies
\[ \Delta v = -t^{d+2}\delta_0 + \sqrt{\frac{2}{\pi}v}. \]
If the odometer for $n$ particles has a scaling limit $w$ that satisfies \eqref{e.thepde}, then $v$ is the
scaling limit of the odometer for $t^{d+2}n$ particles. So increasing the number of particles a factor of $t^{d+2}$ increases the radius by by a factor of $t$ and the odometer by a factor of $t^4$. This motivates the following conjecture.

\begin{conjecture}
Let $u$ be the odometer for the oil and water model started from n particles of each type at the origin in $\Z^d$. There exists a deterministic function $w: \R^d \to \R$ such that for all $\xi \in \R - \{0\}$ we have almost surely,
	\[ \frac{u(\lfloor n^{1/(d+2)} \xi \rfloor)}{n^{4/(d+2)}}  \to w(\xi). \]
Moreover, $w$ is rotationally symmetric, twice differentiable on $\R^d - \{0\}$ and satisfies
	\[ \Delta w = \sqrt{\frac{2}{\pi}w} \]
on $\R^d - \{0\}$ and $\lim_{\xi \to 0} \frac{w(\xi)}{g(\xi)} = 1$ where $g$ is the Green function for the Laplacian on $\R^d$.
\end{conjecture}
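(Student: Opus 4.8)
\textit{Proof proposal.} The hard part is the very first assertion --- that the rescaled odometer converges at all --- and here I have nothing better to offer than to transport the one-dimensional machinery of Sections \ref{section:apriori}, \ref{upperbound} and \ref{lowerbound} to $\Z^d$. Concretely, I would begin by showing that the number of co-located oil--water pairs $P_t$ is still a supermartingale: conditionally on the past its increments take values in $\{-1,0,1\}$ with nonpositive drift, strictly negative unless all $2d$ neighbours of the firing site agree in type. Stochastic domination of $P_t$ by a lazy reflected walk then gives the $\Z^d$-analogue of Lemma \ref{aprioridom} (so $\tau<\infty$ a.s.\ with polynomial tails, hence $\tilde u:=\E u$ exists), and the contradiction argument of Section \ref{upperbound} --- a large odometer somewhere forces $\returns$ to be macroscopically large, which forces $P_t$ to acquire too much negative drift to survive until $\tau$ --- should yield the two-sided bound $u \asymp n^{4/(d+2)}$ predicted by the heuristic preceding the conjecture. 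The regularity-of-the-odometer bad events ($\badtwo$, $\badthree$) are where genuinely new work is needed, since in $\Z^d$ one must control a vector gradient rather than a scalar one. From this point I would \emph{assume}, as the exact analogue of Conjecture \ref{ass1}, that $n^{-4/(d+2)}u(\lfloor n^{1/(d+2)}\xi\rfloor)$ converges a.s.\ and uniformly to a deterministic $w$, and argue conditionally on that.

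Granting existence, the PDE is extracted just as in Section \ref{sect:final_step}. The engine is the $\Z^d$ form of \eqref{laplaceg}: for $x\neq 0$,
\[
\Delta \tilde u(x) = \E\bigl|g_\tau(x)\bigr|,
\]
with $g_\tau(x)$ the signed count of leftover particles at $x$ and $\Delta$ the lattice Laplacian, while at the origin there is the extra source $-2n\,\delta_0$ (this is \eqref{e.repn}). Now $g_\tau(x)$ is a sum over the $2d$ neighbours $y$ of $x$ of $u(y)$ i.i.d.\ increments distributed as one step of a lazy random walk (the variables $D^y(i)$ of \eqref{diff2}, with the $d$-dimensional holding probabilities). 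Replacing each limit $u(y)$ by $\tilde u(y)$ costs only $o(n^{2/(d+2)})$ in $L^1$ --- the $\Z^d$ reprise of Lemma \ref{l.clt}, using Conjecture \ref{ass1}(i) and the a priori bounds to dispose of the bad-event contribution --- after which the central limit theorem and Lemma \ref{mean1}(v) give, for $x=\lfloor n^{1/(d+2)}\xi\rfloor$,
\[
n^{-2/(d+2)}\,\E\bigl|g_\tau(x)\bigr| \longrightarrow \sqrt{\tfrac{2}{\pi}\,w(\xi)},
\]
with the precise constant emerging from the local variance computation exactly as in one dimension. Feeding this back into the identity --- the lattice Laplacian contributes the factor $n^{-2/(d+2)}$ relative to $\Delta$ on the macroscopic scale, and a summation-by-parts plus Riemann-sum argument as in Lemmas \ref{uniform} and \ref{diffposre} upgrades pointwise convergence to convergence of $\Delta\tilde u$ --- yields $\Delta w = \sqrt{2w/\pi}$ on $\R^d\setminus\{0\}$, and twice-differentiability off the origin from the representation of the radial derivative of $w$ as an integral of $\sqrt{2w/\pi}$, as in the corollary following Lemma \ref{diffposre}.

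The behaviour at $0$ is forced by the source term: near the origin $\tilde u$ is, to leading order, the lattice Green's potential of $2n\,\delta_0$, and under the rescaling $x=n^{1/(d+2)}\xi,\ u\mapsto n^{-4/(d+2)}u$ this point mass becomes a continuum point source of total weight exactly $1$ --- this bookkeeping is precisely what the exponents $1/(d+2)$ and $4/(d+2)$ were engineered to make work, and it is where the normalization is verified --- so $w(\xi)/g(\xi)\to 1$ as $\xi\to 0$ with $g$ the Green function of $\Delta$ on $\R^d$; equivalently $w$ solves \eqref{e.thepde} distributionally. Rotational symmetry does not come for free, since $\Z^d$ carries only hyperoctahedral symmetry; I would obtain it as a rigidity consequence of uniqueness for the limiting problem. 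One shows that a nonnegative solution of $\Delta w=\sqrt{2w/\pi}$ on $\R^d\setminus\{0\}$ that is compactly supported (for $d\ge 3$; decaying for $d=1,2$) and satisfies $w/g\to 1$ at $0$ is unique; since the problem is rotationally invariant, $w$ is then radial, the PDE reduces to the ODE $w''+\frac{d-1}{r}w'=\sqrt{2w/\pi}$, and for $d=1$ this integrates --- via Lemmas \ref{valuea} and \ref{valueb} --- to the quartic of \eqref{conjecturedscalinglimit}. The main obstacle, by far, is the existence-and-isotropy input: there is at present not even a limit-shape theorem for oil and water in $\Z^d$, so both the a.s.\ convergence of the rescaled odometer and the rotational symmetry of its limit remain, like Conjecture \ref{ass1} itself, genuinely open, while everything downstream of them is a faithful transcription of Section \ref{sect:final_step}.
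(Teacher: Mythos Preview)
This statement is a \emph{conjecture} in the paper, not a theorem; the paper offers no proof and does not claim one. What the paper does provide is the short heuristic preceding the conjecture: if a scaling limit $w$ exists and satisfies $\Delta w = -\delta_0 + \sqrt{2w/\pi}$, then the rescaling $v(x)=t^4 w(x/t)$ satisfies $\Delta v = -t^{d+2}\delta_0 + \sqrt{2v/\pi}$, which pins down the exponents $1/(d+2)$ and $4/(d+2)$. That is the entire content of Section~\ref{s.open} on this point.

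Your proposal is therefore not competing with a proof in the paper but rather sketching what a proof \emph{might} look like, and you are candid about this: you explicitly flag that the convergence of the rescaled odometer and the rotational symmetry of the limit are ``genuinely open,'' and you condition the PDE derivation on an assumed $\Z^d$-analogue of Conjecture~\ref{ass1}. That conditional part of your outline --- transporting the identity $\Delta\tilde u(x)=\E|g_\tau(x)|$, the CLT step of Lemma~\ref{l.clt}, and the summation-by-parts of Lemma~\ref{diffposre} to $\Z^d$ --- is a faithful extrapolation of Section~\ref{sect:final_step} and is exactly the kind of argument the paper's heuristic is gesturing at. But you should be clear that none of this constitutes a proof of the conjecture: the paper itself says ``there is not even a limiting shape theorem yet for oil and water,'' and your proposal does not close that gap. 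In particular, your appeal to ``uniqueness for the limiting problem'' to recover rotational symmetry is itself a nontrivial PDE claim (uniqueness of nonnegative compactly supported solutions of $\Delta w=\sqrt{2w/\pi}$ with prescribed singularity) that would need its own proof.
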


The fourth power scaling is reflected in the even spacing between contour lines of the odometer function in Figure~\ref{f.2dcontours}.

\begin{figure}
\centering
\includegraphics[width=.48\textwidth]{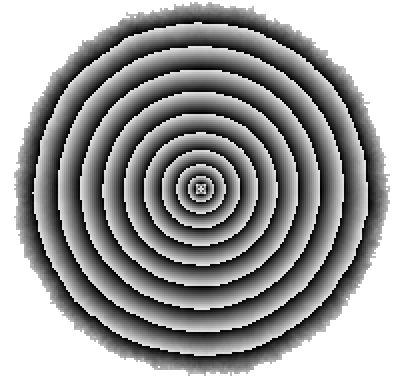}
\caption{Contour lines of the odometer function $u$ of the oil and water model in $\Z^2$ with $n=2^{22}$ particles of each type started at the origin. Each site is shaded according to the fractional part of $\frac15 u^{1/4}$.}
\label{f.2dcontours}
\end{figure}

\newpage

\appendix
\section{Appendix}\label{porse}

\subsection{Concentration Estimates}\label{section:ConcentrationEstimates}

\begin{lemma}\label{lemma:concentration} Suppose for all $x\in \mathbb{Z}$
we have \[
\begin{array}{c}
X^{x}_{1},Y^{x}_{1}\\[1.1ex]
X^{x}_2,Y^{x}_2\\[1.1ex]
\vdots\\[1.1ex]

\end{array} 
\] a sequence of independent uniform $\pm1$ valued random variables. The sequences across $x$ are also independent of each other.
Then there exists constants $C,C',\gamma >0$ such that for $n$ large enough, with probability at least $1-C\exp(-C'n^{\gamma})$  for all  $ k>\sqrt{n}$  and $- n^{5}<j< n^{5}$ we have
\begin{itemize}
\item[(i)] $\displaystyle \left|\sum_{i=1}^{k}\mathbf{1}_{(X^{j}_i=1)}- k/2\right| < k^{1/2+\epsilon}$;
\item[(ii)] $\displaystyle \left|\sum_{i=1}^{k}\mathbf{1}_{(Y^{j}_{i}=1)}- k/2\right| < k^{1/2+\epsilon}$;
\item[(iii)] $\displaystyle \left|\sum_{i=1}^{k}\mathbf{1}_{(X^{j}_{i}=1)}\mathbf{1}_{(Y^{j}_{i}=1)}- k/4\right| < k^{1/2+\epsilon}$;
\item[(iv)] $\displaystyle \left|\sum_{i=1}^{k}\mathbf{1}_{(X^{j}_{i}=-1)}\mathbf{1}_{(Y^{j}_{i}=-1)}- k/4\right| < k^{1/2+\epsilon}$.
\end{itemize}
\end{lemma}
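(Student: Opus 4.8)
The plan is to reduce all four estimates to a single template and then combine a sub-Gaussian tail bound with a dyadic decomposition in $k$. In each of (i)--(iv) the relevant sum is a partial sum $S_k=\sum_{i=1}^k B_i$ of an i.i.d.\ Bernoulli sequence: $B_i$ has mean $p=1/2$ in (i) and (ii), while $B_i=\mathbf 1(X^j_i=1)\mathbf 1(Y^j_i=1)$ (resp.\ $\mathbf 1(X^j_i=-1)\mathbf 1(Y^j_i=-1)$) has mean $p=1/4$ in (iii) and (iv), and in every case the $B_i$ are i.i.d.\ in $i$ because the pairs $(X^j_i,Y^j_i)$ are. Hence $M_k:=S_k-pk$ is a mean-zero martingale with increments bounded by $1$, so by Hoeffding's lemma $\E[e^{\theta M_k}]\le e^{\theta^2 k/8}$ for all $\theta\in\R$. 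Applying Doob's maximal inequality to the nonnegative submartingale $e^{\theta M_k}$ and optimizing over $\theta$ gives, for every $N\ge1$ and $\lambda>0$,
\[
\P\Big(\max_{1\le k\le N}|M_k|\ge\lambda\Big)\le 2\exp\!\Big(-\tfrac{2\lambda^2}{N}\Big).
\]

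Next I would dispose of the unbounded range of $k$ by blocking dyadically. Fix $j$ and let $m_0$ be the least integer with $2^{m_0}\ge\tfrac12\sqrt n$; then $\{k>\sqrt n\}\subseteq\bigcup_{m\ge m_0}[2^m,2^{m+1})$. For $k$ in the block $[2^m,2^{m+1})$ one has $k^{1/2+\epsilon}\ge(2^m)^{1/2+\epsilon}$, so the desired inequality can fail on this block only on the event $\{\max_{k\le 2^{m+1}}|M_k|\ge(2^m)^{1/2+\epsilon}\}$, whose probability, by the displayed bound with $N=2^{m+1}$ and $\lambda=(2^m)^{1/2+\epsilon}$, is at most $2\exp(-(2^m)^{2\epsilon})$.

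Finally I would take a union bound over the at most $2n^5$ admissible values of $j$, over the four statements, and over the scales $m\ge m_0$. Since $(2^m)^{2\epsilon}\ge(2^{m_0})^{2\epsilon}\ge(\tfrac12\sqrt n)^{2\epsilon}\ge\tfrac12 n^{\epsilon}$ for $\epsilon$ small, the series $\sum_{m\ge m_0}\exp(-(2^m)^{2\epsilon})$ is dominated by a constant times its first term, and the total failure probability is at most $C n^5\exp(-\tfrac12 n^{\epsilon})$. Absorbing the polynomial $n^5$ into the exponential yields the asserted bound $1-C\exp(-C'n^{\gamma})$ for any $\gamma<\epsilon$ (relabeling the exponent in the statement if one wishes). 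The only genuine obstacle is that the estimates must hold \emph{simultaneously for all} $k>\sqrt n$ with no upper cutoff: a term-by-term union bound over $k$ would diverge, so the dyadic blocking together with the maximal inequality above is the essential ingredient. The remaining steps --- sub-Gaussian concentration and counting the union-bound terms --- are routine, and I would suppress the explicit constants as elsewhere in the paper.
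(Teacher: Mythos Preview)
Your proof is correct. The paper's own proof is a one-line sketch: apply Azuma--Hoeffding for each fixed $k$ and each fixed $j$, then take a union bound over all $k\ge\sqrt n$ and all $j\in[-n^5,n^5]$. Your argument reaches the same conclusion via a more elaborate route (Doob's maximal inequality plus dyadic blocking in $k$).

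The one point worth correcting is your stated motivation for the blocking. You write that ``a term-by-term union bound over $k$ would diverge,'' but in fact it does not: for fixed $j$ and fixed $k$, Azuma--Hoeffding gives
\[
\P\big(|S_k-pk|\ge k^{1/2+\epsilon}\big)\le 2\exp\!\big(-2k^{2\epsilon}\big),
\]
and the series $\sum_{k\ge\sqrt n}\exp(-2k^{2\epsilon})$ converges for every $\epsilon>0$, with tail of order $\exp(-c\,n^{\epsilon})$ after absorbing a harmless polynomial factor. This is exactly what the paper has in mind. Your dyadic decomposition and maximal inequality are perfectly valid and give the same bound with perhaps tidier bookkeeping, but they are not needed to rescue a divergent sum --- the na\"{\i}ve union bound over $k$ already works here because the deviation threshold $k^{1/2+\epsilon}$ grows with $k$.
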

\begin{proof} Proof follows by standard bounds from Azuma-Hoeffding's inequality for Bernoulli random variables and union bound over $k\ge \sqrt{n}$ followed by $j\in [-n^{5},n^5]$. 
\end{proof}

\subsection{Proof of Lemma \ref{mean2}} Let us define the truncated variable $$Y = M({n^{\frac{4}{3}}}) \mathbf{1}\left(M({n^{\frac{4}{3}}}) < n ^ {\frac{2}{3}+\e}\right)$$ for some small but a priori fixed $\e$. Let $Y_i$ be iid copies of $Y.$
Now by using Azuma's inequality,
$$\P\left(\sum_{i=1}^{n^{1/3}} (Y_i-\E(Y_i)) > t\right)\le e^{-\frac{t^2}{n^{1/3}n^{4/3+2\e}}}.$$ 

Taking $t^2=n^{5/3+3\e}$ we get that 
$$\P\left(\sum_{i=1}^{n^{1/3}} (Y_i-\E(Y_i)) > t\right)<e^{-n^{\e}}$$
Now by $(iv)$ Lemma \ref{mean1} $\E(Y)=O(n^{2/3})$. Thus $$n^{1/3}\E(Y)+t<Dn$$  for some large $D$ as  $t= n^{5/6+2\e}$.  Hence 
$$\P\left(\sum_{i=1}^{n^{1/3}} Y_i>Dn\right)\le e^{-n^{\e}}$$
This implies that 
$$\P\left(\sum_{i=1}^{n^{1/3}} M^{i}({n^{\frac{4}{3}}})>Dn\right)\le Ce^{-n^{\gamma}}$$ since 
by $(i)$ Lemma \ref{mean1} and union bound, there exists a positive constant $c>0$ such that
$$\mathbb{P}\left(\exists\,\, 1\le i \le n^{1/3}\,\,\text {such that }Y_i \neq M^{i}\right)\le e^{-n^{c}}.$$
\qed

\subsection{Proof of Lemma \ref{momentbound}}
We use the variables $E_i$ defined in the statement of Lemma \ref{waitingtimes}. 
Let $$Y=\sum_{i=1}^{\tau'}E_i$$
where $\tau'$ was defined in \eqref{rwhittime}.
As mentioned in proof of Lemma  \ref{aprioridom}
by Lemmas \ref{firingrule} and \ref{waitingtimes} $\tau$ is stochastically dominated by $Y$.
Thus $$\E\left[\tau\mathbf{1}(\tau>n^5)\right] \leq \E\left[Y\mathbf{1}(Y>n^5)\right].$$
Hence to prove the lemma it suffices to show the right hand side is $O(1)$. Now
 \begin{eqnarray}
\E\left[Y\mathbf{1}(Y>n^5)\right]&\le & \E\left[Y\mathbf{1}\bigl(\tau'>n^{\frac{5}{2}}\bigr)\right]+\E\left[Y\mathbf{1}(\tau'\le n^{\frac{5}{2}})\mathbf{1}(Y>n^5)\right]\\
&\le &\E\left[Y\mathbf{1}(\tau'>n^{\frac{5}{2}})\right]+\E\Bigl[\sum_{i=1}^{n^{\frac{5}{2}}}E_i\mathbf{1}(\sum_{i=1}^{n^{\frac{5}{2}}}E_i\ge n^5)\Bigr]\\ 
\label{imp12}
&\le & \E\left[Y\mathbf{1}(\tau'>n^{\frac{5}{2}})\right] +\E\Bigl[\sum_{i=1}^{n^{\frac{5}{2}}}E_i\Bigr]\Bigl[\sum_{i=1}^{n^{\frac{5}{2}}}\mathbf{1}(E_i\ge n^{\frac{5}{2}})\Bigr] 
\end{eqnarray}
where the last inequality follows from the easy fact
$$
\mathbf{1}\left(\sum_{i=1}^{n^{\frac{5}{2}}}E_i\ge n^5\right)\le \sum_{i=1}^{n^{\frac{5}{2}}}\mathbf{1}(E_i\ge n^{\frac{5}{2}}).
$$
We use the following tail estimate for $E_1$ and $\tau'$: there exists a constant $c<1$ such that for $k\ge n^2$ 
\begin{equation}\label{tailest4}
\max(\mathbb{P}(\tau'\ge k),\mathbb{P}(E_1\ge k))\le (1-c)^{\lfloor\frac{k}{n^2}\rfloor}
\end{equation}
which easy follows from the fact that starting  from any point in $[-2n,2n]$ there exists a constant chance $c$ for the random walk to exit the interval in the next $n^2$ steps. 
Using \eqref{tailest4}, independence of $\tau'$, $E_i's$, the theorem now follows from \eqref{imp12}.
The details are omitted.  
\qed
\subsection{Proof of Corollary \ref{boundexp}}
 The proof follows from the following observation:
\begin{equation}\label{keybound1}
u(x)\le Cn^{4/3}\mathbf{1}(\m)+n^{5}\mathbf{1}(\m^{c})\mathbf{1}(\tau\le n^5)+\tau\mathbf{1}(\tau\ge n^5)\mathbf{1}(\m^{c})
\end{equation}
where  $C$ is the constant appearing in the statement of Theorem \ref{lemmaJ}, $\tau$ is defined in \eqref{stoptimedef} and $\m$ is defined in \eqref{mainevent}.
The first term follows from the definition of $\m$. For the second and third term we use the trivial bound that $$u(x)\le \tau.$$
Taking expectation we get 
\begin{eqnarray*}
\tilde u(x)\le Cn^{4/3}+n^{5}\pr(\m^c)+\E(\tau\mathbf{1}(\tau\ge n^5)).
\end{eqnarray*}
The last two terms are $O(1)$ by Theorem \ref{lemmaJ} and Lemma \ref{momentbound} respectively.
Hence we are done.
\qed

\section*{Acknowledgments}
We are grateful to Alexander Holroyd and Yuval Peres for valuable discussions, and to Deepak Dhar for bringing reference \cite{2comp} to our attention. We thank Wilfried Huss and Ecaterina Sava-Huss for a careful reading of an earlier draft, and for detailed comments which improved the paper.

\nocite{}
\bibliography{bibliography}{}
\bibliographystyle{plain}

\end{document}